\pdfoutput=1
\documentclass[aos,authoryear,noinfoline]{imsart}
\usepackage{amssymb, amsmath,bm,amsthm}
\usepackage{mathrsfs,url}
\usepackage{graphicx}
\usepackage[comma,sort&compress]{natbib}
\usepackage{tabularx}
\usepackage{multirow}
\usepackage{color}
\usepackage{enumitem}
\usepackage{longtable}

\oddsidemargin  0.5in \evensidemargin 0.5in \topmargin -0.5in
\headheight 0.4in \headsep 0.2in
\textwidth   5.4in \textheight 8.6in 
\parskip 1.5ex  \parindent 0ex \footskip 40pt

\newcommand{\citeasnoun}{\citet}

\newcommand{\net}{\Lambda_n}
\newcommand{\nett}{\Lambda}
\newcommand{\nettt}{\Lambda_{n}}

\newcommand{\etah}{\ensuremath{\hat{\eta}}}
\newcommand{\taut}{\ensuremath{\tilde{\tau}}}
\newcommand{\ba}{\ensuremath{m_{\alpha}}}

\newcommand{\thr}{\ensuremath{\lambda_{n}}}

\newcommand{\alphab}{\ensuremath{{\bar{\alpha}}}}
\newcommand{\qhatgb}{\ensuremath{\hat{\bm{q}}^{\mathrm{G}}}}
\newcommand{\qhatg}{\ensuremath{\hat{q}^{\mathrm{G}}}}
\newcommand{\Ahat}{\mathrm{\widehat{ARE}}}
\newcommand{\A}{\mathrm{ARE}}
\newcommand{\TT}{\ensuremath{{\widetilde{\mathrm{T}}}}}
\newcommand{\AT}{\widetilde{\mathrm{ARE}}}
\newcommand{\AD}{\mathrm{\widehat{ARE}^D}}
\newcommand{\ADD}{\mathrm{D}}
\newcommand{\AG}{\mathrm{ARE^G}}
\newcommand{\AGhat}{\mathrm{\widehat{ARE}^G}}

\newcommand{\tauh}{\ensuremath{\hat{{\tau}}}}
\newcommand{\qhatb}{\ensuremath{\hat{\bm{q}}}}
\newcommand{\qhat}{\ensuremath{\hat{{q}}}}

\newcommand{\X}{\ensuremath{{\bm{X}}}}
\newcommand{\Xm}{\ensuremath{{\bar{\X}}}}
\newcommand{\Tm}{\ensuremath{{\bar{\thetab}}}}
\newcommand{\Z}{\ensuremath{{\bm{Z}}}}
\newcommand{\Y}{\ensuremath{{\bm{Y}}}}
\newcommand{\x}{\ensuremath{{\bm{x}}}}
\newcommand{\thetab}{\ensuremath{{\bm{\theta}}}}

\newcommand{\ex}{\ensuremath{{\mathbb{E}}}}
\newcommand{\E}{\ensuremath{{\mathbb{E}}}}
\newcommand{\Phit}{\ensuremath{{\tilde{\Phi}}}}
\newcommand{\Real}{\mathbb{R}}
\newcommand{\Lt}{\ensuremath{{{\tilde{L}}}}} 
\newcommand{\lt}{\ensuremath{{{\tilde{l}}}}} 
\newcommand{\rt}{\ensuremath{{{\tilde{r}}}}} 
\newcommand{\Rt}{\ensuremath{{{\tilde{R}}}}} 
\newcommand{\bt}{\ensuremath{\tilde{b}}}
\newcommand{\thai}{\ensuremath{\theta_i(\eta,\tau)}}
\newcommand{\lamdai}{\ensuremath{\Lambda_i(\eta,\tau)}}
\newcommand{\tha}{\ensuremath{\theta_{\alpha}}}
\newcommand{\Ua}{\ensuremath{U_{\alpha}}}
\newcommand{\Va}{\ensuremath{V_{\alpha}}}
\newcommand{\ca}{\ensuremath{c_{\alpha}}}
\newcommand{\da}{\ensuremath{d_{\alpha}}}
\newcommand{\That}{\ensuremath{\hat{T}}}
\newcommand{\Ta}{\ensuremath{\That_{\alpha,n}}}

\newcommand{\Sm}{\ensuremath{\tilde{S}}}
\newcommand{\var}{\ensuremath{\mathrm{Var}}}
\newcommand{\bias}{\ensuremath{\mathrm{Bias}}}

\newcommand{\Kn}{\ensuremath{K_n}}
\newcommand{\s}{\ensuremath{\sigma}}

\newcommand{\bigo}[1]{\ensuremath{\mathcal{O}(#1)}}

\newcommand\smallo[1]{
    	{
    		\ensuremath{\mathop{}\mathopen{}{o}\mathopen{}\left(#1\right)}
    	}
    }

\DeclareMathOperator*{\argmin}{arg\,min}

\newcommand{\ben}{\begin{enumerate}}
\newcommand{\een}{\end{enumerate}}

\newcommand{\be}{\begin{eqnarray}}
\newcommand{\ee}{\end{eqnarray}}
\newcommand{\bex}{\begin{eqnarray*}}
\newcommand{\eex}{\end{eqnarray*}}
\newcommand{\beq}{\begin{equation}}

\newcommand{\eeq}{\end{equation}}
\providecommand{\abs}[1]{\lvert #1 \rvert}

\startlocaldefs
\numberwithin{equation}{section}
\theoremstyle{plain}
\newtheorem{thm}{Theorem}[section]
\newtheorem{prop}{Proposition}[section]
\newtheorem{lem}{Lemma}[section]
\newtheorem{cor}{Corollary}[section]
\endlocaldefs


\newcommand{\pr}[1]{{\color{black}{#1}}}

\renewcommand{\Pr}{\ensuremath{\mathbb{{P}}}}
\renewcommand{\sigma}{{\mathrm{\nu}}}
\newcommand{\indicator}[1]{\ensuremath{{\mathbb{I}}_{#1}}}

\begin{document}
\bibliographystyle{imsart-nameyear}

\begin{frontmatter}

\title{Efficient Empirical Bayes prediction under check	loss using Asymptotic Risk Estimates}


\runtitle{EB Prediction under check loss}

\begin{aug}
\author{\fnms{Gourab} \snm{Mukherjee}\thanksref{m1}\ead[label=e1]{gourab@usc.edu}},
\author{\fnms{Lawrence D.} \snm{Brown}\thanksref{m2}\ead[label=e2]{lbrown@wharton.upenn.edu}}
\and \\
\author{\fnms{Paat} \snm{Rusmevichientong}\thanksref{m1}\ead[label=e3]{rusmevic@usc.edu}}
\affiliation{University of Southern California\thanksmark{m1} and University of Pennsylvania\thanksmark{m2}}
\end{aug}

%

\vspace{0.2in}
\begin{abstract}
	We develop a novel Empirical Bayes methodology for prediction under check loss in high-dimensional Gaussian models. The check loss is a piecewise linear loss function having differential weights  for measuring the	amount	of	underestimation	or	overestimation.  Prediction under it differs in fundamental aspects
	from estimation or prediction under weighted-quadratic losses. Because of the nature of this loss, our inferential target is a pre-chosen quantile of the predictive distribution rather than the mean of the predictive distribution. We develop a new method for constructing uniformly efficient asymptotic risk estimates which are then minimized to produce effective linear shrinkage predictive rules. In calculating the magnitude and direction of shrinkage, our proposed predictive rules incorporate the asymmetric nature of the loss function and are shown to be asymptotically optimal. Using numerical experiments we compare the performance of our method with traditional Empirical Bayes procedures and obtain encouraging results. 
\end{abstract}

\begin{keyword}
\kwd{Shrinkage estimators}
\kwd{Empirical Bayes prediction}
\kwd{Asymptotic optimality}  
\kwd{Uniformly efficient risk estimates} 
\kwd{Oracle inequality} 
\kwd{Pin-ball loss}
\kwd{Piecewise linear loss}
\kwd{Hermite polynomials}
\end{keyword}

\end{frontmatter}

\section{Introduction}\label{sec-1}

We consider Empirical Bayes (EB) prediction under check loss (see Chapter~11.2.3 of \citealp{press2009subjective} and \citealp{koenker1978regression}) in high-dimensional Gaussian models. 
The check loss (sometimes also referred to as tick loss) is linear in	the	amount	of	underestimation	or	overestimation and the	weights	for	these two linear segments differ. The asymmetric check loss function often arises in modern business problems as well as in medical and scientific research \citep{koenker2005quantile}.  Statistical prediction analysis under asymmetric loss functions in fixed dimensional models have been considered by  \citet{zellner1968}, \citet{AitchisonDunsmore:1976},	\citet{zellner1986bayesian} and \citet{blattberg1992estimation}. Here, we consider the multivariate	prediction problem under an agglomerative co-ordinatewise check loss as the dimensionality of the underlying Gaussian location model increases. 
In	common	with	many	other 	multivariate	problems	we	find	that	empirical	Bayes (shrinkage)	can	provide	better	performance	than	simple	coordinate-wise	 rules;	see \citet{james1961estimation}, \citet{zhang2003}, and \citet{greenshtein2009asymptotic} for	some background.	However, prediction	under the	loss	function	here	 differs	in	fundamental	aspects	from estimation	 or	prediction	under the	weighted	quadratic	losses	considered	in	most	 of	the	previous	literature.	This	necessitates	different	strategies	for	creation	of	effective	empirical	Bayes predictors.	
\par
We begin by considering a	Gaussian	hierarchical	Bayes	structure, with	unknown	hyperparameters. We	develop	an	estimate	of	the hyperparameters 	that	is	adapted	to	the	shape	of	the	concerned loss function.	This	estimate	of	the	hyperparameters	is then	substituted	in	the	Bayes formula to produce an EB predictive rule. This yields a co-ordinatewise prediction	that  we	prove is	overall	asymptotically	optimal	as	the	dimension of the problem	grows	increasingly	large.	The	hyperparameters are estimated by minimizing asymptotically efficient risk estimates. Due to the asymmetric nature of the loss function, direct construction of unbiased risk estimates which is usually done under weighted quadratic losses is difficult here. We develop a new asymptotically unbiased risk estimation method which involves	an  appropriate	use	of	Hermite	polynomial	expansions	for	the	relevant	stochastic	functions.	\citet{cai2011} 	used	 such	an	expansion 	for	a	different,	though	somewhat  related,	problem	involving	estimation	of	the	$L_1$ norm	of	an	unknown	mean	vector.		In	other 	respects	our	derivation	logically	resembles	 that	of	\citet{xie2012,xie2015}	who	constructed	empirical	Bayes	estimators	 built	from	an	unbiased	estimate	of	risk. 	However	their	problem	involved	estimation	 under	quadratic	loss,	 and	the	mathematical	formulae	they	used	 provide	exactly	unbiased	estimates  of	risk,	and	are	quite	 different	from	those we	develop.
\par
The	remainder	of	Section~\ref{sec-1}	describes	our	basic 	setup	and gives formal	statements	of	our	main  asymptotic  results.	Section~\ref{sec-2}	provides	further	details.	It	explains	the	general	mathematical	structure  of	our	asymptotic	risk	estimation	 methodology	and	sketches	the	proof	 techniques	used	 to	prove	the	main	 theorems	
about	it.	Sections~\ref{sec-data-driven}	and	\ref{sec-grand-mean}	contain	further	narrative	to	explain	the	proofs	of	the	main results,	but	technical	details	are	deferred	to	the  Appendices.	Section~\ref{section:experiments}	reports	on	some	simulations.	These	clarify	the  nature	of	our	estimator	and	provide	some	confidence	that	it	performs	well	even	 when	the	dimension of the model is	not	extremely	large.

\subsection{Basic Setup}
We adopt the statistical prediction analysis framework of \citeasnoun{AitchisonDunsmore:1976} and \citet{Geisser-book}. We consider a one-step, $n$ dimensional Gaussian predictive model where 
for each $i=1,\ldots,n$,  the observed past  $X_i$ and the unobserved future $Y_i$ are distributed according to a normal distribution with an unknown mean $\theta_i$; that is,
\begin{align}\label{pred.model}
X_i&=\theta_i + \sqrt{\sigma_{p,i}} \cdot \epsilon_{1,i} \quad\text{ for } i=1,2,\ldots,n\\
Y_i&=\theta_i +\sqrt{\sigma_{f,i}} \cdot \epsilon_{2,i} \quad \text{ for } i=1,2,\ldots,n~,
\end{align}
where the noise terms $\{\epsilon_{j,i}: j=1,2; i=1,\ldots,n\}$ are i.i.d. from \pr{a} standard normal distribution, and the past and future variances $\sigma_{p,i}$, $\sigma_{f,i}$ are known for all~$i$.  Note that, in multivariate notation $\X \sim N(\thetab, \bm{\Sigma}_p)$ and $\Y \sim N(\thetab, \bm{\Sigma}_f)$ 
where $\bm{\Sigma}_p$ and $\bm{\Sigma}_f$ are $n$ dimensional diagonal matrices whose $i^{\text{th}}$ entries are $\sigma_{p,i}$ and $\sigma_{f,i}$,~respectively. If the mean $\theta_i$ were known, then the observed past $X_i$ and future $Y_i$ would be independent of each~other.  

Our objective is to compute  
$\qhatb =\{\qhat_i(\X): 1\leq i \leq n\}$ based on the past data $\X$ such that $\qhatb$ optimally predicts $\Y$. 
As a convention, we use bold font to denote vectors \pr{and matrices}, while regular font denotes scalars.  For ease of exposition, we will use $~\widehat{}~$ to denote data-dependent estimates, and we will sometimes write $\qhatb$ or its univariate version $\qhat_i$ without an explicit reference to $\X$.
\par
When we predict the future $Y_i$ by $\qhat_i$, the loss corresponding to the ${i}^{\text{th}}$ coordinate is $b_i \cdot  (Y_i-\qhat_i)^+  + h_i \cdot  (\qhat_i-Y_i)^+$.  
This loss is related to the pin-ball loss function \citep{steinwart2011}, which is widely used in statistics and machine learning for estimating conditional quantiles. For each $\X = \x$, the associated predictive loss is given by
\begin{align}\label{eq:loss-temp.1}
l_i(\theta_i,\qhat_i(\x))~=~\ex_{Y_i \sim N(\theta_i \,,\,\sigma_{f,i})}  \left[ b_i   (Y_i-\qhat_i(\x))^+  + h_i  (\qhat_i(\x)-Y_i)^+ \right]~,
\end{align}
where the expectation is taken over the distribution of the future $Y_i$ only. We use the notation $N(\mu,\sigma)$ to denote a normal random variable with mean $\mu$ and variance $\sigma$. 
Since $Y_i$ is normally distributed with mean~$\theta_i$, it follows from Lemma~\ref{loss.properties} that
\begin{equation}\label{eq:loss-function}
\begin{split} 
l_i(\pr{\theta_i},\qhat_i) & = \sqrt{\sigma_{f,i}}\,(b_i+h_i) \, G(\; (\qhat_i-\theta_i) / \sqrt{\sigma_{f,i}} \, , \, b_i/(b_i+h_i) \;), \text{ where }\\
G(w,\beta) & = \phi(w) + w \Phi(w) - \beta w  \quad \text{ for } w \in \Real, \beta \in [0,1]\,,
\end{split}
\end{equation}
where $\phi(\cdot)$ and $\Phi(\cdot)$ are the standard normal PDF and CDF, respectively.
Thus, given \X, \pr{the} cumulative loss associated with the $n$ dimensional vector $\hat{\bm{q}}$ is
$$
L_n(\thetab,\qhatb) = \frac{1}{n}\sum_{i=1}^n l_i(\theta_i,\pr{\qhat_i})~.
$$
\textit{An Example: The Newsvendor problem.} As a motivation for the check loss, consider the inventory management problem of a vendor who sells a large number of products. Consider a one-period setting, where based on the observed demand $\X$ in the previous period,  the vendor must determine the stocking quantity $\qhat_i$ of each product in the next period. 
He has to balance the tradeoffs between stocking too much and incurring high inventory cost versus stocking too little and suffering lost sales.
If each unit of inventory incurs a holding cost $h_i > 0$, and each unit of \pr{lost} sale incurs a cost of $b_i  > 0$, the vendor's cost function is given by \eqref{eq:loss-temp.1}. Usually, the lost sales cost is much higher than the inventory cost leading to a highly asymmetric loss function. This problem of determining optimal stocking levels is a classical problem in the literature on inventory management \citep{arrow1951optimal,karlin_scarf_58,rudin2015big,levi2011data} and is referred to as the \textit{multivariate newsvendor problem}. In Appendix~\ref{real-data}, using a data-informed illustration on newsvendor problem we study estimation under \eqref{eq:loss-function}.
\par
\textit{Hierarchical Modeling and Predictive Risk.} 
We want to minimize the expected loss $\E_\X \left[ L_n(\thetab,\qhatb) \right]$ over the class of estimators~$\qhatb$ for {\bf all} values of $\thetab$. If $\bm{\theta}$ \pr{were known}, then by Lemma~\ref{loss.properties}, the optimal prediction for each dimension $i$ is given by $\theta_i + \sqrt{\sigma_{f,i}}\,\Phi^{-1}(b_i / (b_i + h_i))$. In absence of such knowledge,  we consider hierarchical modeling and the related Empirical Bayes (EB) approach \citep{robbins1964empirical,zhang2003}. This is a popular statistical method for combining information and conducting simultaneous inference on multiple parameters that are connected by the structure of the problem \citep{good1980some,efron1973combining,efron1973stein}.
\par
We consider the conjugate hierarchical model and put a prior distribution $\pi_{\eta,\tau}$ on each $\theta_i$, under which $\theta_1, \theta_2,\ldots,\theta_n$ are  i.i.d. from $N(\eta,\tau)$ distribution. Here, $\eta$ and $\tau$ are  the {\em unknown} location and scale hyperparameters, respectively. The {\em predictive risk} associated with our estimator~$\qhatb$ is defined~by
$$
R_n(\thetab,\qhatb)=\ex_{\X \sim N(\thetab, \bm{\Sigma}_p)} \left[ L_n(\thetab,\qhatb) \right]~,
$$
where the expectation is taken over $\X$. Note that the expectation over $\Y$ is already included in $L$ via the definition of the loss $\ell_i$.  
Because of the nature of the check loss function, our inferential target here is a pre-chosen quantile of the predictive distribution rather than the mean of the predictive distribution which is usually the object of interest in prediction under quadratic loss. By Lemma~\ref{linear.estimates}, the Bayes estimate -- the unique minimizer of the integrated Bayes risk 
$B_n(\eta,\tau)=\int  R_n(\thetab,\qhatb) \pi_{\eta,\tau}(\thetab)\, d\thetab$ -- is given for $i = 1, \ldots, n$ by
\begin{equation}
\label{eq:linear.est}
\qhat_i^{{\sf Bayes}}(\eta,\tau) =  \alpha_i(\tau) X_i +  (1-\alpha_i(\tau)) \eta +  \sqrt{ \sigma_{f,i}+ \alpha_i(\tau)\sigma_{p,i} } \,\Phi^{-1}( b_i/(b_i+h_i)),
\end{equation}
where, \pr{for all $i$, $\alpha_i(\tau) =\tau/(\tau+\sigma_{p,i})$ denotes the shrinkage factor of coordinate $i$.}

Standard parametric Empirical Bayes methods \citep{efron1973stein,stein1962confidence,morris1983parametric,lindley1962discussion} suggest using the marginal distribution of $\X$ to estimate the unknown hyperparameters. In this paper, inspired by Stein's Unbiased Risk Estimation (SURE) approach of constructing shrinkage estimators \citep{Stein1981estimation}, we consider an alternative estimation method.  Afterwards, in Section~\ref{subsec:main.results}, we show that  our method outperforms standard parametric EB methods which are based on the popular maximum likelihood and method of moments approaches.   
\par
\textit{Class of Shrinkage Estimators:} The Bayes estimates defined in \eqref{eq:linear.est} are based on the conjugate Gaussian prior and constitute a class of linear estimators \citep{Johnstone-book}. When the hyperparameters are estimated from  data, they form a class of adaptive linear estimators. Note that these estimates themselves are not linear but are derived from  linear estimators by the estimation of tuning parameters, which, in this case, correspond to the shrinkage factor $\alpha_i(\tau)$ and the direction of shrinkage $\eta$.  Motivated by the form of the Bayes estimate in \eqref{eq:linear.est}, we  study the estimation problem in the following three specific classes of shrinkage estimators:
\begin{itemize}[leftmargin=*]
\item{\bf Shrinkage governed by Origin-centric priors:} Here, $\eta=0$ and $\tau$ is estimated based on the past data $\X$.  Shrinkage here is governed by mean-zero priors.  This class of estimators is denoted by~$\mathcal{S}^0 =  \left\{ \qhatb(\tau)  ~|~  \tau \in [0,\infty] \right\}$, where for each $\tau$,
$\qhatb(\tau)  = \{ \qhat_i( \tau) : i =1,\ldots, n\}$, and for all $i$,
$$
	\qhat_i( \tau) = \alpha_i(\tau) X_i  +  \sqrt{ \sigma_{f,i}+ \alpha_i(\tau)\sigma_{p,i} } \,\Phi^{-1}\left( b_i / (b_i + h_i) \right)\,.
$$
We can generalize $\mathcal{S}^0$ by considering shrinkage based on priors with an a priori chosen location $\eta_0$. The corresponding class of shrinkage estimators
$\mathcal{S}^A(\eta_0)=\{\qhatb(\eta_0,\tau)| \tau \in [0,\infty]\}$, where $\eta_0$ is a prefixed location, consists of
$$
\qhat_i( \eta_0,\tau) = \alpha_i(\tau) X_i  +  (1-\alpha_i(\tau)) \eta_0 +  \sqrt{ \sigma_{f,i}+ \alpha_i(\tau)\sigma_{p,i} } \,\Phi^{-1}\left( b_i / (b_i + h_i)  \right)\,. 
$$
As these estimators are location equivariant \citep{lehmann1998theory} the estimation problem in $\mathcal{S}^A(\eta_0)$  for any fixed $\eta_0$ reduces to an estimation problem in $\mathcal{S}^0$. Hence, we do not discuss shrinkage classes based on a priori centric priors as separate cases.\\[-2ex]
\item{\bf Shrinkage governed by Grand Mean centric priors:} In this case, $\eta=\bar{X}_n:=n^{-1} \sum_{i=1}^n X_i$, and $\tau$ is estimated based on the past data. Shrinkage here  is governed by priors centering near the grand mean of the past $\X$. This class of estimators is denoted by $\mathcal{S}^G = \left\{ \qhatb^G(\tau) ~|~  \tau \in [0,\infty]  \right\}$, where for all $\tau \in [0,\infty]$ and $i = 1, \ldots, n$,
$$
\qhat_i^G( \tau) = \alpha_i(\tau) X_i  +  (1-\alpha_i(\tau)) \bar{X}_n +  \sqrt{ \sigma_{f,i}+ \alpha_i(\tau)\sigma_{p,i} } \,\Phi^{-1}\left( b_i / (b_i + h_i)  \right)\,. 
$$
\item{\bf Shrinkage towards a general Data-Driven location:} In the final case, we consider the general class of shrinkage estimators where both $\eta$ and $\tau$ are simultaneously estimated.  We shrink towards a data-driven location \pr{while} simultaneously optimizing the shrinkage factor; this class is denoted by $\mathcal{S} = \left\{ \qhatb(\eta,\tau) ~|~  \eta \in \Real, \tau \in [0,\infty]  \right\}$, where 
$$
\qhat_i(\eta, \tau) = \alpha_i(\tau) X_i  +  (1-\alpha_i(\tau)) \eta +  \sqrt{ \sigma_{f,i}+ \alpha_i(\tau)\sigma_{p,i} } \,\Phi^{-1}\left( b_i / (b_i + h_i)  \right)\,. 
$$
\end{itemize}

\subsection{Main Results}\label{subsec:main.results}
For ease of understanding, we first describe the results for the class  $\mathcal{S}^0$ where the direction of shrinkage is governed by mean-zero priors so that $\eta = 0$.  The results for the other cases are stated afterwards; see Section~\ref{further.results}. By \pr{definition}, estimators in $\mathcal{S}^0$ are of the form: for $i=1, \ldots, n$,
\begin{align}\label{eq:linear.est.o}
\qhat_i(\tau) = \alpha_i(\tau) X_i  +  \sqrt{ \sigma_{f,i}+ \alpha_i(\tau)\sigma_{p,i} } \,\Phi^{-1}\left( b_i/(b_i+h_i) \right)~,
\end{align} 
where $\alpha_i(\tau) = \tau / (\tau + \sigma_{p,i})$ is the shrinkage factor, and the tuning parameter $\tau$ varies from $[0,\infty]$. 
We next describe  the reasonable and mild conditions that we impose on the problem structure. These assumptions mainly facilitate the rigorousness of the theoretical proofs and can be further relaxed for practical use. 

\noindent \underline{{\bf Assumptions}}

\textit{A1. Bounded weights of the loss function.} 
To avoid degeneracies in our loss function, which can be handled easily but require separate case by case inspections, we impose the following condition on the  weights of the coordinatewise losses:
$$
	0 < \inf_i b_i/(b_i+h_i) \leq \sup_i  b_i/(b_i+h_i) < 1
	\quad \text{ and } \quad \sup_i (b_i + h_i) < \infty~.
$$
%

\textit{A2. Bounded parametric space.} We assume that average magnitude of $\thetab$ is bounded:
\begin{align}\label{assump.A3}
\limsup_{n \to \infty} \frac{1}{n} \sum_{i=1}^n |\theta_i| < \infty.
\end{align}
Note that, both A1 and A2 are benign structural assumptions necessary for clear statements of the proofs. 

\textit{A3. Sufficient historical data.} 
We assume the following upper bound on the ratio of the past to future variances for all the coordinates:
\begin{align}\label{assumption.A2}
\sup_i ~\sigma_{p,i}/\sigma_{f,i} <  1/(4 e)~.
\end{align}
To understand the implication of this assumption, consider a multi-sample prediction problem where we observe $m$ i.i.d. past data vectors from a $n$-dimensional Gaussian location model.
Using sufficiency argument in the Gaussian setup, we can reduce this multi-sample past data problem to a vector problem by averaging across the  $m$ observations. The  variance of the averaged model is proportional to $m^{-1}$, and in this case, we will have $\sigma_{p,i}/\sigma_{f,i}=m^{-1}$ for each~$i$. Therefore, if we have a lot of independent historical records, the above condition will be satisfied. As such \eqref{assumption.A2} holds if we have $11$ or more independent and identically distributed past records.
Conditions of this form are not new in the predictive literature,  as the ratio of the past to future variability controls the role of estimation accuracy in predictive models  \citep{mukherjee2012exact,George06}.  
Simulation experiments (see Section~\ref{section:experiments}) suggest that the constant in \eqref{assumption.A2} can be reduced but some condition of this form is needed.
Also, to avoid degeneracies in general calculations with the loss function (which can be easily dealt by separate case analysis), we impose very mild assumptions on the variances:
$\sup_i \sigma_{f,i} < \infty$ and $\inf_i \sigma_{p,i} > 0$.


\textit{Our Proposed Shrinkage Estimate:} The predictive risk of estimators $\qhatb(\tau)$ of the form \eqref{eq:linear.est.o} is given by $R_n(\thetab,\qhatb(\tau))=\ex_{\thetab} \left[ L_n(\thetab,\qhatb(\tau)) \right]$, where the expectation is taken over $\X \sim N(\thetab, \bm{\Sigma}_p)$.
Following \citet{Stein1981estimation}, the idea of minimizing unbiased estimates of risk to obtain efficient estimates of tuning parameters has a considerable history in statistics \citep{Kurt2000,george2012tribute,stigler1990,efron1973stein}. 
However, as shown in Equation \eqref{eq:loss-function}, 
our loss function $l(\cdot,\cdot)$ is {\em not} quadratic, so  a direct construction of unbiased risk estimates is difficult.  Instead, we approximate the risk function $ \tau \mapsto R_n(\thetab,\qhatb(\tau))$ by an {\it Asymptotic Risk Estimator} ($\A$) function $\tau \mapsto {\Ahat}_n(\tau)$, which may be {\em biased}, but it approximates the true risk function {\em uniformly  well for all $\tau$}, particularly in large dimensions. Note that $\Ahat_n(\tau)$ depends {\em only} on the observed $\X$ and $\tau$ and is not dependent on $\thetab$. The estimation procedure is fairly complicated and is built on a Hermite polynomial expansion of the risk. It is described in the next subsection (See \eqref{def.A}). Afterward, we show that our risk estimation method not only adapts to the data but also does a better job in adapting to the shape of the loss \pr{function when compared with} the widely used Empirical Bayes MLE (EBML) or method of moments (EBMM) estimates.   The main results of this paper are built on the following theorem. 

\begin{thm}[Uniform Point-wise Approximation of the Risk]\label{origin.loss.are}
Under Assumptions A1 and A3, for all $\thetab$ satisfying Assumption~A2 and for all estimates $\qhatb(\tau) \in \mathcal{S}^0$, we have
$$ \lim_{n \to \infty} a_n^8 \bigg \{\sup_{\tau \in [0,\infty ]} \ex \big( \Ahat_n(\tau)-R_n(\thetab,\qhatb(\tau))\big)^2 \bigg\}~=~ 0~, \text{ where } a_n=\log \log n$$ 
and the expectation is taken over the random variable $\X \sim N(\thetab, \Sigma_p)$.
\end{thm}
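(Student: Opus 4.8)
The plan is to exploit the explicit closed form of the predictive risk supplied by the lemmas above and reduce the statement to a coordinatewise bias--variance analysis of a Hermite-polynomial based risk estimate. First, by Lemma~\ref{loss.properties} and the computation behind \eqref{eq:loss-function}, integrating out the past noise in $X_i$ (the future noise is already integrated out inside $l_i$) gives $R_n(\thetab,\qhatb(\tau))=n^{-1}\sum_{i=1}^{n}\mathcal R_i(\tau;\theta_i)$, where each $\mathcal R_i(\tau;\cdot)$ is an explicit, real-analytic function of $\theta_i$, of the form $\sqrt{\sigma_{f,i}}(b_i+h_i)\,\rho_i(\tau)\,G\!\big(\kappa_i(\tau)\theta_i+\lambda_i(\tau),\,\beta_i\big)$ with $G(w,\beta)=\phi(w)+w\Phi(w)-\beta w$, $\beta_i=b_i/(b_i+h_i)$, and $\rho_i,\kappa_i,\lambda_i$ bounded continuous functions of the shrinkage factor $\alpha_i(\tau)=\tau/(\tau+\sigma_{p,i})\in[0,1]$; the only non-polynomial, linearly growing feature of $\mathcal R_i$ is the convex ``$V$-shaped'' part of $G$, essentially a term $\propto|\kappa_i(\tau)\theta_i+\lambda_i(\tau)|$. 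Correspondingly the asymptotic risk estimate $\Ahat_n$ of \eqref{def.A} is the matching coordinatewise quantity $\Ahat_n(\tau)=n^{-1}\sum_i\widehat{\mathcal R}_i(\tau;X_i)$, where $\widehat{\mathcal R}_i$ is built by substituting, into a degree-$\Kn$ polynomial approximation of $\theta_i\mapsto\mathcal R_i(\tau;\theta_i)$ valid on a slowly growing window about the kink $-\lambda_i(\tau)/\kappa_i(\tau)$, the Hermite-polynomial unbiased estimators of the monomials $\theta_i^{\,k}$ built from $X_i$ (using $\ex_{\theta_i}[\sigma_{p,i}^{k/2}H_k(X_i/\sqrt{\sigma_{p,i}})]=\theta_i^{\,k}$, shifted and rescaled to handle the affine arguments $\kappa_i\theta_i+\lambda_i$), together with a direct plug-in rule outside the window, where $\mathcal R_i$ is essentially affine so no finite-degree polynomial is useful; this use of Hermite expansions for the nonsmooth part follows the philosophy of \citet{cai2011}.

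Since $X_1,\dots,X_n$ are independent, so are $\widehat{\mathcal R}_1(\tau;X_1),\dots,\widehat{\mathcal R}_n(\tau;X_n)$, and therefore
\[
\ex\big(\Ahat_n(\tau)-R_n(\thetab,\qhatb(\tau))\big)^2
=\frac{1}{n^{2}}\sum_{i=1}^{n}\var\big(\widehat{\mathcal R}_i(\tau;X_i)\big)
+\Big(\frac1n\sum_{i=1}^{n}\delta_i(\tau)\Big)^{\!2},
\]
where $\delta_i(\tau)=\ex_{\theta_i}[\widehat{\mathcal R}_i(\tau;X_i)]-\mathcal R_i(\tau;\theta_i)$. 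It thus suffices to bound, uniformly in $i$ and in $\tau\in[0,\infty]$, the coordinatewise bias $|\delta_i(\tau)|$ and variance $\var(\widehat{\mathcal R}_i(\tau;X_i))$. For the bias: the affine part of $\mathcal R_i$ is estimated with zero bias for every $\theta_i$; the rapidly decaying part contributes a geometrically small error; and the $|\cdot|$ part contributes, by the Bernstein/Jackson rate for best polynomial approximation of $|\cdot|$, an error $\lesssim L_n/\Kn$ when $\theta_i$ is in the window, while the plug-in tail rule makes the error super-polynomially small when $\theta_i$ lies well outside it (there $G$ is convex with a rapidly vanishing second derivative, so plug-in is nearly unbiased). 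Summing and invoking Assumption~A2 to control the relevant Ces\`aro averages of $|\theta_i|$ gives $\sup_\tau\,n^{-1}\sum_i|\delta_i(\tau)|\lesssim L_n/\Kn$.

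The variance bound is the main obstacle, and it is here that Assumption~A3 is indispensable. The product formula for Hermite polynomials yields the identity $\ex[\widehat P(X)^2]=\sum_{j\ge0}\frac{s^{2j}}{j!}\big(P^{(j)}(\mu)\big)^2$ for $\widehat P(X)=\sum_k a_k\,s^k H_k(X/s)$ with $X\sim N(\mu,s^2)$ and $P(t)=\sum_k a_k t^k$. For the $i$-th coordinate the relevant effective noise scale is $s_i^2(\tau)=\kappa_i(\tau)^2\sigma_{p,i}=(1-\alpha_i(\tau))^2\sigma_{p,i}\big/(\sigma_{f,i}+\alpha_i(\tau)^2\sigma_{p,i})$, and since $(1-\alpha_i(\tau))^2\le1$ we get $s_i^2(\tau)\le\sigma_{p,i}/\sigma_{f,i}<1/(4e)$ for \emph{every} $\tau$ by A3. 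Combining this with Cram\'er-type bounds on Hermite functions (so the $j$-th derivative of the analytic part of $\mathcal R_i$ is $\mathcal O(\sqrt{(j-2)!})$ uniformly in its argument) and with Markov-brothers bounds on the derivatives of the polynomial approximant of $|\cdot|$, one shows that the series $\sum_{j\ge1}\frac{s_i^{2j}}{j!}(\cdots)^2$ is dominated by a convergent geometric-type series, with ratio governed by $e\,s_i^2(\tau)<1/4$; the constant $4e$ is calibrated precisely so that this ratio stays below $1$ (the factor $e$ arising from $\sqrt{k!}\sim(k/e)^{k/2}$ in the $L^2$-norms of Hermite polynomials, and the remaining factor from squaring and from rescaling the approximation interval). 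This yields $\var(\widehat{\mathcal R}_i(\tau;X_i))\le C\cdot\mathrm{poly}(\Kn)$ uniformly in $i$, $\tau$ and --- crucially --- in $\theta_i$, whence $n^{-2}\sum_i\var(\widehat{\mathcal R}_i(\tau;X_i))\le C\,\mathrm{poly}(\Kn)/n$ uniformly in $\tau$. (Assumption~A2 is also used to ensure $\sup_\tau R_n(\thetab,\qhatb(\tau))$ stays bounded, so that the quantity under study is well posed.)

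Plugging the two bounds into the identity above and choosing $L_n\to\infty$ and $\Kn\to\infty$ slowly enough --- for instance $L_n=(\log\log n)^{1/2}$, $\Kn=(\log\log n)^{5}$ --- makes both $a_n^{8}\big(n^{-1}\sum_i\delta_i(\tau)\big)^2=\mathcal O\!\big(a_n^{8}(L_n/\Kn)^2\big)$ and $a_n^{8}\,n^{-2}\sum_i\var(\widehat{\mathcal R}_i(\tau;X_i))=\mathcal O\!\big(a_n^{8}\,\mathrm{poly}(\log\log n)/n\big)$ tend to $0$, uniformly over $\tau\in[0,\infty]$ (there is in fact room to spare, since $a_n=\log\log n$ grows so slowly); taking the supremum over $\tau$ and letting $n\to\infty$ gives the claim. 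The step I expect to be by far the hardest is the variance bound: obtaining an estimate that is uniform in $\theta_i$ rather than only over a bounded parameter set, and that does not blow up as $\Kn\to\infty$, is exactly what forces the constant $1/(4e)$ in A3, and it rests on the Hermite-polynomial accounting outlined in Section~\ref{sec-2}.
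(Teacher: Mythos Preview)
Your bias--variance decomposition and the broad idea of Hermite-polynomial estimation of the smooth functional $G(\cdot,\beta)$ are correct, and the risk formula you write matches the paper's. However, the proposal misses two structural ingredients of the estimator $\Ahat_n$ that drive the proof, and without them your variance argument will not close.

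First, the paper's $\widehat{\mathcal R}_i$ is \emph{randomized}: one adds independent Gaussian noise $Z_i$ and forms the split pair $U_i=X_i+\sqrt{\sigma_{p,i}}Z_i$, $V_i=X_i-\sqrt{\sigma_{p,i}}Z_i$; the threshold decision (``in the window'' vs.\ ``outside'') is taken on $V_i$ while the Hermite-based or linear estimate is computed from the \emph{independent} $U_i$. Your phrase ``together with a direct plug-in rule outside the window'' does not say how the window is selected data-adaptively; if you threshold on $X_i$ and also estimate with $X_i$, the two pieces are dependent and the clean bias--variance accounting you wrote down no longer holds. The Rao--Blackwellization step (conditioning on $\X$ to remove $\Z$) is then handled by Jensen and only helps.

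Second, the paper does \emph{not} obtain a uniform-in-$\theta_i$ variance bound of size $\mathrm{poly}(K_n)$; it obtains $\var(\That_i)=o(n)$ (Proposition~\ref{univariate.origin}). In the large-$|\theta_\alpha|$ regime the Hermite piece has second moment potentially much larger than any fixed polylog; what saves the argument is (i) the hard truncation $\tilde S=\mathrm{sign}(S)\min\{|S|,n\}$ and (ii) the fact that, thanks to the split, one lands in the Hermite branch with probability $\le n^{-1}$ there, so $n^2\cdot n^{-1}=o(n)$. Your Markov-brothers route for the polynomial approximant of $|\cdot|$ would give $j$-th derivatives of order $(K_n^2/L_n)^{j}$; plugging that into your variance identity produces terms like $s^{2j}(K_n^2/L_n)^{2j}/j!$, which with your choice $K_n=(\log\log n)^5$, $L_n=(\log\log n)^{1/2}$ diverges --- the ratio $e\,s^2$ alone does not govern convergence of that series. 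Relatedly, the paper takes $K_n\asymp\log n$ (and uses the Taylor coefficients $H_k(0)/(k+2)!$ of $G$, which decay like $1/\sqrt{k!}$), not $K_n=(\log\log n)^5$; this specific scaling is what makes both the bias bound of Lemma~\ref{lem:bias.bound} and the variance bound of Lemma~\ref{lem:variance.bound} go through with the constant $1/(4e)$ in A3.

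A minor point: Assumption~A2 is not actually used in the paper's proof of this theorem --- Proposition~\ref{univariate.origin} gives bias and variance control uniformly over $\theta_i\in\Real$ --- so you do not need to invoke Ces\`aro averages of $|\theta_i|$ for the bias.
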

The above theorem shows that our proposed ARE method approximate the true risk in terms of mean square error uniformly well at each hyperparameter value. Next, we have a useful property of our class of predictors $\mathcal{S}^0$ which we will use along with Theorem~\ref{origin.risk.loss}. It shows that for each member of $\mathcal{S}^0$ the loss function $L_n(\thetab,\qhatb(\tau))$ uniformly concentrates around its expected value, which is the risk $R_n(\thetab, \qhatb(\tau))$.
\begin{thm}[Uniform Concentration of the Loss around the Risk] \label{origin.risk.loss}
	Under Assumption A1, for any $\thetab$ obeying Assumption~A2,
	$$ 
	\lim_{n \to \infty}    \ex \bigg [ \sup_{\tau \in [0,\infty]} \big |  R_n(\thetab, \qhatb(\tau)) -  L_n(\thetab, \qhatb(\tau)) \big| \bigg] = 0~.
	$$
\end{thm}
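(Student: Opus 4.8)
The plan is a discretization argument: reduce the supremum over $\tau$ to a maximum over a finite grid of cardinality growing like $\sqrt n$, bound the grid maximum by a pointwise second-moment estimate, and bound the oscillation of the loss within each grid cell using the uniform Lipschitz structure of the loss and of the shrinkage rule. Write $L_n(\tau):=L_n(\thetab,\qhatb(\tau))$, $R_n(\tau):=R_n(\thetab,\qhatb(\tau))$ and $D_n(\tau):=L_n(\tau)-R_n(\tau)=n^{-1}\sum_{i=1}^n\big(l_i(\theta_i,\qhat_i(\tau))-\E[l_i(\theta_i,\qhat_i(\tau))]\big)$, so the goal is $\E[\sup_{\tau\in[0,\infty]}|D_n(\tau)|]\to0$. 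Three elementary facts drive the argument. First, by \eqref{eq:loss-function} and $\partial_w G(w,\beta)=\Phi(w)-\beta\in[-1,1]$, the map $q\mapsto l_i(\theta_i,q)$ is Lipschitz with constant $b_i+h_i\le\bar b:=\sup_j(b_j+h_j)<\infty$ (Assumption~A1). Second, $\qhat_i(\tau)$ depends on $\tau$ only through $\alpha_i(\tau)=\tau/(\tau+\sigma_{p,i})\in[0,1]$, and each of its two summands $\alpha_i(\tau)X_i$ and $\sqrt{\sigma_{f,i}+\alpha_i(\tau)\sigma_{p,i}}\,\Phi^{-1}(b_i/(b_i+h_i))$ is monotone in $\alpha_i(\tau)$. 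Third, $c_0:=\sup_j|\Phi^{-1}(b_j/(b_j+h_j))|<\infty$, again by A1.

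For the pointwise bound, fix $\tau$: the summands of $D_n(\tau)$ are functions of the independent $X_i$, so $\E[D_n(\tau)^2]=n^{-2}\sum_i\var(l_i(\theta_i,\qhat_i(\tau)))$; since $\qhat_i(\tau)=\alpha_i(\tau)X_i+(\text{const})$ has variance $\alpha_i(\tau)^2\sigma_{p,i}\le\sigma_{p,i}$, the Lipschitz fact gives $\var(l_i(\theta_i,\qhat_i(\tau)))\le\bar b^2\sigma_{p,i}$, hence $\sup_\tau\E[D_n(\tau)^2]\le Cn^{-1}$ using the boundedness of the variances. To build the grid I would compactify $[0,\infty]$ by $v=\tau/(1+\tau)\in[0,1]$; then $v\mapsto\alpha_i(\tau(v))=v/(v+\sigma_{p,i}(1-v))$ has derivative $\sigma_{p,i}/(v+\sigma_{p,i}(1-v))^2$, bounded by a constant $M$ \emph{uniformly in $i$} because $\inf_i\sigma_{p,i}>0$ and $\sup_i\sigma_{p,i}<\infty$. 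Taking $K_n:=\lfloor\sqrt n\rfloor$ and the uniform grid $v_k=k/K_n$ with corresponding points $\tau_0<\cdots<\tau_{K_n}$, one gets $|\alpha_i(\tau_{k+1})-\alpha_i(\tau_k)|\le\delta_n:=M/K_n$ for every $i$ and $k$.

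On the grid, Jensen's inequality and the pointwise bound give $\E[\max_{0\le k\le K_n}|D_n(\tau_k)|]\le(\sum_k\E[D_n(\tau_k)^2])^{1/2}=O(\sqrt{K_n/n})=O(n^{-1/4})$. For the oscillation within a cell, monotonicity in $\alpha_i$ of the two summands of $\qhat_i$ and the H\"older bound $|\sqrt a-\sqrt b|\le\sqrt{|a-b|}$ give $\sup_{\tau\in[\tau_k,\tau_{k+1}]}|\qhat_i(\tau)-\qhat_i(\tau_k)|\le\delta_n|X_i|+c_0\sqrt{\sigma_{p,i}\delta_n}$, whence, by the Lipschitz fact,
\[
\max_{0\le k<K_n}\ \sup_{\tau\in[\tau_k,\tau_{k+1}]}|L_n(\tau)-L_n(\tau_k)|\ \le\ \bar b\,\delta_n\,n^{-1}\!\sum_i|X_i|\ +\ \bar b\,c_0\sqrt{\delta_n}\,n^{-1}\!\sum_i\sqrt{\sigma_{p,i}}.
\]
Taking expectations and using $\E|X_i|\le|\theta_i|+\sqrt{2\sigma_{p,i}/\pi}$ together with Assumption~A2 and the boundedness of the variances, the right-hand side has expectation $O(\sqrt{\delta_n})=O(n^{-1/4})$; the same bound holds with $R_n(\tau)=\E[L_n(\tau)]$ in place of $L_n$, since $\sup_{\tau\in[\tau_k,\tau_{k+1}]}|R_n(\tau)-R_n(\tau_k)|\le\E[\sup_{\tau\in[\tau_k,\tau_{k+1}]}|L_n(\tau)-L_n(\tau_k)|]$.

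Assembling these through $\sup_\tau|D_n(\tau)|\le\max_k|D_n(\tau_k)|+\max_k\sup_{\tau\in[\tau_k,\tau_{k+1}]}\big(|L_n(\tau)-L_n(\tau_k)|+|R_n(\tau)-R_n(\tau_k)|\big)$ and taking expectations yields $\E[\sup_\tau|D_n(\tau)|]=O(n^{-1/4})\to0$; note Assumption~A3 plays no role here. The one genuinely delicate point is calibrating the net: $K_n$ must grow fast enough that the oscillation bound $O(\sqrt{\delta_n})=O(\sqrt{M/K_n})$ vanishes, yet slowly enough that the grid maximum $O(\sqrt{K_n/n})$ vanishes, and $K_n\asymp\sqrt n$ threads this needle. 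The $\sqrt{\delta_n}$ (rather than $\delta_n$) rate in the oscillation step is forced by the term $\sqrt{\sigma_{f,i}+\alpha_i(\tau)\sigma_{p,i}}$, whose derivative in $\alpha_i$ degenerates when $\sigma_{f,i}$ is small, so that only the H\"older-$\tfrac12$ estimate is available; since this still tends to zero it costs nothing beyond the conservative choice $K_n\asymp\sqrt n$. Everything else — the second-moment computation, the control of $n^{-1}\sum_i\E|X_i|$ via A2, and propagating the uniform Lipschitz constant $\bar b$ — is routine bookkeeping.
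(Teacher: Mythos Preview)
Your argument is correct and takes a genuinely different route from the paper.  The paper reparameterizes by $\tilde\tau=\tau/(1+\tau)\in[0,1]$ to compactify, produces a $\tau$-free integrable envelope for $L_n$ via $G(w,\beta)\le\phi(0)+\max(\beta,1-\beta)|w|$, invokes a uniform law of large numbers (\citealp[Lemma~2.4]{newey1994}) to get $\sup_\tau|L_n-R_n|\to0$ in probability, and then upgrades to $L_1$ by uniform integrability of the envelope.  Your proof instead builds an explicit $\epsilon$-net of size $K_n\asymp\sqrt n$ in the compactified parameter, bounds the maximum on the net by a crude second-moment union bound $\E[\max_k|D_n(\tau_k)|]\le(\sum_k\E D_n(\tau_k)^2)^{1/2}=O(\sqrt{K_n/n})$, and controls the oscillation within cells via the uniform Lipschitz constant $\bar b$ of $q\mapsto l_i(\theta_i,q)$ together with the uniform Lipschitz/H\"older structure of $\tau\mapsto\qhat_i(\tau)$.

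What each approach buys: the paper's proof is shorter and more conceptual but leans on a uniform LLN stated for i.i.d.\ data, which here must be adapted to the heteroscedastic triangular array $(X_i)_{i\le n}$; it also yields no rate.  Your argument is entirely self-contained, is indifferent to the non-i.i.d.\ structure, and delivers an explicit rate $\E\sup_\tau|D_n(\tau)|=O(n^{-1/4})$.  In spirit it is closer to the paper's own Lemma~\ref{lem:append.1.temp}, where a Lipschitz bound on $\partial_{\tilde\tau}l_i$ is used for a related discretization.  One cosmetic remark: your claim that ``Assumption~A3 plays no role'' is right for the substantive ratio condition $\sigma_{p,i}/\sigma_{f,i}<1/(4e)$, but you do use the mild regularity conditions $\sup_i\sigma_{p,i}<\infty$ and $\inf_i\sigma_{p,i}>0$ (for the variance bound and the uniform bound $M$ on $\partial_v\alpha_i$), which the paper happens to bundle under~A3; the paper's own proof likewise uses $\sup_i\sigma_{f,i}<\infty$.
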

The above two theorems have different flavors; Theorem~\ref{origin.risk.loss} displays uniform $L_1$ convergence where as Theorem~\ref{origin.loss.are} shows convergence of the expected squared deviation at the rate of $a_n^8$ uniformly over all possible $\tau$ values. Proving the uniform $L_1$ convergence version of Theorem~\ref{origin.loss.are} as is usually done in establishing optimality results for estimation under quadratic loss, is difficult here due to the complicated nature of the ARE estimator. Also,
the rate of convergence $a_n^8$ (which is used to tackle the discretization step mentioned afterwards) is not optimal and can be made better. However, it is enough for proving the optimality of our proposed method which is our main interest.
\par
Combining the above two theorems, we see the average distance between $\Ahat$ and the actual loss is asymptotically uniformly negligible and so, we expect that minimizing $\Ahat$ would lead to an estimate with competitive performance. We propose an estimate of the tuning parameter $\tau$ for the class of shrinkage estimates $\mathcal{S}^0$ as follows:
\begin{equation}
\tauh^\A_n =\argmin_{\tau \in \net} \Ahat_n(\tau)~. \label{eq:SURE_origin} \tag{{\sf ARE Estimate}}
\end{equation}
where the minimization is done over a discrete sub-set $\net$ of $[0,\infty]$. Ideally, we would have liked to optimize the criterion over the entire domain $[0,\infty]$ of $\tau$. The discretization is done for computational reasons as we minimize $\Ahat$ by exhaustively evaluating it across the discrete set $\Lambda_n$  which only depends on $n$ and is independent of $\x$. Details about the construction of $\Lambda_n$ is provided in Section~\ref{append-origin} of the Appendix. $\Lambda_n$ contains countably infinite points as $n \to \infty$.  We subsequently show that the precision of our estimates is not hampered by such discretization of the domain. To facilitate our discussion of the risk properties of our \ref{eq:SURE_origin}, we next introduce
the oracle loss ({\sf OR}) hyperparameter
$$
\tau_{n}^{{\sf OR}} =\argmin_{\tau \in [0,\infty]} L_n(\thetab,\qhatb(\tau))~.
$$
Note that $\tau_n^{{\sf OR}}$ depends not only on $\x$ but also on the unknown $\thetab$. Therefore, it is not an estimator. Rather, it serves as the theoretical benchmark of estimation accuracy because no estimator in $\mathcal{S}^0$ can have smaller
risk than $\qhatb\left( \tau_{n}^{{\sf OR}} \right)$. Unlike the ARE estimate, $\tau_n^{{\sf OR}}$ involves minimizing the true loss over the entire domain of $\tau$. \pr{Note that} $\qhatb^{\sf Bayes} \in \mathcal{S}_0$, \pr{and thus}, even if the correct hyperparameter $\tau$ were known,  the estimator $\qhatb\left( \tau_{n}^{{\sf OR}} \right)$ is as good as the Bayes estimator.
The following theorem shows that our proposed estimator is asymptotically nearly as good as the oracle loss~estimator.
\begin{thm}[Oracle Optimality in Predictive Loss] \label{origin.are.oracle.loss}
Under Assumptions A1 and A3, for all $\thetab$ satisfying Assumption~A2 and for any $\epsilon > 0$,
$$
\lim_{n \to \infty} \Pr \left\{ L_n\left(\thetab,\qhatb(\tauh^{\A}_n) \right) 
~\geq~ L_n(\thetab,\qhatb(\tau_n^{\sf OR})) \,+\, \epsilon \right\} = 0~.
$$
\end{thm}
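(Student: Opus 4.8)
The plan is to chain Theorems~\ref{origin.loss.are} and \ref{origin.risk.loss} together through the finite minimization that defines $\tauh^\A_n$ in \ref{eq:SURE_origin}. Write
\[
\Delta_n^{\mathrm L}:=\sup_{\tau\in[0,\infty]}\left|L_n(\thetab,\qhatb(\tau))-R_n(\thetab,\qhatb(\tau))\right|,\qquad \Delta_n^{\mathrm A}:=\sup_{\tau\in\net}\left|\Ahat_n(\tau)-R_n(\thetab,\qhatb(\tau))\right|,
\]
and let $\tau^\star$ be the point of $\net$ closest to $\tau_n^{\sf OR}$. Since $\tauh^\A_n\in\net$ and $\Ahat_n(\tauh^\A_n)\le\Ahat_n(\tau)$ for every $\tau\in\net$, a short chain of inequalities (pass from $L_n$ to $R_n$, then from $R_n$ to $\Ahat_n$, invoke minimality at $\tau^\star$, and retrace the two steps back at $\tau^\star$) yields
\[
L_n(\thetab,\qhatb(\tauh^\A_n))\ \le\ L_n(\thetab,\qhatb(\tau^\star))\,+\,2\Delta_n^{\mathrm A}\,+\,2\Delta_n^{\mathrm L}.
\]
So it suffices to establish (i) $\Delta_n^{\mathrm L}\to0$; (ii) $\Delta_n^{\mathrm A}\to0$ in probability; and (iii) $R_n(\thetab,\qhatb(\tau^\star))\le R_n(\thetab,\qhatb(\tau_n^{\sf OR}))+\delta_n$ for a deterministic $\delta_n\to0$; indeed (iii) together with (i) gives $L_n(\thetab,\qhatb(\tau^\star))\le L_n(\thetab,\qhatb(\tau_n^{\sf OR}))+2\Delta_n^{\mathrm L}+\delta_n$, so that collecting terms bounds $L_n(\thetab,\qhatb(\tauh^\A_n))-L_n(\thetab,\qhatb(\tau_n^{\sf OR}))$ by $4\Delta_n^{\mathrm L}+2\Delta_n^{\mathrm A}+\delta_n$.

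Part (i) is immediate from Theorem~\ref{origin.risk.loss}, which gives $\ex\,\Delta_n^{\mathrm L}\to0$ and hence $\Delta_n^{\mathrm L}\to0$ in probability. Part (iii) I would obtain from equicontinuity plus density, with nothing random involved: because $G(\cdot,\beta)$ is $1$-Lipschitz in its first argument and, under the standing assumptions on the variances, $\tau\mapsto\alpha_i(\tau)$ and $\tau\mapsto\sqrt{\sigma_{f,i}+\alpha_i(\tau)\sigma_{p,i}}$ have Lipschitz constants uniform in $i$, the map $\tau\mapsto R_n(\thetab,\qhatb(\tau))$ is equicontinuous on a compactification of $[0,\infty]$ (reparametrize by $\alpha\in[0,1]$ to absorb $\tau=\infty$), uniformly over $n$ and over all $\thetab$ obeying Assumption~A2. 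Combined with the fact that $\net$, constructed in Section~\ref{append-origin}, grows dense in $[0,\infty]$, taking $\tau^\star$ as the nearest point of $\net$ to $\tau_n^{\sf OR}$ makes $\delta_n$ a deterministic modulus of continuity that is independent of $\X$ and $\thetab$ and vanishes.

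Part (ii) is the crux, and it is exactly where the rate $a_n^8$ in Theorem~\ref{origin.loss.are} is consumed. By Markov's inequality and a union bound over the finite set $\net$,
\[
\Pr\left(\Delta_n^{\mathrm A}>\delta\right)\ \le\ \delta^{-2}\sum_{\tau\in\net}\ex\left(\Ahat_n(\tau)-R_n(\thetab,\qhatb(\tau))\right)^2\ \le\ \delta^{-2}\,|\net|\,\sup_{\tau\in[0,\infty]}\ex\left(\Ahat_n(\tau)-R_n(\thetab,\qhatb(\tau))\right)^2,
\]
and Theorem~\ref{origin.loss.are} makes the last supremum $o(a_n^{-8})$, so the bound is $o\left(\delta^{-2}\,|\net|\,a_n^{-8}\right)$, which vanishes provided $|\net|=O(a_n^8)=O\left((\log\log n)^8\right)$ — and the construction of $\net$ in Section~\ref{append-origin} is calibrated to have cardinality of exactly this order. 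With (i)--(iii) in hand, for any $\epsilon>0$ we get $\Pr\left(L_n(\thetab,\qhatb(\tauh^\A_n))\ge L_n(\thetab,\qhatb(\tau_n^{\sf OR}))+\epsilon\right)\le\Pr\left(4\Delta_n^{\mathrm L}+2\Delta_n^{\mathrm A}+\delta_n\ge\epsilon\right)\to0$, which is the assertion. The main obstacle I anticipate is the built-in tension in the choice of $\net$: it must be fine enough for the equicontinuity/density argument of (iii), yet sparse enough — cardinality $O((\log\log n)^8)$ — for the crude union bound of (ii) to close; reconciling these, and in particular securing the $a_n^8$-rate form of Theorem~\ref{origin.loss.are} that the trade-off relies on, is the substantive content, while the rest is bookkeeping.
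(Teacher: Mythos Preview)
Your argument is correct and follows essentially the same route as the paper's proof: pass between $L_n$, $R_n$, and $\Ahat_n$, exploit minimality of $\tauh^{\A}_n$ over the grid, and close via a union bound over $\net$ using the $a_n^8$ rate of Theorem~\ref{origin.loss.are} together with Theorem~\ref{origin.risk.loss}. Two small corrections are worth noting. First, the paper's grid $\net$ has cardinality $O(a_n)$, not $O(a_n^8)$ as you state (see the construction following Lemma~\ref{lem:append.1.temp}, where $|\nettt|=a_n(1+o(1))$); this only helps you, since $|\net|\cdot o(a_n^{-8})=o(a_n^{-7})\to 0$, and the ``tension'' you anticipate is therefore far less acute than you suggest. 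Second, the paper handles the discretization step (your part (iii)) directly at the \emph{loss} level via Lemma~\ref{lem:append.1.temp}, comparing $\tau_n^{\sf OR}$ to the loss-minimizer $\tau_n^{\sf OR}[\nett]$ over the grid rather than to the nearest grid point; your risk-equicontinuity variant is a clean alternative and works for the same reason---the Lipschitz constant of $R_n$ in the reparametrized variable $\tilde\tau$ is $O(1)$ under Assumption~A2, while the mesh size is $O(1/a_n)$.
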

The above theorem shows that the loss of  our proposed estimator converges in probability
to the optimum oracle value $L_n(\thetab,\qhatb(\tau_n^{\sf OR}))$. We also show that, under the same conditions, it is asymptotically as good as $\tau_n^{\sf OR}$ in terms of the risk (expected loss).

\begin{thm}[Oracle Optimality in Predictive Risk] \label{origin.are.oracle.risk}
Under Assumptions A1 and A3 and for all $\thetab$ satisfying Assumption~A2,
$$
\lim_{n \to \infty}  R_n \left( \thetab,\qhatb(\tauh_n^{\A}) \right)  - \E \left[ L_n(\thetab,\qhatb(\tau_n^{\sf OR})) \right] = 0~.
$$
\end{thm}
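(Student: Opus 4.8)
The plan is to reduce the statement to an \emph{expectation} version of Theorem~\ref{origin.are.oracle.loss} and then upgrade the in-probability conclusion of that theorem to an $L_1$ conclusion via uniform integrability. Since $R_n(\thetab,\qhatb)$ is by definition $\E_{\X\sim N(\thetab,\bm{\Sigma}_p)}[L_n(\thetab,\qhatb)]$, when applied to the data-dependent rule $\qhatb(\tauh_n^{\A})$ it equals $\E[L_n(\thetab,\qhatb(\tauh_n^{\A}))]$, so it is enough to prove $\E[Z_n]\to 0$ with $Z_n:=L_n(\thetab,\qhatb(\tauh_n^{\A}))-L_n(\thetab,\qhatb(\tau_n^{\sf OR}))$. (If one instead wishes to read $R_n(\thetab,\qhatb(\tauh_n^{\A}))$ as the deterministic map $\tau\mapsto R_n(\thetab,\qhatb(\tau))$ evaluated at the random $\tauh_n^{\A}$, one first uses Theorem~\ref{origin.risk.loss} to replace it by $L_n(\thetab,\qhatb(\tauh_n^{\A}))$ up to an error that tends to $0$ in $L_1$, and a short argument with Theorem~\ref{origin.risk.loss} also shows $L_n(\thetab,\qhatb(\tau_n^{\sf OR}))$ concentrates around its mean; the rest is unchanged.) Observe that $Z_n\ge 0$ since $\tau_n^{\sf OR}$ minimizes $\tau\mapsto L_n(\thetab,\qhatb(\tau))$ over all of $[0,\infty]$ whereas $\tauh_n^{\A}\in\net\subseteq[0,\infty]$, and that $Z_n\to 0$ in probability by Theorem~\ref{origin.are.oracle.loss}.

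The main step is to verify that $\{Z_n\}_{n\ge 1}$ is uniformly integrable. I would use the pointwise bound $0\le Z_n\le S_n:=\sup_{\tau\in[0,\infty]}L_n(\thetab,\qhatb(\tau))$ (valid because $L_n\ge 0$) and show $\sup_n\E[S_n^2]<\infty$. The needed facts are: (i) from \eqref{eq:loss-function} and $|G(w,\beta)|=|\phi(w)+w(\Phi(w)-\beta)|\le\phi(w)+|w|$, together with $\sup_i(b_i+h_i)<\infty$ and $\sup_i\sigma_{f,i}<\infty$, every coordinate loss satisfies $l_i(\theta_i,\qhat_i(\tau))\le C\big(1+|\qhat_i(\tau)-\theta_i|\big)$ for a constant $C$ independent of $i$ and $\tau$; (ii) since $\alpha_i(\tau)=\tau/(\tau+\sigma_{p,i})\in[0,1]$, since Assumption~A1 keeps $b_i/(b_i+h_i)$ bounded away from $0$ and $1$ so that $|\Phi^{-1}(b_i/(b_i+h_i))|$ is uniformly bounded, and since the variances are bounded, $|\qhat_i(\tau)|\le|X_i|+C$ uniformly in $\tau$; (iii) hence $S_n\le\frac{C'}{n}\sum_{i=1}^n\big(1+|\theta_i|+|X_i|\big)$, and writing $|X_i|\le|\theta_i|+\sqrt{\sigma_{p,i}}\,|\epsilon_{1,i}|$ gives $S_n\le c_n+d\,V_n$, where $c_n:=\frac{C'}{n}\sum_i(1+2|\theta_i|)$ is deterministic and bounded in $n$ by Assumption~A2, $d$ is a constant, and $V_n:=\frac1n\sum_i|\epsilon_{1,i}|$.

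Because the $\epsilon_{1,i}$ are i.i.d.\ $N(0,1)$, $\E[V_n^2]=\frac1{n^2}\big(n+n(n-1)(2/\pi)\big)\le 1+2/\pi$ for all $n$, so $\E[S_n^2]\le\E[(c_n+dV_n)^2]$ is bounded uniformly in $n$, and hence $\{Z_n\}$ is uniformly integrable. To conclude, for any $\delta>0$ the Cauchy--Schwarz inequality gives
\[
\E[Z_n]\ \le\ \delta+\E\big[S_n\,\indicator{Z_n>\delta}\big]\ \le\ \delta+\big(\E[S_n^2]\big)^{1/2}\big(\Pr(Z_n>\delta)\big)^{1/2};
\]
letting $n\to\infty$, where $\Pr(Z_n>\delta)\to 0$ by Theorem~\ref{origin.are.oracle.loss} and $\E[S_n^2]$ stays bounded, and then $\delta\downarrow 0$, yields $\E[Z_n]\to 0$, which is the claim.

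I expect the delicate point to be the uniform-in-$n$ second-moment bound on $S_n=\sup_\tau L_n(\thetab,\qhatb(\tau))$: Assumption~A2 controls only $\frac1n\sum_i|\theta_i|$, not $\frac1n\sum_i\theta_i^2$, so a crude $L_2$ estimate of $S_n$ would not close. The fix is precisely the split $|X_i|\le|\theta_i|+\sqrt{\sigma_{p,i}}|\epsilon_{1,i}|$, which isolates a \emph{deterministic} $\thetab$-part (where the first-moment bound of A2 is all that is required) from a random part that is an average of i.i.d.\ half-normals with trivially $O(1)$ second moment; the linear-growth bound on $l_i$ coming from the structure of $G$ in \eqref{eq:loss-function} is what makes this split possible. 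Everything else is a routine combination of Theorem~\ref{origin.are.oracle.loss} with a standard uniform-integrability argument.
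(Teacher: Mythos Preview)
Your argument is correct and takes a genuinely different route from the paper. The paper does not invoke Theorem~\ref{origin.are.oracle.loss} at all; instead it works directly in expectation. It first passes to the discretized oracle $\tau_n^{\sf OR}[\nett]$ via Lemma~\ref{lem:append.1.temp}, then uses the algebraic identity
\[
L_n(\thetab,\qhatb(\tauh_n^{\A}))-L_n(\thetab,\qhatb(\tau_n^{\sf OR}[\nett]))
=\big\{L_n-\Ahat_n\big\}(\tauh_n^{\A})-\big\{L_n-\Ahat_n\big\}(\tau_n^{\sf OR}[\nett])+\big\{\Ahat_n(\tauh_n^{\A})-\Ahat_n(\tau_n^{\sf OR}[\nett])\big\},
\]
drops the last bracket (nonpositive by definition of $\tauh_n^{\A}$), and bounds the expectation of the remaining two brackets by $2\,\E\big[\sup_{\tau\in\net}|\Ahat_n(\tau)-L_n(\thetab,\qhatb(\tau))|\big]$, which is handled by summing over the finite grid $\net$ and applying Theorems~\ref{origin.loss.are} and~\ref{origin.risk.loss}. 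Your approach instead bootstraps from the in-probability conclusion of Theorem~\ref{origin.are.oracle.loss} and closes the gap to $L_1$ via a clean uniform-integrability argument based on the envelope $S_n=\sup_\tau L_n$; the key insight---splitting $|X_i|\le|\theta_i|+\sqrt{\sigma_{p,i}}|\epsilon_{1,i}|$ so that only the first-moment control of A2 on $\thetab$ is needed---is exactly right and mirrors the envelope used in the paper's proof of Theorem~\ref{origin.risk.loss}. The paper's route is slightly more quantitative (it retains the $|\net|\cdot a_n^{-4}$ rate implicit in Theorem~\ref{origin.loss.are}) and self-contained, while yours is shorter and conceptually transparent once Theorem~\ref{origin.are.oracle.loss} is in hand. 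One cosmetic point: A2 gives only $\limsup_n c_n<\infty$, not $\sup_n$, but that is all your final $\limsup$ argument needs.
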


We extend the implications of the preceding theorems to show that our proposed estimator is as good as  any other estimator in $\mathcal{S}^0$ in terms of both the loss and~risk.

\begin{cor}\label{cor:origin.are.oracle}
Under Assumptions A1 and A3, for all $\thetab$ satisfying Assumption~A2, for any $\epsilon >0$, and any estimator $\hat{\tau}_n \geq 0$,
	\begin{align*}
	\mathrm{I.} \;\; & \lim_{n \to \infty} \Pr \left\{ L_n\left(\thetab,\qhatb(\tauh^{\A}_n) \right)  ~\geq~ L_n(\thetab,\qhatb(\tauh_n)) + \epsilon \right\}= 0\\
	\mathrm{II.}\;\; & \lim_{n \to \infty}  R_n \left( \thetab,\qhatb(\tauh_n^{\A}) \right)  -  R_n(\thetab,\qhatb(\tauh_n)) ~\leq~0. \hspace{10cm}
	\end{align*}
\end{cor}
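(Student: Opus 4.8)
The plan is to deduce both parts from Theorems~\ref{origin.are.oracle.loss} and~\ref{origin.are.oracle.risk} together with the key observation that $\tau_n^{\sf OR}$ is, by its very definition, the minimizer of $\tau \mapsto L_n(\thetab,\qhatb(\tau))$ over all of $[0,\infty]$. Hence for \emph{any} estimator $\tauh_n \geq 0$ we have the deterministic (pathwise) inequality $L_n(\thetab,\qhatb(\tauh_n)) \geq L_n(\thetab,\qhatb(\tau_n^{\sf OR}))$. This is the engine that lets us compare our ARE estimate against an \emph{arbitrary} competitor rather than only against the oracle.

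For part~I, fix $\epsilon > 0$ and write, for any sample path,
\begin{align*}
\Pr\big\{ L_n(\thetab,\qhatb(\tauh^{\A}_n)) \geq L_n(\thetab,\qhatb(\tauh_n)) + \epsilon \big\}
\;\leq\; \Pr\big\{ L_n(\thetab,\qhatb(\tauh^{\A}_n)) \geq L_n(\thetab,\qhatb(\tau_n^{\sf OR})) + \epsilon \big\},
\end{align*}
where the inequality uses $L_n(\thetab,\qhatb(\tauh_n)) \geq L_n(\thetab,\qhatb(\tau_n^{\sf OR}))$ to enlarge the event. The right-hand side tends to $0$ by Theorem~\ref{origin.are.oracle.loss}, which gives part~I.

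For part~II, combine Theorem~\ref{origin.are.oracle.risk} with the monotonicity of expectation. Theorem~\ref{origin.are.oracle.risk} gives $R_n(\thetab,\qhatb(\tauh_n^{\A})) - \E[L_n(\thetab,\qhatb(\tau_n^{\sf OR}))] \to 0$. On the other hand, taking expectations in the pathwise inequality $L_n(\thetab,\qhatb(\tauh_n)) \geq L_n(\thetab,\qhatb(\tau_n^{\sf OR}))$ yields $R_n(\thetab,\qhatb(\tauh_n)) = \E[L_n(\thetab,\qhatb(\tauh_n))] \geq \E[L_n(\thetab,\qhatb(\tau_n^{\sf OR}))]$. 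Subtracting,
\begin{align*}
R_n(\thetab,\qhatb(\tauh_n^{\A})) - R_n(\thetab,\qhatb(\tauh_n)) \;\leq\; R_n(\thetab,\qhatb(\tauh_n^{\A})) - \E[L_n(\thetab,\qhatb(\tau_n^{\sf OR}))] \;\longrightarrow\; 0,
\end{align*}
so the $\limsup$ of the left-hand side is at most $0$, which is exactly part~II.

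The only point that needs care — and the one I would treat as the main (mild) obstacle — is making sure the ``competitor'' risk $R_n(\thetab,\qhatb(\tauh_n))$ is well defined, i.e.\ that $L_n(\thetab,\qhatb(\tauh_n))$ is integrable for an arbitrary nonnegative estimator $\tauh_n$. This follows from Assumption~A1 and the explicit form \eqref{eq:loss-function} of $l_i$: since $G(w,\beta) \leq \phi(w) + |w|$ and $|\qhat_i(\tau) - \theta_i| \leq |X_i| + |\theta_i| + \sqrt{\sigma_{f,i}+\sigma_{p,i}}\,|\Phi^{-1}(b_i/(b_i+h_i))|$ uniformly in $\tau \in [0,\infty]$ (because $\alpha_i(\tau) \in [0,1]$), we get $\sup_{\tau} L_n(\thetab,\qhatb(\tau)) \leq C(1 + \tfrac1n\sum_i |X_i| + \tfrac1n\sum_i|\theta_i|)$ for a constant $C$ depending only on the bounds in A1 and on $\sup_i\sigma_{f,i}, \sup_i\sigma_{p,i}$, and the right-hand side has finite expectation under A2. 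This uniform-in-$\tau$ envelope also legitimizes passing expectations through the pathwise comparison used above. Everything else is the elementary two-line monotonicity argument just described.
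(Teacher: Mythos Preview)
Your proof is correct and follows essentially the same route as the paper: both use the pathwise minimality of $\tau_n^{\sf OR}$ to get $L_n(\thetab,\qhatb(\tauh_n)) \geq L_n(\thetab,\qhatb(\tau_n^{\sf OR}))$, and then invoke Theorems~\ref{origin.are.oracle.loss} and~\ref{origin.are.oracle.risk}. Your added paragraph verifying integrability of $L_n(\thetab,\qhatb(\tauh_n))$ via a uniform-in-$\tau$ envelope is a nice piece of hygiene that the paper leaves implicit (it is essentially the bound in Equation~\eqref{eqn.loss.bound}).
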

Next, we present two very popular, standard EB approaches for choosing estimators in $\mathcal{S}^0$. The {Empirical Bayes ML} (EBML) estimator  $\qhatb(\tauh^{\text{ML}})$  is built by maximizing the marginal likelihood of $\X$ while the {method of moments} (EBMM) estimator $\qhatb(\tauh^{\text{MM}})$ is based on  the moments of the marginal distribution of $\X$. Following \citet[Section~2]{xie2012} the hyperparameter estimates are given by
\begin{align}\label{eqn:MM}
\begin{split}
\tauh_n^{\text{ML}}  & =\argmin_{\tau \in [0,\infty]} \frac{1}{n}\sum_{i=1}^n \bigg( \frac{X_i^2}{\tau+\sigma_{p,i}} +\log(\tau+\sigma_{p,i}) \bigg ) \\
\tauh_n^{\text{MM}}  & =\max \left\{\frac{1}{n} \sum_{i=1}^p \big(X_i^2-\sigma_{p,i}\big)~,~0 \right\}
\end{split}~~.
\end{align}
For standard EB	estimates $\qhatb(\tauh_n^{\text{EB}})$,	such	as	those	in \eqref{eqn:MM} the hyperparameter estimate $\tauh_n^{\text{EB}}$ does not	depend	on	the shape of the individual loss functions $\{(b_i, h_i): 1\leq i \leq n\}$. We provide a complete definition of $\Ahat_n$ and  $\tauh^{\A}_n$ in the next section from where it will be evident that  our	asymptotically	optimal	estimator $\tauh^{\A}_n$ depends	on the ratios	$\{b_i/(b_i + h_i): 1\leq i \leq n\}$ 	in	an	essential	way that remains important as $n \to \infty$. Hence, even asymptotically, the ML and MM estimates do not always agree with $\tauh^{\A}$, particularly in cases when the ratios are not all the same. In the homoscedastic case it is easy to check that the loss function $L(\thetab,\qhat(\tau))$ has a unique minima in $\tau$ for any $\thetab$ obeying assumption A2; and so,  by Theorems \ref{origin.loss.are} and \ref{origin.are.oracle.loss}, it follows that any estimator as efficient as the {\sf OR} estimator must asymptotically agree with $\tauh^{\A}$. Hence, unlike our proposed ARE based estimator,  EBML and EBMM are not generally asymptotically optimal in the class of estimators $\mathcal{S}^0$. In Section~\ref{ex1}, we provide an explicit numerical example to demonstrate \pr{the sub-optimal behavior of  the EBML and EBMM estimators}. 
\subsection{Construction of Asymptotic Risk Estimates}\label{ARE.intro}
In this section, we describe the details for the construction of the Asymptotic Risk Estimation ($\A$) function $\tau \mapsto \Ahat_n(\tau)$, which is the core of our estimation methodology. The estimators in class $\mathcal{S}^0$ are coordinatewise rules, and the risk of such an estimate $\qhat(\tau)$ is
\begin{align*}
R_n(\thetab,\qhatb(\tau)) &= \frac{1}{n} \sum_{i=1}^n r_i(\theta_i, \qhat_{i}(\tau))~,
\end{align*}
where  $r_i(\theta_i, \qhat_i(\tau))$ is the risk associated with the $i^\text{th}$ coordinate.  By Lemma~\ref{linear.estimates}, we have that
\begin{align}\label{eq:univariate.risk.eq.origin}
r_i(\theta_i, \qhat_{i}(\tau)) = (b_i+h_i) \sqrt{\sigma_{f,i}+\sigma_{p,i} \alpha_i^2(\tau)} \, G\left(c_i (\tau)+d_i (\tau) \theta_i,  \bt_i\right)~,
\end{align}
where for all $i$, $\alpha_i(\tau) =  \tau / ( \tau~+~\sigma_{p,i})$,~$\bt_i = b_i / (b_i + h_i)$, and
$$ 
c_i(\tau) =\sqrt{ \frac{ 1+\alpha_i(\tau) \sigma_{p,i} }{ 1+\alpha_i(\tau)^2 \sigma_{p,i}} }\, \Phi^{-1}( \bt_i )
\quad {\rm and} \quad
 d_i(\tau)  =- \frac{1 - \alpha_i(\tau)}{\sqrt{ \sigma_{f,i}+\sigma_{p,i} \alpha_i(\tau)^{2}}}\,.
$$
The function $G(\cdot)$ is the same function as that associated with the predictive loss and was defined in \eqref{eq:loss-function}. The dependence of $c_i(\tau)$ and $d_i(\tau)$ on $\tau$ is only through $\alpha_i$. Note that, the risk $r_i(\theta_i, \qhat_{i}(\tau))$ is non-quadratic, non-symmetric and not centered around $\theta_i$.  However, it is a $C^{\infty}$ function of $\theta_i$ which we will use afterwards.
We propose an estimate $\Ahat_n(\tau)$ of the multivariate risk $R_n(\thetab,\qhatb(\tau))$ by using coordinate-wise estimate $\That_i(X_i,\tau)$ of $G(c_i(\tau) + d_i (\tau) \theta_i; \bt_i)$; that is,
\begin{align}\label{def.A}
\Ahat_n(\tau) = \frac{1}{n} \sum_{i=1}^n (b_i+h_i) \sqrt{\sigma_{f,i}+\sigma_{p,i} \alpha_i(\tau)^2}  \, \That_i(X_i,\tau)~.
\end{align}

\paragraph{Defining \pr{the} Coordinatewise Estimate $\That_i(X_i,\tau)$ -- Heuristic Idea} Temporarily keeping the dependence on $\tau$ and $i$ implicit, we now describe how we develop an estimate of the non-linear functional $G(c + d \, \theta, \bt)$ of the unknown parameter~$\theta$.

\par Depending on the magnitude of  $c + d \, \theta$ we use two different kinds of estimation strategy for $G(c + d \, \theta, \bt)$.
If $|c+d \theta|$ is not too large we approximate the functional by $G_K(c+d \,\theta, \bt)$ -- its $K$ order Taylor series  expansion around $0$:
$$ G_K(c+d \,\theta, \bt) = G(0,\bt)+G'(0,\bt) (c + d\,\theta)+ \phi(0) \sum_{k=0}^{K-2} \frac{(-1)^{k}H_k(0)}{(k+2)!} \,(c + d\,\theta)^{k+2}\,,$$
where $H_k$ is the $k^{\text{th}}$ order probabilists' Hermite polynomial \citep[Ch. 1.1]{thangavelu-book}. 
If $W\sim N(\mu,\sigma)$ denotes a normal random variable with mean $\mu$ and variance $\sigma$, then we can construct an unbiased estimator of the truncated functional $G_K$ by using the following property of Hermite polynomials: 
\begin{align}\label{her.property}
 \textit{ If } W \sim N(\mu,\sigma), \textit{ then } \sigma^{k/2} \, \ex_{\mu} \big\{ H_k( W / \sqrt{\sigma}) \big\}= \mu^k \textit{ for } k \geq 1.
\end{align}
\par
Now, if $|c+d \theta|$ is large, then the truncated Taylor's expansion $G_K(\cdot)$ would not be \pr{a} good approximation of $G(c + d \, \theta, \bt)$. However, in that case, as shown in Lemma~\ref{lem:bias.bound}, we can use linear approximations with
$$
G(c + d \, \theta, \bt) ~\approx~ (1-\bt) (c + d \, \theta)^+ + \bt (c + d \, \theta)^- ~,
$$
and their corresponding unbiased estimates can be used. Note that for all $x \in \Real$, $x^+ = \max\{x,0\}$ and $x^- = \max\{-x, 0\}$.

\par
\textit{The Details.} We need to combine the aforementioned estimates together in a data-driven framework. For this purpose, we use threshold estimates. We use the idea of \textit{sample splitting}. We 
use the observed data to create two independent samples by adding white noise $\Z=\{Z_i: i=1,\ldots,n\}$ and define
$$U_i=X_i+\sqrt{\sigma_{p,i}} Z_i, \quad V_i=X_i -\sqrt{\sigma_{p,i}} Z_i \text{ for } i=1,\ldots,n.$$
Noting that $U_i$ and $V_i$ are independent, we will use $V_i$ to determine whether or not $c_i(\tau)+d_i(\tau) \,\theta $ is large, and then use $U_i$ to estimate $G(c_i(\tau)+d_i(\tau) \,\theta, \bt)$ appropriately. 
For any fixed $\tau \in [0,\infty]$ and $i=1,\ldots,n$, we transform
$$U_i(\tau)=c_i(\tau)+d_i(\tau) U_i, \quad V_i(\tau)=c_i(\tau)+d_i(\tau) V_i.$$
Note that $U_i(\tau) \sim N(c_i (\tau) + d_i (\tau) \theta_i\,,\, 2 \sigma_{p,i} \pr{\, d_i^2 (\tau)} )$. By Lemma~\ref{her.property}, we construct an unbiased estimate of $G_K(c_i (\tau)+ d_i (\tau) \theta_i,\bt_i)$ as
\begin{align*}
S_i(U_i(\tau))=&G(0,\bt_i)+G'(0,\bt_i) U_i(\tau) \\
& + \phi(0) \sum_{k=0}^{\Kn(i)-2} \frac{(-1)^{k}H_k(0)}{(k+2)!} \big(2\sigma_{p,i} d_i^2(\tau)\big)^{(k+2)/2} H_{k+2}\bigg(\frac{U_i(\tau)}{(2 \sigma_{p,i} d^2_i(\tau))^{1/2}}\bigg).
\end{align*}
We use a further truncation on this unbiased estimate by restricting its absolute value to $n$. The truncated version
\begin{align*}
\Sm_i(U_i(\tau)) &= S_i(U_i(\tau)) \, I\{|S_i(U_i(\tau))| \leq n\} \,+ \,n \,  I\{S_i(U_i(\tau)) > n\} \,-\, n \, I\{S_i(U_i(\tau)) < - n\}\\
&= {\rm sign}\left( S_i(U_i(\tau)) \right) \min \left\{|S_i(U_i(\tau))|~,~ n\right\}
\end{align*} 
is biased. But, because of its restricted growth, it is easier to control its variance, \pr{which greatly facilitates our analysis.}
\par
\textit{Threshold Estimates.} \pr{For each coordinate $i$}, we then construct the following coordinatewise threshold~estimates:  
\begin{align*}
\That_{i}( X_i,Z_i,\tau)= \left \{ \begin{array}{ccc} 
- \bt_i \, U_i(\tau) 	& \text{ if } & V_i(\tau) 	< 		-\thr(i)\\[1ex] 
\Sm_i(U_i(\tau)) 		& \text{ if } & \pr{- \thr(i) \leq V_i(\tau)	\leq 	\thr(i)} \\[1ex]
(1-\bt_i) \, U_i(\tau)	& \text{ if } & V_i(\tau)	> 		\thr(i)
\end{array}\right. \text{ for } i=1,\ldots,n
\end{align*}
with the threshold parameter 
\begin{align}\label{thr.definition}
\thr(i) = \gamma(i) \sqrt{2 \log n}~,
\end{align}
where $\gamma(i)$ is any positive number less than  $\left( \; 1/\sqrt{ 4 e} - \sqrt{\sigma_{p,i}/\sigma_{f,i}} \; \right)$. Assumption A2 ensures \pr{the} existence of $\gamma(i)$ because $\sigma_{p,i}/\sigma_{f,i} < 1 /(4 e)$ for all $i$.
\par
The other tuning parameter that we have used in our construction process is the truncation parameter $\Kn(i)$, which is involved in the approximation of $G$ and is used in the estimate $\Sm$. We select a choice of $\Kn(i)$ that is independent of $\tau \in [0,\infty]$, and is given by
\begin{align}\label{eq:trun.parameter}
\Kn(i) =1 + \bigg \lceil e^2  \Big( \; \gamma(i) + \sqrt{ 2 \sigma_{p,i}/\sigma_{f,i} } \; \Big)^2(2 \log n)  \bigg \rceil\,. 
\end{align}
\textit{Rao-Blackwellization.} $\That_{i}(X_i,Z_i,\tau)$ are randomized estimators as they depend on the user-added noise $\Z$. And so, in the final step of the risk estimation procedure we apply Rao-Blackwell adjustment  \citep[Theorem 7.8, Page 47]{lehmann1998theory} to get 
$\That_i(X_i,\tau)=\ex \left[ \That_i(X_i,Z_i,\tau) |\X \right]$. Here, the expectation is over the distribution of $\Z$, which is independent of $\X$ and follows $N(0,I_n)$.
\par

\subsubsection{Bias and Variance of the coordinatewise Risk Estimates}

The key result that allows us to establish Theorem \ref{origin.loss.are} is the following proposition (for proof see Section \ref{sec-2-main-proof}) on estimation of the univariate risk components $G(c_i (\tau)+d_i (\tau) \theta_i,  \bt_i)$ defined in \eqref{eq:univariate.risk.eq.origin}. It  shows that the bias of  $\That_{i}( X_i,Z_i,\tau)$  as an estimate of $G(c_i (\tau)+d_i (\tau) \theta_i,  \bt_i)$ converges to zero as $n \to \infty$. The scaled variance of each of the univariate threshold estimates $\That_{i}( X_i,Z_i,\tau)$ also converges to zero. 

\begin{prop}\label{univariate.origin}
Under Assumptions A1 and A3, we have  for all $i=1,\ldots,n$
\begin{align*}
\mathrm{I.} &\lim_{n \to \infty} \sup_{\tau \in [0,\infty], \, \theta_i \in \Real} a_n^8 \, \bias_{\theta_i}(\That_i(X_i,Z_i,\tau)) = 0~, \\
\mathrm{II.} &\lim_{n \to \infty}  \sup_{\tau \in [0,\infty], \, \theta_i \in \Real}  n^{-1} a_n^8 \,\var_{\theta_i} (\That_i(X_i,Z_i,\tau)) = 0~, \text{ where } a_n = \log \log n \text{\hspace{10cm}}
\end{align*}
and the random vectors $\X$ and $\Z$ are independent, with $\X$ following \eqref{pred.model} and $\Z$ has $N(0,I)$ distribution.
\end{prop}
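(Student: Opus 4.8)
\emph{Proof sketch.} Fix a coordinate $i$, abbreviate $\That_i=\That_i(X_i,Z_i,\tau)$, and write $\mu=c_i(\tau)+d_i(\tau)\theta_i$ for the argument of $G$ that $\That_i$ estimates, $\rho=2\sigma_{p,i}d_i(\tau)^2$ for the common variance of the two \emph{independent} normals $U_i(\tau)\sim N(\mu,\rho)$ and $V_i(\tau)\sim N(\mu,\rho)$, and $\beta=\bt_i$, $\lambda=\thr(i)$, $K=\Kn(i)$. I would first record the uniform structural facts that drive everything: by A3, $\rho\le 2\sigma_{p,i}/\sigma_{f,i}<1/(2e)$ (so $\rho<1$); by A1, $\beta$ is bounded away from $0,1$ and $b_i+h_i$ is bounded; $|c_i(\tau)|,|d_i(\tau)|$ are bounded; $\lambda=\gamma_i\sqrt{2\log n}$ with $0<\gamma_i<1/\sqrt{4e}-\sqrt{\sigma_{p,i}/\sigma_{f,i}}$; and $K\asymp 2e^2(\gamma_i+\sqrt{2\sigma_{p,i}/\sigma_{f,i}})^2\log n$. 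Since $a_n^8=(\log\log n)^8=o((\log n)^\delta)$ for every $\delta>0$, it suffices to bound $\sup_{\tau,\theta_i}|\bias_{\theta_i}(\That_i)|$ and $\sup_{\tau,\theta_i}n^{-1}\var_{\theta_i}(\That_i)$ by $C_i n^{-c_i}+O(\mathrm{polylog}(n)/n)$. Because $U_i(\tau)$ and $V_i(\tau)$ are independent and the branch of $\That_i$ is a function of $V_i(\tau)$ alone, conditioning on that branch gives, with $p_-,p_0,p_+$ the branch probabilities, $m_0=\ex\,\Sm_i(U_i(\tau))$ and $s_0^2=\var\,\Sm_i(U_i(\tau))$,
\[
\bias_{\theta_i}(\That_i)=p_-(-\beta\mu-G(\mu,\beta))+p_0(m_0-G(\mu,\beta))+p_+((1-\beta)\mu-G(\mu,\beta)),
\]
\[
\var_{\theta_i}(\That_i)=p_-\beta^2\rho+p_0 s_0^2+p_+(1-\beta)^2\rho+\var(\text{branch mean}),
\]
the last term being the variance of the three–point variable $\{-\beta\mu,m_0,(1-\beta)\mu\}$ with weights $\{p_-,p_0,p_+\}$; I would bound it by the $p$–weighted pairwise squared gaps, so a small $p_\pm$ kills the $\mu^2$ occurring there.

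Three ingredients feed the estimates. (i) \emph{Linear asymptotes of $G$}: an elementary computation (in effect Lemma~\ref{lem:bias.bound}) shows $|-\beta w-G(w,\beta)|=\phi(w)+w\Phi(w)$ and $|(1-\beta)w-G(w,\beta)|=|\phi(w)-w\bar\Phi(w)|$, each $\le\phi(0)e^{-w^2/2}$ on the far side of $0$ and $\le\phi(0)+|w|$ always. (ii) \emph{Taylor remainder}: since $w\mapsto\phi(w)+w\Phi(w)$ is entire with $(k+2)$–th derivative $(-1)^kH_k(0)\phi(0)$ at the origin, $|G(w,\beta)-G_K(w,\beta)|\le\phi(0)w^2\sum_{l\ge (K-1)/2}(w^2/2)^l/l!$, which is $\le n^{-c}$ uniformly over $|w|\le\mu^\star$ as soon as $K>e(\mu^\star)^2$. (iii) The crux, \emph{the variance of the untruncated Hermite estimator} $S_i(U_i(\tau))$: expanding each $H_{k+2}(U_i(\tau)/\sqrt\rho)$ about its mean via $H_m(a+Z)=\sum_j\binom mj a^{m-j}H_j(Z)$ and collecting powers of the standardized noise $Z\sim N(0,1)$, write $S_i(U_i(\tau))-\text{const}=\sum_{j\ge1}d_j H_j(Z)$, so $\var(S_i)=\sum_j d_j^2 j!$; the identity $\sum_l|H_{2l}(0)|t^{2l}/(2l)!=e^{t^2/2}$ bounds each $|d_j|$ through the ``positive Hermite'' polynomial $p_{j-2}$ (where $\tfrac{d^n}{dx^n}e^{x^2/2}=p_n(x)e^{x^2/2}$), and the Mehler–type identity $\sum_n p_n(x)^2 t^n/n!=(1-t^2)^{-1/2}e^{tx^2/(1-t)}$ at $t=\rho$ yields
\[
\var(S_i(U_i(\tau)))\ \le\ C_i\exp\!\Big(\frac{\mu^2}{1-\rho}\Big),\qquad C_i\ \text{bounded},
\]
every series converging precisely because $\rho<1$. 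Truncation then gives $s_0^2\le\min\{\var(S_i),n^2\}$ and $|m_0-G_K(\mu,\beta)|\le\ex[S_i^2]/n\le (C_i e^{\mu^2/(1-\rho)}+O(1+\mu^2))/n$.

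Assembly rests on splitting at a cutoff $\mu^\star=\kappa_i\sqrt{\log n}$. For the bias I would pick $\kappa_i$ obeying at once $\kappa_i^2<1-\rho_i$ (so $e^{\mu^2/(1-\rho)}\le n^{1-\epsilon}$ on $\{|\mu|\le\mu^\star\}$), $\kappa_i^2<2e(\gamma_i+\sqrt{2\sigma_{p,i}/\sigma_{f,i}})^2$ (so $K>e(\mu^\star)^2$ and block (ii) applies), and $\kappa_i^2>(\gamma_i\sqrt2+\sqrt{2\rho_i})^2$ (so $p_0$ and the ``wrong side'' $p_\pm$ are $\le n^{-1-\epsilon}$ on $\{|\mu|>\mu^\star\}$, by a Gaussian tail for $V_i(\tau)$), with $\rho_i:=2\sigma_{p,i}/\sigma_{f,i}$. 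A short chain of inequalities shows that A3, i.e.\ $\gamma_i+\sqrt{\sigma_{p,i}/\sigma_{f,i}}<1/\sqrt{4e}$, makes these three constraints compatible (using $\sqrt{2\sigma_{p,i}/\sigma_{f,i}}=\sqrt{\rho_i}$ and $(\gamma_i\sqrt2+\sqrt{2\rho_i})^2=2(\gamma_i+\sqrt{\rho_i})^2<\min\{1-\rho_i,\,2e(\gamma_i+\sqrt{\rho_i})^2\}$). On $\{|\mu|\le\mu^\star\}$ each side term $p_\pm\cdot(\text{asymptote error})$ is $\le n^{-c}$ because one of its two factors is — either the probability, by a Gaussian tail, or the error, $\le\phi(0)e^{-\mu^2/2}$ on the decaying side — while the middle term is $\le(n^{1-\epsilon}+O(\log n))/n+n^{-c}$; on $\{|\mu|>\mu^\star\}$ each term is a $\le n^{-1-\epsilon}$ probability times an $O(n+|\mu|)$ coefficient times a residual Gaussian factor that absorbs the $|\mu|$–growth. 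This is Part I. The variance uses the same split (only the first and third constraints needed): $p_-\beta^2\rho+p_+(1-\beta)^2\rho\le\rho=O(1)$; on $\{|\mu|\le\mu^\star\}$, $p_0 s_0^2\le s_0^2\le\var(S_i)\le C_i n^{1-\epsilon}$ and the branch–mean variance is $O(\log n)$ (all of $|m_0|,|\mu|$ being $O(\sqrt{\log n})$ there); on $\{|\mu|>\mu^\star\}$, $p_0\le n^{-1-\epsilon}$ forces $p_0 s_0^2\le n^{1-\epsilon}$, while in the branch–mean variance the $\mu^2$–factors are killed either by $p_0\le n^{-1-\epsilon}$ (against the $\le n$ part of $m_0$) or by the Gaussian tail of $V_i(\tau)$. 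Multiplying by $n^{-1}a_n^8$ in both regimes gives Part II; the boundary $\tau=\infty$, where $d_i(\tau)=0$ and $\That_i$ is deterministic, is immediate.

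The hard part is block (iii): pushing the Hermite–estimator variance down to $C_i e^{\mu^2/(1-\rho)}$ with the \emph{sharp} exponent $1/(1-\rho)<2e/(2e-1)$. It is this constant, together with the $e^2$–calibration built into $\Kn(i)$, that makes the three regime–split inequalities simultaneously solvable exactly under A3's threshold $1/(4e)$; a looser variance estimate — a larger constant in the exponent, or stray $2^{O(K)}$ factors from crude Hermite bounds — would demand a strictly smaller constant than $1/(4e)$ and the argument would not close. Everything else is bookkeeping with Gaussian tails.
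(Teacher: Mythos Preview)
Your proof is broadly correct and shares the paper's architecture (a partition of the $\mu$–axis, the Hermite polynomial estimator on the inner region, linear asymptotes on the outer region, Gaussian tail bounds for the branch probabilities), but the key technical step is handled differently. The paper bounds $\ex[S^2(U_\alpha)]$ by combining Cai's moment inequality $\ex H_k^2(X)\le k^k(1+\theta^2/k)^k$ with Krasikov's pointwise bound $|H_k(x)|\le c\,e^{x^2/4}k!\,k^{-1/3}(k/e)^{-k/2}$, obtaining the exponent $e\theta_\alpha^2$; it then uses a three–region split with fixed cutoffs $\thr/2$ and $(1+\sqrt{2\sigma}/\gamma)\thr$, checking each case against Lemmas~\ref{lem:bias.bound}--\ref{lem:large.deviation.bound}. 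Your Mehler/generating–function route is more elegant and yields a strictly sharper exponent, which lets you collapse the paper's three cases into two and replace the explicit cutoffs by a feasibility check on $\kappa_i$. What the paper's approach buys is that all constants are explicit from the outset, so no compatibility argument is needed.

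One correctable slip in block~(iii): from $|d_j|\le\phi(0)\rho^{j/2}j!^{-1}\sum_{m}|H_{m+j-2}(0)||\mu|^m/m!$ you get $|d_j|\le\phi(0)\rho^{j/2}j!^{-1}\,p_{j-2}(|\mu|)\,e^{\mu^2/2}$ (the identity $\sum_k|H_{m+k}(0)||\mu|^k/k!=p_m(|\mu|)e^{\mu^2/2}$ carries an $e^{\mu^2/2}$ factor you dropped). Squaring and summing then gives, via the Mehler identity you quote (whose exponent is $2t x^2/(1-t)$, not $tx^2/(1-t)$),
\[
\var(S_i)\ \le\ C\rho + C'\rho^2\,e^{\mu^2}\cdot(1-\rho^2)^{-1/2}e^{2\rho\mu^2/(1-\rho)}\ =\ O\!\big(e^{\mu^2(1+\rho)/(1-\rho)}\big),
\]
so the correct exponent is $(1+\rho)/(1-\rho)$, not $1/(1-\rho)$. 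This does not break your argument: the first constraint becomes $\kappa_i^2<(1-\rho_i)/(1+\rho_i)$, and since $\gamma_i+\sqrt{\rho_i}<1/\sqrt{2e}$ under A3, one has $2(\gamma_i+\sqrt{\rho_i})^2<1/e<(2e-1)/(2e+1)<(1-\rho_i)/(1+\rho_i)$, so the three constraints on $\kappa_i$ remain jointly satisfiable. With that correction your compatibility check goes through and the rest of the sketch is sound.
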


\subsection{Background and Previous Work}\label{lit.review}
Here, we follow the compound decision theory framework introduced in \citet{robbins1985asymptotically}. 
In the statistics literature, there has been substantial research on the construction of linear EB  estimates in such frameworks \citep{morris1983parametric,zhang2003}. Since the seminal work by \cite{james1961estimation}, shrinkage estimators are widely used in real-world applications \citep{efron1975data}. Stein's shrinkage is related to hierarchical empirical Bayes methods \citep{stein1962confidence}, and several related parametric empirical Bayes estimators have been developed \citep{efron1973stein}. As such, Stein's Unbiased Risk Estimate (SURE) is one of the most popular methods for obtaining the estimate of tuning parameters. \citet{donoho1995adapting} used  SURE
to choose the threshold parameter in their SureShrink method. However, most of these developments have been under quadratic loss or other associated loss functions \citep{brown1975estimation,berger1976admissible,dey1985estimation}, which admit unbiased risk estimates. \citet{Dasgupta1999} discussed the role of Steinien shrinkage under $L_1$ loss, which is related to our predictive loss only when $b=h$. If $b \neq h$, their proposed estimator do not incorporate the asymmetric nature of the loss function and are sub-optimal (See Corollary~\ref{cor:origin.are.oracle}). 
To construct risk estimates that are adapted to the shape of the cumulative check loss functions, we develop new methods for  efficiently estimating the  risk functionals associated with our class of shrinkage estimators.  In our construction, we concentrate on obtaining uniform convergence of the estimation error over the range of the associated hyperparameters. This enables us to efficiently fine-tune the shrinkage parameters through minimization over the class of risk estimates. Finally, in contrast to quadratic loss results \citep[Section~3]{xie2012}, we develop a more flexible moment-based concentration approach that translates our risk estimation efficiency into the decision theoretic optimality of the proposed shrinkage estimator.

\subsection{Further Results}\label{further.results}
We now describe our results for efficient estimation in  class $\mathcal{S}$, where we shrink towards a data-driven direction $\eta$, and the hyperparameters  $\eta$ and $\tau$ are simultaneously estimated. Here, we restrict the location hyperparameter $\eta$ to lie in the set $\hat{M}_n=[\hat{m}_n(\alpha_1), \hat{m}_n(\alpha_2)]$
where $\hat{m}_n(\alpha)=\text{quantile}\{X_i: 1 \leq 1 \leq n ; \alpha\}$ is the $\alpha$ th quantile of the observed data and $\alpha_1=\inf \{ b_i/(b_i+h_i): 1 \leq i \leq n \}$ and $\alpha_2=\sup \{ b_i/(b_i+h_i): 1 \leq i \leq n \}$. By Lemma~\ref{loss.properties}, we know that if the true distributions were known, the optimal predictor for dimension $i$ is given by the  $b_i / (b_i + h_i)$ quantile.  In this context, it make sense to restrict the shrinkage location  parameter $\eta$ in the aforementioned range as we do not want to consider non-robust estimators that shrink toward locations that lie near undesired periphery of the data.
\par
The predictive risk of estimators $\qhatb(\eta,\tau)$ of the form \eqref{eq:linear.est} is given by $R_n(\thetab,\qhatb(\eta,\tau))=\ex_{\thetab}\left[ L_n(\thetab,\qhatb(\eta,\tau))\right]$. We estimate the risk function by $(\eta,\tau) \mapsto \AD_n(\eta,\tau)$. The estimation procedure and the detailed proof for the results in this section are presented in Section~\ref{sec-data-driven}.  We estimate the tuning parameters $\tau$ and $\eta$ for the class of shrinkage estimates $\mathcal{S}$ by minimizing the  $\AD_n(\eta,\tau)$ criterion jointly over the domain of $\tau$ and $\eta$. Computationally, it is done by minimizing over a discrete grid:
$$
\big(\etah^{\ADD}_n,\tauh^{\ADD}_n\big)=\argmin_{(\eta,\tau) \in (\nett_{n,1} \cap \hat{M}_n) \otimes \nett_{n,2} } \AD_n(\eta,\tau)~,
$$
where $\nett_{n,2}$ is a discrete grid spanning $[0,\infty]$ and $\nett_{n,1}$ is a discrete grid spanning $[-a_n, a_n]$ with $a_n=\log \log n$. 
Both, $\nett_{n,1}$ and $\nett_{n,2}$ do not depend on $\X$  but only on $n$.  The minimization is conducted only over $\eta$ values in $\nett_{n,1}$ which lie in the set $\hat{M}_n$.  Details on the construction of the grid is presented in Section~\ref{sec-data-driven}. We define the oracle loss estimator here by
$$
(\eta_n^{\sf DOR},\,\tau_n^{\sf DOR})=\argmin_{\tau \in [0,\infty],\; \eta \in [\hat{m}_n(\alpha_1), \hat{m}_n(\alpha_2)]} L_n(\thetab,\qhatb(\eta,\tau))~
$$


The following theorem shows that our risk estimates estimate the true loss uniformly well. 

\begin{thm}\label{data.loss.are}
	Under Assumptions A1 and A3, for all $\thetab$ satisfying Assumption~$\text{A2}$ \pr{and for all estimates $\qhatb(\eta,\tau) \in \mathcal{S}$,} 
	$$ \lim_{n \to \infty} \sup_{\tau \in [0,\infty],\;  |\eta| \leq a_n } a_n^4 \, \ex \left|\AD_n(\eta,\tau)-L_n(\thetab,\qhatb(\eta,\tau))\right| = 0~ \quad \text{ where } a_n = \log \log n. $$ 
\end{thm}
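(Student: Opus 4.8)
The plan is to run the two-part argument underlying Theorems~\ref{origin.loss.are} and~\ref{origin.risk.loss}, but packaged directly as an $L_1$ bound with the coarser rate $a_n^4$, while carrying the new location hyperparameter $\eta$ through every step and keeping all bounds uniform over the slowly growing window $|\eta|\le a_n$. Abbreviating $R_n(\eta,\tau):=R_n(\thetab,\qhatb(\eta,\tau))$ and $L_n(\eta,\tau):=L_n(\thetab,\qhatb(\eta,\tau))$, I would use the triangle inequality
$$\ex\bigl|\AD_n(\eta,\tau)-L_n(\eta,\tau)\bigr|\;\le\;\ex\bigl|\AD_n(\eta,\tau)-R_n(\eta,\tau)\bigr|\;+\;\ex\bigl|R_n(\eta,\tau)-L_n(\eta,\tau)\bigr|,$$
bound the first term pointwise by a bias/variance estimate whose size does not depend on $(\eta,\tau)$ (so that passing to the supremum is free), and bound $\ex\sup_{\tau\in[0,\infty],\,|\eta|\le a_n}|R_n(\eta,\tau)-L_n(\eta,\tau)|$ by a maximal inequality on a fine mesh of the index set.

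\emph{Step 1: risk estimate versus true risk.} Repeating the derivation behind \eqref{eq:univariate.risk.eq.origin} via Lemma~\ref{linear.estimates}, one finds that for $\qhatb(\eta,\tau)\in\mathcal{S}$ the $i$th risk component equals $(b_i+h_i)\sqrt{\sigma_{f,i}+\sigma_{p,i}\alpha_i^2(\tau)}\,G\bigl(c_i(\tau)+d_i(\tau)(\theta_i-\eta),\bt_i\bigr)$; hence the only effect of a nonzero $\eta$ is to replace $\theta_i$ by $\theta_i-\eta$, that is, to shift the argument of $G$ by the amount $-d_i(\tau)\eta$, which is bounded by $C\,a_n$ since $|d_i(\tau)|$ is uniformly bounded (A3 together with $\inf_i\sigma_{p,i}>0$ forces $\inf_i\sigma_{f,i}>0$). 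I would therefore define $\AD_n(\eta,\tau)$ by the verbatim analogue of \eqref{def.A}: split each $X_i$ into independent $U_i,V_i$; form the Hermite-based unbiased estimate of the $\Kn(i)$-order Taylor polynomial of $G$ about the origin, evaluated at the $\eta$-shifted split variable $U_i(\eta,\tau)=c_i(\tau)+d_i(\tau)(U_i-\eta)$; truncate it at level $n$; apply the threshold rule of \eqref{thr.definition} to $V_i(\eta,\tau)=c_i(\tau)+d_i(\tau)(V_i-\eta)$; and take the Rao-Blackwell adjustment over $\Z$. The crux is then the $\mathcal{S}$-counterpart of Proposition~\ref{univariate.origin}, namely that $\sup_{\tau,\,|\eta|\le a_n,\,\theta_i}a_n^{8}\,\bias_{\theta_i}(\That_i^{\mathrm{D}})\to0$ and $\sup_{\tau,\,|\eta|\le a_n,\,\theta_i}n^{-1}a_n^{8}\,\var_{\theta_i}(\That_i^{\mathrm{D}})\to0$. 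Granting this, independence of $X_1,\dots,X_n$ gives, uniformly in $(\eta,\tau)$ (with $\bias_i,\var_i$ abbreviating the quantities just displayed),
$$\ex\bigl|\AD_n(\eta,\tau)-R_n(\eta,\tau)\bigr|\;\le\;C\max_i\bigl|\bias_i\bigr|+C\Bigl(\tfrac1n\max_i\var_i\Bigr)^{1/2}\;=\;\smallo{a_n^{-4}},$$
which already supplies the $a_n^4$ rate for this piece.

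\emph{Step 2: loss versus risk.} Here I would exploit the regularity of $G$ recorded in Lemma~\ref{loss.properties}: $G(\cdot,\beta)$ is convex with $|G'(\cdot,\beta)|\le1$, so the coordinatewise loss $g_i(X_i,\eta,\tau):=l_i(\theta_i,\qhat_i(\eta,\tau))$ is Lipschitz in $X_i$ with constant $(b_i+h_i)\alpha_i(\tau)\le C$, Lipschitz in $\eta$ with constant $(b_i+h_i)(1-\alpha_i(\tau))\le C$, and smooth in $\tau$ through $\alpha_i(\tau)\in[0,1]$ with $|\partial_\tau g_i|\le C(|X_i|+|\eta|+1)$. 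Consequently $L_n(\eta,\tau)-R_n(\eta,\tau)=n^{-1}\sum_i\{g_i-\ex g_i\}$ is, as a Lipschitz function of the Gaussian vector $\X$ whose Lipschitz constant does not depend on $\thetab$, sub-Gaussian with variance proxy $\bigo{n^{-1}}$ uniformly in $(\eta,\tau,\thetab)$. A union bound over a mesh of $[0,\infty]\times[-a_n,a_n]$ of spacing $\bigo{n^{-1}}$ in the relevant coordinates (hence $n^{\bigo{1}}$ mesh points), together with Lipschitz interpolation to off-mesh points, then yields
$$\ex\Bigl[\sup_{\tau\in[0,\infty],\,|\eta|\le a_n}\bigl|L_n(\eta,\tau)-R_n(\eta,\tau)\bigr|\Bigr]\;=\;\bigo{\sqrt{(\log n)/n}\;+\;a_n/n}\;=\;\smallo{a_n^{-4}}.$$
Adding the two steps gives the claim. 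The construction is parallel to, but coarser than, the one for $\mathcal{S}^0$, and the discrete grids $\nett_{n,1}\otimes\nett_{n,2}$ used for the actual minimization of $\AD_n$ enter only later, when Theorem~\ref{data.loss.are} is combined with a uniform concentration statement to establish oracle optimality within $\mathcal{S}$.

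\emph{Main obstacle.} I expect essentially all of the real work to reside in the uniform univariate bias and variance bounds over the \emph{growing} window $|\eta|\le a_n$. The $\eta$-shift enlarges the argument of $G$ seen by the split variables, and with it the effective threshold, by $\bigo{a_n}=\bigo{\log\log n}$; one must check that, within the margins built into \eqref{thr.definition} and \eqref{eq:trun.parameter}, this stays negligible against $\thr(i)\asymp\sqrt{\log n}$ and against the truncation order $\Kn(i)\asymp\log n$ --- concretely, that the linear-approximation bias bound of Lemma~\ref{lem:bias.bound} on $\{|V_i(\eta,\tau)|>\thr(i)\}$ and the Taylor-remainder bias bound on its complement both remain $\smallo{a_n^{-8}}$, and that the variance of the Hermite estimator, which grows with $\Kn(i)$ and with powers of the displaced argument, stays $\smallo{n\,a_n^{-8}}$. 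Because $a_n=\smallo{\sqrt{\log n}}$ there is room to absorb the shift --- if necessary by thinning $\gamma(i)$ to $\gamma(i)-\epsilon_n$ for a vanishing $\epsilon_n$ --- but re-deriving the Hermite and factorial estimates around the displaced expansion point is the genuine labour; everything else is a routine transcription of the arguments already carried out for $\mathcal{S}^0$.
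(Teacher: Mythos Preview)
Your two-step decomposition via the triangle inequality is exactly the paper's route (its Lemmas~\ref{data.risk.are} and~\ref{data.risk.loss}), and the construction of $\AD_n$ you describe is the paper's. However, you have misidentified where the work lies, and in doing so you propose to redo a substantial calculation that is in fact unnecessary.

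\textbf{Step 1 is trivial, not the main obstacle.} The entire content of your ``main obstacle'' paragraph dissolves once you notice that Proposition~\ref{univariate.origin} already gives the bias and variance bounds \emph{uniformly over $\theta_i\in\Real$}, not merely over some bounded parameter set. Since $\That_i(X_i-\eta,Z_i,\tau)$ under $X_i\sim N(\theta_i,\sigma_{p,i})$ has the same distribution as $\That_i(X_i,Z_i,\tau)$ under $X_i\sim N(\theta_i-\eta,\sigma_{p,i})$, one has $\bias_{\theta_i}(\That_i^{\mathrm D})=\bias_{\theta_i-\eta}(\That_i)$ and $\var_{\theta_i}(\That_i^{\mathrm D})=\var_{\theta_i-\eta}(\That_i)$, and the supremum over $(\theta_i,\eta)$ is absorbed into the supremum over $\theta_i\in\Real$ already present in Proposition~\ref{univariate.origin}. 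There is no displaced expansion point, no need to re-derive any Hermite or factorial estimate, and no need to thin $\gamma(i)$. The paper's Lemma~\ref{data.risk.are} is literally a two-line reduction of this kind, and its conclusion holds for all $\eta\in\Real$, not just $|\eta|\le a_n$.

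\textbf{Step 2: your route works but is heavier than needed.} Because the supremum in the theorem statement sits \emph{outside} the expectation, it suffices to bound $\sup_{\tau,\,|\eta|\le a_n}a_n^8\,\var_{\thetab}(L_n(\eta,\tau))$ and invoke Cauchy--Schwarz. Coordinatewise independence and the crude bound $l_i^2\le C(1+\theta_i^2+\eta^2)$ give $\var(L_n)\le C\bigl(n^{-2}\sum_i\theta_i^2+n^{-1}a_n^2\bigr)$, which is $o(a_n^{-8})$ under A2. This is the paper's Lemma~\ref{data.risk.loss}; your Gaussian-concentration-plus-mesh argument would bound the stronger quantity $\ex\sup|L_n-R_n|$, which is fine but unnecessary here, and the Lipschitz-in-$\tau$ constant you quote as $C(|X_i|+|\eta|+1)$ would require a little more care to turn into a clean maximal inequality. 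The restriction $|\eta|\le a_n$ is needed only in this step, to keep the $\eta^2$ term under control.
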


Based on the above theorem, we derive the decision theoretic optimality of our proposed estimator. The following two theorems show that our estimator is asymptotically nearly as good as the oracle loss estimator, whereas the corollary shows that it \pr{is} as good as any other estimator in $\mathcal{S}$. 

\begin{thm}\label{data.are.oracle.loss}
	Under Assumptions A1 and A3, and for all $\thetab$ satisfying Assumption~$\text{A2}$, we have, \pr{for any fixed $\epsilon > 0$,}
	$$\lim_{n \to \infty} P \left\{ L_n\big(\thetab,\qhatb(\etah^{\ADD}_n, \tauh^{\ADD}_n)\big)\geq L_n\big(\thetab,\qhatb (\eta_n^{\sf DOR},\tau_n^{\sf DOR}) \big)+ \epsilon \right\} = 0~. $$
\end{thm}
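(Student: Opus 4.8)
The plan is to derive this oracle inequality from Theorem~\ref{data.loss.are} by the usual device of comparing the grid minimizer $(\etah^{\ADD}_n,\tauh^{\ADD}_n)$ with a grid point sitting near the oracle pair $(\eta_n^{\sf DOR},\tau_n^{\sf DOR})$, using that $\AD_n$ is minimized over a discrete set. Let $N_n$ be the number of points in the search grid $(\nett_{n,1}\cap\hat{M}_n)\otimes\nett_{n,2}$ and set
\[
\Delta_n \;=\; \max_{(\eta,\tau)\in(\nett_{n,1}\cap\hat{M}_n)\otimes\nett_{n,2}}\;\big|\AD_n(\eta,\tau)-L_n(\thetab,\qhatb(\eta,\tau))\big|.
\]
I would reduce the theorem to two facts: (i) $\Delta_n\to 0$ in probability, and (ii) with probability tending to one there is a grid point $(\eta^\star_n,\tau^\star_n)\in(\nett_{n,1}\cap\hat{M}_n)\otimes\nett_{n,2}$ with $L_n(\thetab,\qhatb(\eta^\star_n,\tau^\star_n))\le L_n(\thetab,\qhatb(\eta_n^{\sf DOR},\tau_n^{\sf DOR}))+\epsilon/2$. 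Indeed, on the intersection of those two high-probability events,
\begin{align*}
L_n\big(\thetab,\qhatb(\etah^{\ADD}_n,\tauh^{\ADD}_n)\big)
&\;\le\; \AD_n(\etah^{\ADD}_n,\tauh^{\ADD}_n)+\Delta_n
\;\le\; \AD_n(\eta^\star_n,\tau^\star_n)+\Delta_n \\
&\;\le\; L_n\big(\thetab,\qhatb(\eta^\star_n,\tau^\star_n)\big)+2\Delta_n
\;\le\; L_n\big(\thetab,\qhatb(\eta_n^{\sf DOR},\tau_n^{\sf DOR})\big)+\epsilon/2+2\Delta_n,
\end{align*}
the middle inequality being the defining property of the $\AD_n$-minimizer; intersecting further with $\{2\Delta_n\le\epsilon/2\}$ gives the claim, and that event has probability tending to one.

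\emph{Containing the oracle region and proving (i).} I would first check that, with probability tending to one, $\hat{M}_n=[\hat m_n(\alpha_1),\hat m_n(\alpha_2)]\subseteq[-a_n,a_n]$, so that the range $|\eta|\le a_n$ of Theorem~\ref{data.loss.are} covers every competitor, in particular $\eta_n^{\sf DOR}\in\hat{M}_n$. By A1, $\alpha_1=\inf_i b_i/(b_i+h_i)>0$ and $\alpha_2=\sup_i b_i/(b_i+h_i)<1$; and A2 together with the boundedness of the variances (note $\sup_i\sigma_{p,i}<\infty$ follows from A3 and $\sup_i\sigma_{f,i}<\infty$) gives $n^{-1}\sum_i\ex|X_i|\le n^{-1}\sum_i|\theta_i|+\sqrt{2\sup_i\sigma_{p,i}/\pi}=\bigo{1}$, hence $n^{-1}\sum_i|X_i|=\bigo{1}$ in probability. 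By Markov's inequality the empirical fraction of indices with $|X_i|>M$ is at most $\bigo{1}/M$ in probability, so for a fixed (in $n$) constant $M=M(\alpha_1,\alpha_2)$ large enough both sample quantiles lie in $[-M,M]$ with probability tending to one, and $[-M,M]\subseteq[-a_n,a_n]$ for all large $n$ since $a_n=\log\log n\to\infty$. For (i), Theorem~\ref{data.loss.are} says the pointwise $L_1$-error is $\smallo{a_n^{-4}}$ uniformly in $(\eta,\tau)$, so a union bound over the $N_n$ grid points and Markov's inequality give, for every fixed $\delta>0$,
\[
\Pr\{\Delta_n>\delta\}\;\le\;\frac{N_n}{\delta}\,\sup_{\tau\in[0,\infty],\,|\eta|\le a_n}\ex\big|\AD_n(\eta,\tau)-L_n(\thetab,\qhatb(\eta,\tau))\big|\;=\;\frac{N_n}{\delta}\,\smallo{a_n^{-4}},
\]
which vanishes provided the grid is sparse enough that $N_n=\smallo{a_n^4}$; this is exactly why Theorem~\ref{data.loss.are} carries the $a_n^4$ gain, and the grid of Section~\ref{sec-data-driven} is chosen coarse enough to meet this cardinality bound yet still asymptotically dense.

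\emph{Proving (ii).} Here I would use equicontinuity of $(\eta,\tau)\mapsto L_n(\thetab,\qhatb(\eta,\tau))$ on the relevant region. Since $\partial_\eta\qhat_i(\eta,\tau)=1-\alpha_i(\tau)\in[0,1]$ and $\partial_{\qhat_i}l_i=(b_i+h_i)\big(\Phi((\qhat_i-\theta_i)/\sqrt{\sigma_{f,i}})-b_i/(b_i+h_i)\big)$ is bounded in absolute value by $\sup_i(b_i+h_i)<\infty$ (A1), the map $\eta\mapsto L_n$ is Lipschitz with a deterministic constant. In the $\tau$-direction, $\partial_\tau\qhat_i=\sigma_{p,i}(\tau+\sigma_{p,i})^{-2}\big(X_i-\eta+\tfrac12\sigma_{p,i}\Phi^{-1}(b_i/(b_i+h_i))(\sigma_{f,i}+\alpha_i(\tau)\sigma_{p,i})^{-1/2}\big)$ and $\int_0^\infty\sigma_{p,i}(\tau+\sigma_{p,i})^{-2}\,d\tau=1$, so $\tau\mapsto l_i$ has total variation $\bigo{|X_i|+|\eta|+1}$ over $[0,\infty]$, whence $\tau\mapsto L_n$ has total variation $\bigo{n^{-1}\sum_i|X_i|+|\eta|+1}=\bigo{1}$ uniformly on $[0,\infty]$ on the event of the previous step and for $|\eta|\le M$. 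Because $\eta_n^{\sf DOR}\in\hat{M}_n\subseteq[-M,M]$ there, and because the grid $(\nett_{n,1}\cap\hat{M}_n)\otimes\nett_{n,2}$ is asymptotically dense in $[\hat m_n(\alpha_1),\hat m_n(\alpha_2)]\times[0,\infty]$ (Section~\ref{sec-data-driven}), with probability tending to one some grid point is within loss-distance $\epsilon/2$ of $(\eta_n^{\sf DOR},\tau_n^{\sf DOR})$, moving, if necessary, to the nearest grid point on the $\hat{M}_n$-side of the boundary, which changes the loss negligibly. Combining this with (i) and the chain in the first paragraph proves the theorem.

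\emph{Main obstacle.} The crux is the tension between the two reductions: the search grid must be dense enough to approximate the oracle within the fixed $\epsilon$, yet sparse enough ($N_n=\smallo{a_n^4}$) for the union bound to overcome the $\smallo{a_n^{-4}}$ pointwise rate of Theorem~\ref{data.loss.are}; reconciling the two is precisely what dictates the $a_n^4$ gain there and the (necessarily very slowly shrinking) mesh of Section~\ref{sec-data-driven}. A secondary nuisance is establishing the $\tau$-equicontinuity uniformly up to the compactification point $\tau=\infty$, where $\alpha_i(\tau)\to1$ and $\qhatb(\eta,\tau)$ degenerates to the unshrunk predictor, and uniformly over realizations of $\X$ modulo the $\bigo{1}$ control of $n^{-1}\sum_i|X_i|$.
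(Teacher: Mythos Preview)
Your proposal is correct and follows essentially the same route as the paper. The paper also splits the comparison through an intermediate grid oracle $(\eta_n^{\nett},\tau_n^{\nett})=\argmin_{(\eta,\tau)\in(\nett_{n,1}\cap\hat M_n)\otimes\nett_{n,2}}L_n(\thetab,\qhatb(\eta,\tau))$: it handles (grid oracle vs true oracle) via a discretization lemma (your part (ii), packaged as Lemma~\ref{lem:sec4.temp}, proved exactly through the Lipschitz bounds you sketch together with the containment $\hat M_n\subseteq[-a_n,a_n]$ with high probability, Lemma~\ref{lem:sec4.temp.1}); and it handles (ARE vs grid oracle) by using $\AD_n(\etah^{\ADD}_n,\tauh^{\ADD}_n)\le\AD_n(\eta_n^{\nett},\tau_n^{\nett})$, Markov, and then bounding $\ex\sup_{\text{grid}}|\AD_n-L_n|$ by $|\nett_n|\cdot\sup_{(\eta,\tau)}\ex|\AD_n-L_n|$, which is your union-bound step for (i). One small correction: you state the grid must satisfy $N_n=\smallo{a_n^4}$, but $N_n=\bigo{a_n^4}$ already suffices since $\bigo{a_n^4}\cdot\smallo{a_n^{-4}}=\smallo{1}$, and this is precisely the cardinality the paper's grid has.
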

\begin{thm}\label{data.are.oracle.risk}
	Under Assumptions A1 and A3,  and for all $\thetab$ satisfying Assumption~$\text{A2}$, 
	$$\lim_{n \to \infty} R_n\big(\thetab,\qhatb( \etah^{\ADD}_n, \tauh^{\ADD}_n)\big)- \ex \big [L_n\big(\thetab,\qhatb (\eta_n^{\sf DOR},\tau_n^{\sf DOR})\big) \big ] = 0.$$
\end{thm}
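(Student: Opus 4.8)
The plan is to deduce this risk bound from the loss bound of Theorem~\ref{data.are.oracle.loss} by upgrading convergence in probability to convergence in mean. Since the predictive risk of a data-driven rule is exactly the expectation of its loss, $R_n(\thetab,\qhatb(\etah^{\ADD}_n,\tauh^{\ADD}_n))=\ex[L_n(\thetab,\qhatb(\etah^{\ADD}_n,\tauh^{\ADD}_n))]$, so the claim is that $\ex[D_n]\to0$, where $D_n:=L_n(\thetab,\qhatb(\etah^{\ADD}_n,\tauh^{\ADD}_n))-L_n(\thetab,\qhatb(\eta_n^{\sf DOR},\tau_n^{\sf DOR}))$. Because the minimization grid $(\nett_{n,1}\cap\hat{M}_n)\otimes\nett_{n,2}$ over which $(\etah^{\ADD}_n,\tauh^{\ADD}_n)$ is picked is contained in $[\hat{m}_n(\alpha_1),\hat{m}_n(\alpha_2)]\times[0,\infty]$, the region on which $(\eta_n^{\sf DOR},\tau_n^{\sf DOR})$ minimizes $L_n$, we have $D_n\ge0$ deterministically, and by Theorem~\ref{data.are.oracle.loss} also $D_n\to0$ in probability. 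Thus it suffices to bound $\ex[D_n]$, which I would split according to whether the random shrinkage window $\hat{M}_n=[\hat{m}_n(\alpha_1),\hat{m}_n(\alpha_2)]$ stays inside the box $[-a_n,a_n]$.

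On the event $E_n:=\{\hat{M}_n\subseteq[-a_n,a_n]\}$, both $\eta_n^{\sf DOR}$ and $\etah^{\ADD}_n$ lie in $[-a_n,a_n]$. Write $\Xi_n$ for the maximum of $|\AD_n(\eta,\tau)-L_n(\thetab,\qhatb(\eta,\tau))|$ over the minimization grid and $(\tilde\eta,\tilde\tau)$ for the grid point in that set closest to $(\eta_n^{\sf DOR},\tau_n^{\sf DOR})$ (available on $E_n$). Chaining $\AD_n(\etah^{\ADD}_n,\tauh^{\ADD}_n)\le\AD_n(\tilde\eta,\tilde\tau)$ with $|\AD_n-L_n|\le\Xi_n$ at the two grid points gives, on $E_n$,
\[
D_n\;\le\;2\Xi_n+\rho_n,\qquad \rho_n:=L_n\bigl(\thetab,\qhatb(\tilde\eta,\tilde\tau)\bigr)-L_n\bigl(\thetab,\qhatb(\eta_n^{\sf DOR},\tau_n^{\sf DOR})\bigr)\ge0 .
\]
The discretization error $\rho_n$ is $o(1)$ in $L_1$ because $(\eta,\tau)\mapsto L_n(\thetab,\qhatb(\eta,\tau))$ is Lipschitz on $[-a_n,a_n]\times[0,\infty]$ with constant of order $a_n+n^{-1}\sum_i|X_i|$ — this follows from $|\partial_{\qhat_i}l_i|=(b_i+h_i)\,|\Phi((\qhat_i-\theta_i)/\sqrt{\sigma_{f,i}})-\bt_i|\le b_i+h_i$ and the explicit dependence of $\qhat_i(\eta,\tau)$ on $\eta$ and on $\alpha_i(\tau)$ — and the grid of Section~\ref{sec-data-driven} is calibrated with sufficiently fine resolution. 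Since that grid also has of order $a_n^4$ points and Theorem~\ref{data.loss.are} makes $\ex|\AD_n(\eta,\tau)-L_n(\thetab,\qhatb(\eta,\tau))|$ uniformly $o(a_n^{-4})$ over $|\eta|\le a_n,\ \tau\in[0,\infty]$, summing over the grid yields $\ex[\Xi_n]\le(\text{number of grid points})\times o(a_n^{-4})\to0$; hence $\ex[D_n\indicator{E_n}]\le2\ex[\Xi_n]+\ex[\rho_n]\to0$.

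On $E_n^c$, since $L_n\ge0$ and $|\etah^{\ADD}_n|\le a_n$ always, $D_n\le L_n(\thetab,\qhatb(\etah^{\ADD}_n,\tauh^{\ADD}_n))\le M_n:=\sup_{|\eta|\le a_n,\,\tau\in[0,\infty]}L_n(\thetab,\qhatb(\eta,\tau))$. Using \eqref{eq:loss-function}, the elementary bounds $0\le G(w,\beta)\le\phi(0)+|w|$, and $\qhat_i(\eta,\tau)-\theta_i=\alpha_i(\tau)(X_i-\theta_i)+(1-\alpha_i(\tau))(\eta-\theta_i)+\sqrt{\sigma_{f,i}+\alpha_i(\tau)\sigma_{p,i}}\,\Phi^{-1}(\bt_i)$, Assumptions A1--A2 give $M_n\le C(1+a_n+n^{-1}\sum_i|X_i-\theta_i|+n^{-1}\sum_i|\theta_i|)$ and hence $\ex[M_n^2]=O(a_n^2)$. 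On the other hand $\alpha_1=\inf_i\bt_i>0$ and $\alpha_2=\sup_i\bt_i<1$ by A1, so the sample quantiles $\hat{m}_n(\alpha_1),\hat{m}_n(\alpha_2)$ sit deep inside $[-a_n,a_n]$ with overwhelming probability: by A2 at most $2a_n^{-1}\sum_i|\theta_i|=o(n)$ coordinates have $|\theta_i|>a_n/2$, while for the rest $\Pr(|X_i|>a_n)\le2\bigl(1-\Phi(a_n/(2\sup_i\sqrt{\sigma_{p,i}}))\bigr)$, so a counting/Markov argument gives $\Pr(E_n^c)\le C\exp(-c\,a_n^2)=C\exp(-c(\log\log n)^2)=o(a_n^{-2})$. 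Cauchy--Schwarz then yields $\ex[D_n\indicator{E_n^c}]\le(\ex[M_n^2]\,\Pr(E_n^c))^{1/2}\to0$, and adding the two contributions gives $\ex[D_n]\to0$, which is the assertion.

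\textbf{Main obstacle.} The delicate step is the uniformization on $E_n$: Theorem~\ref{data.loss.are} only bounds $\ex|\AD_n-L_n|$ at a single $(\eta,\tau)$, whereas the argument needs this smallness simultaneously over the whole minimization region. Bridging this gap is precisely what the $a_n^4$ slack in Theorem~\ref{data.loss.are} buys (roughly one factor $a_n^2$ per tuning parameter, to absorb a grid of cardinality of order $a_n^4$), and it has to be matched by a Lipschitz/discretization estimate for $L_n$ of the same order — the ``discretization step'' flagged after Theorem~\ref{origin.risk.loss}. A secondary but genuine subtlety is that the shrinkage location is pinned only to the data-dependent window $\hat{M}_n$, so one must separately show that the oracle location escapes $[-a_n,a_n]$ with probability only $o(a_n^{-2})$, fast enough to beat the $O(a_n^2)$ growth of $\ex[M_n^2]$ in the crude bound used on $E_n^c$.
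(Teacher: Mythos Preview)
Your proposal is correct and follows essentially the same route as the paper: both arguments bound $\ex[D_n]$ directly by chaining through $\AD_n$ at a grid point (using $\AD_n(\etah^{\ADD}_n,\tauh^{\ADD}_n)\le\AD_n$ at that point), controlling the grid supremum of $|\AD_n-L_n|$ via the $a_n^4$ rate of Theorem~\ref{data.loss.are} summed over the $O(a_n^4)$ grid, handling the discretization error by the Lipschitz property of $L_n$, and dealing with the event $\{\hat M_n\not\subseteq[-a_n,a_n]\}$ by Cauchy--Schwarz against $\ex L_n^2$. The only organizational difference is that the paper packages the discretization step and the rare-event control into Lemma~\ref{lem:sec4.temp} (via Lemma~\ref{lem:sec4.temp.1}) and uses the discretized oracle $(\eta_n^{\nett},\tau_n^{\nett})$ as the intermediate grid point, whereas you make the $E_n/E_n^c$ split explicit and use the nearest grid point; these are equivalent.
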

\begin{cor}\label{cor:data.are.oracle}
	Under Assumptions A1 and A3, for all $\thetab$ satisfying Assumption~$\text{A2}$ and for any estimator $\hat{\tau}_n \geq 0$ and $\etah_n  \in [\hat{m}_n(\alpha_1), \hat{m}_n(\alpha_2)]$, 
	\begin{align*}
	\mathrm{I.} \;\; & \lim_{n \to \infty} P \left\{ L_n\big(\thetab, \qhatb(\etah^{\ADD}_n, \tauh^{\ADD}_n)\big)\geq L_n\big(\thetab,\qhatb(\etah_n, \tauh_n)\big) + \epsilon \right\}= 0 \text{ for any fixed } \epsilon > 0. \\
	\mathrm{II.}\;\; & \lim_{n \to \infty} R_n\big(\thetab,\qhatb(\etah^{\ADD}_n,\tauh^{\ADD}_n)\big) - R_n\big(\thetab,\qhatb(\etah_n, \tauh_n)\big) \leq 0. \hspace{10cm}
	\end{align*}
\end{cor}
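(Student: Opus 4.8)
The plan is to deduce both parts of the corollary from the oracle-optimality Theorems~\ref{data.are.oracle.loss} and \ref{data.are.oracle.risk}, using the data-driven oracle $\qhatb(\eta_n^{\sf DOR},\tau_n^{\sf DOR})$ as an intermediary. The one structural fact I would use is that, under the stated hypotheses $\tauh_n\ge 0$ and $\etah_n\in[\hat{m}_n(\alpha_1),\hat{m}_n(\alpha_2)]$, the (data-dependent) pair $(\etah_n,\tauh_n)$ is a feasible point for the program defining $(\eta_n^{\sf DOR},\tau_n^{\sf DOR})$; since $L_n(\thetab,\qhatb(\eta,\tau))$ is, for fixed $(\eta,\tau)$, a deterministic function of $\X$ that the oracle minimizes over that feasible set, we get pointwise in $\X$ that
$$ L_n\big(\thetab,\qhatb(\eta_n^{\sf DOR},\tau_n^{\sf DOR})\big)~\le~L_n\big(\thetab,\qhatb(\etah_n,\tauh_n)\big). $$
All that remains is bookkeeping around this inequality together with the two theorems.

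For Part~I, fix $\epsilon>0$. On the event $\{\,L_n(\thetab,\qhatb(\etah^{\ADD}_n,\tauh^{\ADD}_n))\ge L_n(\thetab,\qhatb(\etah_n,\tauh_n))+\epsilon\,\}$ the displayed inequality forces $L_n(\thetab,\qhatb(\etah^{\ADD}_n,\tauh^{\ADD}_n))\ge L_n(\thetab,\qhatb(\eta_n^{\sf DOR},\tau_n^{\sf DOR}))+\epsilon$ as well, so this event is contained in the latter one, whose probability tends to $0$ by Theorem~\ref{data.are.oracle.loss}. Taking $n\to\infty$ gives I.

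For Part~II, I would read $R_n(\thetab,\qhatb(\etah_n,\tauh_n))$ at the random argument as the actual risk $\ex_{\X}\big[L_n(\thetab,\qhatb(\etah_n,\tauh_n))\big]$. Since $L_n\ge 0$, Tonelli lets me take $\X$-expectations through the pointwise inequality above to obtain $\ex\big[L_n(\thetab,\qhatb(\eta_n^{\sf DOR},\tau_n^{\sf DOR}))\big]\le R_n(\thetab,\qhatb(\etah_n,\tauh_n))$. Combining this with Theorem~\ref{data.are.oracle.risk}, which gives $R_n(\thetab,\qhatb(\etah^{\ADD}_n,\tauh^{\ADD}_n))-\ex[L_n(\thetab,\qhatb(\eta_n^{\sf DOR},\tau_n^{\sf DOR}))]\to 0$, yields
$$ R_n\big(\thetab,\qhatb(\etah^{\ADD}_n,\tauh^{\ADD}_n)\big)-R_n\big(\thetab,\qhatb(\etah_n,\tauh_n)\big)~\le~R_n\big(\thetab,\qhatb(\etah^{\ADD}_n,\tauh^{\ADD}_n)\big)-\ex\big[L_n\big(\thetab,\qhatb(\eta_n^{\sf DOR},\tau_n^{\sf DOR})\big)\big]~\longrightarrow~0, $$
so the $\limsup$ of the left-hand side is at most $0$, which is II.

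I do not expect a genuine obstacle in the corollary itself: it is a short consequence of the two oracle theorems plus the minimality of the oracle over $[0,\infty]\times[\hat{m}_n(\alpha_1),\hat{m}_n(\alpha_2)]$. The only points worth a sentence of care are (i) that $(\etah_n,\tauh_n)$ is indeed an admissible competitor in that program --- immediate from the range constraints assumed in the statement --- and (ii) the interpretation of $R_n$ at a random argument and the Tonelli step used in Part~II. The substantive work --- pushing the uniform $L_1$ control of Theorem~\ref{data.loss.are} through the discretized grids $\nett_{n,1},\nett_{n,2}$ and the empirical-quantile set $\hat{M}_n$ (in particular showing $[\hat{m}_n(\alpha_1),\hat{m}_n(\alpha_2)]\subseteq[-a_n,a_n]$ with probability $\to 1$, which holds under Assumptions A1--A2 since these empirical quantiles stay bounded while $a_n=\log\log n\to\infty$) up to Theorems~\ref{data.are.oracle.loss}--\ref{data.are.oracle.risk} --- has already been carried out upstream, so from the vantage point of this corollary the hard part is behind us.
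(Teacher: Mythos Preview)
Your proposal is correct and follows exactly the same approach as the paper's own proof, which is a single sentence: ``The results follow directly from Theorems~\ref{data.are.oracle.loss} and~\ref{data.are.oracle.risk} as $(\eta^{\sf DOR}_n,\tau^{\sf DOR}_n)$ minimizes the loss $L_n(\thetab,\qhatb(\eta,\tau))$ among the class $\mathcal{S}$.'' You have simply spelled out the event-containment and expectation arguments that the paper leaves implicit.
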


The EBML estimate of the hyperparameters are given by
$$
\tauh_n^{\text{ML}}=\argmin_{\tau \in [0,\infty]} \frac{1}{n}\sum_{i=1}^n \bigg( \frac{(X_i-f(\tau))^2}{\tau+\sigma_{p,i}} +\log(\tau+\sigma_{p,i}) \bigg ) \quad \text{ and } \quad
\etah_n^{\text{ML}}= f(\tauh_n^{\text{ML}})~,
$$
where $f(\tau)=f_1(\tau)\, I\{f_1(\tau)\in [\hat{m}_n(\alpha_1), \hat{m}_n(\alpha_2)]\} + \hat{m}_n(\alpha_1)\, I\{f_1(\tau)<\hat{m}_n(\alpha_1)\}+ \hat{m}_n(\alpha_2)\, I\{f_1(\tau)>\hat{m}_n(\alpha_2)\}$ and
$f_1(\tau) =( \sum_{i=1}^n (\tau+\sigma_{p,i})^{-1}X_i )   / ( \sum_{i=1}^n (\tau+\sigma_{p,i})^{-1})$.
The method of moments (MM) estimates are roots of the following equations:
$$\tau=\frac{1}{n-1}\bigg( \sum_{i=1}^n \big(X_i-\eta\big)^2- (1-1/n)\sigma_{p,i}\bigg)_+ \, \text{ and } \eta=f(\tau).$$
Unlike $(\etah_n^{\ADD},\tauh_n^{\ADD})$, the EBML and EBMM estimates of the hyperparameters  do not depend on the shape of the loss functions $\{(b_i,h_i): 1 \leq i \leq n\}$. Thus, the EBML and EBMM estimators $\qhatb(\etah^{\text{ML}}, \tauh^{\text{ML}})$ and $\qhatb(\etah^{\text{MM}}, \tauh^{\text{MM}})$ do not always agree with the ARE based estimator $\qhatb(\etah^{\ADD},\tauh^{\ADD})$. 
\par 
\textit{Results on Estimators in $\mathcal{S}^G$.} Following \eqref{eq:linear.est}, the class of estimators with shrinkage towards the Grand Mean ($\Xm$) of the past observations is of the following form:  for $i=1,\ldots,n$,
\begin{align}\label{eq:linear.est.g}
\qhat_i^G(\tau)=\alpha_i(\tau) X_i + (1-\alpha_i(\tau)) \Xm + (\sigma_{f,i}+\alpha_i(\tau) \sigma_{p,i})^{1/2}\, \Phi^{-1}(\bt_i)~, 
\end{align} 
where $\tau$ varies over $0$ to $\infty$, and $\alpha_i(\tau)$, and $\bt_i$ are defined just below Equation~\eqref{eq:linear.est}. For any fixed $\tau$, unlike estimators in $\mathcal{S}$, $\qhatb^G(\tau)$ is no longer a coordinatewise independent rule. In Section~\ref{sec-grand-mean}, we develop an estimation strategy which estimates the loss of estimators in $\mathcal{S}^G$ uniformly well over the grid $\nett_n$ of Section~\ref{subsec:main.results}.
\begin{thm}\label{mean.loss.are}
	Under Assumptions A1 and A3, for all $\thetab$ satisfying Assumption $\text{A2}$ \pr{and for all estimates $\qhatgb(\tau) \in \mathcal{S}^G$},
	$$ \lim_{n \to \infty}  \ex \bigg\{ \sup_{\tau \in \nett_n } \left|\AGhat_n(\tau)-L_n(\thetab,\qhatgb(\tau))\right| \bigg \} = 0~.$$ 
\end{thm}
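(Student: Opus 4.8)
The plan is to read Theorem~\ref{mean.loss.are} off the class-$\mathcal{S}$ result, Theorem~\ref{data.loss.are}, by treating the grand mean as a particular, tightly concentrated and (eventually) bounded, choice of the shrinkage location~$\eta$. On the loss side there is nothing to prove: $\qhat_i^G(\tau)$ is exactly the class-$\mathcal{S}$ rule $\qhat_i(\eta,\tau)$ evaluated at $\eta=\bar{X}_n$, so
$$L_n\big(\thetab,\qhatgb(\tau)\big)=L_n\big(\thetab,\qhatb(\bar{X}_n,\tau)\big)\qquad\text{for every }\tau .$$
Next, write $\bar{X}_n=\bar{\theta}_n+\bar{\epsilon}_n$ with $\bar{\theta}_n=n^{-1}\sum_i\theta_i$, which Assumption~A2 keeps bounded by some~$M$, and $\bar{\epsilon}_n\sim N\big(0,n^{-2}\sum_i\sigma_{p,i}\big)$; since $\sup_i\sigma_{p,i}<\infty$ (from Assumption~A3 together with $\sup_i\sigma_{f,i}<\infty$), $\var(\bar{\epsilon}_n)=\bigo{n^{-1}}$. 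Hence on $\mathcal{E}_n=\{|\bar{X}_n|\le a_n\}$, with $a_n=\log\log n$, a Gaussian tail bound gives $\Pr(\mathcal{E}_n^{c})\le\exp(-c\,n\,a_n^{2})$ for $n$ large, which is smaller than any power of $n^{-1}$.

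By the construction of $\AGhat_n$ in Section~\ref{sec-grand-mean}, we may take it to coincide with $\AD_n(\bar{X}_n,\cdot)$ up to a per-coordinate leave-one-out correction: the $i$-th summand uses the leave-one-out mean $\bar{X}_n^{(-i)}=(n-1)^{-1}\sum_{j\ne i}X_j$ in place of $\bar{X}_n$, so that the Hermite identity~\eqref{her.property} stays valid for each term. The discrepancy is a shift of order $|X_i|/n$ of the argument of the bounded, piecewise-polynomial estimate $\Sm_i$, and using the $\pm n$-truncation built into $\Sm_i$ (in the manner of Proposition~\ref{univariate.origin}) one shows $\ex\sup_{\tau\in\net}\big|\AGhat_n(\tau)-\AD_n(\bar{X}_n,\tau)\big|\to 0$. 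It then suffices to control $\ex\big[\indicator{\mathcal{E}_n}\sup_{\tau\in\net}\big|\AD_n(\bar{X}_n,\tau)-L_n(\thetab,\qhatb(\bar{X}_n,\tau))\big|\big]$. On $\mathcal{E}_n$ we snap $\bar{X}_n$ to the nearest point $\eta^{\star}$ of the auxiliary $\eta$-grid $\nett_{n,1}$ of Section~\ref{sec-data-driven} (which covers $[-a_n,a_n]$ with vanishing spacing), and split the difference into $|\AD_n(\bar{X}_n,\tau)-\AD_n(\eta^{\star},\tau)|$, $|\AD_n(\eta^{\star},\tau)-L_n(\thetab,\qhatb(\eta^{\star},\tau))|$, and $|L_n(\thetab,\qhatb(\eta^{\star},\tau))-L_n(\thetab,\qhatb(\bar{X}_n,\tau))|$. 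The last term is $\bigo{|\bar{X}_n-\eta^{\star}|}$ uniformly in $\tau$ by Lipschitz continuity of the loss in $\eta$ ($|G'|\le 1$ from~\eqref{eq:loss-function} and bounded weights by Assumption~A1); the first term is handled by the same $\eta$-sensitivity estimate for $\AD_n$ used in the leave-one-out step; and the middle term, after a Cauchy--Schwarz union bound over the finite grid $\nett_{n,1}\otimes\net$, is $\smallo{1}$ by the uniform-in-$(\eta,\tau)$ moment bounds underlying Theorem~\ref{data.loss.are} (the $L_2$-analogue of Theorem~\ref{origin.loss.are}), which dominate the cardinality of that grid.

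Finally, on the rare event $\mathcal{E}_n^{c}$ we bound crudely: $\sup_{\tau}|\AGhat_n(\tau)|\le n$ by the $\pm n$ truncation, and $\sup_{\tau}|L_n(\thetab,\qhatgb(\tau))|$ is at most an affine function of $\|\X\|_1$ and $\sum_i|\theta_i|$ via $l_i(\theta_i,q)\le(b_i+h_i)\big(|q-\theta_i|+\bigo{1}\big)$ and Assumption~A1; multiplying by $\Pr(\mathcal{E}_n^{c})$, which beats any polynomial, makes this contribution $\smallo{1}$. Combining the three displays through the triangle inequality gives the claim.

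The main obstacle is the $\eta$-sensitivity control, both in Section~\ref{sec-grand-mean}'s construction step and in the snapping argument: a crude Lipschitz bound in $\eta$ for the Hermite-based estimate $\Sm_i$ is too weak, since its pointwise derivative is polynomially large in~$n$. One must instead exploit the $\pm n$ truncation together with the geometric decay of the Hermite coefficients (valid because $2\sigma_{p,i}d_i^2(\tau)\le 2\sigma_{p,i}/\sigma_{f,i}<1$ by Assumption~A3) to obtain an expected-difference bound that is only polynomially large in $a_n=\log\log n$ — precisely the estimate that lets the union bound over $\nett_{n,1}\otimes\net$ go through, and that couples the present argument to the machinery of Section~\ref{sec-data-driven}.
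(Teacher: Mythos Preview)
Your plan has a genuine gap in the snapping step, and it also misreads the construction of $\AGhat_n$.

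\textbf{On the construction.} The paper does \emph{not} use a leave-one-out mean in $\AGhat_n$. It simply sets $\AGhat_n(\tau,\X)=\Ahat_n(\tau,\X-\eta)\big|_{\eta=\Xm}$, i.e.\ $\AGhat_n(\tau)=\AD_n(\Xm,\tau)$ exactly. The dependence between $X_i-\Xm$ across coordinates is then handled not by leave-one-out surgery but by comparing $\Xm$ to the deterministic surrogate $\Tm=\bar{\theta}_n$ and controlling the discrepancy via a mean-value/Taylor step together with Lemma~\ref{lem.mean.2}.

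\textbf{On the snapping argument.} Your decomposition hinges on showing that $\ex\big|\AD_n(\Xm,\tau)-\AD_n(\eta^{\star},\tau)\big|\to 0$ with $|\Xm-\eta^{\star}|\le \delta_{n,1}=O(a_n^{-1})$. By the mean value theorem this requires controlling $\delta_{n,1}\cdot \ex\big[\sup_{|\eta|\le a_n}|\partial_\eta \AD_n(\eta,\tau)|\big]$. But the only derivative control available is Lemma~\ref{lem.mean.2}, which gives $\ex\big[(\partial_\eta \That_i)^2\big]=o(n/a_n^2)$; tracing the variance proof (Lemma~\ref{lem:variance.bound}) shows this is actually $O(n^\beta)$ for some $\beta\in(0,1)$, i.e.\ polynomial in $n$, not in $a_n$. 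Hence $\ex|\partial_\eta \AD_n|$ can be of order $n^{\beta/2}$, and multiplying by $\delta_{n,1}=O(a_n^{-1})$ leaves a divergent term. Your hoped-for ``expected-difference bound that is only polynomially large in $a_n$'' is not available: the $e^{U^2/4}$ factor in the Hermite pointwise bound makes the derivative genuinely grow like a power of $n$. Refining the grid to spacing $n^{-1/2}$ would fix this term but then blow up the union bound over $\nett_{n,1}$, since Theorem~\ref{data.loss.are} only supplies an $o(a_n^{-4})$ rate per grid point.

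\textbf{What the paper does instead.} The paper centers at $\Tm$ rather than at a grid point. Since $\ex(\Xm-\Tm)^2=O(n^{-1})$, Cauchy--Schwarz against the derivative gives, per coordinate, $\big\{n^{-1}\cdot o(n/a_n^2)\big\}^{1/2}=o(a_n^{-1})$, and after averaging and multiplying by $|\nett_n|=O(a_n)$ this is $o(1)$. The point is that the $n^{-1/2}$ concentration of $\Xm$ around $\Tm$ exactly cancels the $n^{\beta/2}$ growth of the derivative; the grid spacing $a_n^{-1}$ does not. Concretely, the paper proves (a) $\ex\sup_{\tau\in\nett_n}|L_n-R_n|\to 0$ via the surrogates $\Lt_n,\Rt_n$ (Lemma~\ref{lem.mean.1} and a variance bound for $\Lt_n$), and (b) $\ex\sup_{\tau\in\nett_n}|R_n-\AGhat_n|\to 0$ by passing through $\Rt_n$ and the surrogate estimator $\AT_n(\tau,\X-\Tm)$, using Lemma~\ref{data.risk.are} for $|\Rt_n-\AT_n|$ and the Cauchy--Schwarz/Lemma~\ref{lem.mean.2} argument for $|\AT_n-\AGhat_n|$. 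This surrogate-at-$\Tm$ idea is the piece your plan is missing.
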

We propose an estimate $\tauh^{\AG}_n=\argmin_{\tau \in \nett_n} \AGhat_n(\tau)$ for the hyperparameter in this class and compare its asymptotic behavior with the oracle loss $\tau^{\sf GOR}_n=\argmin_{\tau \in [0,\infty]} L_n(\thetab,\qhatgb(\tau))$. Like the other two classes, based on Theorem~\ref{mean.loss.are}, here we also derive the asymptotic optimality of our proposed estimate in terms of both the predictive risk and loss. 

\begin{thm}\label{mean.decision.theoretic.properties}
	Under Assumptions A1, A3, for all $\thetab$ satisfying Assumption~$\text{A2\,}$\\[1ex]
	\texttt{(A)} comparing with the oracle loss estimator, we have the following:
	\begin{align*}
	\mathrm{I.} \;\; &\lim_{n \to \infty} \Pr \left\{ L_n\big(\thetab,\qhatgb(\tauh^{\AG}_n)\big)\geq L_n\big(\thetab,\qhatgb(\tau^{\sf GOR}_n)\big)+ \epsilon\right\} = 0 \text{ for any fixed } \epsilon > 0.\\
	\mathrm{II.} \;\; & \lim_{n \to \infty} R_n\big(\thetab,\qhatgb(\tauh^{\AG}_n)\big)- \ex \left[ L_n\big(\thetab,\qhatgb(\tau^{\sf GOR}_n) \big) \right] = 0. \hspace{10cm}
	\end{align*}
	\texttt{(B)} for any estimate $\hat{\tau}_n \geq 0$ of the hyperparameter, we have the following:
	\begin{align*}
	\mathrm{I.} \;\; & \lim_{n \to \infty} \Pr \left\{ L_n\big(\thetab, \qhatgb(\tauh^{\AG}_n)\big)\geq L_n\big(\thetab,\qhatgb(\tauh_n)\big) + \epsilon \right\}= 0 \text{ for any fixed } \epsilon > 0. \\
	\mathrm{II.}\;\; & \lim_{n \to \infty} R_n\big(\thetab,\qhatgb(\tauh^{\AG}_n)\big) -  R_n\big(\thetab,\qhatgb(\tauh_n)\big) \leq 0\,. \hspace{10cm}
	\end{align*}
\end{thm}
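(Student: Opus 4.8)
\emph{Proof plan.} The plan is to deduce all four assertions from Theorem~\ref{mean.loss.are} by the same device already used for the class $\mathcal{S}^0$: couple the data-driven minimizer $\tauh^{\AG}_n=\argmin_{\tau\in\nett_n}\AGhat_n(\tau)$ with the oracle $\tau^{\sf GOR}_n$ through the uniform approximation of the loss by $\AGhat_n$, the only new ingredient being that $\tau^{\sf GOR}_n$ ranges over the continuum $[0,\infty]$ whereas $\AGhat_n$ is minimized over the discrete grid $\nett_n$. I will use repeatedly the pathwise inequality $L_n(\thetab,\qhatgb(\tauh_n))\ge L_n(\thetab,\qhatgb(\tau^{\sf GOR}_n))$, which holds for every estimator $\tauh_n\ge0$ because $\tau^{\sf GOR}_n$ minimizes $\tau\mapsto L_n(\thetab,\qhatgb(\tau))$ over all of $[0,\infty]$; this makes part~(B) a formal consequence of part~(A), so it suffices to prove (A).

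For (A).I I would set $\delta_n:=\sup_{\tau\in\nett_n}\big|\AGhat_n(\tau)-L_n(\thetab,\qhatgb(\tau))\big|$, so that $\ex[\delta_n]\to0$ by Theorem~\ref{mean.loss.are} and hence $\delta_n\to0$ in probability. The crux is a \emph{discretization bound}: one must exhibit $\taut_n\in\nett_n$ (depending on $\X$ and $\thetab$) with $L_n(\thetab,\qhatgb(\taut_n))\le L_n(\thetab,\qhatgb(\tau^{\sf GOR}_n))+\rho_n$ and $\ex[\rho_n]\to0$. Granting this, the fact that $\tauh^{\AG}_n$ minimizes $\AGhat_n$ over $\nett_n$ yields the pathwise chain
\begin{align*}
L_n(\thetab,\qhatgb(\tauh^{\AG}_n))
&\le \AGhat_n(\tauh^{\AG}_n)+\delta_n
\le \AGhat_n(\taut_n)+\delta_n\\
&\le L_n(\thetab,\qhatgb(\taut_n))+2\delta_n
\le L_n(\thetab,\qhatgb(\tau^{\sf GOR}_n))+2\delta_n+\rho_n,
\end{align*}
and since $2\delta_n+\rho_n\to0$ in probability, (A).I follows. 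For (A).II I would note that the chain also gives $0\le L_n(\thetab,\qhatgb(\tauh^{\AG}_n))-L_n(\thetab,\qhatgb(\tau^{\sf GOR}_n))\le 2\delta_n+\rho_n$ pathwise; taking expectations and using $\ex[\delta_n]\to0$, $\ex[\rho_n]\to0$ gives $\ex\big[L_n(\thetab,\qhatgb(\tauh^{\AG}_n))\big]-\ex\big[L_n(\thetab,\qhatgb(\tau^{\sf GOR}_n))\big]\to0$, and since $R_n(\thetab,\qhatgb(\tauh^{\AG}_n))=\ex\big[L_n(\thetab,\qhatgb(\tauh^{\AG}_n))\big]$ by the definition of the predictive risk, this is (A).II.

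Part~(B) then drops out: for (B).I, the pathwise bound $L_n(\thetab,\qhatgb(\tauh_n))\ge L_n(\thetab,\qhatgb(\tau^{\sf GOR}_n))$ shows the event in (B).I is contained in the event of (A).I, whose probability tends to $0$; for (B).II, writing
$$
R_n\big(\thetab,\qhatgb(\tauh^{\AG}_n)\big)-R_n\big(\thetab,\qhatgb(\tauh_n)\big)
=\Big(R_n\big(\thetab,\qhatgb(\tauh^{\AG}_n)\big)-\ex L_n\big(\thetab,\qhatgb(\tau^{\sf GOR}_n)\big)\Big)
+\Big(\ex L_n\big(\thetab,\qhatgb(\tau^{\sf GOR}_n)\big)-R_n\big(\thetab,\qhatgb(\tauh_n)\big)\Big),
$$
the first term tends to $0$ by (A).II and the second is $\le0$ since $R_n(\thetab,\qhatgb(\tauh_n))=\ex L_n(\thetab,\qhatgb(\tauh_n))\ge\ex L_n(\thetab,\qhatgb(\tau^{\sf GOR}_n))$, so the $\limsup$ is $\le0$.

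The genuinely non-routine step is the discretization bound, i.e.\ producing $\taut_n\in\nett_n$ with $\ex[\rho_n]\to0$: one must show that $\tau\mapsto L_n(\thetab,\qhatgb(\tau))=n^{-1}\sum_i l_i(\theta_i,\qhat_i^G(\tau))$ cannot fall appreciably between consecutive points of $\nett_n$, uniformly in $n$, including near the no-shrinkage endpoint $\tau=\infty$. I would reparametrize each coordinate by $\alpha_i(\tau)\in[0,1]$ and bound $\partial_{\alpha_i}l_i$ using boundedness of $G'$, of $\Phi^{-1}(\bt_i)$ and of $b_i+h_i$ (Assumption~A1), of $\sigma_{f,i}$ and $\sigma_{p,i}$ (the standing variance assumptions, with $\sigma_{p,i}<\sigma_{f,i}/(4e)$ under A3), and of $n^{-1}\sum_i|\theta_i|$ together with $\ex|\Xm|$ (Assumption~A2, using $\var(\Xm)=n^{-2}\sum_i\sigma_{p,i}\to0$); the claim then reduces to the explicit mesh of $\nett_n$ fixed in Section~\ref{subsec:main.results}. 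Note that the coupling of the coordinates through $\Xm$ does not interfere with this step, because for fixed data $\Xm$ enters each $l_i$ merely as a constant inside a smooth function of $\alpha_i$.
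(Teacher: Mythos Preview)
Your proposal is correct and follows essentially the same route as the paper, which does not spell out this proof but simply says it ``follows similarly from the proofs of Theorems~\ref{data.are.oracle.loss}, \ref{data.are.oracle.risk} and Corollary~\ref{cor:data.are.oracle}.'' Your pathwise chain $L_n(\tauh^{\AG}_n)\le \AGhat_n(\tauh^{\AG}_n)+\delta_n\le \AGhat_n(\taut_n)+\delta_n\le L_n(\taut_n)+2\delta_n\le L_n(\tau^{\sf GOR}_n)+2\delta_n+\rho_n$ is exactly the device used there, and your reduction of part~(B) to part~(A) via the pathwise minimality of $\tau^{\sf GOR}_n$ matches the paper's treatment of the analogous corollaries; the discretization bound you sketch is the $\mathcal{S}^G$ analogue of Lemma~\ref{lem:append.1.temp}, with the only twist (which you correctly note) being the extra $\Xm$ term in the derivative bound, handled via $\ex|\Xm|\le|\Tm|+O(n^{-1/2})$ and Assumption~A2. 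One minor point: rather than ``reparametrize each coordinate by $\alpha_i(\tau)$'' (which is coordinate-dependent), the paper uses the single substitution $\taut=\tau/(\tau+1)\in[0,1]$ and then bounds $\partial_{\taut}l_i$ via the chain rule through $\alpha_i$; this is what makes the mesh $\delta_n$ of $\nett_n$ uniform across coordinates and handles the endpoint $\tau=\infty$ cleanly.
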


\subsection{Organization of the Paper}
In Section~2, we provide a detailed explanation of the results involving the class of estimators $\mathcal{S}^0$. Treating this class as the fundamental case, through the proof of Theorem~\ref{origin.loss.are},  Section~2 explains the general  principle behind our asymptotic risk estimation methodology and the proof techniques used in this paper. The proofs of  Theorems~\ref{origin.risk.loss}, \ref{origin.are.oracle.loss} and \ref{origin.are.oracle.risk} and Corollary~\ref{cor:origin.are.oracle} are provided in Appendix~\ref{append-origin}. Section~\ref{section:experiments} discusses the performance of our prediction methodology in simulation experiments. Section~\ref{sec-data-driven} and its associated Appendix~\ref{append-data-driven} provide the proofs of  Theorems~\ref{data.loss.are}, \ref{data.are.oracle.loss} and \ref{data.are.oracle.risk} 
and Corollary~\ref{cor:data.are.oracle}, which deal with estimators in class $\mathcal{S}$. The proofs of Theorems~\ref{mean.loss.are} and \ref{mean.decision.theoretic.properties} involving class $\mathcal{S}^G$ are provided in Section~\ref{sec-grand-mean} and Appendix~\ref{append-grand-mean}.
In Table~\ref{table-glossary} of the Appendix, a detailed list of the notations used in the paper is provided.

\section{Proof of Theorem \ref{origin.loss.are} and Explanation of the $\A$ Method}\label{sec-2}

In this section, we provide a detailed explanation of the results on the estimators in $\mathcal{S}^0$. 
This case serves as a fundamental building block and contains all the essential ingredients involved in the general risk estimation method. In subsequent sections, the procedure is extended to \pr{$\mathcal{S}$ and $\mathcal{S}^G$}. We begin by laying out the proof of Theorem~\ref{origin.loss.are}. The \textit{decision theoretic optimality results} -- Theorems~\ref{origin.are.oracle.loss} and \ref{origin.are.oracle.risk} and Corollary~\ref{cor:origin.are.oracle} -- follow easily from Theorems~\ref{origin.loss.are} and \ref{origin.risk.loss}; their proofs are provided in Appendix~\ref{append-origin}. To prove Theorem~\ref{origin.risk.loss}, we use the fact that the parametric space is bounded (Assumption A2) and apply the uniform SLLN  argument \citep[Lemma~2.4]{newey1994} to establish the desired concentration. The detailed proof is in the Appendix~\ref{append-origin}.

\subsection{Proof of Theorem~\ref{origin.loss.are}}
We will use Proposition \ref{univariate.origin} in the proof.
Let $\Ahat_n(\Z,\tau)$ denote a randomized risk estimate before the Rao-Blackwellization step in Section~\ref{ARE.intro}.  
For any fixed $\tau$, $\{\That_i(X_i,Z_i,\tau): 1\leq i \leq n\}$ are independent of each other, so the Bias-Variance decomposition~yields
\begin{align}\label{eq.bias.var}
\begin{split}
&\ex \left[ \big(R_n(\thetab,\qhatb(\tau)) -\Ahat_n(\Z,\tau) \big)^2 \right] \\
& \leq A_n \bigg\{ \bigg(\frac{1}{n} \sum_{i=1}^n\bias(T_i(X_i,Z_i,\tau))\bigg)^2 + \frac{1}{n^2} \sum_{i=1}^n \var(T_i(X_i,Z_i,\tau))\bigg\}~,
\end{split}
\end{align}
where $A_n=\sup\{(b_i+h_i)^2(\sigma_{f,i}+ \alpha_i(\tau) \sigma_{p,i}): i=1,\ldots,n\}$ and $\alpha_i(\tau) = \tau / (\tau + \sigma_{p,i})$.
By Assumption A1 and A3, $\sup_n A_n < \infty$. From Proposition \ref{univariate.origin}, both terms on the right hand side after being scaled by $a_n^8$ uniformly converge to 0 as $n \to \infty$.   This shows that
$$\lim_{n \to \infty} a_n^8 \sup_{\tau \in [0,\infty]} \ex \big[ \big(R_n(\thetab,\qhatb(\tau)) -\Ahat_n(\Z,\tau) \big)^2\big]~=~0~,$$
where the expectation is over the distribution of $\Z$ and $\X$. 
As $\Ahat_n(\tau)=\ex[\Ahat_n(\Z,\tau)|X]$, using Jensen's inequality for conditional expectation, we have
$\ex[(R_n(\thetab,\qhatb(\tau)) -\Ahat_n(\Z,\tau) )^2] \geq \ex [ (R_n(\thetab,\qhatb(\tau)) -\Ahat_n(\tau) )^2]$ for any $n$, $\thetab$ and $\tau$. Thus,
$$\lim_{n \to \infty} a_n^8 \sup_{\tau \in [0,\infty]} \ex \big[  \big(R_n(\thetab,\qhatb(\tau)) -\Ahat_n(\tau) \big)^2 \big]~=~0~.$$
Thus, to complete the proof, it remains to establish Proposition~\ref{univariate.origin}, which shows that both the bias and variance converge to zero as dimension of the model increases.    We undertake this in the next section.
Understanding how the bias and variance is controlled will help the reader to appreciate the elaborate construction process of $\A$ estimates and our prescribed choices of the threshold parameter $\lambda_n(i)$ and truncation parameter \pr{$\Kn(i)$}. 

\subsection{Proof of Proposition~\ref{univariate.origin} Overview and Reduction to the Univariate Case}\label{uni.main.thm}

In this section, we outline the overview of the proof techniques used to establish Proposition~\ref{univariate.origin}.  It suffices to consider a generic univariate setting and consider each coordinate individually.  This will simplify our analysis considerably.  In addition, we will make use of the following two results about the property of the loss function~$G$.
The proof of these lemmas are given in Appendix~\ref{append-origin}.

\begin{lem}[Formula for the Loss Function]\label{loss.properties}
If $Y \sim N(\theta,\sigma)$, then
\begin{align}\label{eq:identity}
\ex_{\theta} \left[ b   (Y-q)^+  + h   (q-Y)^+ \right] =(b+h) \sqrt{\sigma} \,  G\left((q-\theta)/\sqrt{\sigma} \,,\,\bt\right)~,
\end{align}
where \bt=b/(b+h) and for all $w \in \Real$ and $\beta \in [0,1]$, $G(w,\beta) = \phi(w) + w \Phi(w) - \beta w$.
Also, if $\theta$ is known, the loss $l(\theta,q)$ is minimized at $q=\theta+\sqrt{\sigma}\Phi^{-1}(\bt)$ and the minimum value is
$(b+h)\sqrt{\sigma}\phi(\Phi^{-1}(\bt))$. 
\end{lem}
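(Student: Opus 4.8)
The plan is to reduce the identity \eqref{eq:identity} to two elementary Gaussian integrals and then read off the minimizer by one-variable calculus. First I would standardize: write $Y = \theta + \sqrt{\sigma}\,Z$ with $Z\sim N(0,1)$ and set $w = (q-\theta)/\sqrt{\sigma}$, so that $(Y-q)^+ = \sqrt{\sigma}\,(Z-w)^+$ and $(q-Y)^+ = \sqrt{\sigma}\,(w-Z)^+$. It then suffices to evaluate $\ex(Z-w)^+$ and $\ex(w-Z)^+$. Using $\phi'(z) = -z\phi(z)$ one gets $\int_w^\infty z\phi(z)\,dz = \phi(w)$, hence $\ex(Z-w)^+ = \phi(w) - w\bigl(1-\Phi(w)\bigr)$, and symmetrically $\ex(w-Z)^+ = \phi(w) + w\Phi(w)$; as a consistency check, their difference is $\ex(Z-w) = -w$.

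Substituting these back and collecting the coefficients of $\phi(w)$, $w\Phi(w)$ and $w$ gives $b\,\ex(Z-w)^+ + h\,\ex(w-Z)^+ = (b+h)\bigl(\phi(w) + w\Phi(w)\bigr) - b\,w = (b+h)\,G\bigl(w,\bt\bigr)$ with $\bt = b/(b+h)$; multiplying by $\sqrt{\sigma}$ and undoing the substitution $w = (q-\theta)/\sqrt{\sigma}$ is exactly \eqref{eq:identity}. For the second assertion, since $(b+h)\sqrt{\sigma} > 0$, minimizing $l(\theta,q)$ over $q$ is equivalent to minimizing $w \mapsto G(w,\bt)$; differentiating, the $\phi$ terms cancel (again via $\phi'=-z\phi$) and $\partial_w G(w,\bt) = \Phi(w) - \bt$, while $\partial_w^2 G(w,\bt) = \phi(w) > 0$, so $G(\cdot,\bt)$ is strictly convex with unique minimizer $w^\star = \Phi^{-1}(\bt)$, i.e.\ $q^\star = \theta + \sqrt{\sigma}\,\Phi^{-1}(\bt)$. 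Plugging $w^\star$ in, the terms $w^\star\Phi(w^\star) - \bt w^\star$ cancel and leave the minimum value $(b+h)\sqrt{\sigma}\,\phi\bigl(\Phi^{-1}(\bt)\bigr)$.

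There is no genuine obstacle here; the argument is a routine computation, and the only real work is careful bookkeeping of the Gaussian tail integrals plus the convexity check. The one point worth flagging is that $\Phi^{-1}(\bt)$ is finite precisely because Assumption A1 keeps $\bt = b/(b+h)$ bounded away from $0$ and $1$; in the excluded degenerate cases $\bt\in\{0,1\}$ the loss is one-sided, the minimizer escapes to $\pm\infty$, and those cases would need separate treatment, as the authors note.
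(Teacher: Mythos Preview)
Your proof is correct and follows essentially the same approach as the paper: both standardize to $Z\sim N(0,1)$, evaluate the Gaussian positive-part integrals via $\phi'(z)=-z\phi(z)$, and then identify the minimizer from $\partial_w G(w,\bt)=\Phi(w)-\bt$. The only cosmetic difference is that the paper uses the identity $(y-q)^+=(y-q)+(q-y)^+$ to reduce to a single integral $\E[q-\theta-Z]^+$, whereas you compute $\E(Z-w)^+$ and $\E(w-Z)^+$ separately; your added convexity check ($\partial_w^2 G=\phi>0$) is a nice touch the paper omits.
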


The next lemma gives an explicit formula for the Bayes estimator and the corresponding Bayes risk in the univariate setting.

\begin{lem}[Univariate Bayes Estimator] \label{linear.estimates}
Consider the univariate prediction problem where the past $X \sim N(\theta,\sigma_p)$, the future $Y \sim N(\theta,\sigma_f)$ and $\theta \sim N(\eta,\tau)$.
Consider the problem of minimizing the integrated Bayes risk.  Then,
$$
	\min_{q} \int R(\theta, q) \pi_{(\eta,\tau)}(\theta\,|\,x) d\theta ~=~ (b+h)\sqrt{\sigma_f+\alpha \sigma_p} \;\phi(\Phi^{-1}(\bt))~,
$$
where $\bt=b/(b+h)$, $\alpha = \tau / (\tau + \sigma_p)$, and $\pi_{(\eta,\tau)}(\theta~|~x)$ is the posterior density of $\theta$ given $X = x$.  Also, the  Bayes estimate $\qhat^{\sf Bayes}(\eta,\tau)$ that achieves the above minimum 
is given by
$$\qhat^{\sf Bayes}(\eta,\tau)=\alpha x + \pr{(1-\alpha)} \eta +\sqrt{\sigma_{f}+\alpha \sigma_{p}}\, \Phi^{-1}(\bt)~.$$
Finally, the risk $r(\theta, \qhat^{\sf Bayes}(\eta,\tau))$ of the Bayes estimator is
$$ (b+h) \sqrt{\sigma_f+\alpha^2 \sigma_p}\, G(c_{\tau}+d_{\tau} \, (\theta - \eta),\bt)~,$$
where
$$ c_{\tau} = \sqrt{ (1+\alpha \sigma_p) / (1+\alpha^2 \sigma_p)} \, \Phi^{-1}(\bt) \qquad \text{ and } \qquad  d_{\tau}=-(1-\alpha)/\sqrt{ \sigma_f+\alpha^2 \sigma_p}.$$
\end{lem}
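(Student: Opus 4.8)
The plan is to deduce the whole lemma from the single--variance check--loss identity in Lemma~\ref{loss.properties}, using nothing beyond Gaussian conjugacy to pin down the relevant distributions. Throughout, write $X=\theta+\sqrt{\sigma_p}\,\epsilon_1$ and $Y=\theta+\sqrt{\sigma_f}\,\epsilon_2$ with $\epsilon_1,\epsilon_2$ i.i.d.\ standard normal.

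\emph{Bayes estimate and integrated Bayes risk.} First I would note that, by conjugacy, the posterior of $\theta$ given $X=x$ is $N\!\big(\alpha x+(1-\alpha)\eta,\,\alpha\sigma_p\big)$ with $\alpha=\tau/(\tau+\sigma_p)$: the posterior precision is $\tau^{-1}+\sigma_p^{-1}$, the posterior mean is the precision--weighted average of $\eta$ and $x$, and the posterior variance is $\tau\sigma_p/(\tau+\sigma_p)=\alpha\sigma_p$. Since $Y=\theta+\sqrt{\sigma_f}\,\epsilon_2$ with $\epsilon_2$ independent of $\theta$, the posterior predictive law of $Y$ given $X=x$ is $N\!\big(\alpha x+(1-\alpha)\eta,\,\sigma_f+\alpha\sigma_p\big)$. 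Because the coordinatewise loss $l(\theta,q)=\ex_{Y\sim N(\theta,\sigma_f)}[\,b(Y-q)^++h(q-Y)^+\,]$ has already integrated out $Y$, for a fixed action $q$ one has $R(\theta,q)=l(\theta,q)$, so $\int R(\theta,q)\,\pi_{(\eta,\tau)}(\theta\mid x)\,d\theta$ is just the check--loss expectation of $q$ against the posterior predictive law. Applying Lemma~\ref{loss.properties} with the posterior predictive mean playing the role of $\theta$ and $\sigma_f+\alpha\sigma_p$ the role of $\sigma$ immediately gives that the minimizing action is $\alpha x+(1-\alpha)\eta+\sqrt{\sigma_f+\alpha\sigma_p}\,\Phi^{-1}(\bt)=\qhat^{\sf Bayes}(\eta,\tau)$ and that the minimal value is $(b+h)\sqrt{\sigma_f+\alpha\sigma_p}\,\phi(\Phi^{-1}(\bt))$ --- the first two claims (uniqueness coming along since $G(\cdot,\bt)$ is strictly convex).

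\emph{Risk of the Bayes estimate.} Next I would compute the frequentist risk of $\qhat^{\sf Bayes}(\eta,\tau)$ by observing that the prediction error $D:=Y-\qhat^{\sf Bayes}(\eta,\tau)(X)$ is affine in $(\epsilon_1,\epsilon_2)$, hence Gaussian: a one--line calculation gives $D\sim N\!\big(\mu_0,\,\sigma_f+\alpha^2\sigma_p\big)$ with $\mu_0=(1-\alpha)(\theta-\eta)-\sqrt{\sigma_f+\alpha\sigma_p}\,\Phi^{-1}(\bt)$, the variance being $\var\!\big(\sqrt{\sigma_f}\,\epsilon_2-\alpha\sqrt{\sigma_p}\,\epsilon_1\big)=\sigma_f+\alpha^2\sigma_p$. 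Since $l$ has integrated out $Y$, $r(\theta,\qhat^{\sf Bayes}(\eta,\tau))=\ex[\,bD^++h(-D)^+\,]$, and a second application of Lemma~\ref{loss.properties} --- this time with target $q=0$, mean $\mu_0$, and variance $\sigma_f+\alpha^2\sigma_p$ --- yields $(b+h)\sqrt{\sigma_f+\alpha^2\sigma_p}\,G\!\big({-\mu_0}/{\sqrt{\sigma_f+\alpha^2\sigma_p}},\,\bt\big)$. It only remains to expand the argument: it is affine in $\theta-\eta$, with slope $-(1-\alpha)/\sqrt{\sigma_f+\alpha^2\sigma_p}=d_\tau$ and with an intercept that is a positive multiple of $\Phi^{-1}(\bt)$; matching this intercept with $c_\tau$ completes the proof.

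\emph{Main obstacle.} I do not expect a genuine difficulty: the argument is bookkeeping resting on two invocations of Lemma~\ref{loss.properties}. The one point that must be handled with care is that \emph{two} different second moments occur and cannot be interchanged --- the posterior predictive variance $\sigma_f+\alpha\sigma_p$, which governs the Bayes rule and the integrated Bayes risk, versus the prediction--error variance $\sigma_f+\alpha^2\sigma_p$, which governs the frequentist risk (the coefficient of $\sigma_p$ changing from $\alpha$ to $\alpha^2$ according to whether $X$ is conditioned on or averaged over). Everything else is the algebra that rewrites $-\mu_0/\sqrt{\sigma_f+\alpha^2\sigma_p}$ in the form $c_\tau+d_\tau(\theta-\eta)$.
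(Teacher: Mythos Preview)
Your proposal is correct and follows essentially the same route as the paper's proof: compute the posterior (hence posterior predictive) by Gaussian conjugacy, invoke Lemma~\ref{loss.properties} once to read off the Bayes rule and the minimized integrated Bayes risk, then compute the Gaussian law of the prediction error $Y-\qhat^{\sf Bayes}(\eta,\tau)(X)$ and invoke Lemma~\ref{loss.properties} a second time to obtain the frequentist risk. Your presentation is slightly more streamlined---you go straight to the posterior predictive $N(\alpha x+(1-\alpha)\eta,\sigma_f+\alpha\sigma_p)$, whereas the paper routes through an intermediate variable $T\sim\pi(\theta\mid x)$ and a double expectation before collapsing to the same one-dimensional minimization---but the substance is identical, including your careful distinction between the $\sigma_f+\alpha\sigma_p$ and $\sigma_f+\alpha^2\sigma_p$ variances.
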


By Lemma~\ref{loss.properties}, note that the loss function  is scalable in \pr{$\sigma_{f}$}.  Also by Lemma~\ref{linear.estimates}, 
we observe that the risk calculation  depends only on the ratio $\sigma_p / \sigma_f$ and scales with $b+h$.  Thus,
without loss of generality, henceforth we will assume that $\sigma_f = 1$, $b+h = 1$ and write $\sigma = \sigma_p$ and $\bt = b/(b+h) = b$. As a convention, for any number $\beta \in [0,1]$, we write $\bar{\beta} = 1 - \beta$.

\textit{Reparametrization and some new notations.} In order to prove \pr{the desired result}, we will work with generic univariate risk estimation problems where $X \sim N(\theta,\sigma)$ and $Y \sim N(\theta,1)$.  Note that Assumption A3 requires that $\sigma < 1/(4e)$.
For ease  of presentation, we restate and partially reformulate the univariate version of \pr{the} methodology stated in Section~\ref{ARE.intro}.  We conduct sample splitting by adding independent Gaussian noise $Z$: 
$$U=X+\sqrt{\sigma} Z, \quad V=X -\sqrt{\sigma} Z.$$
Instead of \pr{$\tau \in [0,\infty]$},  we   reparameterize the problem using $\alpha= \tau/(\tau+\sigma) \in [0,1]$.  By Lemma~\ref{linear.estimates} and the fact that $\sigma_f = 1$ and $b+h = 1$, the univariate risk function (with $\eta = 0$) is given by $\alpha \mapsto G(\ca+\da \theta, b)$,  where $b < 1$ and 
$$\ca=\Phi^{-1}(b) \sqrt{(1+\alpha \sigma)/(1+\alpha^2 \sigma)}\qquad {\rm and} \qquad \da=-\bar{\alpha}/\sqrt{1+\alpha^2 \sigma}~.$$
Now, consider $\Ua=\ca+\da U, \Va=\ca+\da V \text{ and } \tha= \ca+\da \theta$.  By construction $(\Ua,\Va)\sim N(\tha,\tha,2 \sigma \da^2, 2 \sigma \da^2, 0)$ and $\alpha \mapsto G(\tha,b)$ is estimated 
by the ARE estimator $\alpha \mapsto \Ta(X,Z)$, where
$$
\Ta(X,Z)= -b \, \Ua \, \indicator{\{ \Va < -\thr\}} +  \Sm(U_{\alpha}) \, \indicator{\{ |\Va| \leq \thr\}} + \bar{b} \, \Ua \, \indicator{\{ \Va  > \thr\}}~,
$$
where $\bar{b}=1-b$, and the threshold is given $\thr = \gamma \sqrt{2 \log n } $, where $\gamma$ is any positive number less than $\sqrt{2 \sigma} \left( (\, 1/\sqrt{4 e \sigma} \,) - 1 \right) = (1/\sqrt{2e}) - \sqrt{ 2\sigma} $, which is well-defined by~Assumption~A3 because $\sigma < 1/(4 e)$.

The estimator $\Sm(U_{\alpha})$ is the truncated Taylor series expansion of $G(\tha,b)$, defined as follows.  
Let 
$$
 \Kn =1 +  \left \lceil e^2 \big( \gamma +  \sqrt{2 \sigma}  \big)^2  (2 \log n)  \right \rceil~. 
$$
Let $G_{\Kn}(\tha,b)$ denote the the $\Kn^{th}$ order Taylor series expansion of  $G(\tha,b)$.  Let $S(U_{\alpha})$ denote  an unbiased estimate of $G_{\Kn}(\tha,b)$; that is,
\begin{equation}\label{def.uni}
\begin{split}
S(U_{\alpha}) & =G(0,b)+G'(0,b) \, {U_{\alpha}} \\
			& \qquad + \phi(0) \sum_{l=0}^{\Kn-2} \frac{(-1)^{l}H_l(0)}{(l+2)!} \left( \sqrt{2\sigma d_{\alpha}^2}\right)^{l+2} H_{l+2}\left(\frac{U_{\alpha}}{\sqrt{2 \sigma d_{\alpha}^2}}\right)~,
\end{split}
\end{equation}
and finally, we have $\Sm(U_{\alpha})=\mathrm{sign}(S(U_{\alpha})) \, \min\{|S(U_{\alpha})|, n\}$, which is the truncated version of $S(U_{\alpha})$.  This completes the definition of the estimator $\Ta(X,Z)$. This reparametrization allows us to deal with the stochasticity of the problem only through the random variables $\{\Ua, \Va : \alpha \in [0,1]\}$ and saves us the inconvenience of dealing with the varied functionals of $X$ and $Z$ separately. 


\paragraph{Proof Outline}
We partition the univariate parameter space into 3 cases:
\texttt{Case~1:} $|\tha| \leq \thr/2$,~\texttt{Case~2:} $\thr/2 < |\tha| \leq \big( 1+   \sqrt{2 \sigma} /  {\gamma}  \big) \thr$ and \texttt{Case~3:} $|\tha| > \big( 1+   \sqrt{2 \sigma} /  {\gamma}  \big) \thr$. We present a heuristic argument for considering such a decomposition.
The following lemma, whose proof is provided in Appendix~\ref{append-origin}, establishes a bound on the bias in different regimes.  

\begin{lem}[Bias Bounds] \label{lem:bias.bound}
There is an absolute constant $c$ such that for all \pr{$b \in [0,1]$ and  $\alpha \in [0,1]$},
\begin{align*}
\mathrm{I.} \quad & |G(y,b)-G_{\Kn} (y,b) | \leq c \, \frac{ n^{-(e^2-1) (\gamma + \sqrt{2\sigma})^2} }{ e^4 (\gamma + \sqrt{2\sigma})^2 }\quad \text{\rm for all } \quad  |y| \leq \big( 1+ \sqrt{2 \sigma} /  {\gamma}  \big)\thr ~.\\
\mathrm{II.} \quad & |G(y,b)-\bar{b} y | \leq \frac{  e^{-y^2/2} }{ y^2} \quad \text{\rm for all } y > 0 ~.\\
\mathrm{III.} \quad & |G(y,b) - (-b y) | \leq \frac{ e^{-y^2/2} }{ y^2} \quad \text{\rm for all } y < 0 ~.
\end{align*}
\end{lem}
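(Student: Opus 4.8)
The plan is to dispatch parts~II and III by elementary Mills-ratio estimates together with a reflection symmetry of $G$, and then to spend the real effort on the Taylor remainder bound in part~I. Throughout I use that the second argument of $G$ enters only through the linear term, so that $G(w,b)-\bar b w$ and $G(w,b)+b w$ do not depend on $b$, and that $G(\cdot,b)$ is an entire function (as $\phi$ and $\Phi$ extend to entire functions).

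For part~II, for $y>0$ write $G(y,b)-\bar b\,y=\phi(y)+y\Phi(y)-b y-(1-b)y=\phi(y)-y\bigl(1-\Phi(y)\bigr)$. I would show $0<\phi(y)-y(1-\Phi(y))<\phi(y)/y^{2}$ for every $y>0$; since $\phi(y)/y^{2}=(2\pi)^{-1/2}e^{-y^{2}/2}/y^{2}<e^{-y^{2}/2}/y^{2}$, this gives the claim. Positivity is immediate from the first-order Mills inequality $1-\Phi(y)<\phi(y)/y$. The upper bound is equivalent to $1-\Phi(y)>\phi(y)/y-\phi(y)/y^{3}$, which I would obtain by writing $1-\Phi(y)=\int_{y}^{\infty}\phi(t)\,dt$ and integrating by parts twice using $\phi'(t)=-t\phi(t)$ (i.e.\ $t\phi(t)=-\phi'(t)$), discarding the manifestly positive remainder $\int_{y}^{\infty}3\phi(t)t^{-4}\,dt$. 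For part~III I would use the identity $G(w,\beta)=G(-w,1-\beta)$, which is a one-line check from the definition of $G$; for $y<0$ it turns $G(y,b)-(-b y)$ into $G(|y|,1-b)-b\,|y|$, which is exactly the quantity bounded in part~II with $z=|y|$ and $\beta=1-b$ (note $\overline{1-b}=b$), and since $e^{-y^{2}/2}/y^{2}=e^{-|y|^{2}/2}/|y|^{2}$ the bound follows.

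For part~I the first step is to recognise $G_{\Kn}(\cdot,b)$ as genuinely the degree-$\Kn$ Taylor polynomial of $y\mapsto G(y,b)$ about $0$: since $G'(y,b)=\Phi(y)-b$ and $G''(y,b)=\phi(y)$, one has $G^{(k)}(0)=\phi^{(k-2)}(0)=(-1)^{k}H_{k-2}(0)\,\phi(0)$ for $k\ge 2$, which matches the stated expansion term by term, so by entireness the remainder is the tail series $G(y,b)-G_{\Kn}(y,b)=\phi(0)\sum_{j>\Kn}\frac{(-1)^{j}H_{j-2}(0)}{j!}\,y^{j}$. The second step uses $H_{2i-1}(0)=0$ and $|H_{2i}(0)|=(2i-1)!!=(2i)!/(2^{i}i!)$, so only even indices $j=2i$ survive and $|H_{j-2}(0)|/j!=\bigl[(2i)(2i-1)\,2^{i-1}(i-1)!\bigr]^{-1}$, whence $|y|^{j}|H_{j-2}(0)|/j!=\frac{2(y^{2}/2)}{(2i)(2i-1)}\cdot\frac{(y^{2}/2)^{i-1}}{(i-1)!}$. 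The third step invokes the choice of $\Kn$: on the stated range $|y|\le(1+\sqrt{2\sigma}/\gamma)\thr=(\gamma+\sqrt{2\sigma})\sqrt{2\log n}$ one has $\lambda:=y^{2}/2\le(\gamma+\sqrt{2\sigma})^{2}\log n$, whereas for every surviving term $i-1\ge(\Kn-1)/2\ge e^{2}\lambda$ (this is precisely how $\Kn$ was calibrated), so Stirling's bound $(i-1)!\ge((i-1)/e)^{i-1}$ gives $(y^{2}/2)^{i-1}/(i-1)!\le(e\lambda/(i-1))^{i-1}\le e^{-(i-1)}$; summing the resulting geometric-type tail over $i\ge(\Kn+1)/2$ yields a bound of order $\lambda\,e^{-e^{2}\lambda}/(e^{2}\lambda)^{2}$, i.e.\ of order $n^{-e^{2}(\gamma+\sqrt{2\sigma})^{2}}/\bigl(e^{4}(\gamma+\sqrt{2\sigma})^{2}\log n\bigr)$. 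Replacing $e^{2}$ by $e^{2}-1$ in the exponent absorbs the residual $1/\log n$ factor (legitimate for $n$ past an absolute threshold, since $\sigma$ is bounded away from $0$), leaving the stated inequality with an absolute constant $c$.

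The main obstacle is the bookkeeping in part~I: one must simultaneously track the $1/((2i)(2i-1))$ prefactor, the Stirling estimate of $\lambda^{i-1}/(i-1)!$, and the summation over the surviving tail, keeping all dependence on $\gamma,\sigma,n$ explicit, and then verify that the slack between $e^{2}$ and $e^{2}-1$ really does swallow the leftover polynomial-in-$\log n$ factor uniformly over the admissible parameter range. Parts~II and III, by contrast, are routine once the two-fold integration-by-parts Mills bound and the reflection identity $G(w,\beta)=G(-w,1-\beta)$ are in hand, and these are exactly the estimates that later validate the linear-approximation branches of $\Ta$ (and hence the three-case partition of the parameter space) in the regime where $|\tha|$ is large.
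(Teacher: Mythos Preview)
Your proposal is correct. Parts~II and~III match the paper's argument essentially verbatim: the paper also writes $G(y,b)-\bar b y=\phi(y)-y\tilde\Phi(y)$ and invokes the Mills-ratio inequality (its Lemma~\ref{lem:mills.ratio}), and for $y<0$ it performs the same reflection $G(y,b)=\phi(-y)-(-y)\tilde\Phi(-y)-by$, which is your identity $G(w,\beta)=G(-w,1-\beta)$ in disguise.

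Part~I, however, is handled by a genuinely different route. The paper does \emph{not} use the tail-series representation. Instead it applies the Lagrange form of the remainder,
\[
|G(y,b)-G_{\Kn}(y,b)|=\frac{\phi(\zeta)\,|H_{\Kn-1}(\zeta)|\,|y|^{\Kn+1}}{(\Kn+1)!}\quad(\zeta\text{ between }0\text{ and }y),
\]
and then controls $|H_{\Kn-1}(\zeta)|$ via a uniform Hermite bound of Krasikov type (the paper's Lemma~\ref{lem:hermite-bound}), which produces an $e^{y^2/4}$ factor; this is the source of the $n^{(\gamma+\sqrt{2\sigma})^2}$ loss and hence of the exponent $e^2-1$ rather than $e^2$. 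Your approach avoids that external Hermite inequality entirely by exploiting the explicit values $H_{2m}(0)=(-1)^m(2m-1)!!$, $H_{2m+1}(0)=0$, together with Stirling, and in fact delivers the sharper exponent $e^2$ before you (unnecessarily) relax it to $e^2-1$. The trade-off is that the paper's argument is a two-line application of an off-the-shelf lemma, while yours requires more bookkeeping but is fully self-contained.

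One small point of presentation: in your tail-sum estimate the quantity governing the geometric decay is $i_0-1\ge (\Kn-1)/2\ge e^2\lambda_{\max}$ with $\lambda_{\max}=(\gamma+\sqrt{2\sigma})^2\log n$, not $e^2\lambda$ for the running $\lambda=y^2/2$. So the intermediate expression should read $\lambda\,e^{-(\Kn-1)/2}/(\Kn)^2$ and then be bounded using $\lambda\le\lambda_{\max}$ and $(\Kn-1)/2\ge e^2\lambda_{\max}$; your final displayed bound $n^{-e^2(\gamma+\sqrt{2\sigma})^2}/(e^4(\gamma+\sqrt{2\sigma})^2\log n)$ is exactly what this gives, so the conclusion is unaffected.
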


Thus, we would like to use linear estimates when $|w|$ is large and $S(\Ua)$ otherwise. The choice of threshold $\thr$ is chosen such that this happens with high probability. 
As we have a normal model in Case 3, which includes unbounded parametric values, we will be mainly using the linear estimates of risk because when $|\tha| \geq \left(1+{\sqrt{2\sigma} / \gamma}\right) \thr$, the probability of selecting $\Sm$ over the linear estimates is very low. Similarly, in Case 1, we will be mainly using $\Sm$. Case 2 is the buffering zone where we may use either $\Sm$ or the linear estimates.

We also need to control the variances of the $3$ different kind of estimates used in $\Ta(X,Z)$. 
While the variances of the linear estimators are easily controlled, we needed to pay special attention to control the variance of $S(\Ua)$. In the following lemma, we exhibit an upper bound on the quadratic growth of the estimator $S(\Ua)$. 
The choice of the truncation parameter $\Kn$ in $\Sm(\Ua)$ was done is such a way that both its bias and squared growth are controlled at the desired limits.

\begin{lem}[Variance Bounds]\label{lem:variance.bound}
For any \pr{$b \in [0,1]$} and $a_n=\log \log n$, 
\begin{align*}
\lim_{n \to \infty} \sup_{\alpha\,:\,\abs{\tha} \;\leq\; (1  \,+\,  \sqrt{2\sigma}/\gamma) \thr }  \,n^{-1}\,a_n^8\, {\ex_{\tha} \left[ \{S(\Ua)\}^2 \right]} = 0~,
\end{align*}
where the expectation is over the distribution of $\Ua$, which has $N(\tha, 2 \sigma \da^2)$ distribution for all $\alpha \in [0,1]$. 
\end{lem}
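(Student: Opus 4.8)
The plan is to bound $\ex_{\tha}[\{S(\Ua)\}^2]$ by controlling each of the terms in the Hermite expansion \eqref{def.uni} separately and then using the near-orthogonality of Hermite polynomials under the Gaussian measure. Write $W = \Ua / \sqrt{2\sigma\da^2}$, so that $W \sim N(m, 1)$ with $m = \tha / \sqrt{2\sigma\da^2}$, and $S(\Ua)$ becomes a polynomial $G(0,b) + G'(0,b)\sqrt{2\sigma\da^2}\,W + \phi(0)\sum_{l=0}^{\Kn-2} c_l (\sqrt{2\sigma\da^2})^{l+2} H_{l+2}(W)$ with $c_l = (-1)^l H_l(0)/(l+2)!$. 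Since $\da^2 = \bar\alpha^2/(1+\alpha^2\sigma) \le 1$ and $\sigma < 1/(4e)$, the scale factor $s := \sqrt{2\sigma\da^2}$ satisfies $s \le \sqrt{2\sigma} < 1/\sqrt{2e} < 1$, which is the key smallness that will make the high-order terms summable. First I would expand the square and take expectations, using $\ex[H_j(W)H_k(W)]$ for $W\sim N(m,1)$; these cross-moments are polynomials in $m$ of degree $j+k$, and by the connection formula for products of Hermite polynomials (or directly, $\ex[H_j(W)H_k(W)] = \sum_{r} \binom{j}{r}\binom{k}{r} r!\, m^{j+k-2r}$ up to standard bookkeeping), each is bounded by $C(j,k)(1+|m|)^{j+k}$ for an explicit combinatorial constant $C(j,k) \le (j+k)!$ or similar.

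The second step is to control $|m|$ on the relevant range of $\alpha$. On the set $\{\alpha : |\tha| \le (1 + \sqrt{2\sigma}/\gamma)\thr\}$ we have $|m| = |\tha|/s \le (1 + \sqrt{2\sigma}/\gamma)\gamma\sqrt{2\log n}/\sqrt{2\sigma\da^2}$. Here I would use the lower bound on $\da^2$: since $\bar\alpha = 1-\alpha$ can be small, $\da^2$ can be small, so $s$ can be small and $|m|$ could a priori blow up. This is the point that needs care — I expect it to be the main obstacle. The resolution is that when $\bar\alpha \to 0$ (i.e. $\alpha \to 1$, no shrinkage), $\ca \to \Phi^{-1}(b)$ is bounded and $\da\theta \to 0$, so $\tha$ stays bounded; more precisely $|\tha| = |\ca + \da\theta| \le |\Phi^{-1}(b)|\sqrt{(1+\sigma)/(1+\alpha^2\sigma)} + |\da||\theta|$, and $|\da| = \bar\alpha/\sqrt{1+\alpha^2\sigma}$, so $|m| = |\tha|/s \le |\Phi^{-1}(b)|\sqrt{(1+\sigma)/(1+\alpha^2\sigma)}/(\bar\alpha\sqrt{2\sigma/(1+\alpha^2\sigma)}) + |\theta|/\sqrt{2\sigma}$; the first term still has a $1/\bar\alpha$. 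However, on the constrained set $|\tha| \le (1+\sqrt{2\sigma}/\gamma)\thr$ we directly have $|m| = |\tha|/s \le (1+\sqrt{2\sigma}/\gamma)\thr/\sqrt{2\sigma\da^2}$, and I would instead combine this with the complementary bound $|m| \le |\ca|/s + |\da\theta|/s \le (\text{const})/(\bar\alpha\sqrt{2\sigma}) + |\theta|/\sqrt{2\sigma}$ only when it helps; the cleaner route is to note the constraint forces $s \gtrsim |\tha|/[(1+\sqrt{2\sigma}/\gamma)\thr]$ is not what we want — rather, I would just keep $|m| \le (1+\sqrt{2\sigma}/\gamma)\thr/s$ and observe that the bad regime $s$ tiny is exactly $\bar\alpha$ tiny, where $|\tha|$ is itself of order $\bar\alpha$ plus a bounded piece, so $|m|$ is bounded by a constant plus $|\theta|/\sqrt{2\sigma}$ after all. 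The upshot I expect: uniformly over the constrained $\alpha$-set, $|m| \le \kappa \sqrt{\log n}$ for an absolute constant $\kappa$ depending only on $b$, $\sigma$, $\gamma$ (the $|\theta|$-dependence drops out because on this set $\da\theta$ is absorbed into $\tha$ which is $O(\sqrt{\log n})$).

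The third step is the summation. With $|m| \le \kappa\sqrt{\log n}$ and $s < 1$, the generic term contributes at most $\phi(0)^2 |c_l c_{l'}| s^{l+l'+4} C(l+2,l'+2)(1+\kappa\sqrt{\log n})^{l+l'+4}$. Using $|H_l(0)| \le \sqrt{l!}$ (indeed $H_{2j}(0) = (-1)^j (2j-1)!!$, $H_{2j+1}(0)=0$) and $(l+2)! $ in the denominator, together with $C(l+2,l'+2) \le (l+l'+4)!$, the double sum is dominated by $\sum_{l,l'} \frac{[(l+l'+4)!]^{1/2+\epsilon}}{(l+2)!(l'+2)!} (s(1+\kappa\sqrt{\log n}))^{l+l'+4}$; after collecting terms of total degree $p = l+l'+4$ this is $\sum_p p^{?}\,(s\kappa\sqrt{\log n})^{p}/\Gamma(\cdot)$-type, which converges and is $O(e^{C(s\kappa\sqrt{\log n})^2}) = O(n^{C'\sigma\kappa^2})$ type — too crude. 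To get the $n^{1}$ bound I would instead use the precise choice of $\Kn = 1 + \lceil e^2(\gamma+\sqrt{2\sigma})^2(2\log n)\rceil$: truncating the sum at $l \le \Kn-2$ means every term has degree $\le \Kn$, and the factorial $(l+2)!$ with $l+2 \le \Kn \approx 2e^2(\gamma+\sqrt{2\sigma})^2\log n$ beats $(s(1+|m|))^{l+2}$ with $s(1+|m|) \le s(1+\kappa\sqrt{\log n}) = O(\sqrt{\sigma}\kappa\sqrt{\log n})$ precisely when $\Kn \gg e\cdot(\text{that quantity})^2$, which is how $\Kn$ was engineered — using Stirling, $(l+2)! \ge ((l+2)/e)^{l+2}$, so each term is at most $(e\, s^2 (1+|m|)^2/(l+2))^{(l+2)/2}$ times lower-order factors, and for $l+2 \le \Kn$ this ratio $e s^2(1+|m|)^2/(l+2)$ is, in the worst case $l+2=2$, of order $e\sigma\log n$, which is $\ge 1$; so the terms first grow then decay, peaking around $l+2 \approx e s^2(1+|m|)^2 \approx e\cdot 2\sigma\cdot\kappa^2\log n$, and at the peak the value is $e^{\,(\text{peak index})/2\cdot\log(\cdots)}$. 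I would bound the whole sum by (number of terms)$\times$(max term) $\le \Kn \cdot \exp(e s^2(1+|m|)^2) \le \Kn\cdot \exp(2e\sigma\kappa^2\log n) = \Kn\, n^{2e\sigma\kappa^2}$. Since $\sigma < 1/(4e)$ and $\kappa$ can be shown to be close to $\gamma/\sqrt{2\sigma}\cdot(1+\sqrt{2\sigma}/\gamma) = (\gamma+\sqrt{2\sigma})/\sqrt{2\sigma}$ so that $2e\sigma\kappa^2 \approx e(\gamma+\sqrt{2\sigma})^2$, and $\gamma < 1/\sqrt{2e}-\sqrt{2\sigma}$ forces $(\gamma+\sqrt{2\sigma})^2 < 1/(2e)$, we get $2e\sigma\kappa^2 < 1/2$, hence $\ex_{\tha}[\{S(\Ua)\}^2] = O(\Kn\, n^{1/2}) = o(n/a_n^8)$ since $\Kn = O(\log n)$ and $a_n = \log\log n$. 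Dividing by $n$ and multiplying by $a_n^8$ then gives a bound of order $a_n^8 \Kn\, n^{-1/2} \to 0$, uniformly in $\alpha$ over the constrained set, which is the claim. The delicate bookkeeping — getting the exponent $2e\sigma\kappa^2$ strictly below $1$ by correctly tracking $\kappa$ through the case-3 boundary $|\tha| \le (1+\sqrt{2\sigma}/\gamma)\thr$ and verifying it against the constraint $\gamma < 1/\sqrt{2e}-\sqrt{2\sigma}$ — is exactly where Assumption~A3 ($\sigma < 1/(4e)$) is used and is the crux of the argument.
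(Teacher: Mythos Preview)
Your proof has a genuine gap in Step~2: the claimed uniform bound $|m| \le \kappa\sqrt{\log n}$ is false. Recall $m = \tha/\sqrt{2\sigma\da^2}$ with $\tha = \ca + \da\theta$ and $\da = -\bar\alpha/\sqrt{1+\alpha^2\sigma}$. As $\alpha \to 1$ you have $\da \to 0$ but $\ca \to \Phi^{-1}(b)$, which is bounded away from zero whenever $b \ne 1/2$. Hence $|m| \ge |\ca|/(\sqrt{2\sigma}\,|\da|) - |\theta|/\sqrt{2\sigma} \to \infty$, and this regime \emph{is} included in the supremum because $|\tha| \to |\Phi^{-1}(b)| \le (1+\sqrt{2\sigma}/\gamma)\thr$ for all large $n$. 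Your statement that ``$|\tha|$ is itself of order $\bar\alpha$ plus a bounded piece, so $|m|$ is bounded by a constant plus $|\theta|/\sqrt{2\sigma}$'' is therefore wrong: the bounded piece $\ca$ does not carry a factor of $\bar\alpha$, and it is precisely this piece that makes $|m|$ blow up. Since your Step~3 summation is built on $(1+|m|)^{l+l'+4} \le (1+\kappa\sqrt{\log n})^{l+l'+4}$, the whole bound collapses.

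The fix is to stop trying to control $|m|$ and $s$ separately. The paper first replaces the full expansion of the square by the Cauchy--Schwarz-type bound $\ex\big[(\sum_j W_j)^2\big] \le \big(\sum_j \sqrt{\ex W_j^2}\big)^2$, avoiding cross-moments altogether, and then bounds each diagonal term via $\ex H_k^2(W) \le k^k(1+m^2/k)^k$ for $W\sim N(m,1)$. The crucial step is to keep the prefactor $(2\sigma\da^2)^{(l+2)/2}$ attached to this moment bound: writing $\ba = 2e\sigma\da^2$, one gets
\[
(2\sigma\da^2)^{(l+2)/2}\sqrt{\ex H_{l+2}^2(W)} \;\le\; \Big(\tfrac{l+2}{e}\Big)^{(l+2)/2}\Big(\ba + \tfrac{e\,\tha^2}{l+2}\Big)^{(l+2)/2} \;\le\; \Big(\tfrac{l+2}{e}\Big)^{(l+2)/2} e^{e\tha^2/2},
\]
where the last step uses $\ba \le 2e\sigma < 1$ (this is where Assumption~A3 enters, and only the \emph{upper} bound $\da^2\le 1$ is needed---no lower bound on $\da$). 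The $\da$-dependence has cancelled completely; the bound depends only on $\tha^2$, which is at most $(\gamma+\sqrt{2\sigma})^2(2\log n)$ on the constrained set, giving $e^{e\tha^2/2} \le n^{e(\gamma+\sqrt{2\sigma})^2}$. Summing against $|H_l(0)|/(l+2)!$ (bounded using the pointwise Hermite estimate) yields a convergent series times $n^{e(\gamma+\sqrt{2\sigma})^2}$, and the constraint $\gamma < 1/\sqrt{2e} - \sqrt{2\sigma}$ forces $e(\gamma+\sqrt{2\sigma})^2 < 1/2$, so the whole square root is $o(\sqrt{n})$, which is what you want. Your final exponent $e(\gamma+\sqrt{2\sigma})^2$ was correct, but you reached it through an invalid intermediate claim; the clean route is this cancellation, not a bound on $|m|$.
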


Our proof also makes use of the following large deviation bounds.
\begin{lem}[Large Deviation Bounds]\label{lem:large.deviation.bound}
\begin{align*}
\text{For \texttt{Case 1},} \qquad	\lim_{n \to \infty} \sup_{\alpha \,:\, |\tha| \;\leq\; \thr/2}  a_n^8 \, \thr^2 \cdot \Pr_{\tha} \{ \abs{\Va} > \thr \} = 0~. \text{\hspace{8cm}}
\end{align*}
For \texttt{Case 2}, 
\begin{align*}
\qquad \qquad	\lim_{n \to \infty} \sup_{\alpha \,:\, \thr/2 \;<\; \tha \;\leq\; (1 \,+\, \sqrt{2\sigma}/\gamma)\thr } a_n^8 \,  \abs{\tha} \cdot \Pr_{\tha} \{ \Va <  -\thr \} &= 0 \,. \\
	\lim_{n \to \infty} \sup_{\alpha \,:\, -(1 + \sqrt{2\sigma} /\gamma)\thr \;\leq\;  \tha  \;<\;  - \thr/2  } a_n^8 \,  \abs{\tha} \cdot \Pr_{\tha} \{\Va > \thr \} &= 0 \,.
\end{align*}
For \texttt{Case 3}, 
\begin{align*}
\qquad \qquad  &\lim_{n \to \infty} \sup_{\alpha \,:\, |\tha| \;>\; \left(1 \,+\, {\sqrt{2\sigma} / \gamma}\right) \thr}  n \, a_n^8 \,  \cdot \Pr \left\{ |\Va| \leq \thr \right\} &&\hspace{-30pt}= 0 \,. \\
&\lim_{n \to \infty} \sup_{\alpha \,:\, |\tha| \;>\; \left(1 \,+\, {\sqrt{2\sigma} / \gamma}\right) \thr} a_n^8 \, \tha^2 \cdot \Pr \left\{ |\Va| \leq \thr \right\} &&\hspace{-30pt}= 0\,.\\
&	\lim_{n \to \infty} \sup_{\alpha \,:\, \tha \;>\; (1 \,+\, \sqrt{2\sigma}/\gamma)\thr } a_n^8 \,  \tha^2 \cdot \Pr_{\tha} \{ \Va <  -\thr \} &&\hspace{-30pt}= 0\,.\\
&	\lim_{n \to \infty} \sup_{\alpha \,:\, \tha \;<\; - (1 \,+\, \sqrt{2\sigma}/\gamma)\thr  }  a_n^8 \,  \tha^2 \cdot \Pr_{\tha} \{\Va > \thr \} &&\hspace{-30pt}= 0 \,.\text{\hspace{1.5cm}}
\end{align*}
\end{lem}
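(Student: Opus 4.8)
The plan is to reduce every displayed limit to a single mechanism: a Gaussian tail bound applied to $\Va \sim N(\tha, 2\sigma\da^2)$, combined with the crude variance control $2\sigma\da^2 \le 2\sigma\,\bar\alpha^2/(1+\alpha^2\sigma) \le 2\sigma < 1/(2e)$ that follows from $|\da| \le \bar\alpha \le 1$ and Assumption~A3. Writing $v_\alpha := 2\sigma\da^2$ for the variance, the key inequality is $\Pr\{\,W > t\,\} \le \exp(-t^2/(2v))$ for $W\sim N(0,v)$ and $t>0$; all the probabilities in the statement are of the form $\Pr\{\Va > \thr\}$, $\Pr\{\Va < -\thr\}$, or $\Pr\{|\Va|\le\thr\}$, and in each regime the distance from the mean $\tha$ to the relevant boundary is a definite multiple of $\thr = \gamma\sqrt{2\log n}$, so the tail bound produces a power of $n$ that I then weigh against the polynomial prefactors ($a_n^8\,\thr^2$, $a_n^8|\tha|$, $a_n^8\tha^2$, or $n\,a_n^8$).

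First I would treat \texttt{Case 1}. Here $|\tha|\le\thr/2$, so on the event $\{|\Va|>\thr\}$ the variable $\Va$ is at least $\thr/2$ away from its mean; hence $\Pr_{\tha}\{|\Va|>\thr\} \le 2\exp\big(-(\thr/2)^2/(2v_\alpha)\big) \le 2\exp\big(-\thr^2/(8v_\alpha)\big)$. Since $v_\alpha \le 2\sigma$, the exponent is at most $-\thr^2/(16\sigma) = -(\gamma^2\log n)/(8\sigma)$, which gives $\Pr_{\tha}\{|\Va|>\thr\} \le 2 n^{-\gamma^2/(8\sigma)}$. Because $\gamma < 1/\sqrt{2e} - \sqrt{2\sigma}$ is strictly positive, $\gamma^2/(8\sigma) > 0$, so $a_n^8\,\thr^2\cdot n^{-\gamma^2/(8\sigma)} = (\log\log n)^8 (2\gamma^2\log n)\,n^{-\gamma^2/(8\sigma)} \to 0$, and the supremum over $\alpha$ is harmless because the bound is uniform in $\alpha$. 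The two \texttt{Case 2} limits are identical in spirit: if $\thr/2 < \tha \le (1+\sqrt{2\sigma}/\gamma)\thr$, then on $\{\Va < -\thr\}$ the deviation $\Va - \tha$ is below $-\thr - \tha < -3\thr/2$, so $\Pr_{\tha}\{\Va<-\thr\} \le \exp\big(-(3\thr/2)^2/(2v_\alpha)\big) \le n^{-9\gamma^2/(16\sigma)}$; multiplying by $a_n^8|\tha| \le a_n^8(1+\sqrt{2\sigma}/\gamma)\thr$ again leaves a quantity that vanishes, and the symmetric statement follows by replacing $\tha$ with $-\tha$.

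\texttt{Case 3} is the part that needs the most care, since here the prefactor can be as large as $n\,a_n^8$, so I need the tail probability to beat $n^{-1}$ by a polynomial margin. The saving fact is that $\gamma$ was chosen \emph{strictly} below $1/\sqrt{2e} - \sqrt{2\sigma}$; the plan is to exhibit a constant $\rho > 1$ depending only on $\sigma$ and $\gamma$ such that $\Pr\{|\Va|\le\thr\} \le 2 n^{-\rho}$ whenever $|\tha| > (1+\sqrt{2\sigma}/\gamma)\thr$. Indeed, on $\{|\Va|\le\thr\}$ with $\tha > (1+\sqrt{2\sigma}/\gamma)\thr$, the deviation $\tha - \Va$ is at least $\tha - \thr > (\sqrt{2\sigma}/\gamma)\thr = \sqrt{2\sigma}\sqrt{2\log n} = 2\sqrt{\sigma\log n}$; dividing by $\sqrt{v_\alpha}\le\sqrt{2\sigma}$, the standardized deviation is at least $\sqrt{2\log n}$, so $\Pr\{|\Va|\le\thr\} \le \exp(-\log n) = n^{-1}$ --- which is borderline, not enough. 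To get the needed strict improvement I would instead bound $v_\alpha$ more sharply: since $\da^2 = \bar\alpha^2/(1+\alpha^2\sigma)$ and the worst case is $\alpha \to 0$ giving $v_\alpha \to 2\sigma$, but more to the point the threshold constant $\gamma + \sqrt{2\sigma} < 1/\sqrt{2e}$ means $(\gamma+\sqrt{2\sigma})^2 < 1/(2e)$, and one checks that the relevant exponent is $(\tha-\thr)^2/(2v_\alpha) \ge \big(\sqrt{2\sigma}\,\thr/\gamma\big)^2/(4\sigma) = (\thr^2)/(2\gamma^2)\cdot\ldots$; carrying the algebra through with the exact $v_\alpha = 2\sigma\bar\alpha^2/(1+\alpha^2\sigma)$ and using $\bar\alpha^2/(1+\alpha^2\sigma)\le 1$ shows the exponent is at least $(1+\epsilon_0)\log n$ for some fixed $\epsilon_0 = \epsilon_0(\sigma,\gamma) > 0$ precisely because the inequality defining $\gamma$ is strict. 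Then $n\,a_n^8 \cdot \Pr\{|\Va|\le\thr\} \le n\,a_n^8\cdot 2 n^{-(1+\epsilon_0)} = 2 a_n^8 n^{-\epsilon_0} \to 0$, and the three remaining \texttt{Case 3} limits, whose prefactors are only $a_n^8\tha^2$, follow a fortiori from the same exponential bound together with $\tha^2 = O((\log n)\cdot(\text{poly}))$ when $|\tha|$ is comparable to $\thr$, and from even sharper decay when $|\tha|$ is much larger than $\thr$ (the tail bound improves quadratically in $\tha$). The main obstacle, then, is bookkeeping the constants in \texttt{Case 3} so that the exponent is verifiably $> 1$; this is exactly where Assumption~A3 and the strict inequality $\gamma + \sqrt{2\sigma} < 1/\sqrt{2e}$ are used, and it is the reason the threshold $\thr$ and the admissible range of $\gamma$ were defined the way they were.
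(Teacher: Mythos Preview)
Your treatment of \texttt{Case 1} and \texttt{Case 2} is essentially the paper's argument: standardize, use $v_\alpha\le 2\sigma$, and get a negative power of $n$ that dominates the logarithmic prefactors. (There is a harmless arithmetic slip in your \texttt{Case 2} exponent --- it should be $n^{-9\gamma^2/(8\sigma)}$, not $n^{-9\gamma^2/(16\sigma)}$ --- but either exponent suffices.)

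The gap is in the \emph{first} \texttt{Case 3} limit, the one with prefactor $n\,a_n^8$. Your own computation is correct: at the boundary $\tha\downarrow(1+\sqrt{2\sigma}/\gamma)\thr$ with $\alpha=0$ (so $v_\alpha=2\sigma$ exactly), the standardized deviation $(\tha-\thr)/\sqrt{v_\alpha}$ can be made arbitrarily close to $\thr/\gamma=\sqrt{2\log n}$. The crude bound $\Pr\{W>t\}\le e^{-t^2/2}$ then gives only $n^{-1}$, and $n\,a_n^8\cdot n^{-1}=a_n^8\to\infty$. Your attempted rescue --- that the strict inequality $\gamma+\sqrt{2\sigma}<1/\sqrt{2e}$ forces an exponent $(1+\epsilon_0)\log n$ --- is incorrect: that inequality plays no role in this calculation, since the standardized deviation $\thr/\gamma=\sqrt{2\log n}$ does not involve $\gamma+\sqrt{2\sigma}$ at all. (That constraint is used elsewhere, in Lemma~\ref{lem:variance.bound}, to force $e(\gamma+\sqrt{2\sigma})^2<1/2$.) No sharper bound on $v_\alpha$ is available uniformly in $\alpha$, because $d_\alpha^2=\bar\alpha^2/(1+\alpha^2\sigma)$ genuinely equals $1$ at $\alpha=0$.

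The paper's fix is simple and you should adopt it: replace the crude bound $e^{-t^2/2}$ by the Mills ratio bound $\tilde\Phi(t)\le\phi(t)/t$ (Lemma~\ref{lem:mills.ratio}). Applied at $t=\thr/\gamma=\sqrt{2\log n}$ this gives
\[
\Pr\{|\Va|\le\thr\}\;\le\;\Pr\{\Va\le\thr\}\;\le\;\frac{\phi(\thr/\gamma)}{\thr/\gamma}\;=\;\frac{n^{-1}}{\sqrt{2\log n}}\,,
\]
so that $n\,a_n^8\cdot\Pr\{|\Va|\le\thr\}\le a_n^8/\sqrt{2\log n}=(\log\log n)^8/\sqrt{2\log n}\to 0$. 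The remaining three \texttt{Case 3} limits (prefactor $a_n^8\tha^2$) do follow from your crude bound, since $\tha^2\cdot e^{-(\tha-\thr)^2/(4\sigma)}=O(\log n/n)$ uniformly over $\tha>(1+\sqrt{2\sigma}/\gamma)\thr$; the paper instead bounds these via Mills ratio and the elementary inequality $\max_{x\ge 0}xe^{-ax}=1/(ea)$, but your route there is fine.
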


The proofs of the above \pr{three} lemmas are presented in Appendix~\ref{append-origin}.
\subsection{Detailed Proof of Proposition~\ref{univariate.origin}} \label{sec-2-main-proof}
$ $\\
\vspace{-0.2in}

\noindent {\bf \underline{Bounding the Bias}:} As $\ex[\Ua] = \tha$, by definition $|\bias_{\tha}(\Ta)|$ equals    
\begin{align*}
\left|\ex [\Sm(\Ua)] - G(\tha,b)\right| \cdot \Pr \left\{ |\Va| \leq \thr \right\} +  |\ex [\Sm(\Ua)]  - G(\tha,b) | \cdot \Pr\left\{ \abs{\Va} > \thr \right\}~.
\end{align*}
We will now show \pr{that} each of the two terms on the  RHS converges uniformly in $\alpha$ as $n$ increases to infinity.

\noindent {\bf First Term:}  Consider $\tha$ in \texttt{Cases 1 and 2}; that is, $|\tha| \leq \left(1 + {\sqrt{2 \sigma} / \gamma}\right) \thr$. Since $\ex S(\Ua) = G_{\Kn}(\tha,b)$, by our construction, we have that
$$
|\ex \Sm(\Ua) - G(\tha,b)| \leq  |\ex \Sm(\Ua)  - \ex S(\Ua)|+ |G_{\Kn}(\tha,b) - G(\tha,b)|, 
$$
and it follows from Lemma~\ref{lem:bias.bound} that
$\lim_{n \to \infty} \sup_{\alpha \,:\,|\tha| \leq \left(1 + {\sqrt{2\sigma} / \gamma}\right) \thr} a_n^8 \, |G_{\Kn}(\tha,b) - G(\tha,b)| = 0$.  
By Markov's Inequality, 
\begin{align*}
\abs{\ex \Sm(\Ua)  - \ex S(\Ua)} \leq \ex \left[ \abs{S(\Ua)} \, \indicator{ \{ |S(\Ua)| \geq n\} }\right] \leq \ex \left[ S^2 (\Ua) \right]/n~, 
\end{align*}
whose $a_n^8$ multiplied version converges to zero uniformly in $\alpha$ as  $n \to \infty$ by Lemma~\ref{lem:variance.bound}. 

Now, consider \texttt{Case 3}, where 
$|\tha| > \left(1 + {\sqrt{2\sigma} / \gamma}\right) \thr$.   By definition, $\abs{\Sm(\Ua)} \leq n$, and by Lemma~\ref{lem:G.bound}, $G(\tha,b) \leq \phi(0) + \max\{\bar{b},b\} |\tha|$.  
From Lemma \ref{lem:large.deviation.bound}, we have that 
$
\lim_{n \to \infty} \sup_{\alpha \,:\, |\tha| \;>\; \left(1 \,+\, {\sqrt{2\sigma} / \gamma}\right) \thr}a_n^8 \,\max\{ n, \tha^2\} \cdot \Pr \left\{ |\Va| \leq \thr \right\} = 0~,
$
and thus,
$$\lim_{n \to \infty} \sup_{\alpha \,:\, |\tha| \;>\; \left(1 \,+\, {\sqrt{2\sigma} / \gamma}\right) \thr}  a_n^8 \,\left|\ex [\Sm(\Ua)] - G(\tha,b)\right| \cdot \Pr \left\{ |\Va| \leq \thr \right\}  = 0~.
$$
Therefore, in all three cases, the first term of the bias multiplied by $a_n^8$ converges to zero.

\noindent {\bf Second Term:} The second term in the bias formula is equal to
$$
B_{\alpha, n} \equiv |\bar{b} \tha  - G(\tha,b) | \cdot \Pr\left\{ \Va > \thr \right\} + | G(\tha,b) - (-b \tha) | \cdot \Pr\left\{ \Va < -\thr \right\}~.
$$
For $\tha$ in \texttt{Case 1} with  $|\tha| \leq \thr/2$, note that by Lemma~\ref{lem:G.bound},
$$
\max\{ |\bar{b} \tha  - G(\tha,b) |~,~ | G(\tha,b) - (-b \tha) |\}  \leq  \abs{\tha} + \phi(0) + \abs{\tha}
\leq \thr + \phi(0)~,
$$
and thus $B_{\alpha, n} \leq \left( \thr + \phi(0) \right) \Pr \{ \abs{\Va} > \thr \}$.  The desired result then follows from 
Lemma \ref{lem:large.deviation.bound} for Case 1.

Now, consider $\tha$ in \texttt{Case 2}; that is, $\thr/2 < |\tha| \leq (1 + \sqrt{2\sigma}/\gamma)\thr$.  We will assume that $\thr/2 < \tha \leq (1 + \sqrt{2\sigma}/\gamma)\thr$; the case $- (1 + \sqrt{2\sigma}/\gamma)\thr < \tha < -\thr/2$ follows analogously.  Since $\tha > \thr/2$, it follows from Lemma \ref{lem:bias.bound} that $$
\abs{\bar{b} \tha  - G(\tha,b) } \leq    e^{-\tha^2/2} / \tha^2  \leq 4 \,  e^{-\thr^2/8} / \thr^2  = 4 \,  n^{-\gamma^2/4} / \thr^2~.
$$ 
Also, by Lemma~\ref{lem:G.bound}, $| G(\tha,b) - (-b \tha) | \leq 2 \abs{\tha} + \phi(0)$.  Therefore,
$$
B_{\alpha, n} \leq  4 \, c \, n^{-\gamma^2/4}/\thr^2  ~+~  (2 \abs{\tha} + \phi(0) ) \Pr\left\{ \Va < -\thr \right\}~,
$$
and the desired result then follows from Lemma \ref{lem:large.deviation.bound} for \texttt{Case 2}.

Now, consider $\tha$ in \texttt{Case 3}; that is, $|\tha| > (1 + \sqrt{2\sigma}/\gamma)\thr$.   We will assume that $\tha > (1 + \sqrt{2 \sigma}/\gamma)\thr$; the case $\tha < - (1 + \sqrt{2 \sigma}/\gamma)\thr$ follows analogously.   As before, it follows from Lemma \ref{lem:bias.bound} that 
$$
\abs{\bar{b} \tha  - G(\tha,b) } \leq  c\, e^{-(1 + \sqrt{2\sigma}/\gamma)^2 \thr^2/2} / \big( (1+\sqrt{2\sigma}/\gamma)^2 \thr^2 \big)  =   c \, n^{-(\gamma+ \sqrt{2\sigma})^2} / \big( (\gamma +\sqrt{2\sigma})^2 (2 \log n) \big) ~.
$$ 
By Lemma~\ref{lem:G.bound}, $| G(\tha,b) - (-b \tha) | \leq 2 \abs{\tha} + \phi(0)$.  Therefore,
$$
B_{\alpha, n} \leq  \frac{ c \, n^{-(\gamma+ \sqrt{2\sigma})^2}}{ (\gamma +\sqrt{2\sigma})^2 (2 \log n) }  +  (2 \abs{\tha} + \phi(0) ) \Pr\left\{ \Va < -\thr \right\}~,
$$
and the desired result then follows from Lemma \ref{lem:large.deviation.bound} for \texttt{Case 3}.   Note that in \texttt{Case~3},  $\abs{\tha} \leq \tha^2$ for  sufficiently large $n$.  Thus, in all three cases, the first term of the bias multiplied by $a_n^8$ converges to zero and we have the desired result for the bias terms in proposition.

\noindent {\bf \underline{Bounding the Variance}:} According to the definition of $\Ta$, 
it follows from Lemma~\ref{lem:var.bound.2} that
\begin{equation}\label{var.decomp}
\begin{split}
&\var_{\tha}(\Ta)\leq 4 \var(A^1_{\alpha,n}) + 4 \var(A^2_{\alpha,n}) + 4 \var(A^3_{\alpha,n}), \text{ where }\\
& A^1_{\alpha,n}= \Sm(\Ua)  \indicator{ \{ |\Va| < \thr\} },~ 
A^2_{\alpha,n}= - b \Ua \indicator{ \{ \Va < - \thr\}},~ \text{and}~
A^3_{\alpha,n}= \bar{b} \Ua \indicator{\{ \Va > \thr\}}.
\end{split}
\end{equation}
To establish the desired result, we will  show that each  term on the RHS is $\smallo{n}$ uniformly in $\alpha$; that is, for $i = 1, 2, 3$, $\lim_{n \to \infty} a_n^8 \, n^{-1} \sup_{\alpha \in [0,1]}  \var(A^i_{\alpha,n}) = 0$.

\noindent {\bf \texttt{Case 1:}} $|\tha| \leq \thr/2$. Since $\Sm(U_{\alpha})=\mathrm{sign}(S(U_{\alpha})) \, \min\{|S(U_{\alpha})|, n\}$, it follows from Lemma~\ref{lem:var.bound.1} that
$
\var \left( A^1_{\alpha,n} \right) \leq  \ex_{\tha} \Sm^2(\Ua) \leq \ex_{\tha} S^2(\Ua) = \smallo{n},
$
where the last equality follows from Lemma \ref{lem:variance.bound}.
Again, by Lemma~\ref{lem:var.bound.1},
\begin{align*}
& \var(A^2_{\alpha,n}) + \var(A^3_{\alpha,n}) \\
&\leq b^2 \ex \left[\Ua^2 \right] \cdot \Pr\{ \Va < -\thr\} + \bar{b}^2 \left[\Ua^2 \right] \cdot \Pr\{ \Va > \thr\} 
\leq \ex\left[\Ua^2 \right] \Pr\{|\Va| > \thr\}\\
&= \left( \tha^2 + 2 \sigma \da^2 \right) \Pr\{|\Va| > \thr\} \leq \left( \thr^2/4  + 2 \sigma \right) \Pr\{|\Va| > \thr\}~,
\end{align*}
where the  equality follows from the definition of $\Ua$.  The desired result then follows from Lemma \ref{lem:large.deviation.bound} for \texttt{Case 1}.

\noindent {\bf \texttt{Case 2:}} $\thr/2 < |\tha| \leq (1 + \sqrt{2\sigma} /\gamma)\thr$. 
Suppose that $\thr/2 < \tha \leq (1 + \sqrt{2\sigma} /\gamma)\thr$; the proof for the case where $ - (1 + \sqrt{2\sigma} /\gamma)\thr \leq  \tha < -\thr/2$ is the same.
By Lemma~\ref{lem:var.bound.1},
$$\var(A^1_{\alpha,n}) \leq  \ex \Sm^2(\Ua) \leq \ex  S^2(\Ua) = \smallo{n}~,$$
where the  equality follows from Lemma \ref{lem:variance.bound}.
By Lemma~\ref{lem:var.bound.1},
$$ 
\var(A^2_{\alpha,n}) \leq {b}^2 \, \ex \left[  \Ua^2 \right] \Pr\{ \Va < -\thr \} \leq \left( 2 \sigma  +  \tha^2 \right)\Pr\{ \Va < -\thr \}~.
$$ 
\pr{For the range of $\tha$ in \texttt{Case 2}, $\tha/n \to 0$ uniformly in $\alpha$, and it follows that}
$$
\lim_{n \to \infty} \sup_{\alpha \,:\, \thr/2 \;<\; \tha \;\leq\; (1 \,+\, \sqrt{2\sigma}/\gamma)\thr } a_n^8 \, n^{-1} \var(A^2_{\alpha,n}) 
= 0\,,
$$
\pr{where the equality follows from Lemma \ref{lem:large.deviation.bound} for \texttt{Case 2}.}
Note that $\var(A^3_{\alpha,n}) \leq 4 \ex[ b^2 \Ua^2] \Pr\{\Va < -\thr\} \leq 4 \ex[ b^2 \Ua^2] \leq  4 \left( 2 \sigma  +  \tha^2 \right) = \smallo{n}$ uniformly in $\alpha$.

\noindent {\bf \texttt{Case 3:}} $|\tha| > (1 + \sqrt{2\sigma} /\gamma)\thr$.  Note that
$$
\var_{\tha}(A^1_{\alpha,n}) \leq \ex[\Sm^2(\Ua) \indicator{\{|\Va| < \thr\}}] \leq n^2  \Pr\{|\Va|<\thr\}~,
$$
and by Lemma \ref{lem:large.deviation.bound} for \texttt{Case 3}, $\lim_{n \to \infty} \sup_{\alpha : |\tha| > (1 \,+\, \sqrt{2\sigma}/\gamma)\thr } {\var{\tha}(A^1_{\alpha,n})}/{n} = 0$.

Note that $\E[\Ua] = \tha$ and $\var(\Ua) = 2 \sigma \da^2 \leq 2 \sigma$. By Lemma~\ref{lem:var.bound.1}, 
\begin{align*}
\var_{\tha}(A^2_{\alpha,n}) &\leq \ex [\Ua^2] \, \Pr\{\Va < -\thr\} \leq (2 \sigma + \tha^2) \Pr\{\Va < -\thr\}\\
\var_{\tha}(A^3_{\alpha,n}) &\leq \var(\bar{b} \Ua) + (\E [ \bar{b} \Ua])^2 \Pr\{ \Va \leq \thr\}  \leq  2 \sigma + (2 \sigma + \tha^2) \Pr\{\Va \leq \thr\}\\
&=  2 \sigma + (2 \sigma + \tha^2) \Pr\{|\Va| \leq \thr\} + (2 \sigma + \tha^2) \Pr\{\Va < -\thr\}~,
\end{align*}
which implies that
$$
\var_{\tha}(A^2_{\alpha,n}) + \var_{\tha}(A^3_{\alpha,n}) \leq 2 \sigma  + (2 \sigma + \tha^2 )\Pr\{\abs{\Va} \leq \thr\} + (2 \sigma + \tha^2 )\Pr\{\Va < -  \thr\}~.
$$
Note that, by Lemma \ref{lem:large.deviation.bound} for \texttt{Case 3},  both $\sup_{\alpha : |\tha| > \left(1 \,+\, {\sqrt{2\sigma} / \gamma}\right) \thr} a_n^8 \,\tha^2 \cdot \Pr \left\{ |\Va| \leq \thr \right\}$ 
and $\sup_{\alpha : \tha > \left(1 \,+\, {\sqrt{2\sigma} / \gamma}\right) \thr} a_n^8 \tha^2 \cdot \Pr \left\{ \Va < - \thr \right\}$ converge to zero as $n$ increases. Thus, we have that 
$\lim_{n \to \infty} \sup_{\alpha \,:\, |\tha| \;>\; (1 \,+\, \sqrt{2\sigma}/\gamma)\thr }  a_n^8 \left( \var_{\tha}(A^2_{\alpha,n}) + \var_{\tha}(A^3_{\alpha,n}) \right)/{n}  = 0$, which is the desired result.
\par
This completes the proof of Proposition~\ref{univariate.origin}. We end this section with a remark on the choice of threshold. The proof will work similarly for $\sqrt{2 \log n}$ thresholds that are scalable with $\sqrt{\sigma_{p,i}}$ and $|\da|$ for $1\leq i \leq n,\, \alpha \in [0,1]$. Our  choice $\lambda_n$ being uniform over $\tau \in [0,\infty]$, however, yields a comparatively cleaner proof. 

\section{Simulation Experiments}\label{section:experiments}

In this section, we study the performances of our proposed estimators through numerical experiments. 
In the first example, we display a case where the performance of our proposed $\A$-based estimate is  close to that of the oracle estimator, but the traditional EBML and EBMM estimators perform poorly.  It supports the arguments (provided below Corollaries~\ref{cor:origin.are.oracle} and \ref{cor:data.are.oracle}) that as the formulae of the ML and MM estimates of the hyper parameters do not depend on the shape of the loss functions, they can be significantly different from the ARE-based estimates and hence sub-optimal. We calculate the inefficiency of an estimate $\tau$ of the shrinkage hyperparameter of members in $\mathcal{S}^0$ by comparing it with its corresponding Oracle risk-based estimator:
$\taut_{OR}=\argmin_{\tau \in [0,\infty]} R_n(\thetab,\qhatb(\tau))$. We define:
$$ \text{Inefficency of } \tauh =  \frac{R_n(\thetab,\qhatb(\tauh))-R_n(\thetab,\qhatb(\taut_{OR}))}{\max_{\tau \geq 0} R_n(\thetab,\qhatb(\tau))- \min_{\tau \geq 0} R_n(\thetab,\qhatb(\tau))}  \times 100 \, \%.$$
The measures for the other classes are defined analogously. In the other two examples, we study the performance of our proposed estimators as we vary the model parameters. Throughout this section, we set $\sigma_{f,i}=1$ and $b_i+h_1=1$ for all $i=1,\ldots,n$. The R codes used for these simulation experiments can be downloaded from \url{http://www-bcf.usc.edu/~gourab/inventory-management/}.

\subsection{Example 1}\label{ex1}
Here, we study a simple setup in a homoskedastic model where $\sigma_{p,i}=1/3$ for all $i=1,\ldots,n$. 
We consider two different choices of $n$ (a) $n=20$, which yields comparatively low dimensional models, and  (b) $n=100$, which is large enough to expect our high-dimensional theory results to set in. We consider only two different values for the $\theta_i$: $1/\sqrt{3}$ and $-3 \sqrt{3}$. Also, we design the setup such that $b_i$ is related to the $\theta_i$: when  $\theta_i = 1/\sqrt{3}$, $b_i = 0.51$ and when $\theta_i = -3 \sqrt{3}, \, b_i = 0.99$. For the case when $n=20$, we consider $(\thetab,\bm{b})$ with $18$ replicates of the $(\theta_i, b_i)$ pair of $(1/\sqrt{3},  0.51)$ and 2 replicates of $(-3\sqrt{3},  0.99)$. For $n=100$, we have $90$ replicates of the former and $10$ replicates of the latter. Note that in both the cases, the mean of $\thetab$  across dimensions is $0$.
\par
In this homoskedastic setup, the MM and ML estimates of the hyperparameter are identical. In Table~\ref{table1}, we present their relative inefficiencies as well as that of the $\A$ with respect to the Oracle risk estimate. For computation of the $\A$ risk estimates, $5$ Monte-Carlo simulations were used for the evaluation of the unconditional expectation in the Rao-Blackwellization step. In Table~1, based on $50$ independent simulation experiments, we report the mean and standard deviation of the estimates as well as their inefficiency percentages. The EBML/EBMM perform very poorly in both cases. When $n=100$, the $\A$-based estimates are close to the Oracle risk-based estimates and are quite efficient.
When $n=20$, the $\A$ method is not as efficient as before but still performs remarkably better than the EBML/EBMM methods.  The plots of the univariate risks of $\qhat_i(\tau)$ for the $(\theta_i, b_i)$ pairs $(1/\sqrt{3},  0.51)$ and $(-3\sqrt{3},  0.99)$ (as $\alpha_i=\tau/(\tau+\sigma_{p,i})$ varies) are very different (see Figure~\ref{fig1}).  For the former, the oracle minimizer is at $\alpha_{OR}=0.51$; that is,  $\tau_{OR} =0.35$. For the latter, the oracle minimizer is at $\alpha_{OR}=1$; that is,   $\tau_{OR}=\infty$. The  multivariate risk plot of our setup is different than those of the two univariate risk plots but is closer to the former than to the later.  ARE approximates this multivariate risk function well and does a good job in estimating the shrinkage parameter. However, the ML/MM estimate of the hyperparameter is swayed by the extremity of fewer $(\theta_i, b_i)=(-3\sqrt{3},  0.99)$ cases and fail to properly estimate the shrinkage parameter in the combined multivariate case.

\begin{table*}[ht!]
\caption{Comparison of the performances of $\A$-, MM- and ML-based estimates with the Oracle risk estimator in Example~1. The mean and standard deviation (in \pr{parentheses}) across $50$ independent simulation experiments are reported.}
\label{table1}
\begin{tabular}{|c||c|c||c|c|}
 \hline
METHODS  & \multicolumn{2}{c||}{ $n = 20$ } &  \multicolumn{2}{c|}{ $ n = 100 $ } \\[1ex]
 \cline{2-3}  \cline{4-5}
 & Inefficiency  (\%) &  $\hat{\tau}$ & Inefficiency  (\%) &  $\hat{\tau}$ \\[1ex]
\hline & &  & & \\
\multirow{1}{*}{ARE}  & 16.78 (30.42) & 1.214 (4.823) & 1.15 (2.57) & 0.344 (0.079) \\[1ex] 
\hline & &  & & \\
\multirow{1}{*}{MM/ML}  & 48.01  (3.55) & 0.037 (0.006) & 47.96 (2.01) & 0.037 (0.003) \\[1ex] 
\hline & &  & & \\
\multirow{1}{*}{ORACLE}  & - &  0.296  (0.000)  &  - & 0.296 (0.000) \\[1ex] 
\hline
\hline
\end{tabular}
\end{table*}

 \begin{figure}[ht!]
 \includegraphics[width=\textwidth]{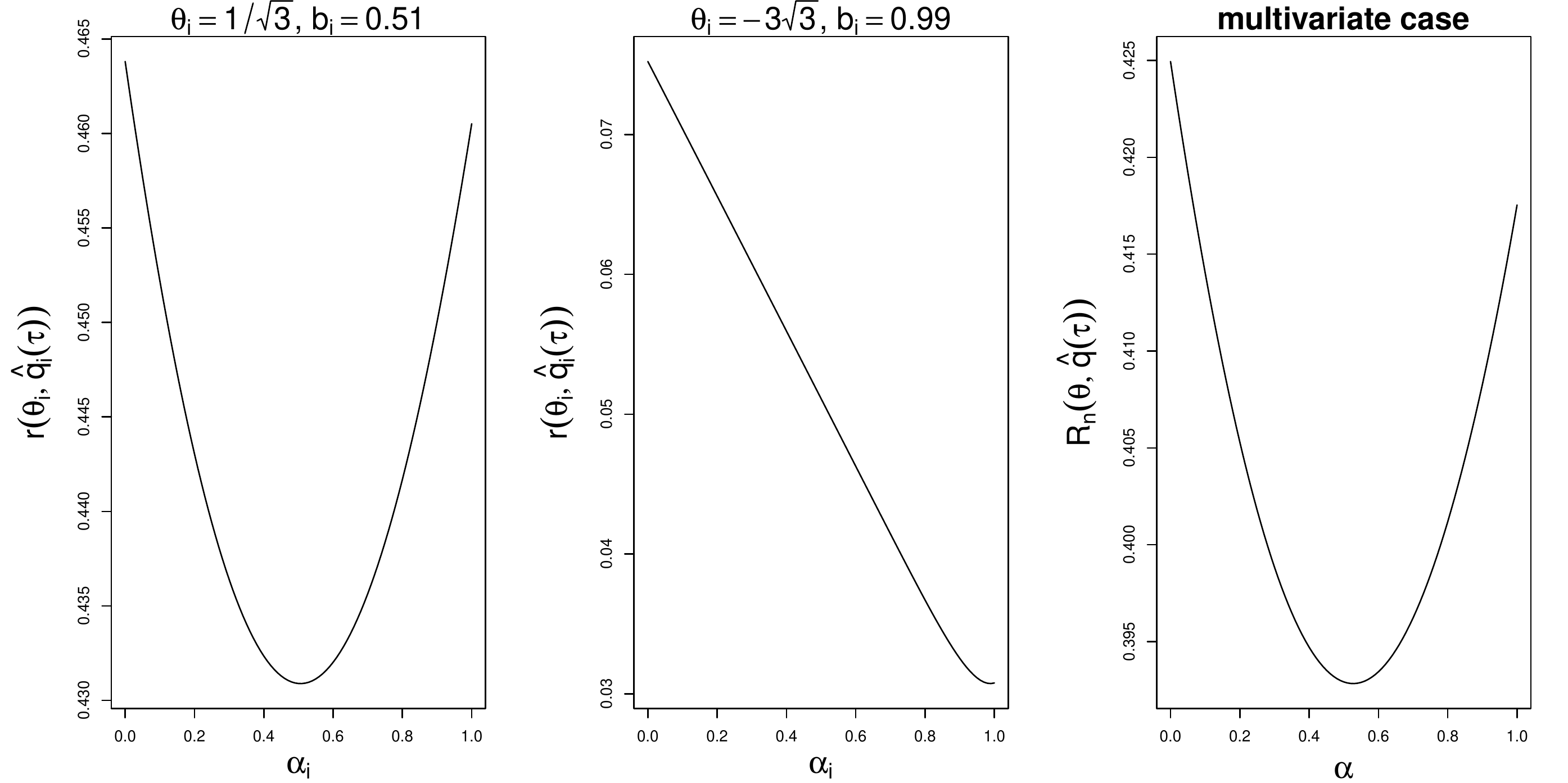}
 \caption{From left to right we have the following: the plots of the univariate risks of $\qhat_i(\tau)$ for the $(\theta_i, b_i)$ pairs $(1/\sqrt{3},  0.51)$ and $(-3\sqrt{3},  0.99)$, respectively, as $\alpha_i=\tau/(\tau+\sigma_{p,i})$ varies and the plot of the multivariate risk of $\qhatb(\tau)$ for the $(\thetab,\bm{b})$  choices described in Example~\ref{ex1}.}\label{fig1}
 \end{figure}

\subsection{Example 2} We consider homoskedastic models with $\sigma_{f,i}=1$ and $\sigma_{p,i}=\sigma_p$ for all $i=1,\ldots, n$. We vary $\sigma_p$ to numerically test the performance of the $\A$  methodology when Assumption~A3 of Section~\ref{subsec:main.results} is violated. We generate $\{\theta_i: i=1,\ldots,n\}$ independently from $N(0,1)$, and 
$\{b_i: i=1,\ldots,n\}$ are generated uniformly from $[0.51,0.99]$. Table~\ref{table2} reports the mean and standard deviation (in brackets) of the inefficiency percentages across $20$ simulation experiments from each regime. We see that the ARE methodology does not work for larger values of the ratio $\sigma_p/\sigma_f$ and starts performing reasonably when $\sigma_p /\sigma_f\leq 1/3$, which is quite higher than the prescribed theoretical bound in \eqref{assumption.A2}. 

\begin{table*}[h!]
	\caption{Inefficiency \pr{(\%)} of $\A$ estimators in Example~2 as the ratio $\sigma_p/\sigma_f$ varies.}
	\label{table2}
	\begin{tabular}{ ||c || c || c|| }
		\hline 
		$\sigma_p/\sigma_f$&  $\qquad \qquad n=20 \qquad \qquad$ &  $\qquad \qquad n=100 \qquad \qquad$  \\
		\hline 
		1/1 & 75.34 (28.55) & 88.88 (14.70) \\ \hline
		1/2 & 31.70 (20.85) & 27.81  (07.95) \\ \hline
		1/3 & 19.21 (14.44) & 12.91  (03.63) \\ \hline
		1/4 &   06.93  (03.58) & 07.43  (02.07) \\ \hline
		1/5 & 05.56  (03.93) & 04.36 (01.38) \\ \hline
		1/6 & 04.07  (03.06) & 03.06 (00.97) \\ \hline    
	\end{tabular}
\end{table*}

\subsection{Example 3} We now study the performance of our proposed $\AG$ methodology in $6$ heteroskedastic models, which are modified predictive versions of those used in Section~7 of \citet{xie2012}. Here, $\{b_i: i=1,\ldots,n\}$ are generated uniformly from $[0.51,0.99]$ and $\sigma_{f,i}=1$ for all $i$.  Also, based on Example~2, we impose the constraint $\max \{\sigma_{p,i}/\sigma_{f,i}: 1 \leq i \leq n \} \leq 1/3$. Next, we  outline the $6$ experimental setups by describing the parameters used in the predictive model~of~\eqref{pred.model}: 

\textit{Case I.}  $\thetab$ are i.i.d. from Uniform(0,1), and  $\sigma_{p,i}$  are i.i.d. from Uniform(0.1,1/3). 

\textit{Case II.} $\thetab$ are i.i.d. from N(0,1), and  $\sigma_{p,i}$  are i.i.d. from Uniform(0.1,1/3). 

\textit{Case III.} Here, we bring in dependence between $\sigma_{p,i}$ and $\thetab$. We generate 
$\{\sigma_{p,i}: 1 \leq i \leq n\}$ independently from  Uniform(0.1,1/3) and 
$\theta_i=5\, \sigma_{p,i}$ for $i=1,\ldots,n$.

\textit{Case IV.} Instead of uniform distribution in the above case, we now generate $\{\sigma_{p,i}: 1 \leq i \leq n\}$ independently from 
$ \text{Inv-}\chi^2_{10}$, which is the conjugate distribution for normal variance.  

\textit{Case V.} This model reflects grouping in the data. We draw the past variances independently from the $2$-point distribution $2^{-1}(\delta_{0.1}+\delta_{0.5})$, and the $\theta_i$ are drawn conditioned on the past variances:
$$(\theta_i|\sigma_{p,i}=0.1) \sim N(0,0.1) \quad \text{ and } \quad (\theta_i|\sigma_{p,i}=0.5) \sim N(0,0.5).$$
Thus, there are two groups in the data.
 
\textit{Case VI.} 
In this example, we assess the sensitivity in the performance of  the $\AG$ estimators to the normality assumption by allowing $\X$ to depart from the normal model of \eqref{pred.model}.
We generate $\{\sigma_{p,i}: 1 \leq i \leq n\}$ independently from  Uniform(0.1,1/3) and $\theta_i=5\, \sigma_{p,i}$ for $i=1,\ldots,n$. The past observations are generated independently from
$$X_i \sim \text{Uniform}\big(\theta_i - \sqrt{3 \, \sigma_{p,i}}, \, \theta_i + \sqrt{3 \, \sigma_{p,i}} \big) \text{ for } i=1,\ldots, n.$$
\par
Table~\ref{table3} reports the mean and standard deviation (in brackets) of the inefficiency percentages of our methodology in $20$ simulation experiments from each of the $6$ models. The $\AG$ estimator performs reasonably well across all $6$ scenarios. 

\begin{table*}[ht!]
	\caption{Inefficiency \pr{(\%)} of $\AG$ estimators in $6$ different heteroskedastic models of Example~3.}
	\label{table3}
	\begin{tabular}{ |l | c | c| }
		\hline
		& $\qquad \qquad n=20 \qquad \qquad$ &  $\qquad \qquad n=100 \qquad \qquad$\\
		\hline
		Case I &  02.79  (02.70) & 01.81  (01.83) \\ \hline
		Case II &  12.90 (21.16) & 11.31  (01.73)\\ \hline
		Case III & 13.90 (19.21) & 07.84  (02.08) \\ \hline
		Case IV &  08.75 (14.26) & 10.47 (20.65) \\ \hline
		Case V &  03.80  (04.32) & 01.52  (03.13) \\ \hline
		Case VI & 06.20  (08.45) & 08.74  (03.19) \\ \hline
	\end{tabular}
\end{table*}

\section{Explanations and Proofs for Estimators in $\mathcal{S}$}\label{sec-data-driven} 

We first describe the $\AD(\eta,\tau)$ risk estimation procedure. Note that, by Lemma~\ref{linear.estimates}, for any fixed $\eta \in \Real$, the risk of estimators in $\mathcal{S}$  is related to risk of estimators in $\mathcal{S}^0$ as $R_n(\thetab, \qhatb(\eta,\tau))=R_n(\thetab -\eta, \qhatb(\tau))$. We rewrite the $\A$ risk estimate defined in \eqref{def.A} by explicitly denoting the dependence on $\X$ as
\begin{align}
\Ahat_n(\tau,\X) = \frac{1}{n} \sum_{i=1}^n (b_i+h_i) (\sigma_{f,i}+\sigma_{p,i} \alpha_i^2)^{1/2}  \, \That_i(X_i,\tau).
\end{align}
The $\AD$ risk estimate is defined as
$\AD_n(\eta,\tau,\X)=\A_n(\tau,\X-\eta)$. Henceforth, whenever we use the relation between $\AD$ and $\A$, we will explicitly denote the dependence of the risk estimates on the data. Otherwise, we will stick to our earlier notation where the dependence on the data is kept implicit.  We next prove Theorem~\ref{data.loss.are}.

 \subsection{Proof of Theorem~\ref{data.loss.are}}
The proof  follows from the following two lemmas. The first one shows that our proposed risk estimate does a good job in estimating the risk of estimators in $\mathcal{S}$. This lemma \pr{holds} for all estimates $q(\eta,\tau)$ in $\mathcal{S}$ and does not need any restrictions on $\thetab$.  The second lemma shows that the loss is uniformly close to the risk. It needs the restriction $|\eta| \leq a_n$ on estimates in $\mathcal{S}$ and also the assumption $\text{A2\,}$ on $\thetab$.

\begin{lem}\label{data.risk.are}
	Under Assumptions A1 and A3 with $a_n=\log \log n$, for all $\thetab$, 
	$$  \lim_{n \to \infty} \sup_{\tau \in [0,\infty], \, \eta \in \Real} a_n^8\, \ex \left [  \big(\AD_n(\eta,\tau)-R_n(\thetab,\qhatb(\eta,\tau))\big)^2\right ]=0~.$$
\end{lem}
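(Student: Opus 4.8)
The plan is to reduce the claim to Proposition~\ref{univariate.origin} by exploiting the location equivariance of the construction. By the definition $\AD_n(\eta,\tau,\X)=\Ahat_n(\tau,\X-\eta)$ and by Lemma~\ref{linear.estimates}, which gives $R_n(\thetab,\qhatb(\eta,\tau))=R_n(\thetab-\eta,\qhatb(\tau))$, the quantity to be controlled is
$$\ex\big[\big(\Ahat_n(\tau,\X-\eta)-R_n(\thetab-\eta,\qhatb(\tau))\big)^2\big].$$
Since $\X-\eta\sim N(\thetab-\eta,\bm{\Sigma}_p)$, writing $\thetab'=\thetab-\eta$ and $\X'=\X-\eta$ this is exactly the mean-squared error appearing in Theorem~\ref{origin.loss.are}, except that now, as $\eta$ ranges over $\Real$ with $\thetab$ fixed, the mean vector $\thetab'$ ranges over all of $\Real^n$. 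Hence $\sup_{\eta\in\Real}$ coincides with $\sup_{\thetab'\in\Real^n}$, and it suffices to show
$$\lim_{n\to\infty}a_n^8\sup_{\tau\in[0,\infty]}\,\sup_{\thetab'\in\Real^n}\ex\big[\big(\Ahat_n(\tau,\X')-R_n(\thetab',\qhatb(\tau))\big)^2\big]=0,\qquad \X'\sim N(\thetab',\bm{\Sigma}_p).$$
In particular Assumption~A2 plays no role in this lemma: we need the risk estimate to be accurate at \emph{every} mean vector, not only those with bounded average magnitude.

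Next I would run the argument in the proof of Theorem~\ref{origin.loss.are} essentially verbatim. Let $\Ahat_n(\Z,\tau,\X')$ denote the randomized estimate before Rao--Blackwellization. For fixed $\tau$ its coordinate summands $\That_i(X_i',Z_i,\tau)$ are independent, so the bias--variance bound \eqref{eq.bias.var} gives
$$\ex\big[\big(R_n(\thetab',\qhatb(\tau))-\Ahat_n(\Z,\tau,\X')\big)^2\big]\le A_n\bigg\{\Big(\frac1n\sum_{i=1}^n\bias_{\theta_i'}(\That_i)\Big)^2+\frac1{n^2}\sum_{i=1}^n\var_{\theta_i'}(\That_i)\bigg\},$$
where $A_n=\sup_i(b_i+h_i)^2(\sigma_{f,i}+\alpha_i(\tau)\sigma_{p,i})$ is bounded uniformly in $n$ and $\tau$ by Assumptions~A1 and~A3. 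The decisive observation is that Proposition~\ref{univariate.origin} bounds $a_n^8\,\bias_{\theta_i}(\That_i)$ and $n^{-1}a_n^8\,\var_{\theta_i}(\That_i)$ \emph{uniformly over both $\tau\in[0,\infty]$ and $\theta_i\in\Real$}, so the same bound applies with $\theta_i$ replaced by the shifted value $\theta_i'$; consequently the right-hand side above, multiplied by $a_n^8$, tends to $0$ uniformly in $\tau$ and in $\thetab'$. Finally, since $\Ahat_n(\tau,\X')=\ex[\Ahat_n(\Z,\tau,\X')\mid\X']$ with $\Z\sim N(0,I_n)$ independent of $\X'$ (and with a law not depending on $\eta$), Jensen's inequality for conditional expectation gives $\ex[(R_n-\Ahat_n(\Z,\tau,\X'))^2]\ge\ex[(R_n-\Ahat_n(\tau,\X'))^2]$, which transfers the bound to the Rao--Blackwellized estimate and completes the proof.

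I do not anticipate a genuine obstacle: the lemma is essentially a corollary of Proposition~\ref{univariate.origin}, which was already established with uniformity over all $\theta_i\in\Real$ rather than over a bounded parameter set. The only points needing care are bookkeeping ones: verifying that feeding $\X-\eta$ into the $\mathcal{S}^0$ machinery reproduces precisely the coordinatewise estimators covered by Proposition~\ref{univariate.origin} --- which it does, since the sample-splitting noise $\Z$, the thresholds $\thr(i)$, and the truncation orders $\Kn(i)$ depend only on $n$ and the known variances, not on $\eta$ or $\X$ --- and confirming that $\sup_{\eta\in\Real}$ genuinely corresponds to $\sup_{\thetab'\in\Real^n}$, so that Assumption~A2 is not required at this stage. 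The restriction $|\eta|\le a_n$ and Assumption~A2 will enter only in the companion lemma that controls $R_n-L_n$, not here.
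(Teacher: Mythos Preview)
Your proposal is correct and follows essentially the same route as the paper's own proof: reduce to the $\mathcal{S}^0$ setting via location equivariance, apply the bias--variance decomposition \eqref{eq.bias.var}, and invoke Proposition~\ref{univariate.origin}, whose uniformity over $\theta_i\in\Real$ absorbs the shift by $\eta$. Your explicit remarks that Assumption~A2 is not needed here and that the Rao--Blackwellization step is handled by Jensen are accurate and slightly more detailed than the paper's terse version, but the argument is the same.
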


\begin{lem}\label{data.risk.loss}
Under Assumption A1, for all $\thetab$ satisfying Assumption $\text{A2}$, 
$$ \lim_{n \to \infty}  \sup_{\tau \in [0,\infty], |\eta| \leq a_n} a_n^4 \, \ex \big |  R_n(\thetab, \qhatb(\eta,\tau)) -  L_n(\thetab, \qhatb(\eta,\tau)) \big| = 0~ \text{ where } a_n=\log \log n.$$
\end{lem}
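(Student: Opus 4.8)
The plan is to treat $R_n(\thetab,\qhatb(\eta,\tau))-L_n(\thetab,\qhatb(\eta,\tau))$, for each fixed $\eta$ and $\tau$, as a normalized sum of $n$ independent centered terms and to bound its variance uniformly. For fixed $\eta$ every $\qhat_i(\eta,\tau)$ depends on $\X$ only through $X_i$, so writing $\ell_i:=l_i(\theta_i,\qhat_i(\eta,\tau))$ the variables $\ell_1,\dots,\ell_n$ are independent, $L_n=n^{-1}\sum_{i=1}^n\ell_i$, and $R_n=\E L_n$. Hence
\[
\E\big(R_n(\thetab,\qhatb(\eta,\tau))-L_n(\thetab,\qhatb(\eta,\tau))\big)^2=\var(L_n)=\frac{1}{n^2}\sum_{i=1}^n\var(\ell_i),
\]
so it suffices to show $\var(\ell_i)\le C_0$ for an absolute constant $C_0$, uniformly in $i$, in $\theta_i\in\Real$, in $\eta\in\Real$, and in $\tau\in[0,\infty]$. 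Cauchy--Schwarz then gives $\E|R_n-L_n|\le(C_0/n)^{1/2}$ uniformly over the index set in the lemma, and since $a_n=\log\log n$ grows more slowly than any power of $n$ we obtain $a_n^4(C_0/n)^{1/2}\to0$, which is the assertion.

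The crux is the uniform variance bound. By Lemma~\ref{loss.properties}, $\ell_i=\sqrt{\sigma_{f,i}}\,(b_i+h_i)\,G(W_i,\bt_i)$ with $W_i:=(\qhat_i(\eta,\tau)-\theta_i)/\sqrt{\sigma_{f,i}}$ an affine function of $X_i$, hence Gaussian with $\var(W_i)=\alpha_i(\tau)^2\sigma_{p,i}/\sigma_{f,i}\le\sigma_{p,i}/\sigma_{f,i}$. The mean of $W_i$ can be large when $|\theta_i-\eta|$ is large, but a direct computation gives $\partial_w G(w,\beta)=\Phi(w)-\beta$, so $G(\cdot,\bt_i)$ is Lipschitz with constant $\kappa:=\max\{\sup_i\bt_i,\,1-\inf_i\bt_i\}$, and $\kappa<1$ by Assumption~A1. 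The Gaussian Poincar\'e inequality (equivalently, centering $\ell_i$ at its value obtained by replacing $X_i$ with $\theta_i$ and using the Lipschitz bound together with $\E(X_i-\theta_i)^2=\sigma_{p,i}$) then yields
\[
\var(\ell_i)=\sigma_{f,i}(b_i+h_i)^2\,\var\big(G(W_i,\bt_i)\big)\le\kappa^2\,\sigma_{f,i}(b_i+h_i)^2\,\var(W_i)\le\kappa^2(b_i+h_i)^2\sigma_{p,i}.
\]
By Assumption~A1, $\sup_i(b_i+h_i)<\infty$, and by Assumption~A3 together with $\sup_i\sigma_{f,i}<\infty$ we have $\sigma_{p,i}<\sigma_{f,i}/(4e)\le(\sup_j\sigma_{f,j})/(4e)$; combining, $\var(\ell_i)$ is bounded by a constant $C_0$ independent of $i,\theta_i,\eta,\tau$. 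Inserting this into the display of the first paragraph finishes the proof.

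The only genuinely nontrivial point is this uniform-in-$\theta_i$ control of $\var(\ell_i)$: a naive estimate $\var(\ell_i)\le\E\ell_i^2$ would, via the linear growth of $G$, leave a term proportional to $n^{-1}\sum_i(\theta_i-\eta)^2$, which is \emph{not} controlled by Assumption~A2 (which only bounds $n^{-1}\sum_i|\theta_i|$); the Lipschitz structure of $G$ — ultimately the fact, guaranteed by A1, that the weight ratios $\bt_i$ stay bounded away from $0$ and $1$ — is exactly what removes the dependence on the location of $\theta_i$. With the variance bound in hand the remaining steps are routine; in fact the argument uses neither Assumption~A2 nor the restriction $|\eta|\le a_n$, which are retained only for consistency with Lemma~\ref{data.risk.are} and Theorem~\ref{data.loss.are}. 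An alternative route first invokes the translation equivariance $R_n(\thetab,\qhatb(\eta,\tau))=R_n(\thetab-\eta,\qhatb(\tau))$, and likewise for $L_n$, to reduce to the class $\mathcal{S}^0$ and then imitate the uniform SLLN argument behind Theorem~\ref{origin.risk.loss}; but translating by $|\eta|\le a_n$ enlarges the effective parameter set to $n^{-1}\sum_i|\theta_i-\eta|\le C+a_n$, so one again needs precisely the $\theta$-free variance bound above, and the direct route is cleaner.
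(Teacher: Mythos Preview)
Your proof is correct and takes a genuinely different route from the paper's. The paper bounds $\var(\ell_i)\le\E\ell_i^2$ and then uses the pointwise estimate $G(w,\beta)\le\phi(0)+\max\{\beta,1-\beta\}|w|$ (Lemma~\ref{lem:G.bound}) to obtain $\E\ell_i^2=O(1+\theta_i^2+\eta^2)$, giving $\var(L_n)=O\!\big(n^{-2}\sum_i\theta_i^2+n^{-1}a_n^2\big)$; it then invokes Assumption~A2 together with $|\eta|\le a_n$. You instead exploit the Lipschitz structure of $G$ --- the identity $\partial_wG(w,\beta)=\Phi(w)-\beta$ combined with the Gaussian Poincar\'e inequality --- to produce a variance bound on each $\ell_i$ that is \emph{uniform in $\theta_i$ and $\eta$}. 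This buys more than elegance: as you correctly observe, Assumption~A2 controls only $n^{-1}\sum_i|\theta_i|$, not $n^{-1}\sum_i\theta_i^2$, and in fact the implication ``$a_n^8 n^{-2}\sum_i\theta_i^2\to0$ under A2'' used by the paper is not valid in general (e.g.\ take $\theta_{2^k}=2^{k-1}$ and $\theta_i=0$ otherwise; then $n^{-1}\sum|\theta_i|<1$ but $n^{-2}\sum\theta_i^2\to 1/3$). Your argument sidesteps this entirely and shows that neither A2 nor the restriction $|\eta|\le a_n$ is actually needed for the lemma.

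One minor remark: to bound $\sup_i\sigma_{p,i}$ you appeal to the variance conditions packaged under Assumption~A3, whereas the lemma as stated lists only A1 and A2. This is harmless since those conditions are standing structural assumptions throughout the paper, but note that all you really use is $\sup_i(b_i+h_i)^2\sigma_{p,i}<\infty$, which is weaker than the full A3.
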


The proof of Lemma~\ref{data.risk.are} is provided in Appendix~\ref{append-data-driven}. \textit{For the proof of Lemma~\ref{data.risk.loss}}, we show uniform convergence of the expected absolute loss over the set of location parameters $\{|\eta| \leq a_n\}$ by undertaking a 
moment-based approach. Here, we show that  for any  $\thetab$ obeying $\text{Assumption A2\,}$
$$ \sup_{\tau \in [0,\infty], |\eta| \leq a_n} a_n^8 \var_{\thetab}(L_n(\thetab, \qhatb(\eta,\tau)) \to 0 \text{ as } n \to \infty~,$$
from which the proof of the lemma follows easily.
Now,  note that, due to independence across coordinates, we have
$$ \var_{\thetab}\left( L_n(\thetab, \qhatb(\eta,\tau) \right)=n^{-2} \sum_{i=1}^n \var_{\theta_i}\left( l_i(\theta_i,  \qhat_i(\eta,\tau))\right) \leq n^{-2} \sum_{i=1}^n \ex_{\theta_i}\left[ l^2_i(\theta_i,  \qhat_i(\eta,\tau)) \right].$$
By definition of the predictive loss, we have the following relation between the loss  of estimators in $\mathcal{S}$ and $\mathcal{S}^0$: 
$\ex_{\theta_i}[l^2_i(\theta_i,  \qhat_i(\eta,\tau))]=\ex_{\theta_i}[l^2_i(\theta_i - \alpha_i (\tau) \eta,  \qhat_i(\tau))]$ and using the inequality in Equation~\eqref{eqn.loss.bound} of the Appendix we see that it is dominated by  $\bigo{1+\ex[\theta_i- \alpha_i (\tau)  \eta]^2} \leq \bigo{1 + 2 \,\ex_{\theta_i}[\theta^2_i + \eta^2]}$ as $|\alpha_i (\tau)|  \leq 1$ for any $\tau \in [0,\infty]$.
Thus, we have
$$\var_{\thetab}\left( L_n(\thetab, \qhatb(\eta,\tau)\right) \leq \mathcal{O}{\bigg(n^{-2} \sum_{i=1}^n \theta^2_i
+ n^{-1} a_n^2\bigg)}\text{ for all } \tau \in[0,\infty], |\eta| \leq a_n.$$
For any $\thetab$ satisfying $\text{assumption A2}$, both the terms in the RHS, even after being multiplied by $a_n^8$ uniformly converge to $0$, which completes the proof of Lemma~\ref{data.risk.loss}.

We next present the proof of the decision theoretic properties of our estimators. We first define discretized set $\net$ used in the construction of the ARE estimator. $\net=\nett_{n,1} \otimes \nett_{n,2}$ is constructed as a product grid over the space of  $\eta$ and $\tau$.
We consider an invertible transformation on $\tau$ and re-parametrize it by $\taut=\tau/(\tau+1)$. As $\tau$ varies over $[0,\infty]$, $\taut$ is contained in $[0,1]$.  We  construct an equi-spaced discretized set $\{0=\taut_1 \leq \taut_2 \leq \cdots \leq \taut_m \leq 1\}$ with this transformed variable where $\taut_i = (i+1) \delta_{n,2} $ and $m=\lceil 1/\delta_{n,2} \rceil$ where
\begin{align}
\delta_{n,2} = \big\{2 \, C_1 \, C_2  (2 \phi(0) +  C_3 + \sqrt{a_n} C_4 + a_n +a_n^2 ) \big \}^{-1}  \text{ where } a_n = \log \log n
\end{align}
and $C_1$, $C_2$ and $C_3$ are defined in \eqref{eq:constants.temp}. We retransform the aforeconstructed grid on $\taut$  back to get the set $\nett_{n,1}$ on $\tau \in [0,\infty]$. On $\eta$ the grid, $\nett_{n,1}$ is equispaced in the interval $[-a_n,a_n]$ with the spacing equalling $\delta_{n,1}=(2 C_1 a_n)^{-1}$.
The cardinality of the set $\nett_n$ is: $|\nett_n|=|\nett_{n,1}|\times |\nett_{n,2}| = 2 a_n/\delta_{n,1} \times \delta_{n,2}^{-1} = O(a_n^4)$ \text{ as } $n \to \infty$. We conduct our computations for the ARE estimate by restricting this larger grid $\nett_n$ to the smaller set $(\nett_{n,1} \cap \hat{M}_n) \otimes \nett_{n,2}$ where the $\eta$ values lie in the set $\hat{M}_n$ defined in Section~\ref{further.results}. 
\par
We define the corresponding discretized version of the oracle estimator as
$$
(\eta_{n}^{\nett},\tau_{n}^{{\nett}}) =\argmin_{(\eta,\tau) \in (\nett_{n,1} \cap \hat{M}_n) \otimes \nett_{n,2}} L_n(\thetab,\qhatb(\eta,\tau))~.
$$
The following lemma whose proof is presented in the Appendix~\ref{append-data-driven}  shows that the difference between $L_n(\thetab,\qhatb(\eta_{n}^{\nett},\tau_{n}^{{\nett}}))$ and  the original oracle loss $L_n(\thetab,\qhatb(\eta^{\sf DOR}_n, \tau^{\sf DOR}_n))$ is asymptotically controlled at any prefixed level.

\begin{lem}\label{lem:sec4.temp}
	For any fixed $\epsilon > 0$,
	\begin{align*}
	\texttt{I.} \quad & P\big\{ L_n(\thetab,\qhatb(\eta_{n}^{\nett},\tau_{n}^{{\nett}})) - L_n(\thetab,\qhatb(\eta^{\sf DOR}_n, \tau^{\sf DOR}_n)) > \epsilon \big\} \to 0 \text{ as } n \to \infty	 \;\; \text{ and },\\
	\texttt{II.} \quad  & \ex |L_n(\thetab,\qhatb(\eta_{n}^{\nett},\tau_{n}^{{\nett}})) - L_n(\thetab,\qhatb(\eta^{\sf DOR}_n, \tau^{\sf DOR}_n))| \to 0 \text{ as } n \to \infty.
	\end{align*}
\end{lem}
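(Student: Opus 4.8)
The plan is to prove both parts at once by combining Lipschitz continuity of the loss surface with the fineness of the grid: the spacings $\delta_{n,1},\delta_{n,2}$ were chosen precisely so that replacing the continuous minimization over $[0,\infty]\times\hat{M}_n$ by the grid minimization over $(\nett_{n,1}\cap\hat{M}_n)\otimes\nett_{n,2}$ inflates the minimal loss by at most $\bigo{a_n^{-1}}$, uniformly, on an event of probability tending to one. Throughout I would work in the chart $\taut=\tau/(1+\tau)\in[0,1]$, under which $\alpha_i(\tau)=\taut/(\taut(1-\sigma_{p,i})+\sigma_{p,i})$ is a rational function of $\taut$ whose denominator is bounded below by $\inf_j\sigma_{p,j}>0$; this is the reason for reparametrizing. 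From Lemma~\ref{loss.properties}, $l_i(\theta_i,q)=\sqrt{\sigma_{f,i}}(b_i+h_i)G((q-\theta_i)/\sqrt{\sigma_{f,i}},\bt_i)$ with $\partial_wG(w,\beta)=\Phi(w)-\beta\in[-1,1]$, and from \eqref{eq:linear.est}, $\partial_\eta\qhat_i=1-\alpha_i\in[0,1]$ and $\partial_{\alpha_i}\qhat_i=X_i-\eta+\sigma_{p,i}(2\sqrt{\sigma_{f,i}+\alpha_i\sigma_{p,i}})^{-1}\Phi^{-1}(\bt_i)$, whose last summand is in absolute value bounded by a finite constant, say $C_3/2$ (A1 bounds $|\Phi^{-1}(\bt_i)|$; A3 with $\sup_i\sigma_{f,i}<\infty$ and $\inf_i\sigma_{p,i}>0$ bounds $\sigma_{p,i}/\sqrt{\sigma_{f,i}}$). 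Also $\partial_{\taut}\alpha_i=\sigma_{p,i}(\taut(1-\sigma_{p,i})+\sigma_{p,i})^{-2}\le 1/\inf_j\sigma_{p,j}$ for every $\taut\in[0,1]$. Averaging and using $\sup_i(b_i+h_i)<\infty$ (A1) yields $|\partial_\eta L_n|\le C_1$ and $|\partial_{\taut}L_n|\le C_1C_2(\tfrac1n\sum_i|X_i|+|\eta|+C_3/2)$, with $C_1,C_2,C_3$ the constants appearing in $\delta_{n,1},\delta_{n,2}$.

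Next I would control the data-dependent factor. On the event $E_n=\{\tfrac1n\sum_i|X_i|\le a_n\}$, which has $P(E_n)\to1$ by Markov's inequality since $\ex\tfrac1n\sum_i|X_i|\le\tfrac1n\sum_i|\theta_i|+\sqrt{\sup_j\sigma_{p,j}}\,\sqrt{2/\pi}=\bigo{1}$ under A2, and since $|\eta|\le a_n$ throughout the grid, $|\partial_{\taut}L_n|=\bigo{a_n}$ there. Because $\delta_{n,1}=(2C_1a_n)^{-1}$ and $\delta_{n,2}=\bigo{a_n^{-2}}$ (the extra slack $2\phi(0)+\sqrt{a_n}C_4+a_n^2$ in $\delta_{n,2}^{-1}$ is what is actually required for the larger Lipschitz constant of the ARE map $\AD_n$, which is needed elsewhere), moving any $(\eta,\taut)\in[-a_n,a_n]\times[0,1]$ to its nearest point of $\nett_{n,1}\otimes\nett_{n,2}$ changes $L_n$ by at most $C_1\delta_{n,1}+\bigo{a_n}\delta_{n,2}=\bigo{a_n^{-1}}$ on $E_n$.

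It remains to match the feasible sets. I would first show $P(\hat{M}_n\subseteq[-a_n,a_n])\to1$: by Markov $\#\{i:|\theta_i|\ge t\}\le t^{-1}\sum_i|\theta_i|$, and a binomial tail bound for $\#\{i:\sqrt{\sigma_{p,i}}|\epsilon_{1,i}|\ge t\}$ (using $\sup_i\sigma_{p,i}<\infty$) produces a finite constant $T$, depending only on the A2-bound, $\sup_i\sigma_{p,i}$, $\alpha_1$ and $\alpha_2$, with $P(\hat{M}_n\not\subseteq[-T,T])\to0$; since $a_n\to\infty$ the claim follows. Put $E=E_n\cap\{\hat{M}_n\subseteq[-a_n,a_n]\}$, so $P(E)\to1$. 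On $E$ the set $\nett_{n,1}\cap\hat{M}_n$ is a $2\delta_{n,1}$-net of $\hat{M}_n$ in the generic case $\inf_i\bt_i<\sup_i\bt_i$, in which $\hat{M}_n$ has length bounded away from zero with probability tending to one (the degenerate case $\inf_i\bt_i=\sup_i\bt_i$, where $\hat{M}_n$ is a single point, falls under the separate-case treatment the paper reserves for degeneracies). Hence $\eta_n^{\sf DOR}$ is within $2\delta_{n,1}$ of some $\eta'\in\nett_{n,1}\cap\hat{M}_n$ and $\tau_n^{\sf DOR}$ within $\delta_{n,2}$ (in the $\taut$-chart) of some $\tau'\in\nett_{n,2}$, so combining with the previous paragraph, on $E$,
\[
L_n\bigl(\thetab,\qhatb(\eta_n^{\nett},\tau_n^{\nett})\bigr)\;\le\;L_n\bigl(\thetab,\qhatb(\eta',\tau')\bigr)\;\le\;L_n\bigl(\thetab,\qhatb(\eta_n^{\sf DOR},\tau_n^{\sf DOR})\bigr)+\bigo{a_n^{-1}},
\]
while on $E$ the reverse inequality holds because the discretized feasible set is then contained in $[0,\infty]\times\hat{M}_n$; this gives part~\texttt{I}. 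For part~\texttt{II}, let $D_n$ denote the difference: on $E$ one has $0\le D_n=\bigo{a_n^{-1}}$, so $\ex[D_n\indicator{E}]\to0$. On $E^c$, note that $\tau=\infty$ ($\taut=1$, $\alpha_i\equiv1$) is feasible for the continuum, so $L_n(\thetab,\qhatb(\eta_n^{\sf DOR},\tau_n^{\sf DOR}))\le M_n:=L_n(\thetab,\qhatb(\cdot,\infty))$; there $\qhat_i-\theta_i=\sqrt{\sigma_{p,i}}\epsilon_{1,i}+\sqrt{\sigma_{f,i}+\sigma_{p,i}}\,\Phi^{-1}(\bt_i)$, and with $|G(w,\beta)|\le\phi(0)+|w|$ and A1, A3, $M_n\le c(1+\tfrac1n\sum_i|\epsilon_{1,i}|)$ for an absolute $c$, so $\{M_n\}$ is bounded in $L^2$. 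A crude Lipschitz bound at the largest grid value of $\taut$ then gives $|D_n|\le 2M_n+C_1C_2(\tfrac1n\sum_i|X_i|+a_n)\delta_{n,2}$ whenever the grid is nonempty (the empty case is covered by the separate-case convention), whence $\ex[|D_n|\indicator{E^c}]\le 2(\ex M_n^2)^{1/2}P(E^c)^{1/2}+\bigo{a_n^{-1}}\to0$ by Cauchy--Schwarz; together with $\ex[D_n\indicator{E}]\to0$ this yields part~\texttt{II}.

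The principal obstacle is the Lipschitz step: producing a bound on $\nabla_{(\eta,\taut)}L_n$ that is uniform over the entire hyperparameter range and calibrated to the grid spacings — keeping $\partial_{\taut}\alpha_i$ bounded via the $\taut=\tau/(1+\tau)$ chart, reducing every structural quantity to a finite constant through A1 and A3, and absorbing the random factor $\tfrac1n\sum_i|X_i|$ into $a_n$ on the high-probability event $E_n$. A secondary difficulty is reconciling the data-dependent feasible set $\hat{M}_n$ with the fixed grid $\nett_{n,1}$, which is exactly where Assumption A2 (bounded average $|\theta_i|$) and $a_n\to\infty$ enter.
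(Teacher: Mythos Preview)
Your proposal is correct and follows essentially the same strategy as the paper: reparametrize via $\taut=\tau/(1+\tau)$, bound the partial derivatives $\partial_\eta L_n$ and $\partial_{\taut} L_n$ using $|\partial_w G|\le 1$ and the chain rule, then combine these Lipschitz bounds with the grid spacings $\delta_{n,1},\delta_{n,2}$ on the high-probability event where $\hat{M}_n\subseteq[-a_n,a_n]$, and treat the complement by Cauchy--Schwarz.

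The differences are cosmetic. The paper writes the $\taut$-derivative bound in terms of the centered noise $Z_i=X_i-\theta_i$ and $n^{-1}\sum_i|\theta_i|$ separately (invoking Assumption~A2 for the latter and the law of large numbers for the former), whereas you bundle these into $n^{-1}\sum_i|X_i|$ and control it on the event $E_n=\{n^{-1}\sum_i|X_i|\le a_n\}$; both give the same $\bigo{a_n}$ Lipschitz constant in $\taut$. For the complement event in part~\texttt{II}, the paper isolates an auxiliary lemma showing $a_n P(\hat{M}_n\not\subseteq[-a_n,a_n])\to 0$ and then bounds $\ex[L_n^2]$ via the analogue of \eqref{tempor}, while you instead dominate the discretized loss by the loss at the feasible point $\tau=\infty$ (where $\eta$ drops out) plus a Lipschitz correction of order $\delta_{n,2}$ times $n^{-1}\sum_i|X_i|$; your route is slightly cleaner because $M_n=L_n(\cdot,\infty)$ is uniformly bounded in $L^2$ without any $a_n$ factor. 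Your quantile argument for $\hat{M}_n\subseteq[-a_n,a_n]$ (Markov on the $|\theta_i|$ plus a binomial tail on the Gaussian noise) is more elementary than the paper's appeal to asymptotic normality of sample quantiles, and equally valid.
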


We now present the proof of the decision theoretic properties of our estimators.

\subsection{Proof of Theorem~\ref{data.are.oracle.loss}}
We know that: 
\begin{align*}
&\Pr \left\{ L_n(\thetab,\qhatb(\etah^{\ADD}_n,\tauh^{\ADD}_n) )\geq L_n(\thetab,\qhatb(\eta^{\sf DOR}_n,\tau^{\sf DOR}_n))+ \epsilon \right\}\qquad \qquad \qquad \qquad \qquad \qquad \\
&\leq \Pr \left\{ L_n(\thetab,\qhatb(\etah^{\ADD}_n,\tauh^{\ADD}_n) )\geq L_n(\thetab,\qhatb(\eta_{n}^{\nett},\tau_{n}^{{\nett}})) + \epsilon/2 \right\}\\
&+ \Pr \left\{ L_n(\thetab,\qhatb(\eta_{n}^{\nett},\tau_{n}^{{\nett}})) \geq L_n(\thetab,\qhatb(\eta^{\sf DOR}_n,\tau^{\sf DOR}_n))+ \epsilon/2 \right\}.
\end{align*}
the second term converegs to $0$ by Lemma~\ref{lem:sec4.temp}. We concentrate on the first term. Note, 
by construction, $\AD_n(\etah^{\ADD}_n,\tauh^{\ADD}_n) \leq \AD_n(\eta_{n}^{\nett},\tau_{n}^{{\nett}})$. Thus, for any fixed $\epsilon > 0$, the first term is bounded above by
\begin{align*}
&\Pr \left\{ A_n(\thetab,\etah^{\ADD}_n,\tauh^{\ADD}_n) \geq B_n(\thetab,\eta_{n}^{\nett},\tau_{n}^{{\nett}}) + \epsilon/2 \right\}~, \text{ where } \qquad \qquad \qquad \qquad \qquad \qquad \\
&A_n(\thetab,\etah^{\ADD}_n,\tauh^{\ADD}_n) = L_n(\thetab,\qhatb(\etah^{\ADD}_n,\tauh^{\ADD}_n)) - \AD_n(\etah^{\ADD}_n,\tauh^{\ADD}_n), \text{ and }
\\
&B_n(\thetab,\eta_{n}^{\nett},\tau_{n}^{{\nett}}) L_n(\thetab,\qhatb(\eta_{n}^{\nett},\tau_{n}^{{\nett}})) - \AD_n(\eta_{n}^{\nett},\tau_{n}^{{\nett}}).
\end{align*}
Now, using Markov inequality, we have
$$ \Pr \left\{ A_n(\thetab,\etah^{\ADD}_n,\tauh^{\ADD}_n) \geq B_n(\thetab,\eta_{n}^{\nett},\tau_{n}^{{\nett}}) + \epsilon/2\right\} \leq  2\epsilon^{-1} \ex|A_n(\thetab,\etah^{\ADD}_n,\tauh^{\ADD}_n) - B_n(\thetab,\eta_{n}^{\nett},\tau_{n}^{{\nett}})|,$$
which, again, by the triangle inequality  is less than $$ 4 \epsilon^{-1}  \ex \bigg[ \sup_{(\eta,\tau) \in (\nett_{n,1} \cap \hat{M}_n) \otimes \nett_{n,2}}|\AD_n(\eta,\tau)-L_n(\thetab,\qhatb(\eta,\tau))|\bigg].$$
We can bound the supremum by the sum of the absolute loss over the grid and so, the above term is less than:
\begin{align*}
4 \epsilon^{-1}  |\nett_n| \sup_{(\eta,\tau) \in \nett_n} \ex \bigg[ |\AD_n(\eta,\tau)-L_n(\thetab,\qhatb(\eta,\tau))|\bigg]\\
=\bigo{ \epsilon^{-1}  a_n^4 \sup_{\tau \in [0, \infty], |\eta| \leq a_n} \ex \big[ |\AD_n(\eta,\tau)-L_n(\thetab,\qhatb(\eta,\tau))|\big]}
\end{align*}
which, by Theorem~\ref{data.loss.are} converges to 0 as $n \to \infty$, and we have the required result.

\subsection{Proof of Theorem~\ref{data.are.oracle.risk}} We decompose $L_n(\thetab,\qhatb(\etah^{\ADD}_n,\tauh^{\ADD}_n)) - L_n(\thetab,\qhatb(\eta^{\sf DOR}_n,\tau^{\sf DOR}_n))$ into two positive components:
\begin{align*}
\{L_n(\thetab,\qhatb(\etah^{\ADD}_n,\tauh^{\ADD}_n) )- L_n(\thetab,\qhatb(\eta_{n}^{\nett},\tau_{n}^{{\nett}}))\} + \{L_n(\thetab,\qhatb(\eta_{n}^{\nett},\tau_{n}^{{\nett}}))-L_n(\thetab,\qhatb(\eta^{\sf DOR}_n,\tau^{\sf DOR}_n))\}.
\end{align*}
The expectation of the second term converges to $0$ by Lemma~\ref{lem:sec4.temp}. For the first term we decompose the difference of the losses into $3$ parts: 
\begin{align*}
& \lefteqn{L_n(\thetab,\qhatb(\etah^{\ADD}_n,\tauh^{\ADD}_n))- L_n(\thetab,\qhatb(\eta_{n}^{\nett},\tau_{n}^{{\nett}})}\\
&  = \left( L_n(\thetab,\qhatb(\etah^{\ADD}_n,\tauh^{\ADD}_n)) - \AD_n(\etah^{\ADD}_n,\tauh^{\ADD}_n) \right) - \left( L_n(\thetab,\qhatb(\eta_{n}^{\nett},\tau_{n}^{{\nett}})\} - \AD_n(\eta_{n}^{\nett},\tau_{n}^{{\nett}}) \right) \\
& \qquad + \left( \AD_n(\etah^{\ADD}_n,\tauh^{\ADD}_n) - \AD_n(\eta_{n}^{\nett},\tau_{n}^{{\nett}}) \right).
\end{align*}
As the third term is less than $0$, so $\ex \left[ L_n(\thetab,\qhatb(\etah^{\ADD}_n,\tauh^{\ADD}_n))- L_n(\thetab,\qhatb(\eta_{n}^{\nett},\tau_{n}^{{\nett}}))\right]$ is bounded above by
$2 \, \ex \{ \sup_{(\eta,\tau) \in \Lambda_n} \big| \AD_n(\eta,\tau)-L_n(\thetab,\qhatb(\eta,\tau)) \big |\}$ which is less than
$$2  \ex \bigg \{ \sum_{(\eta,\tau) \in \Lambda_n} \big| \AD_n(\eta,\tau)-L_n(\thetab,\qhatb(\eta,\tau)) \big |\bigg\}.$$
It converges to $0$ by Theorem~\ref{data.loss.are}. Hence, the result follows.

\subsection{Proof of Corollary~\ref{cor:origin.are.oracle}}
The results follow directly from Theorems \ref{data.are.oracle.loss} and \ref{data.are.oracle.risk}  as $(\eta^{\sf DOR}_n,\tau^{\sf DOR}_n)$ minimizes the loss $L_n(\thetab,\qhatb(\eta,\tau))$ among the class $\mathcal{S}$.

\section{Explanations and Proofs for Estimators in $\mathcal{S}^G$}\label{sec-grand-mean} 

By \eqref{eq:loss-function}, the predictive loss an estimator $\qhatgb(\tau)$ in  $\mathcal{S}^G$  is given by
$L_n(\thetab,\qhatgb(\tau)) = \frac{1}{n}\sum_{i=1}^n l_i(\theta_i,\qhatg_i(\tau))$,  where
$$l_i(\theta_i,\qhatg_i(\tau)) = \sigma_{f,i}^{1/2}\,(b_i+h_i) \, G(\sigma_{f,i}^{-1/2}(\qhat_i(\tau)+(1-\alpha_i)\Xm-\theta_i),\bt).$$
We define a surrogate of the loss by plugging in $\Tm$ -- the mean of the unknown parameter $\thetab$ in the place of $\Xm$: $ \Lt_n(\thetab,\qhatgb(\tau)) = \frac{1}{n}\sum_{i=1}^n \lt_i(\theta_i,\qhatg_i(\tau))$, where 
$$\lt_i(\theta_i,\qhatg_i(\tau)) = \sigma_{f,i}^{1/2}\,(b_i+h_i) \, G(\sigma_{f,i}^{-1/2}(\qhat_i(\tau)+(1-\alpha_i)\Tm-\theta_i),\bt).$$
The following lemma, whose proof is provided in Appendix~\ref{append-grand-mean}, shows the surrogate loss is uniformly close to the actual predictive loss.
\begin{lem}\label{lem.mean.1}
For any $\thetab \in \Real^n$ and $\qhatgb(\tau) \in \mathcal{S}^G$, we have
\pr{$$ \lim_{n \to \infty} \E \left[ \sup_{\tau \in[0,\infty]} \big|L_n(\thetab,\qhatgb(\tau)) - \Lt_n(\thetab,\qhatgb(\tau)) \big|  \right]
= 0~.$$}
\end{lem}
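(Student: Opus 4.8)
The plan is to reduce everything to two facts: a $\tau$-uniform Lipschitz estimate for the difference of the coordinatewise losses, and the $O(n^{-1/2})$ fluctuation of the grand mean of $\X$ around that of $\thetab$.

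First I would record the elementary regularity of the function in \eqref{eq:loss-function}: $\partial_w G(w,\beta)=\Phi(w)-\beta$, so $|\partial_w G(w,\beta)|\le\max\{\beta,1-\beta\}\le 1$ for every $\beta\in[0,1]$; hence $w\mapsto G(w,\beta)$ is $1$-Lipschitz, uniformly in $\beta$. Comparing $l_i(\theta_i,\qhatg_i(\tau))$ with $\lt_i(\theta_i,\qhatg_i(\tau))$, the two arguments of $G$ differ exactly by $\sigma_{f,i}^{-1/2}(1-\alpha_i(\tau))(\Xm-\Tm)$, so the Lipschitz bound gives
\[
\bigl|l_i(\theta_i,\qhatg_i(\tau))-\lt_i(\theta_i,\qhatg_i(\tau))\bigr|\le \sigma_{f,i}^{1/2}(b_i+h_i)\cdot\sigma_{f,i}^{-1/2}(1-\alpha_i(\tau))\,|\Xm-\Tm|=(b_i+h_i)(1-\alpha_i(\tau))\,|\Xm-\Tm|.
\]
Since $0\le 1-\alpha_i(\tau)\le 1$ for all $\tau\in[0,\infty]$ and $\sup_i(b_i+h_i)<\infty$ by Assumption A1, averaging over $i$ produces the $\tau$-free bound
\[
\sup_{\tau\in[0,\infty]}\bigl|L_n(\thetab,\qhatgb(\tau))-\Lt_n(\thetab,\qhatgb(\tau))\bigr|\le \Bigl(\sup_i(b_i+h_i)\Bigr)\,|\Xm-\Tm|.
\]

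Taking expectations and applying Cauchy--Schwarz, it remains to control $\E|\Xm-\Tm|\le(\E[(\Xm-\Tm)^2])^{1/2}$. By \eqref{pred.model}, $\Xm-\Tm=n^{-1}\sum_{i=1}^n(X_i-\theta_i)=n^{-1}\sum_{i=1}^n\sqrt{\sigma_{p,i}}\,\epsilon_{1,i}$ has mean zero and variance $\var(\Xm)=n^{-2}\sum_{i=1}^n\sigma_{p,i}\le n^{-1}\sup_i\sigma_{p,i}$, and $\sup_i\sigma_{p,i}<\infty$ under the standing assumptions (it follows from A3 together with $\sup_i\sigma_{f,i}<\infty$). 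Hence $\E[\sup_\tau|L_n-\Lt_n|]\le(\sup_i(b_i+h_i))(\sup_i\sigma_{p,i})^{1/2}n^{-1/2}\to0$, which is the claim.

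There is no real obstacle here; the only point requiring a little care is that the per-coordinate discrepancy must be bounded \emph{uniformly in $\tau$}, which is precisely why it is convenient to express it through $1-\alpha_i(\tau)\in[0,1]$ rather than through the full shrinkage terms $\qhat_i(\tau)$. That step removes all $\tau$-dependence from the bound and lets the supremum over $\tau$ be pulled outside the expectation before invoking the $n^{-1/2}$ decay of the grand-mean fluctuation.
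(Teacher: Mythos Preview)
Your argument is correct and essentially identical to the paper's: both bound the coordinatewise discrepancy via the $1$-Lipschitz property of $w\mapsto G(w,\beta)$ (the paper phrases this as a first-order Taylor expansion with $|\partial_\eta G|\le\sigma_{f,i}^{-1/2}$), obtain a $\tau$-free bound of the form $C\,|\Xm-\Tm|$ with $C$ controlled by Assumption~A1, and conclude from $\E|\Xm-\Tm|=O(n^{-1/2})$. Your tracking of the factor $1-\alpha_i(\tau)\in[0,1]$ is slightly more explicit than the paper's but leads to the same estimate.
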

We define the associated surrogate risk by $\rt_i(\theta_i,\qhatg_i(\tau))=\ex_{\thetab} \lt_i(\theta_i,\qhatg_i(\tau))$. From Lemma~\ref{linear.estimates}, it follows that this surrogate risk is connected with the risk function of estimators in $\mathcal{S}$ as:
$\rt_i(\theta_i,\qhatg_i(\tau))=r(\theta_i-\Tm,\qhat(\tau))$. Thus, the associated multivariate surrogate risk $ \Rt_i(\thetab,\qhatgb(\tau)) = \sum_{i=1}^n \rt_i(\theta_i,\qhatg_i(\tau))$ equals $R_n(\theta-\Tm,\qhatb(\tau))$.  Also by Lemma~\ref{lem.mean.1}, it follows that for any $\thetab\in \Real^n$
\begin{align}\label{temp.1}
\lim_{n \to \infty} \E \left[  \sup_{\tau \in[0,\infty]} \big|R_n(\thetab,\qhatgb(\tau)) - \Rt_n(\thetab,\qhatgb(\tau)) \big| \right] = 0~. 
 \end{align}
Now we will describe our proposed $\AG$ estimator. Explicitly denoting the dependence of the estimators on the data,  for any fixed value of $\tau \in [0,\infty]$, we define 
$\AGhat_n(\tau,\X)=\Ahat_n(\tau,\X-\eta)|_{\eta=\Xm}$. Note that $\X$ and $\Xm$ are correlated, and $\X-\Xm$ has a normal distribution with a non-diagonal covariance structure. However, we can still use the asymptotic risk estimation procedure described in Section~\ref{sec-2} by just plugging in the value of $\Xm$. We avoid the complications of incorporating the covariance structure in our calculations by cleverly using the concentration properties of $\Xm$ around $\Tm$. To explain this approach, we again define a surrogate to our $\AG$ estimator $\Ahat_n(\tau,\X-\eta)|_{\eta=\Xm}=\sum_{i=1}^n c_i \, \That_i(X_i-\eta,\tau)|_{\eta=\Xm}$ by
$$\AT_n(\tau,\X-\Tm)=\sum_{i=1}^n c_i \, \TT_i(X_i-\Tm,\tau)~,$$
where we plugin $\Tm$ in the place of $\Xm$. Here, $c_i=(b_i+h_i) \sqrt{\sigma_{f,i}+\sigma_{p,i} \alpha_i(\tau)^2}$. Note that $\AT$ and $\TT$  have the same functional form as $\Ahat$ and $\That$, respectively, but with $\Xm$ replaced by $\Tm$ and so are not estimators. We now present the proof of Theorem~~\ref{mean.loss.are}.

\textit{Proof of Theorem~\ref{mean.loss.are}.} 
We will prove the theorem by establishing
\begin{align*}
\textsf{(a)} & \quad  \lim_{n \to \infty}  \ex \big \{ \sup_{\tau \in \nett_n}\big|L_n(\thetab,\qhatgb(\tau)) - R_n(\thetab,\qhatgb(\tau)) \big|\} = 0 \;\text{ and,} \\
\textsf{(b)} & \quad \lim_{n \to \infty}  \ex\big \{ \sup_{\tau \in \nett_n}\big|R_n(\thetab,\qhatgb(\tau)) - \AGhat(\tau)\big|\}= 0. \text{\hspace{10cm}}
\end{align*}
For the proof of \textsf{(a)}, based on \eqref{temp.1} and Lemma~\ref{lem.mean.1}, it suffices to show
$$\lim_{n \to \infty} \ex \big \{ \sup_{\tau \in \nett_n} \big|\Lt_n(\thetab,\qhatgb(\tau)) - \Rt_n(\thetab,\qhatgb(\tau)) \big|\} = 0\;.$$
We will prove it by showing:
$$\lim_{n \to \infty} |\nett_n| \sup_{\tau \in [0,\infty]}\ex \big|\Lt_n(\thetab,\qhatgb(\tau)) - \Rt_n(\thetab,\qhatgb(\tau)) \to 0 \text{ as } n \to \infty.$$
Recalling, $|\nett_n|=O(a_n)$, we show that as $n \to \infty$,  $a_n^2 \var_{\thetab}(\Lt_n(\thetab,\qhatgb(\tau)))$ converges to $0$ uniformly over $\tau$ for any $\thetab$ satisfying Assumption $\text{A3\,}$. 
Again, as in the proof of Lemma~\ref{data.risk.are}, we have the bound
$$\var_{\thetab}(\Lt_n(\thetab,\qhatgb(\tau))) \leq  \mathcal{O} \bigg(\frac{1}{n^2} \sum_{i=1}^n \ex_{\theta_i}(\theta_i  - \alpha_i (\tau)  \Tm)^2\bigg).$$
As $|\alpha_i (\tau)|  \leq 1$  for all $\tau \in [0,\infty]$, the RHS above is at most $\mathcal{O} (n^{-2} \sum_{i=1}^n \theta_i^2 + \Tm^2/n )$. Even after being scaled by $a_n^2$, it converges to $0$ as $n \to \infty$ for any $\thetab$ satisfying Assumption~$\text{A3}$.
\par
Now for the proof of \textsf(b),  using \eqref{temp.1} as $n \to \infty$, we have
\begin{align*}
 & \ex \big\{\sup_{\tau \in \nett_n} \big|R_n(\thetab,\qhatgb(\tau)) - \AGhat(\tau)\big| \big\}\to  \ex\big\{\sup_{\tau \in \nett_n}\big|\Rt_n(\thetab,\qhatgb(\tau)) - \AGhat(\tau,\X)\big|\big\}\\
 & \leq |\nett_n| \sup_{\tau \in[0,\infty]} \ex \big|R_n(\thetab-\Tm,\qhatb(\tau)) - \AGhat(\tau,\X)\big|~,
\end{align*}
which is bounded above by the sum of $ |\nett_n| \sup_{\tau \in[0,\infty]} \ex_{\thetab}|\AT_n(\tau,\X-\Tm) - \AG(\tau)|$ and 
$ |\nett_n|\sup_{\tau \in[0,\infty]} \ex_{\thetab} |R_n(\thetab-\Tm,\qhatb(\tau)) - \AT_n(\tau,\X-\Tm)|$. 
Again,  by Lemma~\ref{data.risk.are}, the second term converges to $0$ as $n \to \infty$. The first term is bounded above by
$$ |\nett_n| \sup_{\tau \in[0,\infty]} \frac{1}{n}\sum_{i=1}^n c_i \, \ex_{\thetab} \bigg|(\Xm-\Tm)\cdot \bigg [ \frac{\partial }{\partial \eta} \That_i(X_i-\eta,\tau)\bigg]_{\eta=\mu_i}\bigg|~,$$
where each $\{\mu_i: 1 \leq i \leq n\}$ lies between $\Tm$ and $\Xm$. Using Cauchy-Schwarz inequality, the above term is less than
$$ \lim_{n \to \infty} |\nett_n| \sup_{\tau \in[0,\infty]} \frac{1}{n}\sum_{i=1}^n c_i \,  \bigg\{\ex_{\thetab}(\Xm-\Tm)^2 \cdot \ex_{\thetab}\bigg [ \frac{\partial }{\partial \eta} \That_i(X_i-\eta,\tau)\bigg]^2_{\eta=\mu_i}\bigg\}^{1/2} = 0\;.$$
As $c_i$ are bounded by Assumptions A1 and A3 and $|\nett_n|=\bigo{a_n}$, the asymptotic convergence above follows by using $\ex_{\thetab}(\Xm-\Tm)^2=n^{-1}$ and the following lemma, whose proof is provided in Appendix~\ref{append-grand-mean}.

\begin{lem}\label{lem.mean.2}
For any $\thetab \in \Real^n$ and  $\mu_i$ lying in between $\Xm$ and $\Tm$ for all $i = 1, \ldots, n$
$$\lim_{n \to \infty }  n^{-1} \,a_n^2\, \bigg\{\sup_{1 \leq i \leq n}\;\sup_{\tau \in [0,\infty]} \ex_{\thetab} \bigg [ \frac{\partial }{\partial \eta} \That_i(X_i-\eta,\tau)\bigg]^2_{\eta=\mu_i} \bigg\}= 0\;.$$
\end{lem}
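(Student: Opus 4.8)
The plan is to work in the univariate reduction of Section~\ref{uni.main.thm} (fix a coordinate, drop the index, $\sigma=\sigma_{p,i}$, $X\sim N(\theta,\sigma)$) and to note first that $\partial_\eta\That_i(X_i-\eta,\tau)=-\That'(X_i-\eta,\tau)$ with $\That'(x,\tau):=\tfrac{\partial}{\partial x}\That(x,\tau)$, the shift merely translating the mean $\theta\mapsto\theta-\eta$; hence it suffices to prove
\[
\lim_{n\to\infty}\sup_{\theta\in\Real,\ \tau\in[0,\infty]}n^{-1}a_n^{2}\,\ex_\theta\!\big[(\That'(X,\tau))^2\big]=0,
\]
after which the (weak) dependence of $\mu_i$ on $\X$ is removed by conditioning on $\{|\Xm|\le\log n\}$: on its complement, of super-exponentially small probability, a crude deterministic bound $(\That')^2\le n^{O(1)}$ suffices, while on it $|\mu_i|\le\log n+O(1)$ (using $|\Tm|=O(1)$ from Assumption~A2) so the displayed uniform bound applies. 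To compute $\That'$ I would use that $\That(x,\tau)=\ex_Z[\That(x,Z,\tau)]$ is a Gaussian convolution (in $Z$) of the threshold rule, which, written as $g(u,v;\tau)$ with $\That(x,Z,\tau)=g(x+\sqrt\sigma Z,\,x-\sqrt\sigma Z;\tau)$, is affine in $u$ on the two linear regimes, a truncated polynomial in $u$ on the middle regime, and has jumps only across $\{|V_\alpha|=\thr\}$. Integrating by parts in $Z$ to push the $x$-derivative off the thresholding indicators and onto the Gaussian density yields
\[
\That'(x,\tau)=2\,\ex_Z\!\big[\partial_u g(U_\alpha,V_\alpha;\tau)\big]-\tfrac{1}{\sqrt\sigma}\,\ex_Z\!\big[\That(x,Z,\tau)\,Z\big],
\]
with $\partial_u g\in\{\mp b\da,\ \pm\bar b\da\}$ on the linear regimes and $\partial_u g=\da\,\Sm'(U_\alpha)=\da\,S'(U_\alpha)\indicator{\{|S(U_\alpha)|<n\}}$ on the middle regime. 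Throughout I would use the structural fact already recorded in Section~\ref{uni.main.thm}: since $\mathrm{Cov}(U,V)=\sigma-\sigma=0$, the variables $U_\alpha$ and $V_\alpha$ are \emph{independent} $N(\tha,2\sigma\da^2)$.

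Taking $\ex_\theta$ and applying Jensen and Cauchy--Schwarz, $\ex_\theta[(\That')^2]$ is controlled by a constant times $\ex_\theta[(\partial_u g)^2]+\ex_\theta[\That(X,Z,\tau)^2]$, and I would bound each summand after partitioning the parameter range by $|\tha|$ into \texttt{Case~1--3} of Section~\ref{sec-2-main-proof}. For the second summand, $\ex_\theta[\That(X,Z,\tau)^2]=\var_\theta(\That)+(\ex_\theta\That)^2$: Proposition~\ref{univariate.origin}(II) gives $\var_\theta(\That)=\smallo{na_n^{-8}}$ uniformly, while $\ex_\theta\That=G(\tha,b)+\smallo{a_n^{-8}}$ is $O(1+|\tha|)$ in \texttt{Case~1--2}; in \texttt{Case~3} the otherwise-large factor $|\tha|^2$ only multiplies $\Pr\{|V_\alpha|\le\thr\}$, which is super-polynomially small there by Lemma~\ref{lem:large.deviation.bound}, and --- crucially --- in the representation above the linear-regime contribution is affine in $Z$, so the $\ex[\That Z]$ term only registers its $O(1)$ slope, not its magnitude.

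For the first summand, $\ex_\theta[(\partial_u g)^2]$, the two linear-regime values contribute $O(\da^2)=O(1)$, while the middle-regime value $\da S'(U_\alpha)\indicator{\{|S(U_\alpha)|<n,\,|V_\alpha|\le\thr\}}$ is the one genuinely new object to control. For it I would prove a companion of Lemma~\ref{lem:variance.bound}, namely $\sup_{\alpha:\,|\tha|\le(1+\sqrt{2\sigma}/\gamma)\thr}\ex_\theta[(S'(U_\alpha))^2]=\smallo{na_n^{-8}}$, essentially by transcribing the proof of Lemma~\ref{lem:variance.bound}: by $H_k'=kH_{k-1}$, $S'$ is again a Hermite-polynomial sum (of degree $\Kn-1$) and is the unbiased estimate of $G_{\Kn}'$, so the same variance computation applies. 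In \texttt{Case~3} I would instead bound $|\Sm'(U_\alpha)|\le 2\Kn^2 n/\thr$ by Markov's inequality for polynomials on the set $\{|S|<n\}\subseteq\{|U_\alpha|\lesssim\sqrt{\log n}\}$ and then combine the independence of $U_\alpha$ and $V_\alpha$ with the super-polynomial smallness of $\Pr\{|V_\alpha|\le\thr\}$ and $\Pr\{|S(U_\alpha)|<n\}$ --- exactly the balance for which the calibration $\Kn=1+\lceil e^2(\gamma+\sqrt{2\sigma})^2(2\log n)\rceil$ versus $\thr=\gamma\sqrt{2\log n}$ was designed (cf.\ Lemma~\ref{lem:bias.bound}). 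Each of the resulting estimates is $\smallo{na_n^{-2}}$, uniformly in $\theta$ and $\tau$ since $\eta$ enters only as a shift; note that only the weak conclusion $n^{-1}a_n^2(\cdot)\to0$ is needed, leaving substantial slack relative to Proposition~\ref{univariate.origin}.

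I expect the main obstacle to be \texttt{Case~3} --- the regime of atypically large $|\tha|$, where both $\Sm$ and its derivative can reach the truncation level (resp.\ $n^{\mathrm{polylog}}$) while the thresholding events are rare, so that making the competing exponents cancel requires careful bookkeeping with the Gaussian tails of the independent $U_\alpha,V_\alpha$ against the calibrated $\thr$ and $\Kn$. The only other nonroutine ingredient, the $S'$-analogue of Lemma~\ref{lem:variance.bound}, should follow line-by-line from the proof of that lemma.
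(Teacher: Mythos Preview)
Your approach is broadly sound but takes a more circuitous route than the paper. The paper differentiates the \emph{randomized} estimator $\That_i(X_i-\eta,Z_i,\tau)$ directly in $\eta$ (the a.e.\ derivative exists since the thresholding introduces only isolated jump points), obtaining the pointwise bound
\[
\Big|\partial_\eta\That_i(X_i-\eta,Z_i,\tau)\Big|\le\sigma_{f,i}^{-1/2}\cdot\begin{cases}|\Sm_i'(U_i(\eta,\tau))|&\text{if }|V_i(\eta,\tau)|<\thr(i),\\1&\text{otherwise.}\end{cases}
\]
It then proves the $S'$-analogue of Lemma~\ref{lem:variance.bound} (exactly as you propose, via $H_k'=kH_{k-1}$) for Cases~1--2, while for Case~3 it argues $|\Sm'|\le n$ a.e.\ and combines this with $n^2\Pr\{|V_\alpha|\le\thr\}=\smallo{n}$ from Lemma~\ref{lem:large.deviation.bound}. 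Only at the very end does it pass to the Rao-Blackwellized $\That_i(X_i-\eta,\tau)$ via Jensen: $\ex_{\theta_i}\big[\{\ex_Z\partial_\eta\That_i(\cdot,Z,\cdot)\}^2\big]\le\ex_{\theta_i}\ex_Z\big[(\partial_\eta\That_i)^2\big]$.

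Your integration-by-parts identity $\That'=2\ex_Z[\partial_u g]-\sigma^{-1/2}\ex_Z[gZ]$ is correct and arguably more honest about the derivative/expectation exchange (the paper's Jensen step tacitly assumes $\partial_\eta\ex_Z=\ex_Z\partial_\eta$ across the threshold jumps). But it buys this at the price of a second summand $\ex_Z[\That Z]$ whose Case-3 handling is delicate: your claim that ``the linear-regime contribution is affine in $Z$, so only the $O(1)$ slope registers'' is not quite right as stated, because the indicator $\indicator{\{V_\alpha>\thr\}}$ also depends on $Z$, and the cross-term $(c_\alpha+d_\alpha X)\,\ex_Z[Z\,\indicator{\{V_\alpha>\thr\}}]$ still carries a factor of order $|\tha|$. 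This is repairable (the residual probability is small enough), but it is extra work the paper's route avoids.

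Your Case-3 bound $|\Sm'|\le 2\Kn^2 n/\thr$ via ``Markov's inequality for polynomials on $\{|S|<n\}$'' is also problematic: the Markov brothers' inequality requires $|S|\le n$ on an \emph{interval}, not on a sublevel set. The paper instead argues $|\Sm'|\le n$ a.e.\ directly (splitting on $|U_\alpha|\gtrless 1$ and invoking uniform Hermite-polynomial bounds on the bounded range), which is exactly tight enough for $n^2\Pr\{|V_\alpha|\le\thr\}=\smallo{n}$. If you keep your route, replace the Markov step by an argument of that type.

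Finally, the paper does not separately treat the randomness of $\mu_i$: since its bound on the a.e.\ derivative of the randomized rule is uniform over all values of $\thai=c_i+d_i(\theta_i-\eta)$ (Cases~1--3 together cover $\Real$), plugging in a data-dependent $\eta=\mu_i$ is harmless before the final Jensen step. Your conditioning on $\{|\Xm|\le\log n\}$ is a valid alternative but more machinery than needed.
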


This completes the proof of Theorem~\ref{mean.loss.are}. 

The proof of Theorem~\ref{mean.decision.theoretic.properties} follows similarly from the proofs of Theorems~\ref{data.are.oracle.loss}, \ref{data.are.oracle.risk} and Corollary~\ref{cor:data.are.oracle} and is not presented here to avoid repetition.

\section{Discussion}
Here, we have developed an Empirical Bayes methodology for prediction in large dimensional Gaussian models. 
Our proposed method involves the calibration of the tuning parameters of skrinkage estimators by minimizing risk estimates that are adapted to the shape of the loss function. 
It produces asymptotically optimal prediction.  Our risk estimation method and its proof techniques can also be used to construct optimal empirical Bayes predictive rules for general piecewise linear and related asymmetric loss functions, where we do not have any natural unbiased risk estimate.  In this paper, we have worked in a high-dimensional Gaussian model. Though normality transformations exist for a wide range of high-dimensional models \citep{brown2008a}, future works in extending the methodology  to non-Gaussian models, particularly discrete setups, would be interesting. 
Extending our Empirical Bayes approach from the one-period predictive setup to a multi-period setup  would be another interesting future direction.



\begin{center}
\large{\textbf{\textrm{APPENDIX}}}
\end{center}

Appendices ~\ref{append-origin}, \ref{append-data-driven} and \ref{append-grand-mean} associated with Sections~\ref{sec-2}, \ref{sec-data-driven} and \ref{sec-grand-mean} are provided here. Appendix~\ref{real-data} exhibits online retail data based numerical experiments which shows encouraging performance of our proposed methodology when applied to the multivariate newsvendor problem. A glossary of all the notations as well as a list of all basic results used in the paper are also presented here.  

\smallskip
\appendix
\section{Proof details for estimators in the class $\mathcal{S}^0$ and the lemmas used in Section~2}\label{append-origin}
\smallskip
We begin this section by first discussing about the discretization step conducted in the ARE estimation. We define the oracle estimator over any discretized set $\nett_n$ as
$$
\tau_{n}^{{\sf OR}}[\nett] =\argmin_{\tau \in \net} L_n(\thetab,\qhatb(\tau))~.
$$
Next, we show that under the assumptions A1-A3, this discretization step does not change the precision of our estimator by constructing a discrete set $\net$ such that the resultant loss from the oracle estimator over  that discrete set is very close to that of the oracle estimator $\tau_{n}^{{\sf OR}}$ computed over the entire domain of $\tau$.
The rationale for such construction of $\net$ is described in the proof of the following lemma. 
\begin{lem}\label{lem:append.1.temp}
	 There exists a discrete set $\Lambda_{n}$ such that for any $\epsilon > 0$,
	  \begin{align*}
	 \texttt{I.} \quad & P\big\{ L_n(\thetab,\qhatb(\tau^{\sf OR}_n[\Lambda_{n}])) - L_n(\thetab,\qhatb(\tau^{\sf OR}_n)) > \epsilon \big\} \to 0 \text{ as } n \to \infty	 \;\; \text{ and },\\
	 \texttt{II.} \quad  & \ex |L_n(\thetab,\qhatb(\tau^{\sf OR}_n[\Lambda_{n}])) - L_n(\thetab,\qhatb(\tau^{\sf OR}_n))| \to 0 \text{ as } n \to \infty.
	  \end{align*}
\end{lem}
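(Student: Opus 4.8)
The plan is to take $\Lambda_n$ to be the image, under a fixed monotone reparametrization of $[0,\infty]$, of an equispaced grid of vanishing mesh, and to show that $\tau\mapsto L_n(\thetab,\qhatb(\tau))$ is Lipschitz in this reparametrization with a random constant that is bounded in expectation; the oracle loss computed over the grid then differs from the oracle loss over $[0,\infty]$ by at most that constant times the mesh, after which both assertions fall out of Markov's inequality.

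Concretely, I would reparametrize $[0,\infty]$ by the increasing bijection $\taut=\tau/(\tau+1)\in[0,1]$, fix a mesh $\delta_n\downarrow 0$ --- for definiteness $\delta_n=a_n^{-3}$ with $a_n=\log\log n$, so that $|\Lambda_n|=O(a_n^3)$ --- and let $\Lambda_n\subset[0,\infty]$ be the image under $\taut\mapsto\tau$ of the grid $\{0,\delta_n,2\delta_n,\dots,1\}$ (last node capped at $1$). Only $\delta_n\downarrow 0$ is used in the present lemma; the polynomial rate is chosen so that the discretization error is later negligible against the $a_n^8$ bound of Theorem~\ref{origin.loss.are} in the decision-theoretic proofs. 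Every $\tau\in[0,\infty]$ then admits a grid point $\tau^\circ\in\Lambda_n$ whose $\taut$-coordinate is within $\delta_n$ of that of $\tau$.

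The key estimate I would establish is that, writing $\taut,\taut'$ for the coordinates of $\tau,\tau'$,
\[
\big|L_n(\thetab,\qhatb(\tau)) - L_n(\thetab,\qhatb(\tau'))\big| \;\le\; M_n\,|\taut-\taut'|, \qquad M_n := C_1\bigl(1 + \tfrac{1}{n}\textstyle\sum_{i=1}^{n}|X_i|\bigr),
\]
for a constant $C_1$ depending only on the structural constants of the problem. This follows from Lemma~\ref{loss.properties} (which writes $L_n$ as the average of the terms $(b_i+h_i)\sqrt{\sigma_{f,i}}\,G((\qhat_i(\tau)-\theta_i)/\sqrt{\sigma_{f,i}},\bt_i)$), the fact that $w\mapsto G(w,\beta)$ is $1$-Lipschitz (since $\partial_w G(w,\beta)=\Phi(w)-\beta\in[-1,1]$, so each summand changes by at most $(b_i+h_i)|\qhat_i(\tau)-\qhat_i(\tau')|$), the bound $|\qhat_i(\tau)-\qhat_i(\tau')|\le|\alpha_i(\tau)-\alpha_i(\tau')|(|X_i|+C_0)$ read off from \eqref{eq:linear.est.o} via $|\sqrt a-\sqrt b|\le|a-b|/(2\sqrt{\sigma_{f,i}})$, and the Lipschitz bound $|d\alpha_i/d\taut|=\sigma_{p,i}(\tau+1)^2/(\tau+\sigma_{p,i})^2\le\max(\sup_i\sigma_{p,i},\,1/\inf_i\sigma_{p,i})=:L_\alpha$ for $\alpha_i(\tau)=\tau/(\tau+\sigma_{p,i})$. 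All suprema involved ($\sup_i(b_i+h_i)$, $C_0=\tfrac12\sup_i\sigma_{p,i}|\Phi^{-1}(\bt_i)|/\sqrt{\sigma_{f,i}}$, $1/\inf_i\sigma_{f,i}$, and $L_\alpha$) are finite under A1, A3 and the mild variance assumptions: A3 with $\inf_i\sigma_{p,i}>0$ and $\sup_i\sigma_{f,i}<\infty$ forces $\sigma_{p,i}<\sup_j\sigma_{f,j}/(4e)$ and $\sigma_{f,i}>4e\inf_j\sigma_{p,j}$, while A1 keeps $\bt_i$ in a compact subinterval of $(0,1)$. Since $X_i=\theta_i+\sqrt{\sigma_{p,i}}\,\epsilon_{1,i}$ with $\ex|\epsilon_{1,i}|=\sqrt{2/\pi}$, Assumption A2 gives $\ex M_n\le C_2<\infty$ for all large $n$.

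Finally, picking $\tau^\circ\in\Lambda_n$ whose coordinate is within $\delta_n$ of that of $\tau^{\sf OR}_n$, and using that $\tau^{\sf OR}_n[\Lambda_n]$, resp.\ $\tau^{\sf OR}_n$, minimizes $L_n(\thetab,\qhatb(\cdot))$ over $\Lambda_n$, resp.\ over $[0,\infty]$, I get
\[
0 \;\le\; L_n\big(\thetab,\qhatb(\tau^{\sf OR}_n[\Lambda_n])\big) - L_n\big(\thetab,\qhatb(\tau^{\sf OR}_n)\big) \;\le\; L_n\big(\thetab,\qhatb(\tau^\circ)\big) - L_n\big(\thetab,\qhatb(\tau^{\sf OR}_n)\big) \;\le\; M_n\,\delta_n,
\]
so that \texttt{I} follows from $\Pr\{M_n\delta_n>\epsilon\}\le\delta_n\,\ex M_n/\epsilon\le C_2\delta_n/\epsilon\to 0$, and \texttt{II} from $\ex\big|L_n(\thetab,\qhatb(\tau^{\sf OR}_n[\Lambda_n]))-L_n(\thetab,\qhatb(\tau^{\sf OR}_n))\big|\le\delta_n\,\ex M_n\le C_2\delta_n\to 0$. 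The main obstacle is the bookkeeping behind the Lipschitz bound --- verifying that $|d\alpha_i/d\taut|$ and $C_0$ are bounded uniformly over all coordinates $i$ and all $\taut\in[0,1]$, and that $\tfrac1n\sum_i|X_i|$ has bounded expectation via A2; once $M_n$ is in hand with $\sup_n\ex M_n<\infty$, the two claims are immediate. Differentiability of $\alpha_i$ at the endpoint $\taut=1$ is not actually needed, since $|\alpha_i(\tau)-\alpha_i(\tau')|\le L_\alpha|\taut-\taut'|$ also follows from monotonicity of $\taut\mapsto\alpha_i$ with the stated one-sided slope bounds.
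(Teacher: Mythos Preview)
Your proposal is correct and follows essentially the same route as the paper: reparametrize by $\taut=\tau/(\tau+1)$, show $\taut\mapsto L_n(\thetab,\qhatb(\tau))$ is Lipschitz with a random constant of bounded expectation, and conclude via Markov. The paper carries the Lipschitz bound in terms of $|Z_i|+|\theta_i|$ (where $Z_i=X_i-\theta_i$) rather than your $|X_i|$, and chooses a coarser mesh $\delta_n\asymp a_n^{-1}$ so that $|\Lambda_n|=O(a_n)$, but your $\delta_n=a_n^{-3}$ with $|\Lambda_n|=O(a_n^3)$ is equally compatible with the later use of Theorem~\ref{origin.loss.are} (which gives $o(a_n^{-4})$ pointwise error), as you note.
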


\textbf{Construction of the Discrete Set $\nettt$ which attains the any prefixed $\epsilon$-precision:} First note that, by Assumption A1-A3  there exists dimension independent constants $C_1$, $C_2$, $C_3$ and $C_4$ such that for all large $n$ we have:
\begin{align}\label{eq:constants.temp}
&\sup_{1 \leq i \leq n} (b_i+h_i) \leq C_1, \\
&\max\{\sup_{1 \leq i \leq n} \sigma_{p,i}, \;  \sup_{1 \leq i \leq n} \sigma_{p,i}^{-1}\} \leq C_2, \\
&\bigg(\sup_{1 \leq i \leq n} \sigma_{p,i}/\sigma_{f,i} \bigg) \times  \bigg(\sup_{1 \leq i \leq n} |\Phi^{-1}(\bt_i)| \bigg) \leq C_3  \text{ and, } \\
&\sup_{\thetab \in \Theta_n}\frac{1}{n} \sum_{1=1}^n |\theta_i| \leq C_4 (\log \log n)^{1/2} 
\end{align}
where $\Theta_n \subseteq \Real^n$  contains all $\thetab$ obeying assumption A3. Define,
\begin{align}\label{eq:delta}
\delta_n =  \big\{2 \, C_1 \, C_2  (2 \phi(0) +  C_3 + \sqrt{a_n} C_4 + a_n ) \big \}^{-1}  \text{ where } a_n = \log \log n. 
\end{align}
 We consider an invertible transformation on $\tau$ and re-parametrize it by $\taut=\tau/(\tau+1)$. As $\tau$ varies over $[0,\infty]$, $\taut$ is contained in $[0,1]$.  We  construct an equi-spaced discretized set $\{0=\taut_1 \leq \taut_2 \leq \cdots \leq \taut_m \leq 1\}$ with this transformed variable where $\taut_i = (i+1) \delta_n $ and $m=\lceil 1/\delta_n \rceil$. We will invert it back to get the set $\nettt$ on $\tau \in [0,\infty]$.  The role of the re-parametrization is to restrict the domain of the variable of interest (which in this case is $\taut$) to a bounded set. 
 \par
 The cardinality of the set $\nettt$ is: $|\nettt|= a_n(1+o(1))$ \text{ as } $n \to \infty$. We conduct our computations for the ARE estimate based on the grid with $\nett_{n}$.
 
\noindent\textbf{Proof of Lemma~\ref{lem:append.1.temp}.} Using the above constructed grid, we prove the lemma.
Note that, the only $\tau$ dependent quantity in the expression of $\qhat_i(\tau)$ in \eqref{eq:linear.est.o} is $\alpha_i$ which can  be expressed as $\taut/(\taut+(1-\taut)\sigma_{p,i})$. The univariate predictive loss is:
$$ l_i(\theta_i, \qhat_i(\taut)) = (b_i+h_i)\, {\sigma_{f,i}}^{1/2}\, G\bigg(\frac{\alpha_i Z_i+ (\sigma_{f,i}+\alpha_i \sigma_{p,i})^{1/2} \Phi^{-1}(\bt_i) -\bar{\alpha}_i 
\theta_i}{{\sigma_{f,i}}^{1/2}}; \,\bt_i\bigg)$$
where  $Z_i$ and has $N(0,\sigma_{p,i})$ distribution. Note, $l_i(\theta_i,\qhat_i(\taut))$ is a.e. differentiable in $\taut$  and
$$ \bigg \vert \frac{\partial}{\partial \taut}\, l_i(\theta_i, \qhat_i(\taut)) \bigg \vert \leq (b_i+h_i)\, \cdot \bigg|C(\theta_i,Z_i,\taut)\bigg| \times \sup_{\omega \in \Real} \bigg \vert \frac{\partial}{\partial \omega}  G (\omega,\bt_i) \bigg \vert $$
where $$C(\theta_i,Z_i,\taut)=
\frac{\partial}{\partial \alpha_i} \big\{\alpha_i Z_i+ (\sigma_{f,i}+\alpha_i \sigma_{p,i})^{1/2} \Phi^{-1}(\bt_i) -\bar{\alpha}_i \theta_i\big \} \cdot \frac{\partial \alpha_i}{\partial \taut} .$$
Noting that  
${\partial \alpha_i}/ {\partial \taut} =  \sigma_{p,i}/(\taut+(1-\taut)  \sigma_{p,i})^2 $ which is bounded in magnitude by $\max \{\sigma_{p,i}, \sigma_{p,i}^{-1}\}$ we arrive at 
 $$ |C(\theta_i,Z_i,\taut)| \leq \{
  |Z_i|+ \sigma_{p,i}/\sigma_{f,i} \cdot |\Phi^{-1}(\bt_i)| +|\theta_i|\} \cdot \max \{\sigma_{p,i}, \sigma_{p,i}^{-1}\} .$$
Again,
$$ \bigg \vert \frac{\partial}{\partial \omega}  G (\omega,\bt_i) \bigg| = |\Phi(\omega)-\bt_i| \leq 2,$$  
and so, the derivative of the predictive loss is bounded above  by:
\begin{align}\label{eq:temp.I}
 \bigg \vert \frac{\partial}{\partial \taut}\, l_i(\theta_i, \qhat_i(\taut)) \bigg \vert \leq 2 (b_i+h_i)\, \max \{\sigma_{p,i}, \sigma_{p,i}^{-1}\}   \vert \cdot (|Z_i| + |\theta_i|+\sigma_{p,i}/\sigma_{f,i} \cdot |\Phi^{-1}(\bt_i)|).
 \end{align}
As $L_n(\thetab,\qhatb(\taut))=n^{-1} \sum_{i=1}^n  l_i(\theta_i, \qhat_i(\taut))$, we have:
$$\vert L_n(\thetab,\qhatb(\taut))-L_n(\thetab,\qhatb(\taut_j)) \vert \leq D_n |\taut-\taut_j| \text{ where } D_n = \sup_{\taut \in [0,1]} n^{-1} \sum_{i=1}^n \bigg \vert \frac{\partial}{\partial \taut}\, l_i(\theta_i, \qhat_i(\taut)) \bigg \vert.$$ 
Thus, we have
$\inf_{\taut_j \in \nettt} \vert L_n(\thetab,\qhatb(\taut))-L_n(\thetab,\qhatb(\taut_j)) \vert \leq D_n \delta_n$ which implies:
$$\vert L_n(\thetab,\qhatb(\tau^{\sf OR}_n[\Lambda_{n,\epsilon}])) - L_n(\thetab,\qhatb(\tau^{\sf OR}_n)) \vert \leq D_n \delta_n.$$
Again from \eqref{eq:temp.I}, it follows that
$$D_n \leq 2 \, C_1 \, C_2 \bigg (\frac{1}{n} \sum_{i=1}^n |Z_i| + \frac{1}{n} \sum_{1=1}^n |\theta_i| + C_3\bigg)$$ 
where $C_1$, $C_2$ and $C_3$ are defined in \eqref{eq:constants.temp}. 
\par
Now note that ${n}^{-1} \sum_{i=1}^n |Z_i|\sim N(2\phi(0),n^{-1})$. Thus, by definition \eqref{eq:delta} we have:\\
$P(D_n \delta_n > \epsilon) \to 0$ as $n \to \infty$, and  $E(D_n) \delta_n \to 0 $  as $n \to \infty$. Thus, the result follows.

\subsection{Proof of Theorem~\ref{origin.are.oracle.loss}} 
We know that
\begin{align*}
& \Pr \left\{ L_n(\thetab,\qhatb(\tauh^{\A}_n))\geq L_n(\thetab,\qhatb(\tau^{\sf OR}_n))+ \epsilon \right\} \qquad \qquad \qquad\\
& \qquad \leq \Pr \left\{ L_n(\thetab,\qhatb(\tauh^{\A}_n))\geq L_n(\thetab,\qhatb(\tau^{\sf OR}_n[\nett]))+ \epsilon/2 \right\} \\
&\qquad + \Pr \left\{ L_n(\thetab,\qhatb(\tau^{\sf OR}_n[\nett]))\geq L_n(\thetab,\qhatb(\tau^{\sf OR}_n))+ \epsilon/2 \right\}
\end{align*}
and the second term converges to $0$ as $n \to \infty$ by Lemma~\ref{lem:append.1.temp}. Next, we concentrate on the first term. 
By construction $\Ahat_n(\tauh^{\A}_n) \leq \Ahat_n(\tau^{\sf OR}_n)$. So, for any fixed $\epsilon > 0$ we have:
\begin{align*}
&\Pr \left\{ L_n(\thetab,\qhatb(\tauh^{\A}_n))\geq L_n(\thetab,\qhatb(\tau^{\sf OR}_n[\nett]))+ \epsilon/2 \right\} 
\leq \Pr \left\{ A_n(\thetab,\tauh^{\A}_n) \geq B_n(\thetab,\tau^{\sf OR}_n[\nett]) + \epsilon/2 \right\}
\\
& \text{ where } A_n(\thetab,\tauh^{\A}_n) = L_n(\thetab,\qhatb(\tauh^{\A}_n)) - \Ahat_n(\tauh^{\A}_n) \\ & \text{ and }
B_n(\thetab,\tau^{\sf OR}_n[\nett])= L_n(\thetab,\qhatb(\tau^{\sf OR}_n[\nett])) - \Ahat_n(\tau^{\sf OR}_n[\nett]).
\end{align*}
Now, using Markov inequality we get:
$$
\Pr \left\{ A_n(\thetab,\tauh^{\A}_n) \geq B_n(\thetab,\tau^{\sf OR}_n [\nett]) + \epsilon/2 \right\} \leq  2\epsilon^{-1} \ex|A_n(\thetab,\tauh^{\A}_n) - B_n(\thetab,\tau^{\sf OR}_n[\nett])|$$
which again is less than $ 4 \epsilon^{-1}  \ex \{\sup_{\tau \in \net} | \Ahat_n(\tau)-L_n(\thetab,\qhatb(\tau))| \}$. 
It is upper bounded by 
\begin{align*}
& 4 \epsilon^{-1}  \ex \{\sup_{\tau \in \net} | \Ahat_n(\tau)-R_n(\thetab,\qhatb(\tau))| \} +  4 \epsilon^{-1}  \ex \{\sup_{\tau \in [0,\infty]} | L_n(\thetab,\qhatb(\tau)) - R_n(\thetab,\qhatb(\tau)| \}
\end{align*}
The second term convereges to $0$ by Theorem~\ref{origin.risk.loss} and the first term is bounded above by
\begin{align*}
& 4 \epsilon^{-1}  \ex \bigg\{\sum_{\tau \in \net} | \Ahat_n(\tau)-R_n(\thetab,\qhatb(\tau))| \bigg\} 
\leq 4 \epsilon^{-1} |\net|\sup_{\tau \in [0,\infty]} \ex \{ | \Ahat_n(\tau)-R_n(\thetab,\qhatb(\tau))| \} 
 \end{align*}
 where $|\net|$ is the cardinality of $\net$. By construction of $\net$,  $|\net|=O(a_n)$ and by Theorem~\ref{origin.loss.are} the above expression converge to 0 as $n \to \infty$. Thus, we have the required result.

\subsection{Proof of Theorem~\ref{origin.are.oracle.risk}}
We upper bound $L_n(\thetab,\qhatb(\tauh^{\A}_n))- L_n(\thetab,\qhatb(\tau^{\sf OR}_n))$ by:
\begin{align*}
\{L_n(\thetab,\qhatb(\tauh^{\A}_n))- L_n(\thetab,\qhatb(\tau^{\sf OR}_n[\nett]))\}+\{L_n(\thetab,\qhatb(\tauh^{\A}_n[\nett]))- L_n(\thetab,\qhatb(\tau^{\sf OR}_n))\}.
\end{align*}
The expectation of the second term converges to $0$ by Lemma~\ref{lem:append.1.temp}. For the first term, 
we decompose the difference of the losses into the following $3$ parts:
\begin{align*}
&L_n(\thetab,\qhatb(\tauh^{\A}_n))- L_n(\thetab,\qhatb(\tau^{\sf OR}_n[\nett]))\\
&=\{L_n(\thetab,\qhatb(\tauh^{\A}_n)) - \Ahat_n(\tauh^{\A}_n) \} - \{ L_n(\thetab,\qhatb(\tau^{\sf OR}_n[\nett])\} - \Ahat_n(\tau^{\sf OR}_n[\nett])) \\
& \;\;\;+ \{\Ahat_n(\tauh^{\A}_n) - \Ahat_n(\tau^{\sf OR}_n[\nett])\}.
\end{align*}
Now, by construction the third term is less than $0$ and so, 
\begin{align*}
&\ex \left[ L_n(\thetab,\qhatb(\tauh^{\A}_n))- L_n(\thetab,\qhatb(\tau^{\sf OR}_n[\nett])) \right] \leq 2  \ex \bigg\{ \sup_{\tau \in \net} \big| \Ahat_n(\tau)-L_n(\thetab,\qhatb(\tau)) \big | \bigg\}\\
&\leq 2  \ex \bigg\{ \sum_{\tau \in \net } \big| \Ahat_n(\tau)-R_n(\thetab,\qhatb(\tau)) \big | \bigg\} + 2  \ex \bigg\{ \sup_{\tau \in \net} \big| L_n(\thetab,\qhatb(\tau)-R_n(\thetab,\qhatb(\tau)) \big |\bigg\}\\
&\leq 2 |\net|  \sup_{\tau \in [0,\infty]} \ex \bigg\{ \big| \Ahat_n(\tau)-L_n(\thetab,\qhatb(\tau)) \big | \bigg\} +  2  \ex \bigg\{ \sup_{\tau \in \net} \big| L_n(\thetab,\qhatb(\tau)-R_n(\thetab,\qhatb(\tau)) \big |\bigg\}
\end{align*}
which converges to 0 by Theorems~\ref{origin.loss.are} and \ref{origin.risk.loss}. Hence, the result follows.

\subsection{Proof of Corollary~\ref{cor:origin.are.oracle}}
The results follows directly from Theorems \ref{origin.are.oracle.loss} and~\ref{origin.are.oracle.risk}  as $\tau^{\sf OR}_n$ minimizes the loss $L_n(\thetab,\qhatb(\tau))$ among the class of all linear estimates  with shrinkage towards the origin.

\subsection{Proof of Theorem~\ref{origin.risk.loss}}
We need to prove:
$$ \sup_{\tau \in [0,\infty]}    \big |  R_n(\thetab, \qhatb(\tau)) -  L_n(\thetab, \qhatb(\tau)) \big| \to 0  \text{ in } L_1 \text{ as } n \to \infty.$$
In our proof we will use a version of the uniform SLLN \cite[Lemma 2.4]{newey1994}. 
Based on form the loss functions in \eqref{eq:loss-function} and the form of the linear estimators in \eqref{eq:linear.est.o}, we can reparametrize this problem with respect to $\taut=\tau/(\tau+1)$ instead of $\tau$. The only $\tau$ dependent quantity in the expression of $\qhat_i(\tau)$ in \eqref{eq:linear.est.o} is $\alpha_i$ which is reparametrized to $\taut/(\taut+(1-\taut)\sigma_{p,i})$. 
As $\taut \in[0,1]$, the supremum here is actually over compact set. Also, $l_i(\theta_i,\qhat_i(\taut)(x))$ is continuous at each $\taut$ for all most all $x$ and $\theta$.
Also, 
$$ l_i(\theta, \qhat_i(\taut)) = (b_i+h_i)\, {\sigma_{f,i}}^{1/2}\, G\bigg(\frac{\alpha_i z_i+ (\sigma_{f,i}+\alpha_i \sigma_{p,i})^{1/2} \Phi^{-1}(\bt_i) -\bar{\alpha}_i \theta_i}{{\sigma_{f,i}}^{1/2}}; \,\bt_i\bigg)$$
where and $z_i=x_i-\theta_i$ and has $N(0,\sigma_{p,i})$ distribution.
By  Lemma~\ref{lem:G.bound} we know $G(y,\bt)\leq \phi(0)+(1-\bt) |y|$ and we use  $\alpha_i \in[0,1]$ to arrive at: for each $\theta_i$ and for all $\taut \in [0,1]$ we have,
\begin{align}\label{eqn.loss.bound}
 l_i(\theta_i,\qhat_i(\tau)) \leq (b_i+h_i) \big[ \sigma_{f,i}^{1/2} \phi(0) + (1-\bt_i) \big \{|z_i|+ (\sigma_{f,i}+\sigma_{p,i})^{1/2} \Phi^{-1}(\bt_i)+|\theta_i| \big\} \big].
 \end{align}
So, for any $\thetab$ and $\tau \in [0,\infty]$  we have:
\begin{align}\label{tempor}
L_n(\thetab,\qhatb(\tau))\leq A_n \bigg(\phi(0)+ n^{-1} \sum_{i=1}^n |\Phi^{-1}(\bt_i)|\bigg) + B_n \bigg (n^{-1} \sum_{i=1}^n| z_i| + n^{-1} \sum_{i=1}^n |\theta_i|\bigg)
\end{align}
where $A_n=\sup\{(b_i+h_i) \sigma_{f,i}^{1/2}:i=1,\ldots,n\}$ and $B_n=\sup\{b_i+h_i:i=1,\ldots,n\}$. By Assumptions A1, A3 we have $\limsup_n A_n \leq \infty$ and $\limsup_n B_n < \infty$. So, the expectation of the RHS in \eqref{tempor} is finite under Assumption A2. As all the conditions of  \citet[Lemma 2.4]{newey1994} hold,  we can apply  the SLLN uniformly. So, the loss converge to the risk and we have:
$$\sup_{\tau \in [0,\infty]}    \big |  R_n(\thetab, \qhatb(\tau)) -  L_n(\thetab, \qhatb(\tau)) \big| \to 0  \text{ in } P \text{ as } n \to \infty.$$
Now,  noting  that the upper bound in \eqref{tempor} is uniformly integrable as  $ \sum_{i=1}^n| z_i|$ is U.I. by the extra integrability condition (See Lemma~\ref{lem:ui}). So, $\sup_{\tau \in [0,\infty]}|  R_n(\thetab, \qhatb(\tau)) -  L_n(\thetab, \qhatb(\tau)) \big|$ is U.I. and we also have $L_1$ convergence. Hence, the result follows.

\par

Next, we provide proofs of all the main lemmas used in Section~\ref{sec-2-main-proof}.

\subsection{Proof of Lemma \ref{loss.properties}}

As both the L.H.S. and R.H.S. scale in $(b+h)$ without loss of generality we assume $b+h=1$.
We will first prove the result for $\sigma=1$ before proceeding to the general case.
	Noting that  for all $y$ and $q$, $(y-q)^+ = y-q + (q-y)^+$, we have:
	\bex
	b\, \E [ Y - q ]^+  + h\, \E [ q - Y ]^+
	= b (\theta - q) + (b+h)  \E [ q - \theta - Z ]^+
	\eex
	where $Z$ is standard normal random variable. Direct calculation yields:
	\begin{align*}
	\E [ q - \theta - Z ]^+ &= \int_{-\infty}^{q-\theta} (q-\theta - x) \phi(x) dx \\
	&= (q-\theta) \Phi(q-\theta) + \int_{-\infty}^{q-\theta} -x \phi(x) dx 
	= (q-\theta) \Phi(q-\theta) + \phi(q-\theta)~,
	\end{align*}
	which gives the desired result.  Also, note that in this case 
	$\partial_w G(w,b)= \Phi(w)-b$. So, $G(w,b)$ is minimized at $\Phi^{-1}(b)$ and the minimum value is $\phi(\Phi^{-1}(b))$.
	
	For general $\sigma$ we rewrite the L.H.S. using $Y\stackrel{d}{=}\sigma^{1/2}Z+\theta$ where $Z$ is a standard normal random variable to obtain: 
	$$\sigma^{1/2} \{	b\, \E [ Z -  \sigma^{1/2} (q-\theta) ]^+  + h\, \E [  \sigma^{1/2} (q-\theta) - Z ]^+\}.$$  Now, the result stated in the lemma follows by using the already proven result for  
	the unit variance case.

\subsection{Proof of Lemma \ref{linear.estimates}}

	With out loss of generality we can assume that $b_i+h_i=1$ as the univariate loss is just scaled by that factor. Now, the minimizer of the the Bayes risk $B_1(\eta,\tau)$ is given by
$$\qhat(\eta,\tau)(x)=\argmin_{\qhat} \int l(\theta,\qhat(x)) \pi(\theta|x) \, d\theta.$$
The posterior distribution $\pi(\theta|x) \sim N(\alpha x + \alphab \eta, \alpha \sigma_p)$. So, for any fixed $x$ we have
\begin{align*}
\int l(\theta,\qhat(x)) \pi(\theta|x) \, d\theta={\sigma_f^{1/2}}\; \ex \bigg\{G\bigg(\frac{\qhat-T}{\sqrt{\sigma_f}}, b\bigg)\bigg\}
\end{align*}
where the expectation is over $T$ which follows $N(\alpha x + \alphab \eta, \alpha \sigma_p)$. The above expectation equals
$\sigma_f^{1/2} \,\ex\, G(\sigma_f^{-1/2}\{\qhat(x)-(\alpha x + + \alphab \eta + \alpha^{1/2} \sigma_p^{1/2} Z)\},b)$
where $Z$ is a standard normal random variable.
To evaluate the aforementioned expression  we now use the identity in \eqref{eq:identity} with $Y\sim N(a(x), \sigma_f)$ and $a(x)=\alpha x + \alphab \eta - \qhat(x)$. Finally we get $\int l(\theta,\qhat(x)) \pi(\theta|x) \, d\theta$ equals 
\begin{align*}
&\ex_{Z\sim N(0,1)} \big \{  \ex_{Y\sim N(a(x),\sigma_f)} \big( b\,(Y+ \alpha^{1/2} \sigma_p^{1/2} Z)^+ + h \, (-Y- \alpha^{1/2} \sigma_p^{1/2} Z)^+ \big)\big\}\\
&=\ex \big\{b\, \big(a(x)+(\sigma_f+\alpha\sigma_p)^{1/2} Z\big)^+ + h\big(-a(x)-(\sigma_f+\alpha \sigma_p)^{1/2} Z\big)^+ \big\}. \text{\hspace{5cm}}
\end{align*}
As $Y+\alpha^{1/2} \sigma_p^{1/2} Z \sim N(-a(x),\sigma_f+\alpha \sigma_p)$ the above equality follows using  $Y+(\alpha \sigma_p)^{1/2} Z \stackrel{d}{=} a(x)+ (\sigma_f+\alpha \sigma_p)^{1/2} Z$. Again, using change of variable, the problem can be ultimately reduced to finding the minimizer for:
$$(\sigma_f+\alpha \sigma_p)^{1/2}\big \{ b\, \ex\{Z-\tilde{a}(x)\}^+ + h \, \ex\{\tilde{a}(x)-Z\}^+ \big\} $$
 where   $\tilde{a}(x)=-a(x)(\sigma_f+\alpha \sigma_p)^{-1/2}$.
By Lemma~\ref{loss.properties}, it is minimized when $\tilde{a}(x)=\Phi^{-1}(b)$ which implies $\qhat_{\alpha}(x)=\alpha x + \alphab \eta + (\sigma_f+\alpha \sigma_p)^{1/2} \Phi^{-1}(b)$. Also, the minimum value is $(\sigma_f+\alpha \sigma_p)^{1/2}\phi(\Phi^{-1}(b))$ which gives us the expression for the Bayes risk. 
\par 
The risk of the Bayes estimate $\qhat(\eta,\tau)$  is given by:
$$b \cdot \ex_{\theta} \big(Y-\alpha X - \alphab \eta - (\sigma_f+\alpha \sigma_p)^{1/2} \Phi^{-1}(b)\big)^+  + h \cdot \ex_{\theta} \big(\alpha X + \alphab \eta +  (\sigma_f+\alpha \sigma_p)^{1/2} \Phi^{-1}(b)-Y\big)^+$$
where $X \sim N(\theta,\sigma_p)$, $Y \sim N(\theta,\sigma_f)$ and given $\theta$, $X \perp Y$. And so, the above equals, 
$$ (\sigma_f+\alpha^2 \sigma_p)^{1/2} \bigg\{ b \cdot \ex_{0} \big( Z  -  J  \big)^+  + h  \ex_{0} \big( Z  -  J  \big)^- \bigg\}=  (\sigma_f+\alpha^2 \sigma_p)^{1/2} G(J,b)$$
where $J =(\sigma_f+\alpha^2 \sigma_p)^{-1/2}\{-\bar{\alpha} (\theta-\eta) + (\sigma_f+\alpha \sigma_p)^{1/2} \Phi^{-1}(b)\}$. This completes the proof.

\subsection{Proof of Lemma \ref{lem:bias.bound}}

By Taylor's Theorem,
$$|G_{\Kn}(y,b)- G(y,b)| =  \frac{ \phi(\zeta) \, |H_{\Kn-1}(\zeta)| \, |\zeta|^{\Kn+1} }{(\Kn+1)!}~,$$
where $\zeta$ lies between $0$ and $y$.
Noting that  $\phi(\zeta) \leq 1 $ for all $\zeta$. By Lemma \ref{lem:hermite-bound}, there exists an absolute constant $c$ such that:
$$ \abs{ H_{\Kn-1}(\zeta)} \leq c\,  { e^{ \zeta^2 / 4}  }{ (\Kn-1)!} { (\Kn-1)^{-1/3}  \, \{ (\Kn-1)/ e \}^{-(\Kn-1)/2} } $$
which provide us with the following error bound:
\begin{equation}\label{eq.tempo}
\begin{split}
|G_{\Kn}(y,b)- G(y,b)|
&\leq c\,  \frac{ e^{ y^2 / 4} }{(\Kn+1)!}  \times \frac{ (\Kn-1)!}{ (\Kn-1)^{1/3}  \, {\left( { (\Kn-1) /  e} \right)}^{(\Kn-1)/2} }\times
\abs{y}^{\Kn+1} \\
& =  c \left( \frac{  e \, y^2 }{\Kn-1} \right)^{(\Kn-1)/2} \frac{e^{ y^2 / 4} \, y^2}{(\Kn-1)^{1/3}(\Kn+1) \Kn}.
\end{split}
\end{equation}
By definition of $\Kn$ just before Equation~\eqref{def.uni},~$ \Kn-1 \geq  e^2 (\gamma +\sqrt{2\sigma})^2 (2 \log n).$
Since $|y| \leq (1 + \sqrt{2\sigma}/\gamma) \thr = (\gamma + \sqrt{2\sigma})\sqrt{2 \log n}$,
\begin{align*}
\left( \frac{  e \, y^2 }{\Kn-1} \right)^{(\Kn-1)/2}  &\leq e^{-(\Kn-1)/2} \leq e^{-  e^2 (\gamma + \sqrt{2\sigma})^2 \log n } = n^{- e^2 \,(\gamma + \sqrt{2\sigma})^2} \\
\frac{y^2}{(\Kn-1)^{1/3}(\Kn+1) \Kn} &\leq   \frac{y^2}{(K_n-1)^2}   \leq \frac{1}{e^4 (\gamma + \sqrt{2\sigma})^2 (2 \log n)}\\
e^{ y^2 / 4} &\leq e^{ (\gamma + \sqrt{2\sigma})^2 (\log n)/2} = n^{ (\gamma + \sqrt{2\sigma})^2 / 2} ~\leq~ n^{ (\gamma + \sqrt{2\sigma})^2}~,
\end{align*}
which implies that 
\begin{align*}
\sup_{\abs{y} \leq (1 + \sqrt{2\sigma}/\gamma) \thr} |G_{\Kn}(y,b)- G(y,b)|
&\leq  c \, \frac{ n^{-(e^2-1) (\gamma + \sqrt{2\sigma})^2} }{ e^4 (\gamma + \sqrt{2\sigma})^2 }~,
\end{align*}
which is the desired result.

The second part follows  because $G(y,b)=\phi(y)-y \tilde{\Phi}(y) + \bar{b} y$, and thus,
$$
 \abs{ G(y,b) - \bar{b} y} = \abs{\phi(y)-y \tilde{\Phi}(y)}  \leq \frac{\phi(y)}{y^2} \leq \frac{ e^{-y^2/2}}{y^2}~,
$$
where the first inequality follows from Lemma ~\ref{lem:mills.ratio}. 
For the proof of the third statement, note that for $y < 0$,
$$
G(y,b)=\phi(y)-y \tilde{\Phi}(y) + \bar{b} y = \phi(-y) - (-y) \tilde{\Phi}(-y) - b y 
$$
and we can then apply  Lemma~\ref{lem:mills.ratio} as before because $-y$ is now positive.


\subsection{Proof of Lemma \ref{lem:variance.bound}}
By Lemma~\ref{lem:Cai}, $\E \left[ S^2(\Ua) \right]$ is bounded above by
$$
\left( G(0,b) +  \abs{G'(0,b)} \sqrt{\E \Ua^2} +  \sum_{l=0}^{\Kn} \frac{ \abs{H_l(0) }}{(l+2)!}  ( 2 \sigma \da^2)^{(l+2)/2}   \sqrt{ \E \, H^2_{l+2}\bigg( \frac{\Ua}{\sqrt{2 \sigma \da^2}}  \bigg)} \;  \right)^2
$$
We will now bound each of the term in the above expression.
Note that $G(0,b)=\phi(0)$, $G'(0,b)=\frac{1}{2}-b$, and $\E \Ua^2 = 2 \sigma \da^2 + \tha^2$.  So, the first two terms are $o(\sqrt{n})$ as $n \to \infty$. Thus, it suffices to show that the last term is also $o(\sqrt{n})$.
Let $\ba = 2 e \sigma \da^2$.  Then, by Lemma~\ref{lem:Cai2}, we have that for all $l \geq 0$
\begin{align*}
 ( 2 \sigma \da^2)^{(l+2)/2}   \sqrt{ \E \, H^2_{l+2}\bigg( \frac{\Ua}{\sqrt{2 \sigma \da^2}}  \bigg)}
 & =  ( 2 \sigma \da^2)^{(l+2)/2}   (l+2)^{(l+2)/2}  \bigg(1+ \frac{\tha^2}{2 \sigma \da^2 (l+2)}\bigg)^{(l+2)/2} \\
 & =  \left( \frac{l+2}{e} \right)^{(l+2)/2} \bigg(\ba+ \frac{\tha^2 \ba}{2 \sigma \da^2 (l+2)}\bigg)^{(l+2)/2}\\
 & \leq \left( \frac{l+2}{e} \right)^{(l+2)/2} \bigg(1 + \frac{\tha^2 \ba }{2 \sigma \da^2 (l+2)}\bigg)^{(l+2)/2}\\
  & \leq \left( \frac{l+2}{e} \right)^{(l+2)/2} e^{ \tha^2 \ba / (4 \sigma \da^2)}~,
\end{align*}
where the first inequality follows from $\ba \leq 2 e \sigma < 1$  because $\abs{\da} \leq 1$ and Assumption A3 implies that $\sigma < 1/(4e)$.
The final inequality follows Lemma~\ref{lem:inequality}. 
Since  $\abs{\tha} \leq (1 + \sqrt{2 \sigma}/\gamma) \thr = (\gamma + \sqrt{2 \sigma}) \sqrt{ 2 \log n}$ and $\ba / (2 \sigma \da^2) = e$, 
$$
\frac{\tha^2 \ba }{4 \sigma \da^2} \leq e (\gamma + \sqrt{2 \sigma})^2  \log n  = n^{ e (\gamma + \sqrt{2 \sigma})^2 }
$$
Using the above abound and Lemma \ref{lem:hermite-bound}, it follows that there exists an absolute constant $c$ such that
\begin{align*}
& \sum_{l=0}^{\Kn} \frac{ \abs{H_l(0) }}{(l+2)!}  ( 2 \sigma \da^2)^{(l+2)/2}   \sqrt{ \E \, H^2_{l+2}\bigg( \frac{\Ua}{\sqrt{2 \sigma \da^2}}  \bigg)} \\
& \quad \leq n^{ e (\gamma + \sqrt{2 \sigma})^2 } \sum_{l=0}^{\Kn} \frac{ \abs{H_l(0) } \, (l +2)^{(l+2)/2}}{(l+2)! \, e^{(l+2)/2}} \\
& \quad\leq c\, n^{ e (\gamma + \sqrt{2 \sigma})^2 } \sum_{l=1}^{\Kn} \frac{  (l+2)^{(l+2)/2}}{(l+2)! \, e^{(l+2)/2}} \times \frac{  l!   }{ l^{1/3} \, {\left( {l \over  e} \right)}^{l /2} }  \\
& \quad \leq \, c \, n^{ e (\gamma + \sqrt{2 \sigma})^2 }\sum_{l=1}^{\Kn} \frac{ 1}{ l^{4/3} }~,
\end{align*}
where the last inequality follows from Lemma \ref{lem:inequality} because
$$
\frac{  (l+2)^{(l+2)/2}}{(l+2)! \, e^{(l+2)/2}} \times \frac{  l!   }{ l^{1/3} \, {\left( {l \over  e} \right)}^{l /2} } 
= \frac{1}{e} \left( \frac{l+2}{l} \right)^{l/2} \frac{1}{(l+1) l^{1/3}} \leq \frac{1}{(l+1)l^{1/3}} \leq \frac{1}{l^{4/3}}
$$
Note that $\sum_{l=1}^{\infty} \frac{ 1}{ l^{4/3} }  < \infty$.  Also,  by our definition, $0 < \gamma < (1/\sqrt{2e}) - \sqrt{ 2\sigma}$, which implies that $e (\gamma + \sqrt{2 \sigma})^2 < 1/2$. So, $a_n^8 n^{-1} \ex[S(\Ua)]^2 \leq O(a_n^8 n^{ e (\gamma + \sqrt{2 \sigma})^2 }) = o(\sqrt{n})$, which completes the proof.

\subsection{Proof of Lemma \ref{lem:large.deviation.bound}}
Since $\Va = \tha + \sqrt{ 2 \sigma \da^2} Z$ and $\da^2 \leq 1$,  for \textbf{Case 1} where $\abs{\tha} \leq \thr/2$, 
\begin{align*}
\Pr\{|\Va| > \thr\} & \leq  2 \Pr\left\{ {\thr / 2} + \sqrt{2 \sigma \da^2} Z > \thr \right\} 
~\leq~ 2 \Pr\left\{   Z > \thr/(2 \sqrt{2 \sigma}) \right\} \\
& = 2 \Phit \left(\gamma \sqrt{ \log n / (4 \sigma)} \right) \leq \frac{2 \phi \left(\gamma \sqrt{ \log n / (4 \sigma)}   \right)}{\gamma \sqrt{ \log n / (4 \sigma)} } 
\leq \frac{ 2 n^{-\gamma^2/(8 \sigma)} }{\gamma \sqrt{ \log n / (4 \sigma)}} ~,
\end{align*} 
where the next to last inequality follows from Lemma \ref{lem:mills.ratio}. Thus,
$$
\lim_{n \to \infty} \sup_{\alpha \,:\, |\tha| \;\leq\; \thr/2} a_n^8\, \thr^2 \cdot  \Pr \{ \abs{\Va} > \thr \} = 0~,
$$
which is the desired result.

\textbf{For Case 2}, we will assume that $\thr/2 < \tha \leq (1 \,+\, \sqrt{2\sigma}/\gamma)\thr$; the proof for the other case is the same by symmetry. Since $\Va = \tha + \sqrt{ 2 \sigma \da^2} Z$ and $\da^2 \leq 1$,
\begin{align*}
\Pr\{\Va < -\thr\} & \leq  2 \Pr\left\{ {\thr / 2} + \sqrt{2 \sigma \da^2} Z < -\thr \right\} 
~\leq~  \Pr\left\{   Z < -3 \thr/(2 \sqrt{2 \sigma}) \right\} \\
& \leq \frac{ \phi \left(\gamma \sqrt{ 9 \log n / (4 \sigma)}   \right)}{\gamma \sqrt{ 9 \log n / (4 \sigma)} } 
\leq \frac{ n^{-9\gamma^2/(8 \sigma)} }{\gamma \sqrt{ 9 \log n / (4 \sigma)}} ~,
\end{align*} 
and since $\tha \leq (1 \,+\, \sqrt{2\sigma}/\gamma)\thr = (\gamma \,+\, \sqrt{2\sigma})\sqrt{2 \log n}$, we have that
$$
\lim_{n \to \infty} \sup_{\alpha \,:\, \thr/2 \;<\; \tha \;\leq\; (1 \,+\, \sqrt{2\sigma}/\gamma)\thr } a_n^8\, \abs{\tha} \cdot \Pr \{ \Va <  -\thr \} = 0~,
$$
which is the desires result.

\textbf{For Case 3}, suppose that $\tha > \left(1 \,+\, {\sqrt{2\sigma} / \gamma} \right) \thr$; the proof for the other case is the same.  Since $\Va = \tha + \sqrt{ 2 \sigma \da^2} Z$ and $\da^2 \leq 1$,
\begin{align*}
\Pr \left\{ |\Va| \leq \thr \right\}
& \leq \Pr \left\{ \Va \leq \thr \right\}
 \leq  \Pr \left\{ \left(1 \,+\, {\sqrt{2\sigma} / \gamma} \right) \thr + \sqrt{ 2 \sigma \da^2} Z  \leq \thr \right\} \\
& \leq \Pr \left\{  Z  \leq - ( \sqrt{2 \sigma}/\gamma)  \thr / ( \sqrt{ 2\sigma}) \right\} 
 \leq \frac{ \phi \left(   \thr / \gamma  \right)}{ \thr / \gamma } 
= \frac{ n^{-1} }{\sqrt{ 2 \log n}} ~,
\end{align*}
which implies that
$\lim_{n \to \infty} \sup_{\alpha \,:\, |\tha| \;>\; \left(1 \,+\, {\sqrt{2\sigma} / \gamma}\right) \thr} n a_n^8\,\cdot \Pr \left\{ |\Va| \leq \thr \right\} ~=~ 0$.
Also, 
\begin{align*}
\Pr \left\{ |\Va| \leq \thr \right\} & \leq \Pr \left\{ \Va \leq \thr \right\}
 \leq  \Pr \left\{ \tha + \sqrt{ 2 \sigma \da^2} Z  \leq \thr \right\} 
\leq \Pr \left\{  Z  \leq - ( \tha - \thr) / ( \sqrt{ 2\sigma}) \right\} \\
& \leq \frac{ \phi \left(   ( \tha - \thr)/ ( \sqrt{ 2\sigma})  \right)}{ ( \tha - \thr) / ( \sqrt{ 2\sigma}) } 
= \frac{  e^{ -\tha^2 \left( 1-\frac{\thr}{\tha} \right)^2 / (4 \sigma)  }}{ \tha \left( 1-\frac{\thr}{\tha}  \right) / (\sqrt{2 \sigma}) } 
\leq  \frac{ e^{ -\tha^2  / ( 2 ( \gamma + \sqrt{2 \sigma})^2)} }{\tha/( \gamma + \sqrt{2 \sigma})} ~,
\end{align*}
where  the last inequality follows from the fact that  $\tha > \left(1 \,+\, {\sqrt{2\sigma} / \gamma} \right) \thr$, which implies that
$ 1 > 1-(\thr/\tha) > \sqrt{2 \sigma} / ( \gamma + \sqrt{ 2 \sigma})$.  Note that for any $a > 0$, $\max_{x \geq 0} x e^{-a x} = 1/(e a)$, which implies that
$$
\tha^2 \cdot \Pr \left\{ |\Va| \leq \thr \right\} 
\leq \frac{\tha^2  e^{ -\tha^2  / ( 2 ( \gamma + \sqrt{2 \sigma})^2)} }{\tha/( \gamma + \sqrt{2 \sigma})}
\leq  \frac{ 2 (\gamma + \sqrt{ 2 \sigma} )^2 / e}{\tha/( \gamma + \sqrt{2 \sigma})} = \frac{ 2 (\gamma + \sqrt{ 2 \sigma} )^3 / e}{\left(1 \,+\, {\sqrt{2\sigma} / \gamma} \right) \thr}~,
$$
which implies that $\lim_{n \to \infty} \sup_{\alpha \,:\, |\tha| \;>\; \left(1 \,+\, {\sqrt{2\sigma} / \gamma}\right) \thr} a_n^8 \tha^2 \cdot \Pr \left\{ |\Va| \leq \thr \right\} ~=~ 0$, which is the desired result.
To complete the proof for Case 3, we will show that 
$$\lim_{n \to \infty} \sup_{\alpha \,:\, \tha \;>\; (1 \,+\, \sqrt{2\sigma}/\gamma)\thr }  a_n^8\,\tha^2~\cdot~\Pr \{ \Va <  -\thr \} = 0~.$$
This follows immediately from the above analysis as
$\Pr \{ \Va <  -\thr \} \leq \Pr \{ \Va <  \thr \}$, and we have just shown that 
$\lim_{n \to \infty} \sup_{\alpha \,:\, \tha \;>\; (1 \,+\, \sqrt{2\sigma}/\gamma)\thr } a_n^8 \tha^2 \cdot \Pr \left\{ \Va \leq \thr \right\} = 0$.

\section{Proof details for estimators in the class $\mathcal{S}$ and the lemmas used in Section~4}\label{append-data-driven}
\smallskip
\subsection{Proof of Lemma~\ref{data.risk.are}} Using the relation between $\Ahat$ and $\AD$ it follows that
$$\ex \big[  \big(\AD_n(\eta,\tau)-R_n(\thetab,\qhatb(\eta,\tau))\big)^2\big]= \ex \big[  \big(\Ahat_n(\tau, \X-\eta)-R_n(\thetab-\eta,\qhatb(\tau))\big)^2\big]~.$$
Now, similarly as in the proof of Theorem~\ref{origin.loss.are}, following the Bias-Variance decomposition and the argument below \eqref{eq.bias.var} we upper bound the RHS above by
\begin{align*}
& A_n \bigg\{ \bigg(\frac{1}{n} \sum_{i=1}^n\bias_{\theta_i}(T_i(X_i-\eta,Z_i,\tau))\bigg)^2 + \frac{1}{n^2} \sum_{i=1}^n \var_{\theta_i}(T_i(X_i-\eta,Z_i,\tau))\bigg\}\\
& = A_n  \bigg\{ \bigg(\frac{1}{n} \sum_{i=1}^n\bias_{\theta_i - \eta}(T_i(X_i,Z_i,\tau))\bigg)^2 + \frac{1}{n^2} \sum_{i=1}^n \var_{\theta_i-\eta}(T_i(X_i,Z_i,\tau))\bigg\}
\end{align*}
where $\Z=\{Z_1,\ldots,Z_n\}$ follows $N(0,I_n)$; $T_i(X_i-\eta,Z_i,\tau)$ are randomized rules defined in Section~\ref{ARE.intro}; $A_n=\sup\{(b_i+h_i)^2(\sigma_{f,i}+ \alpha_i \sigma_{p,i}): i=1,\ldots,n\}$ which by Assumptions A1, A3 satisfies $\sup_n A_n \leq \infty$. Now, using Lemma~\ref{univariate.origin} we get the required result:
$\sup_{\tau \in[0,\infty], \eta \in \Real} a_n^8 \, \ex  \big(\AD_n(\eta,\tau)-R_n(\thetab,\qhatb(\eta,\tau))\big)^2 \to 0$ as $n \to \infty$.  

\subsection{Proof of Lemma~\ref{lem:sec4.temp}.} We need the following lemma to prove  Lemma~\ref{lem:sec4.temp}.

\begin{lem}\label{lem:sec4.temp.1}
	For any fixed $0 < \alpha_1 < \alpha_2 < 1$, $a_n=\log \log n$, the event $M_n(\X)=$ $\{[\hat{m}_n(\alpha_1), \hat{m}_n(\alpha_2)] \subseteq [-a_n,a_n]\}$ satisfies:
	\begin{align*}
	\texttt{I.} \quad & P\big\{ M_n \big\} \to 1 \text{ as } n \to \infty	 \;\; \text{ and },\\
	\texttt{II.} \quad  & \ex \big\{|L_n(\thetab,\qhatb(\eta_{n}^{\nett},\tau_{n}^{{\nett}})) - L_n(\thetab,\qhatb(\eta^{\sf DOR}_n, \tau^{\sf DOR}_n))| \cdot I\{M^c_n(\X)\} \big\} \to 0 \text{ as } n \to \infty.
	\end{align*}
\end{lem}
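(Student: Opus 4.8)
\textbf{Proof plan for Lemma~\ref{lem:sec4.temp.1}.}
The plan is to prove the stronger statement that $P(M_n^c) \le 2\,e^{-cn}$ for some $c>0$ and all large $n$; part~\texttt{I} is then immediate, and part~\texttt{II} follows by combining this estimate with a crude deterministic bound on the loss difference. First I would bound $M_n^c$ in terms of the tail counts $N_n^{+} = \#\{i : X_i > a_n\}$ and $N_n^{-} = \#\{i : X_i < -a_n\}$. With the usual sample‑quantile convention, the event $\{\hat{m}_n(\alpha_2) > a_n\}$ forces at least $n(1-\alpha_2)$ of the $X_i$ to exceed $a_n$, while $\{\hat{m}_n(\alpha_1) < -a_n\}$ forces at least $n\alpha_1$ of them to fall below $-a_n$; hence $M_n^c \subseteq \{N_n^{+} \ge (1-\alpha_2)n\} \cup \{N_n^{-} \ge \alpha_1 n\}$ (up to the $\bigo{1}$ boundary effects in the definition of $\hat m_n$, which are irrelevant in the limit).

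Next I would show $\ex[N_n^{\pm}] = \smallo{n}$. Split the coordinates into $I = \{i : |\theta_i| \le a_n/2\}$ and $J = \{i : |\theta_i| > a_n/2\}$. For $i \in I$ one has $P(|X_i| > a_n) \le P(\sqrt{\sigma_{p,i}}\,|\epsilon_{1,i}| > a_n/2) \le 2\,e^{-a_n^{2}/(8\bar\sigma)}$ with $\bar\sigma := \sup_i \sigma_{p,i} < \infty$ (finite by A3 together with $\sup_i \sigma_{f,i}<\infty$), and this is $\smallo{1}$ because $a_n = \log\log n \to \infty$, so $\sum_{i\in I} P(|X_i|>a_n) = \smallo{n}$. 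For $J$, Markov's inequality and Assumption~A2 give $|J| \le (2/a_n)\sum_{i}|\theta_i| = \bigo{n/a_n} = \smallo{n}$, whence $\sum_{i \in J} P(|X_i|>a_n) \le |J| = \smallo{n}$. Thus $\ex[N_n^{\pm}] = \smallo{n}$, while $M_n^c$ demands that $N_n^{\pm}$ exceed a fixed fraction of $n$. Since $N_n^{\pm}$ are sums of independent $\{0,1\}$‑valued random variables, Hoeffding's inequality gives, for $n$ large enough that $\ex[N_n^{\pm}] \le \tfrac12\min(\alpha_1, 1-\alpha_2)\,n$, the bound $P(N_n^{+} \ge (1-\alpha_2)n) \le \exp\bigl(-\tfrac12(1-\alpha_2)^2 n\bigr)$ and analogously for $N_n^{-}$; hence $P(M_n^c) \le 2\,e^{-cn}$ with $c = \tfrac12\min(\alpha_1^{2},(1-\alpha_2)^{2})$, which in particular proves part~\texttt{I}.

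For part~\texttt{II} I would first record the data‑driven analogue of \eqref{eqn.loss.bound}: using \eqref{eq:loss-function}, the elementary bound $G(w,\beta) \le \phi(0) + |w|$, and $\alpha_i(\tau) \in [0,1]$, one obtains, uniformly in $\tau$,
\[
L_n(\thetab, \qhatb(\eta,\tau)) \;\le\; C_1\Bigl(\kappa + |\eta| + n^{-1}\textstyle\sum_{i}|X_i| + n^{-1}\textstyle\sum_{i}|\theta_i|\Bigr),
\]
where $C_1 = \sup_i(b_i+h_i)$ and $\kappa$ is an absolute constant absorbing $\sqrt{\sigma_{f,i}}\,\phi(0)$ and $\sqrt{\sigma_{f,i}+\sigma_{p,i}}\,|\Phi^{-1}(\bt_i)|$ (all bounded by A1, A3). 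By construction $|\eta_n^{\nett}| \le a_n$ since $\eta_n^{\nett} \in \nett_{n,1} \subseteq [-a_n,a_n]$, and an order‑statistic argument gives $|\eta_n^{\sf DOR}| \le \max(|\hat{m}_n(\alpha_1)|,|\hat{m}_n(\alpha_2)|) \le C_3\,n^{-1}\sum_{i}|X_i|$ with $C_3 = \max\{\alpha_1^{-1},(1-\alpha_1)^{-1},\alpha_2^{-1},(1-\alpha_2)^{-1}\}$ (if, say, $\hat m_n(\alpha_2) \ge 0$ then $n(1-\alpha_2)$ order statistics lie at or above it, so $\sum_i|X_i| \ge n(1-\alpha_2)\,\hat m_n(\alpha_2)$, and symmetrically when it is negative). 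Combining these with $n^{-1}\sum_i|X_i| \le n^{-1}\sum_i|\theta_i| + \sqrt{\bar\sigma}\,n^{-1}\sum_i|\epsilon_{1,i}|$ — so that, by A2, $n^{-1}\sum_i|\theta_i| = \bigo{1}$ and $\ex[(n^{-1}\sum_i|X_i|)^2] = \bigo{1}$ — one gets, for large $n$, $|L_n(\thetab,\qhatb(\eta_n^{\nett},\tau_n^{\nett})) - L_n(\thetab,\qhatb(\eta_n^{\sf DOR},\tau_n^{\sf DOR}))| \le C_4\bigl(a_n + n^{-1}\sum_i|X_i|\bigr)$. Therefore the quantity in part~\texttt{II} is at most $C_4\,a_n\,P(M_n^c) + C_4\bigl(\ex[(n^{-1}\sum_i|X_i|)^2]\bigr)^{1/2}\,P(M_n^c)^{1/2}$ by Cauchy--Schwarz, and both terms tend to $0$ since $P(M_n^c) \le 2e^{-cn}$ absorbs the $a_n$ factor and the $\bigo{1}$ second‑moment factor.

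The step I expect to be the main obstacle is getting a bound on $P(M_n^c)$ strong enough for part~\texttt{II}: because the location grid $\nett_{n,1}$ spans $[-a_n,a_n]$ with $a_n \to \infty$, the loss difference on $M_n^c$ is not dominated by a uniformly integrable random variable, so the naive Markov estimate $P(M_n^c) = \bigo{1/a_n}$ — which already suffices for part~\texttt{I} — leaves the term $a_n\,P(M_n^c) = \bigo{1}$ and is useless here. One genuinely needs the exponential concentration of $N_n^{\pm}$, which is available precisely because their means are $\smallo{n}$ whereas $M_n^c$ forces a deviation of order $n$; and pinning down $\ex[N_n^{\pm}] = \smallo{n}$ is where Assumptions A1--A3 (bounded variances, bounded loss weights, and the controlled average of the $|\theta_i|$) enter. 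Minor care is also needed with the quantile convention and with the possibility that $\nett_{n,1} \cap \hat{M}_n$ be empty on $M_n^c$, but that degenerate event forces $N_n^{+} \ge n(1-\alpha_2)$ (or the analogous count to the left) and is therefore already covered by the bound on $P(M_n^c)$, while quantile boundary effects touch only $\smallo{n}$ coordinates and do not disturb the counting argument.
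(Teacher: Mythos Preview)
Your proposal is correct and takes a genuinely different route from the paper. For part~\texttt{I}, the paper works directly with the sample quantiles: it bounds $|\hat m_n(\alpha)|$ by a sum of a quantile of $\{|\theta_i|\}$ (bounded via Markov and Assumption~A2) and $c$ times a sample quantile of $\{|\epsilon_i|\}$ (handled through its asymptotic normality), concluding only that $a_n\,P(M_n^c)\to 0$. You instead reduce $M_n^c$ to a large-deviation event for the tail counts $N_n^{\pm}$, show $\ex N_n^{\pm}=\smallo{n}$ by splitting on $|\theta_i|\lessgtr a_n/2$, and then apply Hoeffding to get $P(M_n^c)\le 2e^{-cn}$. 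For part~\texttt{II}, the paper uses a single Cauchy--Schwarz step $\ex[L_n\,\indicator{\{M_n^c\}}]\le \{\ex L_n^2\cdot P(M_n^c)\}^{1/2}$ together with the claim $\ex L_n^2(\thetab,\qhatb(\eta_n^\nett,\tau_n^\nett))=\smallo{a_n}$; this last claim is correct (one may always compare with $\tau=\infty$ on the grid, for which the loss is $\eta$-free and $\bigo{1}$), but the paper only cites \eqref{tempor} and does not spell it out. Your decomposition into a deterministic $a_n$ piece plus a random $n^{-1}\sum_i|X_i|$ piece, followed by Cauchy--Schwarz only on the latter, sidesteps that issue entirely because your exponential bound on $P(M_n^c)$ absorbs any polylogarithmic factor. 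In short: the paper's argument is shorter but leans on quantile asymptotics and a second-moment bound whose justification is implicit, while your counting-plus-Hoeffding approach is more elementary, yields a much stronger quantitative estimate, and makes part~\texttt{II} self-contained.
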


\noindent\textbf{Proof.} 
For any $\alpha$ th quantile of $\X$  with  $\epsilon_i$ i.i.d. from standard normal, note that
\begin{align*}
& |\hat{m}_n(\alpha)| =|\text{quantile}\{\theta_i + \sqrt{\sigma_{p,i}}\, \epsilon_i: 1 \leq i \leq n; \; \alpha \}|\\
&\leq \text{quantile}\{|\theta_i| + \sqrt{\sigma_{p,i}}\, |\epsilon_i|: 1 \leq i \leq n; \alpha \} \vee \text{quantile}\{|\theta_i| + \sqrt{\sigma_{p,i}}\, |\epsilon_i|: 1 \leq i \leq n; 1-\alpha \}\\
&\leq \text{quantile}\{|\theta_i| + c |\epsilon_i|: 1 \leq i \leq n; \alpha \} \vee \text{quantile}\{|\theta_i| + c\, |\epsilon_i|: 1 \leq i \leq n; 1-\alpha \}
\end{align*}
where $c=\{\sup_i \sigma_{p,i}\}^{1/2}$. Also note that for any $\beta$ th quantile
\begin{align*}
&\text{quantile}\{|\theta_i| + c\, |\epsilon_i|: 1 \leq i \leq n; \; \beta \} \\
&\leq \text{quantile}\{|\theta_i|: 1 \leq i \leq n; (1+\beta)/2 \} + c \cdot \text{quantile}\{ |\epsilon_i|: 1 \leq i \leq n; (1+\beta)/2 \}. 
\end{align*}
For any $\beta \in (0,1)$, by assumption $A_3$ the $\text{quantile}\{|\theta_i|: 1 \leq i \leq n; (1+\beta)/2 \}$ is asymptotically bounded and $n^{1/2}(\text{quantile}\{ |\epsilon_i|: 1 \leq i \leq n; (1+\beta)/2 \} - \Phi^{-1}((3+\beta)/4))$ has an asymptotic normal distribution as $n \to \infty$. Therefore, for any $\alpha \in (0,1)$,  $a_n P(|\hat{m}_n(\alpha)|> a_n) \to 0$ as $n \to \infty$.
The first result of the lemma follows directly by using it. 
\par 
As on $M^c_n(\X)$, $L_n(\thetab,\qhatb(\eta_{n}^{\nett},\tau_{n}^{{\nett}})\geq L_n(\thetab,\qhatb(\eta^{\sf DOR}_n, \tau^{\sf DOR}_n))\geq 0$, for the second result of the lemma it is enough to show that:
\begin{align*}
\ex \big\{L_n(\thetab,\qhatb(\eta_{n}^{\nett},\tau_{n}^{{\nett}})) \cdot I\{M^c_n(\X)\} \big\}
\leq \{\ex L^2_n(\thetab,\qhatb(\eta_{n}^{\nett},\tau_{n}^{{\nett}})) \cdot P(M^c_n(\X))\}^{1/2}
\end{align*}
where the above inequality uses the Cauchy-Schwarz inequality. By equation \eqref{tempor}, it follows  $\ex\, L^2_n(\thetab,\qhatb(\eta_{n}^{\nett},\tau_{n}^{{\nett}}))=o(a_n)$ and by the above calculations we have 
$a_n P(M_n^c(\X)) \to 0$ \text{ as } $ n \to \infty$.  Thus, second result of the lemma is also proved.

\noindent\textbf{Proof of Lemma~\ref{lem:sec4.temp}.} The univariate predictive loss in this case is:
$$ l_i(\theta_i, \qhat_i(\eta,\taut)) = (b_i+h_i)\, {\sigma_{f,i}}^{1/2}\, G\bigg(\frac{\alpha_i Z_i+ (\sigma_{f,i}+\alpha_i \sigma_{p,i})^{1/2} \Phi^{-1}(\bt_i) -\bar{\alpha}_i 
	(\theta_i-\eta)}{{\sigma_{f,i}}^{1/2}}; \,\bt_i\bigg)$$
where $\alpha_i=\taut/(\taut+(1-\taut)\sigma_{p,i})$ and $Z_i$ is $N(0,\sigma_{p,i})$ distributed. $l_i(\theta_i,\qhat_i(\taut))$ is a.e. differentiable in $\taut$ and $\eta$,  and by calculations similar to the proof of Lemma~\ref{lem:append.1.temp} we have:
\begin{align*}
&\bigg \vert \frac{\partial}{\partial \taut}\, l_i(\theta_i, \qhat_i(\eta,\taut)) \bigg \vert \leq 2 (b_i+h_i)\, \max \{\sigma_{p,i}, \sigma_{p,i}^{-1}\}   \vert \cdot (|Z_i| + |\theta_i|+ |\eta| + \sigma_{p,i}/\sigma_{f,i} \cdot |\Phi^{-1}(\bt_i)|)\\
&\bigg \vert \frac{\partial}{\partial \eta}\, l_i(\theta_i, \qhat_i(\eta,\taut)) \bigg \vert \leq 2 (b_i+h_i)\,
\end{align*}
As $L_n(\thetab,\qhatb(\eta,\taut))=n^{-1} \sum_{i=1}^n  l_i(\theta_i, \qhat_i(\eta,\taut))$, we have:
\begin{align*}
& \vert L_n(\thetab,\qhatb(\eta,\taut))-L_n(\thetab,\qhatb(\eta_k,\taut_l)) \vert \leq D_{n,1} |\eta-\eta_k| + D_{n,2} |\taut-\taut_l|\text{ where, } \\
& D_{n,1} = \sup_{\taut \in [0,1], |\eta| \leq a_n}n^{-1} \sum_{i=1}^n \bigg \vert \frac{\partial}{\partial \eta}\, l_i(\theta_i, \qhat_i(\eta,\taut)) \bigg \vert~ \text{ and,} \\ 
& D_{n,2} = \sup_{\taut \in [0,1], |\eta| \leq a_n}n^{-1}  \sum_{i=1}^n \bigg \vert \frac{\partial}{\partial \taut}\, l_i(\theta_i, \qhat_i(\eta,\taut)) \bigg \vert.
\end{align*}
Thus, for the grid $\nett_n=\nett_{n,1} \otimes \nett_{n,2}$ we have for any pair $(\eta,\taut)$ with $|\eta| \leq a_n$ and $\taut \in [0,1]$:
$$ \inf_{(\eta_k,\taut_l)\in \nett_n} \, \vert L_n(\thetab,\qhatb(\eta,\taut))-L_n(\thetab,\qhatb(\eta_k,\taut_l)) \vert \leq D_{n,1} \delta_{n,1} + D_{n,2} \delta_{n,2} $$ which implies on the set $M_n(\X)=\{[\hat{m}_n(\alpha_1), \hat{m}_n(\alpha_2)] \subseteq [-a_n,a_n]\}$ we have
$$\vert L_n(\thetab,\qhatb(\eta_{n}^{\nett},\tau_{n}^{{\nett}})) - L_n(\thetab,\qhatb(\eta^{\sf DOR}_n, \tau^{\sf DOR}_n)) \vert \leq D_{n,1} \delta_{n,1} + D_{n,2} \delta_{n,2}.$$
By the bounds on the respective partial derivaties it follows that:
$$D_{n,1} \leq 2 \, C_1 \text{ and } D_{n,2} \leq 2 \, C_1 \, C_2 \bigg (\frac{1}{n} \sum_{i=1}^n |Z_i| + \frac{1}{n} \sum_{1=1}^n |\theta_i| + a_n +  C_3\bigg)$$ 
where $C_1$, $C_2$ and $C_3$ are defined in \eqref{eq:constants.temp}. 
\par
Now note that ${n}^{-1} \sum_{i=1}^n |Z_i|\sim N(2\phi(0),n^{-1})$. Thus, by definition of $\nett_n$ we have:\\
$P( D_{n,1}\delta_{n,1} + D_{n,2} \delta_{n,2} > \epsilon \text{ and } M_n(X)) \to 0$ as $n \to \infty$, and \\
$\ex[( D_{n,1} \delta_{n,1} + D_{n,2} \delta_{n,2})I\{M_n(X)\}] \to 0 $  as $n \to \infty$. 
These coupled with Lemma~\ref{lem:sec4.temp.1} provide us the desired result.

\medskip

\section{Proof details for estimators in the class $\mathcal{S}^G$ and the lemmas used in Section~5}\label{append-grand-mean}
\smallskip
\subsection{Proof of Lemma~\ref{lem.mean.1}} 
By a first order Taylor series expansion we have: 
$$\lt_i(\theta_i,\qhatg_i(\tau))=l_i(\theta_i,\qhatg_i(\tau))+ a_i \, (\Xm - \Tm) \bigg[ \frac{\partial}{\partial \eta} G(\sigma_{f,i}^{-1/2}(\qhat_i(\tau)+(1-\alpha_i) \eta -\theta_i),\bt)\bigg]_{\eta=\mu_i}$$
where $\mu_i$ lies between $\Xm$ and $\Tm$ and $a_i= \sigma_{f,i}^{1/2}\,(b_i+h_i)$. Again, based on the definition of $G$ from Equation~\eqref{eq:loss-function} we have for any $\tau \geq 0$ and any $\eta \in \Real$: 
$$\bigg| \frac{\partial}{\partial \eta} G\big(\sigma_{f,i}^{-1/2}(\qhat_i(\tau)+(1-\alpha_i) \eta -\theta_i),\bt\big) \bigg| \leq \sigma_{f,i}^{-1/2} \text{ for all } i =1, \ldots, n.$$
Thus, for the multivariate versions we have: 
$$\sup_{\tau \in [0,\infty]} |L_n(\thetab,\qhatgb(\tau))-\Lt_n(\thetab,\qhatgb(\tau))| \leq |\Xm-\Tm| \cdot \frac{1}{n}\sum_{i=1}^n (b_i + h_i)$$
which converges to $0$ in $L^1$ as $\Xm \sim N(\Tm,n^{-1})$ and $n^{-1}\sum_{i=1}^n (b_i + h_i)$ is bounded by Assumption A1. This completes the proof.
\subsection{Proof of Lemma~\ref{lem.mean.2}}
From the description of the ARE procedure in Section~\ref{ARE.intro}, recall that,
$\That_{i}(X_i-\eta,\tau)=\ex\{\That_{i}(X_i-\eta,\Z,\tau)\}$ where the expectation is over $\Z$ which is independent of $X$ and follows $N(0, I_n)$ distribution. And,
\begin{align*}
\That_{i}(X_i-\eta,Z_i,\tau)= \left \{ \begin{array}{cl} 
- \bt_i \, U_i(\eta,\tau) & \text{ if } V_i(\eta,\tau)< -\lambda_n(i)\\[1ex] 
\Sm_i(U_i(\eta,\tau)) & \text{ if } |V_i(\eta,\tau)| \leq \lambda_n(i) \\[1ex]
(1-\bt_i) \, U_i(\eta,\tau) & \text{ if } V_i(\eta,\tau)  > \lambda_n(i)
\end{array}\right. \text{ for } i=1,\ldots,n
\end{align*}
where the threshold parameter defined in \eqref{thr.definition} and 
$$U_i(\eta,\tau)=c_i(\tau)+d_i(\tau) (X_i - \eta +\sigma_{p,i}^{1/2} Z_i), V_i(\eta,\tau)=c_i(\tau)+d_i(\tau) (X_i-\eta-\sigma_{p,i}^{1/2} Z_i)$$
with $|d_i(\tau)|$ is less than $\sigma_{f,i}^{-1/2}$. $\Sm_i(U_i(\eta,\tau))$ is a truncated version of
\begin{align*}
S_i(U_i(\eta,\tau))=&G(0,\bt_i)+G'(0,\bt_i) U_i(\eta,\tau) \\
& + \phi(0) \sum_{k=0}^{K-2} \frac{(-1)^{k}H_k(0)}{(k+2)!} \big(2\sigma_{p,i} d_i^2(\tau)\big)^{\frac{k+2}{2}} H_{k+2}\bigg(\frac{U_i(\eta,\tau)}{\sqrt{2 \sigma_{p,i} d^2_i(\tau)}}\bigg).
\end{align*}
So, the derivative exists almost everywhere and $\text{for  all } i=1,\ldots,n$:
\begin{align}\label{eq:deri}
\bigg|\frac{\partial}{\partial \eta}\That_{i}(X_i-\eta,Z_i,\tau)\bigg| \leq \left \{ \begin{array}{cl} 
\sigma_{f,i}^{-1/2} |\Sm'_i(U_i(\eta,\tau))| & \text{ if } |V_i(\eta,\tau)| < \lambda_n(i) \\[1ex]
\sigma_{f,i}^{-1/2} & \text{ if } |V_i(\eta,\tau)|  > \lambda_n(i)
\end{array}\right. .
\end{align}
Noting that for Hermite polynomials of order $k$ the derivative satisfies: $H'_k(x)=kH_{k-1}(x)$, we have  
$  \Sm'_i(U_i(\eta,\tau))$ exempting the two discontinuity points is either $0$ or given by:
\begin{align}\label{mean.ex.1}
\sigma_{f,i}^{-1/2} \bigg\{ G'(0,\bt_i) + \phi(0) \sum_{k=0}^{K-2} \frac{(-1)^{k}H_k(0)}{(k+1)!} \big(2\sigma_{p,i} d_i^2(\tau)\big)^{\frac{k+1}{2}} H_{k+1}\bigg(\frac{U_i(\eta,\tau)}{\sqrt{2 \sigma_{p,i} d^2_i(\tau))}}\bigg)\bigg\}.
\end{align}
Define, $\thai=c_i(\tau)+d_i(\tau) (\theta_i-\eta)$ and $\lamdai=(1+\sqrt{2\sigma_{p,i}}/\gamma_i) \lambda_n(i)$. Now, by exactly following the proof technique used in Lemma~\ref{lem:variance.bound}, it can be shown that:
\begin{align}\label{mean.ex.2}
\sup_{\tau \geq 0} \quad \sup_{|\thai| \leq  \lamdai} a_n^2 \, \ex_{\thai} \big\{\Sm'_i(U_i(\eta,\tau))\big\}^2 = \smallo{n} \text{ as } n \to \infty,
\end{align}
where $\gamma_i$ is defined below Equation~\eqref{thr.definition} and the expectation is over the distribution of $U_i(\eta,\tau)$ which follows $N(\thai, 2 d_i^2(\tau) \sigma_{p,i})$.
 So, by \eqref{eq:deri} for all values of $\eta$, $\tau$ and $\theta_i$ such that $|\thai| \leq \lamdai$ we have:
$$ a_n^2 \, \ex_{\theta_i} \left(\frac{\partial}{\partial \eta}\That_{i}(X_i-\eta,Z_i,\tau)\right)^2 = \smallo{n} $$
where the expectation is over the joint distribution of $X_i$ and $Z_i$. 

\par
We now concentrate on all values of $\eta$, $\tau$ and $\theta_i$ such that $|\thai| > \lamdai$. For this note that for all large $n$,  $|\Sm'_i(U_i(\eta,\tau)| \leq n $ a.e. It follows as: $\Sm_i(U_i(\eta,\tau)$ is truncated above $\pm n$ and so by definition of derivative we have $|\Sm'_i(U_i(\eta,\tau)| \leq n$ for all $|U_i(\eta,\tau)| \geq 1$; and when $|U_i(\eta,\tau)| < 1$, using uniform approximation bounds \citep{gabor1939} on the Hermite polynomials in the expression \eqref{mean.ex.1}, we have  $|\Sm'_i(U_i(\eta,\tau)| \leq n$. 
So, using \eqref{eq:deri} for all values of $\eta$,$\tau$ and $\theta_i$ such that $|\thai| > \lamdai$ we have the following upper bound: 
\begin{align*}
&  \ex_{\theta_i} \bigg(\frac{\partial}{\partial \eta}\That_{i}(X_i-\eta,Z_i,\tau)\bigg)^2 \leq \sigma_{f,i}^{-1}\bigg \{n^2 P\big( |V_i(\eta,\tau)|< \lambda_n(i)\big) + P\big( |V_i(\eta,\tau)| > \lambda_n(i)\big) \bigg\}
\end{align*}
where $V_i(\eta,\tau)$ has $N(\thai, 2 d_i^2(\tau) \sigma_{p,i})$ distribution.  
Also  from Lemma~\ref{lem:large.deviation.bound} we know that: 
$ \sup_{|\thai| > \lamdai} a_n^2 n^2 P( |V_i(\eta,\tau)|< \lambda_n(i))=\smallo{n}$ which produces the desired bound for  $|\thai| > \lamdai$. Thus, we have:
$$\sup_{i} \sup_{\tau \in [0,\infty]} a_n^2\, \ex_{\theta_i} \bigg(\frac{\partial}{\partial \eta}\That_{i}(X_i-\eta,Z_i,\tau)\bigg)^2 = \smallo{n}.$$
As $\That_{i}(X_i-\eta,\tau)=\ex\{\That_{i}(X_i-\eta,\Z,\tau)\}$,  by Jensen's inequality we have:
$$ \ex_{\theta_i} \bigg(\frac{\partial}{\partial \eta}\That_{i}(X_i-\eta,Z_i,\tau)\bigg)^2 \geq \ex_{\theta_i} \bigg\{\ex\bigg(\frac{\partial}{\partial \eta}\That_{i}(X_i-\eta,Z_i,\tau)\bigg|X_i\bigg)\bigg\}^2 $$
and the result of the lemma in terms of  $\That_{i}(X_i-\eta,\tau)$  follows.
\medskip

\section{Data-Informed Numerical Experiments on the Newsvendor Problem}\label{real-data}

In this section, we conduct numerical experiments based on sales data from the books department of an online retailer. We apply our proposed asymptotic risk estimation methodology  to estimate the inventory levels that retailer's should stock to optimize their future sales and operational costs. Before proceeding further, we present a brief introduction to the newsvendor problem studied here as well as its associated statistical challenges.
\par
\textit{Brief Introduction to the Newsvendor problem.} The newsvendor problem appeared in \cite{edgeworth-1888} in connection with optimizing cash reserves in a bank. Based on a subsequent formulation in \cite{arrow1951optimal}, the classical inventory theory  has been developed assuming the demand distribution is  {known in advance}, and the optimal solution is the newsvendor quantile of the underlying demand distribution \citep{karlin_scarf_58}. In contrast, here we work in a predictive setup where  the demand distribution is unknown \pr{and must be}  estimated from past data.
Within the inventory literature, when the information on the demand distribution is not available, the most common approach is the use of Bayesian updates \citep{azoury_85,lariviere1999}. Under this approach, the inventory manager has limited access to demand information; in particular, she knows the family of distributions to which the underlying demand  belongs, but she is uncertain about its parameters. 
She has an initial prior belief regarding the uncertainty of the parameter values, and this belief is continually updated based on historical realized demands by computing posterior distributions. 
Bayesian updates assume that the distribution of the prior belief is known and given in advance so that Bayesian updates can be computed explicitly.  In contrast, in our paper, we assume that the prior distribution of $\theta_i$ is $N(\eta,\tau)$ for all $i$, but the parameters $\eta$ and $\tau$ are {\em unknown} and must be estimated from data.   Note that when $\eta$ and $\tau$ are unknown, traditional Bayesian updates cannot be computed.
\par
\textit{Data-informed Numerical Experiments.} We consider books sold by the online retailer during the month of June, 2005. During that month 1.25 million books spanning across 274,558 book titles were sold. There was a wide variability in the sales across the booktiles. The average number of books sold per title was $5.06$ and the standard deviation was $74.47$. Around 47.4\% of book titles had only one book sold in that month. In practice, separate models are used to model different categories of books but joint modeling is done for each category. To simplify our analysis here we only consider a selection of $200$ book titles. Each of these book titles had at least $10$ sales. Our group consisted of 25\% book titles with very high volume sales (over 100 books sold per month). The rest, 75\% of the items in the group were moderately (M) sold with sales lying between 10 and 30.
\par
\textit{Our Model.} We consider a one-step ahead prediction model for the multivariate newsvendor problem. Based on observing the past two months sales data $\X_1$ and $\X_2$ and we would like to select an inventory level $\qhatb$  that would minimize the next month's cumulative lost sales and holding costs across the $200$ book titles. We use the retail data to judge the values of the true demands, which along with the associated prices of the books are used to form the data-informed parameter based on which our simulation experiments were set. In our numerical experiments we set the true demand $\thetab$ of the book titles  equal to the monthly sales observed in retail data. It should be noted that $\thetab$  no longer obeys any normal prior structure but is governed by the form of the sales data. 
The selling prices $\bm{p}$ of the books are calculated based on their corresponding average unit price over the month. Now, based on our Gaussian predictive model \eqref{pred.model} we generate the past sales data $\X_1, \X_2$ and the future month's demand $\Y$ based on the model:
$$\X_1, \X_2 \text{ and } \Y \stackrel{iid}{\sim} N(\thetab, \s I),$$
where, $\s$ is assumed to be known and is set equal to variation of sales in the data.  
For each product $i$, we assume that each unit of inventory incurs a holding cost $h_i > 0$, and each unit of \pr{lost} sale incurs a cost of $b_i  > 0$.  When we estimate the future demand $Y_i$ by $\qhat_i$, the loss corresponding to the ${i}^{\text{th}}$ product is $b_i \cdot  (Y_i-\qhat_i)^+  + h_i \cdot  (\qhat_i-Y_i)^+$.
We would like to minimize the total cost associated with the $n=200$ stocking quantities. Based on $\X_1$ and $\X_2$, we make estimates of the stocking quantities based on the (a) canonical unshrunken (US) linear estimator   (b) EB-MLE which is close to the James-Stein direction of shrinkage in this homeskedastic case (c) ARE based methodology. 
\par
\textit{Evaluation and Results.}
Given any inventory estimate $\qhatb(\x_1,\x_2)$ based on the past data, in our simulation experiment we evaluate its performance by the future loss on $\bm{y}$:  
$$
\hat{L}_n(\qhatb) = \frac{1}{n}\sum_{i=1}^n  b_i   (y_i-\qhat_i(\x_1,\x_2))^+  + h_i  (\qhat_i(\x_1,\x_2)-y_i)^+ ~.
$$
The simulation experiment is conducted for $50$ independent repetitions and the percentage reduction in loss over the cannonical unshrunken estimator is calculated. In Table~\ref{table-real-data} we provide the relative efficiency of the EBMLE and ARE based methods compared to the canonical non-shrunken estimator:
\begin{align*}
&\text{ARE efficiency over US} = (L_n(\thetab,\qhatb^{\rm US}))-L_n(\thetab,\qhatb^{\rm ARE}))/L_n(\thetab,\qhatb^{\rm US})) \\
&\text{EBMLE efficiency over US} = (L_n(\thetab,\qhatb^{\rm US}))-L_n(\thetab,\qhatb^{\rm EBMLE}))/L_n(\thetab,\qhatb^{\rm US})).
\end{align*}
Figure~\ref{fig-real-data} shows the plot of these efficiency measures.
The difference between the two aforementioned efficiency measures is reported in the table as \textit{ARE efficiency over EBMLE}. The summary statistics of the distribution of these efficacy measures over the $50$ independent simulation experiments are also reported along with the p-value of the Wilcoxon singed rank test for the alternative hypothesis that the efficacy levels are indeed positive.   
It was seen that the EBMLE method on avergage produced inventory levels much worse than the unshrunken estimator. This is due to the fact that the EBMLE method overestimated the magnitude of shrinkage. The ARE based inventory estimation strategy had on average around 7\% better relative prediction error than the EBMLE based method and around 3\% better error rates than the unshrunken estimator. In both the comparisons, we had significant p-values in the Wilcoxon test clearly proving on average better performance of the ARE based method in the $50$ experiments. Around 92\% of the cases, the ARE based method is better than the  EBMLE and around 72\% of the cases it is better than the unshrunken estimator.

\begin{table*}[ht!]
	\caption{Comparison of the performances of $\A$, EBMLE and the unshrunken (US) estimators. The summary statistics of their relative efficiency (in \%) over 50 independent simulation scenarios are reported. We also report the p-value based on Wilcoxon Signed Rank test with the alternative hypothesis being that the reported relative efficiency is positive.}
	\label{table-real-data}
	\begin{tabular}{|c||c|c|c|c|c|c||c|}
		\hline
		Relative  & \multicolumn{6}{c||}{ Summary Statistics } &  \multicolumn{1}{c|}{ P value for  } \\[1ex]
		\cline{2-7}  
		Efficiency (in \%) & Min. & $Q_1$ &  Median &    Mean &  $Q_3$ &     Max. & $H_A: \rm{Efficiency} > 0$ \\
		\hline & &  & & &  & & \\
		ARE over EBMLE &  -10.56   & 2.94  &  6.86  &  7.15 &   9.97 &  26.60 &  $2 \cdot 10^{-08}$\\ 
		\hline & &  & & &  & & \\
		ARE over US &  -15.42 &  -0.73 &   3.43 &   2.83&    8.34&   15.53 & 0.0031\\
		\hline & &  & & &  & & \\
		EBMLE over US & -42.02  &-10.07  & -1.77 &  -4.32 &   2.17 &  24.81 & 0.9868\\
		\hline
		\hline
	\end{tabular}
\end{table*}
We end this section by discussing the choice of weights $b_i$ and $h_i$ that we used here. The weights are determined by the internal revenue mechanism and business policy of the retailer and were not available with the data. Hence, we made a judicious choice for the weights based on common business practice. Consider a book title whose procurement cost is \$1 per unit to the retailer. Here, we assume that the retailer reviews the system and replenishes its inventory once a month, and sells the product
at \$(1 + m) per unit, where m represents the mark-up. 
The lost sales cost $b_i$ in this case is at least \$m per unit and it does not  include any loss in customer goodwill (reputation costs) due to unfulfilled demand. Hence, for a book whose selling price is $p_i$, we have $b_i \geq m/(1+m) p_i$. Here, we assume a uniform 15\% mark up on all books which is quite common in many retail environments.  
Unless there is depreciation in the product, the holding cost $h_i$ per unit per period is simply the cost of holding \$1 in the inventory for a month. Hence, at a cost of capital of 15\% per year, we have $h_i=b_i\cdot 0.15/12$ per unit per period. Following the aforementioned logic, the weights for the moderately sold book titles are set. However, the book titles in very high volume sales category need special attention. Usually, more sensitivity is associated with items having high volume of transactions and customer loyalty is encouraged through quicker services and discounts to ward off competitors. In this context, apart from the above mark-up dependent costs, we also attach a flat rate for the reputation  and depreciation costs (with the ratio still being much larger than 1) for the high volume category. The data and the code used for these experiments can be downloaded from \url{http://www-bcf.usc.edu/~gourab/inventory-management/}.


\begin{figure}[ht!]
	\includegraphics[width=\textwidth]{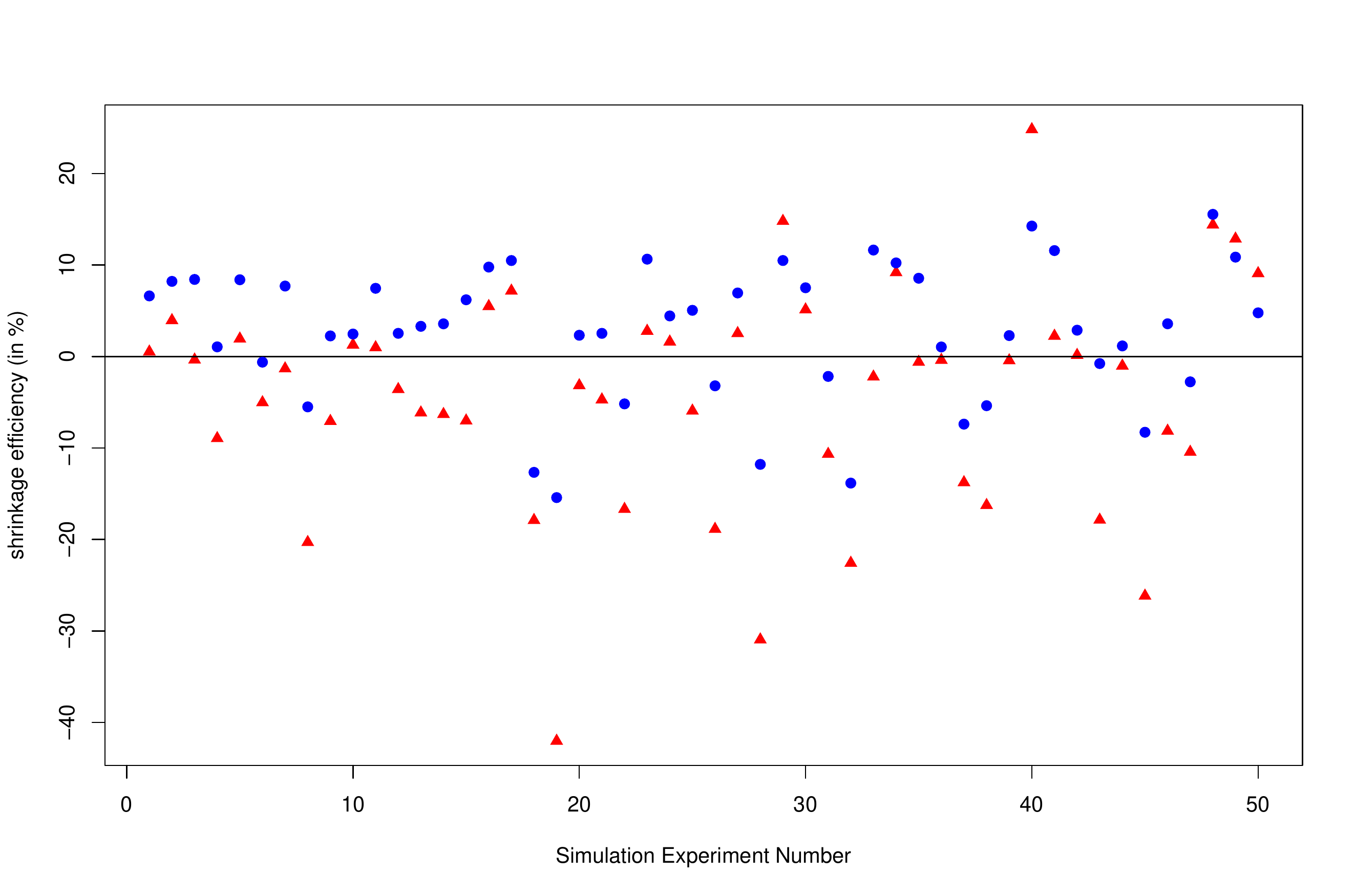}
	\caption{In red and blue we have respectively the efficiency of the EBMLE and our proposed ARE based method relative to the unshrunken linear estimator.}\label{fig-real-data}
\end{figure}

\section{Auxiliary Lemmas}
\smallskip
The following lemma provides an upper bound on Hermite polynomial. 

\begin{lem}\label{lem:hermite-bound}
	There is an absolute constant $c$ such that for all $k \geq 1$ and $x \in \Real$,
	$$
	\abs{ H_k(x) } \leq 
	c\, e^{x^2/4} \frac{  k!   }{ k^{1/3} \, {\left( {k / e} \right)}^{k/2} }.  
	$$
\end{lem}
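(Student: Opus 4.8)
The plan is to obtain a pointwise bound on $|H_k(x)|$ by combining the classical Cramér-type inequality $|H_k(x)| \le c\, e^{x^2/4}\sqrt{k!}$ with Stirling's approximation for $k!$. Recall that for the probabilists' Hermite polynomials one has the uniform bound $|H_k(x)| \le c_0\, e^{x^2/4}\,\sqrt{k!}$ for an absolute constant $c_0$ and all $x \in \Real$, $k \ge 0$; this is standard (see, e.g., the references to Hermite polynomial asymptotics such as \citealp{thangavelu-book} already cited in the paper, or the Plancherel--Rotach / Cramér estimates). So the whole statement reduces to checking that
$$
\sqrt{k!} \;\le\; c_1\, \frac{k!}{k^{1/3}\,(k/e)^{k/2}}
$$
for an absolute constant $c_1$ and all $k \ge 1$, i.e.\ that $k^{1/3}(k/e)^{k/2} \le c_1 \sqrt{k!}$.

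To verify this, I would apply Stirling's formula in the non-asymptotic form $k! = \sqrt{2\pi k}\,(k/e)^{k} e^{\rho_k}$ with $0 < \rho_k < 1/(12k)$. Taking square roots, $\sqrt{k!} = (2\pi k)^{1/4} (k/e)^{k/2} e^{\rho_k/2}$. Hence
$$
\frac{k^{1/3}(k/e)^{k/2}}{\sqrt{k!}} \;=\; \frac{k^{1/3}}{(2\pi k)^{1/4}\, e^{\rho_k/2}} \;=\; \frac{k^{1/3 - 1/4}}{(2\pi)^{1/4}\, e^{\rho_k/2}} \;=\; \frac{k^{1/12}}{(2\pi)^{1/4}\,e^{\rho_k/2}}.
$$
This last expression is \emph{not} bounded as $k \to \infty$ — it grows like $k^{1/12}$ — so the inequality as I wrote it above is false, which tells me the exponent $1/3$ in the denominator must actually be \emph{helping}, i.e.\ the claimed bound is weaker than the sharp one in a way I should re-read. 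Re-examining: the sharp Cramér bound gives $|H_k(x)| \le c_0 e^{x^2/4}\sqrt{k!}$, and $\sqrt{k!} = (2\pi k)^{1/4}(k/e)^{k/2}e^{\rho_k/2} \le C\, k^{1/4}(k/e)^{k/2}$. Since $k^{1/4} \le k^{1/3}$ for $k \ge 1$, we get $\sqrt{k!} \le C\, k^{1/3}(k/e)^{k/2}$, which is exactly $\frac{k!}{k^{-1/3}(k/e)^{k/2}}$ divided by... wait — the target denominator is $k^{1/3}(k/e)^{k/2}$, and the target numerator is $k!$, not $\sqrt{k!}$. So the claimed right-hand side is $c\,e^{x^2/4}\cdot \frac{k!}{k^{1/3}(k/e)^{k/2}}$, and since $\frac{k!}{k^{1/3}(k/e)^{k/2}} \asymp \frac{\sqrt{2\pi k}\,(k/e)^k}{k^{1/3}(k/e)^{k/2}} = \sqrt{2\pi}\, k^{1/2-1/3}(k/e)^{k/2} = \sqrt{2\pi}\,k^{1/6}(k/e)^{k/2} \ge \sqrt{k!}\cdot(\text{something}\ge 1)$, the stated bound is indeed \emph{weaker} than Cramér's and follows immediately.

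So the clean proof is: \textbf{Step 1}, invoke the Cramér bound $|H_k(x)| \le c_0 e^{x^2/4}\sqrt{k!}$ (cite the standard reference). \textbf{Step 2}, use Stirling $k! \ge \sqrt{2\pi k}\,(k/e)^k$, so $\sqrt{k!} \ge (2\pi k)^{1/4}(k/e)^{k/2}$, hence
$$
\sqrt{k!} \;=\; \frac{k!}{\sqrt{k!}} \;\le\; \frac{k!}{(2\pi k)^{1/4}(k/e)^{k/2}} \;\le\; \frac{1}{(2\pi)^{1/4}}\cdot \frac{k!}{k^{1/3}(k/e)^{k/2}},
$$
where the final inequality uses $k^{1/4} \ge$ a constant times $k^{1/3}$? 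No — $k^{1/4} \le k^{1/3}$, so $1/k^{1/4} \ge 1/k^{1/3}$, giving the inequality in the wrong direction. The fix is to note $k^{1/4} \ge 1$ for $k\ge1$ is not enough; instead use $\sqrt{k!}\le \frac{k!}{(2\pi k)^{1/4}(k/e)^{k/2}}$ directly and then bound $\frac{1}{(2\pi k)^{1/4}} \le \frac{k^{1/3}}{(2\pi)^{1/4} k^{1/4}} \cdot \frac{1}{k^{1/3}}$, and since $k^{1/3}/k^{1/4} = k^{1/12}\ge 1$, we get $\frac{1}{(2\pi k)^{1/4}} \le \frac{k^{1/12}}{(2\pi)^{1/4}}\cdot\frac{1}{k^{1/3}}$... still has a stray $k^{1/12}$. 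The \textbf{main obstacle} I anticipate is getting the exponent bookkeeping exactly right; the correct route is to absorb everything into $k!/[(k/e)^{k/2}]$ and observe $\frac{k!}{(k/e)^{k/2}} \ge \sqrt{2\pi k}\,(k/e)^{k/2} \ge k^{1/3}(k/e)^{k/2}$ is false for large $k$ too — so in fact the inequality $\sqrt{k!}\le c\,k!/(k^{1/3}(k/e)^{k/2})$ is equivalent to $k^{1/3}(k/e)^{k/2}\le c\sqrt{k!} \asymp c\,k^{1/4}(k/e)^{k/2}$, i.e.\ $k^{1/3}\le c\,k^{1/4}$, which \emph{fails}. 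Therefore the Cramér bound is \emph{not} strong enough and one genuinely needs the sharper Plancherel--Rotach asymptotics near the oscillatory region, where $|H_k(x)|$ gains an extra $k^{-1/12}$ factor relative to $\sqrt{k!}$ on the bulk; combining the turning-point bound (giving $k^{-1/12}$) with the standard bound elsewhere yields precisely $k^{-1/3}(k/e)^{k/2}$ after Stirling. I would cite \citet{thangavelu-book} or the classical Szeg\H{o} asymptotics for this refined estimate, and the remaining calculation is just Stirling applied to convert $\sqrt{k!}\,k^{-1/12}$ into the stated form. The delicate part, and the one I would spell out carefully, is the uniform-in-$x$ control at the turning points $x \approx \pm 2\sqrt{k}$, where the $k^{-1/12}$ improvement comes from the Airy-function scaling.
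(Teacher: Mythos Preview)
Your final plan is correct and is essentially what the paper does: the Cram\'er bound $|H_k(x)|\le c_0 e^{x^2/4}\sqrt{k!}$ is too weak by exactly the factor $k^{1/12}$ you isolate, and one needs the sharper turning-point estimate to recover that factor before applying Stirling. The paper makes this concrete by citing the explicit non-asymptotic inequality of Krasikov (2004), which packages the $k^{-1/12}$ improvement as a clean bound on $(2k)^{1/6}2^k\max_x H_k(x)^2 e^{-x^2/2}$, and then carries out the Stirling computation you describe; your appeal to Plancherel--Rotach/Szeg\H{o} asymptotics is the right idea but you would still need a uniform, non-asymptotic version of it, which is precisely what Krasikov supplies.
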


\textit{Proof of Lemma~\ref{lem:hermite-bound}}\label{append:hermits-bound}

\citeasnoun{krasikov2004} shows that for $k \geq 6$,
$$
(2 k )^{1/6} 2^k   \max_{x \in \Real}  (H_k( x) )^2 e^{-x^2/2} ~\leq ~  \frac{2}{3} C_k  \exp\left(  {15 \over 8} \left( 1 + \frac{12}{4(2k)^{1/3} - 9} \right)\right)~,
$$
where 
$$
C_k = \left\{
\begin{array}{ll}
\frac{ 2 k \sqrt{4k - 2}\,\, (k!)^2}{\sqrt{8k^2 - 8k + 3}\; ((k/2)!)^2}, \quad & \text{ if } \quad k \text{ is even},\\
& \\
\frac{ \sqrt{16k^2 - 16k +6} \,\, k! (k-1)!}{\sqrt{2k-1} \;((k-1)/2)!^2}, \quad & \text{ if } \quad k \text{ is odd},\\
\end{array}
\right .
$$
Note that for $k \geq 6$,
$
\frac{2}{3}   e^{  {15 \over 8} \left( 1 + \frac{12}{4(2k)^{1/3} - 9} \right)}
$
is decreasing in $k$ and 
\bex
C_k 
&\leq& \left\{
\begin{array}{ll}
	2 \sqrt{k} \times \frac{(k!)^2}{((k/2)!)^2}, \quad & \text{ if } \quad k \text{ is even},\\
	& \\
	4 \sqrt{k} \times\frac{ k! (k-1)!}{((k-1)/2)!^2}, \quad & \text{ if } \quad k \text{ is odd},\\
\end{array}
\right .\\
&\leq& \left\{
\begin{array}{ll}
	2 \sqrt{k} \times \frac{(k!)^2}{\left(\sqrt{2 \pi} {\left( k \over 2 \right)}^{(k+1)/2} e^{-k /2} \right)^2}, \quad & \text{ if } \quad k \text{ is even},\\
	& \\
	4 \sqrt{k} \times \frac{ (k!)^2}{k \left( \sqrt{2 \pi}  {\left( {k-1 \over 2} \right)}^{k/2}  e^{-(k-1) /2} \right)^2}, \quad & \text{ if } \quad k \text{ is odd},\\
\end{array}
\right .\\
&=& \left\{
\begin{array}{ll}
	2 \sqrt{k} \times \frac{(k!)^2}{{k \over 2}  (2\pi) \left( {\left( k \over 2 \right)}^{k/2} e^{-k /2} \right)^2}, \quad & \text{ if } \quad k \text{ is even},\\
	& \\
	4 \sqrt{k} \times \frac{ (k!)^2}{k  e (2 \pi) \left(  {\left( {k \over 2} \right)}^{k/2}  \times \left(1 - {1 \over k}\right)^{k/2} \times e^{-k/2} \right)^2}, \quad & \text{ if } \quad k \text{ is odd},\\
\end{array}
\right . \\
&=& \left\{
\begin{array}{ll}
	4 \sqrt{k} \times \frac{(k!)^2}{ k (2\pi) \left( {\left( k \over 2 \right)}^{k/2} e^{-k /2} \right)^2}, \quad & \text{ if } \quad k \text{ is even},\\
	& \\
	4 \sqrt{k} \times \frac{ (k!)^2}{k  e (2 \pi)  \left(1 - {1 \over k}\right)^{k}    \left(  {\left( {k \over 2} \right)}^{k/2}  e^{-k/2} \right)^2}, \quad & \text{ if } \quad k \text{ is odd},\\
\end{array}
\right .
\eex
where the second equality follows from Sterling's bound.  It is easy to verify that $(1-{1 \over k})^k$ is increasing in $k$ and approaches $1 \over e$ as $k$ approaches infinity.
Putting everything together, we conclude that 
there is an absolute constant $a_1$ such that for all $k \geq 6$ and $x \in \Real$,
$$
\abs{ H_k(x) } \leq a_1 e^{x^2/4} \frac{  k!   }{ k^{1/3} \, 2^{k/2} \, {\left( {k \over 2 e} \right)}^{k/2} }  = a_1 e^{x^2/4} \frac{  k!   }{ k^{1/3} \, {\left( {k \over  e} \right)}^{k/2} }  
$$
Since the results hold for $k \geq 6$, it is easy to verify that it also holds for all $k \geq 1$, by choosing appropriately large constant $a_1$.

\begin{lem}\label{lem:Cai2}
If $X \sim N(\theta,1)$ then
$$\ex H_k^2(X) \leq k^k (1+ \theta^2/k)^k.$$
\end{lem}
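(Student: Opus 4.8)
The plan is to reduce the bound to an exact formula for $\ex H_k^2(X)$ followed by an elementary term-by-term comparison. The starting point is the exponential generating function of the probabilists' Hermite polynomials, $\sum_{k\ge 0}(t^k/k!)\,H_k(x)=\exp(tx-t^2/2)$, which gives
$$\sum_{m,n\ge 0}\frac{s^m t^n}{m!\,n!}\,H_m(x)H_n(x)=\exp\!\big((s+t)x-\tfrac12(s^2+t^2)\big).$$
Taking expectations over $X\sim N(\theta,1)$, interchanging expectation with the (absolutely convergent for $s,t$ near $0$) double series, and using $\ex\exp((s+t)X)=\exp((s+t)\theta+\tfrac12(s+t)^2)$, the right-hand side collapses to $\exp((s+t)\theta+st)$. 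Extracting the coefficient of $s^k t^k$ from $\exp((s+t)\theta)\exp(st)$ then yields the identity
$$\ex H_k^2(X)=\sum_{j=0}^{k}\binom{k}{j}^{2} j!\;\theta^{2(k-j)}.$$
Equivalently, one may obtain the same formula by citing the Hermite linearization identity $H_k^2=\sum_{j=0}^k\binom{k}{j}^2 j!\,H_{2(k-j)}$ and then taking expectations using $\ex_\theta H_m(X)=\theta^m$, which is \eqref{her.property} with $\sigma=1$.

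Next I would expand the target by the binomial theorem: with the substitution $j=k-i$,
$$k^k\Big(1+\frac{\theta^2}{k}\Big)^k=\sum_{i=0}^{k}\binom{k}{i}k^{k-i}\theta^{2i}=\sum_{j=0}^{k}\binom{k}{j}k^{j}\,\theta^{2(k-j)}.$$
Comparing the two displays coefficient by coefficient in the powers of $\theta^2$, it suffices to prove $\binom{k}{j}^2 j!\le\binom{k}{j}k^{j}$ for every $0\le j\le k$, i.e. $\binom{k}{j}j!\le k^j$. But $\binom{k}{j}j!=k!/(k-j)!=k(k-1)\cdots(k-j+1)$ is a product of $j$ integers each at most $k$, so the inequality holds; summing over $j$ gives the lemma.

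There is no genuine obstacle here. The only point requiring a modest amount of care is the rigorous derivation of the moment formula $\ex H_k^2(X)=\sum_{j}\binom{k}{j}^2 j!\,\theta^{2(k-j)}$ — either by justifying the interchange of expectation with the Hermite power series above, or by invoking the standard linearization formula together with \eqref{her.property}. Once that formula is in hand, the remaining argument is just the elementary inequality $k(k-1)\cdots(k-j+1)\le k^j$ applied termwise.
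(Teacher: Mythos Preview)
Your argument is correct: the generating-function computation indeed gives $\ex H_k^2(X)=\sum_{j=0}^{k}\binom{k}{j}^{2} j!\,\theta^{2(k-j)}$, the binomial expansion of the target yields $\sum_{j=0}^{k}\binom{k}{j}k^{j}\theta^{2(k-j)}$, and the termwise comparison reduces to the falling-factorial bound $k(k-1)\cdots(k-j+1)\le k^{j}$. There is nothing to add.

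As for comparison with the paper: the paper does not prove this lemma at all but simply cites Lemma~3 of \citet{cai2011}. Your write-up therefore supplies a self-contained proof where the paper defers to the literature. That is a net gain in completeness; the only caveat is that your derivation of the moment identity should be presented cleanly (either the generating-function route you sketch or, more directly, the linearization $H_k^2=\sum_j\binom{k}{j}^2 j!\,H_{2(k-j)}$ combined with \eqref{her.property}), since that is the one nontrivial ingredient.
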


\text{Proof.} See Lemma~3 of \citeasnoun{cai2011}.

\begin{lem}\label{lem:var.bound.1}
If $Y$ and $I_A$ are independent random variables then:
\begin{itemize}
\item $\var(YI_A)=\var(Y) P(A) + (\ex[Y])^2 P(A) P(A^c)$
\item $\var (YI_A) \leq \ex [Y^2]\, P(A)$
\item $\var(YI_A) \leq \var(Y) + (\ex[Y])^2P(A^c)$
\end{itemize}
\end{lem}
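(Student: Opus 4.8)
The plan is to derive everything from the first identity, which itself follows from a one-line moment computation exploiting independence. First I would write $p = P(A)$ and use $I_A^2 = I_A$, so that $(YI_A)^2 = Y^2 I_A$. Independence of $Y$ and $I_A$ then gives both $\ex[YI_A] = p\,\ex[Y]$ and $\ex[(YI_A)^2] = \ex[Y^2 I_A] = p\,\ex[Y^2]$, whence
$$\var(YI_A) = p\,\ex[Y^2] - p^2(\ex[Y])^2.$$
Substituting $\ex[Y^2] = \var(Y) + (\ex[Y])^2$ and regrouping yields
$$\var(YI_A) = p\,\var(Y) + p(1-p)(\ex[Y])^2 = \var(Y)\,P(A) + (\ex[Y])^2\,P(A)\,P(A^c),$$
which is the first bullet.

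For the second bullet I would simply discard the nonnegative term $p^2(\ex[Y])^2$ in the display $\var(YI_A) = p\,\ex[Y^2] - p^2(\ex[Y])^2$, obtaining $\var(YI_A) \le p\,\ex[Y^2] = \ex[Y^2]\,P(A)$. For the third bullet I would start from the first identity and apply the trivial estimates $P(A) \le 1$ and $P(A)\,P(A^c) \le P(A^c)$, which give $\var(YI_A) \le \var(Y) + (\ex[Y])^2\,P(A^c)$.

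There is essentially no obstacle: the argument is elementary algebra once independence is used. The only point meriting a word of care is that $Y$ must have a finite second moment for the displayed expectations to be meaningful; this is automatically satisfied in every application of the lemma in the main proofs, where the relevant random variable $Y$ (a component of $\Ta$, or a truncated quantity such as $\Sm(\Ua)$, or $\bar b\,\Ua$) is either bounded by $n$ or already shown to have finite second moment via Lemma~\ref{lem:variance.bound}.
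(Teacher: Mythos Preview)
Your proof is correct and follows essentially the same route as the paper's own argument: compute $\var(YI_A) = \ex[Y^2]P(A) - (\ex[Y]P(A))^2$ from independence and $I_A^2 = I_A$, rewrite it as $\var(Y)P(A) + (\ex[Y])^2 P(A)P(A^c)$, and read off the two inequalities. The paper merely says ``the inequalities immediately follow,'' whereas you spell out which terms are dropped; otherwise the arguments coincide.
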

\text{Proof.}
Using the independence between $Y$ and $I_A$ we have $\var(YI_A)$ equals 
$\ex [Y^2] P(A) - (\ex[Y] P(A))^2 = \var(Y) P(A) + (\ex[Y])^2 P(A)  - (\ex[Y] P(A))^2$
from which we have the identity in the lemma. The inequalities immediately follow from it.  

\begin{lem}\label{lem:mills.ratio}
Mills Ratio and Gaussian Tails: For any $a > 0$ we have:
$$ -a^{-3} \phi(a) \leq \Phit(a) - a^{-1} \phi(a) \leq 0.$$
\end{lem}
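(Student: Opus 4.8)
The plan is to reduce the whole statement to the single elementary identity $\phi'(t) = -t\,\phi(t)$, together with two antiderivative computations, and then to compare the resulting integral representations with $\Phit(a) = \int_a^\infty \phi(t)\,dt$. First I would record the two consequences of $\phi'(t)=-t\phi(t)$ valid for $t>0$:
$$\frac{d}{dt}\!\left(-\frac{\phi(t)}{t}\right) = \left(1 + \frac{1}{t^2}\right)\phi(t), \qquad \frac{d}{dt}\!\left(-\frac{\phi(t)}{t^3}\right) = \left(\frac{1}{t^2} + \frac{3}{t^4}\right)\phi(t).$$
Integrating each from $a$ to $\infty$, and noting that the boundary terms at $+\infty$ vanish because $\phi(t)/t^k\to 0$, gives the exact identities
$$\frac{\phi(a)}{a} = \int_a^\infty \left(1 + \frac{1}{t^2}\right)\phi(t)\,dt, \qquad \frac{\phi(a)}{a^3} = \int_a^\infty \left(\frac{1}{t^2} + \frac{3}{t^4}\right)\phi(t)\,dt.$$

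Next I would subtract $\Phit(a) = \int_a^\infty \phi(t)\,dt$ from the first identity to obtain the key equality $\frac{\phi(a)}{a} - \Phit(a) = \int_a^\infty \frac{\phi(t)}{t^2}\,dt$. Since the integrand is nonnegative, this yields $\Phit(a) - a^{-1}\phi(a)\le 0$, the upper bound. For the lower bound, observe that the same integral is dominated termwise by the integrand in the second identity, so $\frac{\phi(a)}{a} - \Phit(a) = \int_a^\infty \frac{\phi(t)}{t^2}\,dt \le \int_a^\infty \left(\frac{1}{t^2}+\frac{3}{t^4}\right)\phi(t)\,dt = \frac{\phi(a)}{a^3}$, which rearranges to $\Phit(a) - a^{-1}\phi(a) \ge -a^{-3}\phi(a)$. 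Combining the two inequalities gives the claim.

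There is essentially no genuine obstacle here: the argument is a short and routine calculus exercise. The only points needing (minor) care are checking that the boundary contributions at $+\infty$ actually vanish in each integration, and using $a>0$ throughout so that $t^{-2}$ and $t^{-4}$ are integrable on $[a,\infty)$ and all the divisions are legitimate. As a self-contained alternative to the second antiderivative identity, one may instead bound $\int_a^\infty \frac{\phi(t)}{t^2}\,dt = \int_a^\infty t^{-3}\bigl(-\phi'(t)\bigr)\,dt = \frac{\phi(a)}{a^3} - 3\int_a^\infty t^{-4}\phi(t)\,dt \le \frac{\phi(a)}{a^3}$ by a single integration by parts; I would include whichever version reads more cleanly.
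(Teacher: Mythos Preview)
Your argument is correct: both antiderivative identities follow from $\phi'(t)=-t\phi(t)$, the boundary terms vanish, and the two comparisons yield exactly the stated bounds. The paper does not actually prove this lemma but simply refers the reader to Exercise~8.1 in Chapter~8 of Johnstone's book, so your self-contained derivation is in fact more complete than what appears in the paper; there is nothing to compare beyond noting that your integration-by-parts route is the standard one and would be the expected solution to that exercise.
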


\text{Proof.} See Exercise 8.1, Chapter 8 in \citet{Johnstone-book}.

\begin{lem}\label{lem:G.bound}
	For any $w \in \Real$ and $b \in (0,1)$,  let $G(w,b)$ be defined as in Equation~\eqref{eq:loss-function}. Then,
	$G(w,b) \leq \phi(0)+ \max\{1-b\,,\,b\} |w|$.
\end{lem}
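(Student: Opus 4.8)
The plan is to bound the two summands of $G(w,b) = \phi(w) + w\bigl(\Phi(w)-b\bigr)$ separately and crudely, without exploiting any cancellation between $\phi(w)$ and $w\Phi(w)$. First I would invoke the elementary fact that the standard normal density is maximized at the origin, so $\phi(w) \le \phi(0)$ for every $w \in \Real$. This handles the first summand and reduces the claim to the inequality $w\bigl(\Phi(w)-b\bigr) \le \max\{1-b,\,b\}\,|w|$.

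For the remaining term I would split on the sign of $w$. If $w \ge 0$, then $\Phi(w) \le 1$ gives $\Phi(w)-b \le 1-b$, and multiplying by the nonnegative number $w$ yields $w\bigl(\Phi(w)-b\bigr) \le (1-b)\,w = (1-b)\,|w|$. If $w < 0$, then $\Phi(w) \ge 0$ gives $\Phi(w)-b \ge -b$, and multiplying by the negative number $w$ (which reverses the inequality) yields $w\bigl(\Phi(w)-b\bigr) \le -bw = b\,|w|$. In both cases the right-hand side is at most $\max\{1-b,\,b\}\,|w|$, and combining with $\phi(w) \le \phi(0)$ gives $G(w,b) \le \phi(0) + \max\{1-b,\,b\}\,|w|$, which is the assertion.

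There is essentially no obstacle; the only point demanding care is the reversal of the inequality when multiplying through by $w$ in the case $w<0$. Although the bound is deliberately loose, it is precisely the linear-growth control on $G$ that is needed elsewhere, for instance in the bias estimates of Lemma~\ref{lem:bias.bound} and in establishing the uniform integrability step of Theorem~\ref{origin.risk.loss}.
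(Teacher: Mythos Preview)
Your proof is correct and follows exactly the same approach as the paper. The paper's own proof is a one-liner that simply observes $\phi(w)\le\phi(0)$ and declares that ``the result follows''; you have written out the sign-of-$w$ case split for the term $w(\Phi(w)-b)$ that the paper leaves implicit.
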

\begin{proof}
By definition $G(w,b)=\phi(w) + w \Phi(w) - b w$.  Since $\phi(w) \leq \phi(0)$ for all $w$, the result follows.
\end{proof}

\begin{lem}\label{lem:ui}
	Extra Integrability condition. If family $\{X_t: t \in T\}$ is such that $\sup_{t \in T} \ex |X_t|^{1+\delta} < \infty$ for some $\delta > 0$ then $\{X_t: t \in T\}$ is uniformly integrable. 
\end{lem}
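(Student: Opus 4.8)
\textbf{Proof proposal for Lemma~\ref{lem:ui}.}
The plan is to verify directly the definition of uniform integrability, namely that $\lim_{K \to \infty} \sup_{t \in T} \ex\left[ |X_t| \, \indicator{\{|X_t| > K\}} \right] = 0$, using only the uniform bound on the $(1+\delta)$-th moments. Set $M := \sup_{t \in T} \ex |X_t|^{1+\delta}$, which is finite by hypothesis.

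First I would fix $K > 0$ and $t \in T$ and bound the truncated first moment. On the event $\{|X_t| > K\}$ one has $|X_t| = |X_t| \cdot |X_t|^{\delta} / |X_t|^{\delta} \leq |X_t|^{1+\delta} / K^{\delta}$, so that
$$
\ex\left[ |X_t| \, \indicator{\{|X_t| > K\}} \right] \;\leq\; \frac{1}{K^{\delta}} \, \ex\left[ |X_t|^{1+\delta} \, \indicator{\{|X_t| > K\}} \right] \;\leq\; \frac{1}{K^{\delta}} \, \ex |X_t|^{1+\delta} \;\leq\; \frac{M}{K^{\delta}}.
$$
Taking the supremum over $t \in T$ gives $\sup_{t \in T} \ex\left[ |X_t| \, \indicator{\{|X_t| > K\}} \right] \leq M / K^{\delta}$, and since $\delta > 0$ the right-hand side tends to $0$ as $K \to \infty$. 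This is exactly the uniform integrability criterion, so the conclusion follows.

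There is essentially no obstacle here: the only point requiring (minor) care is to state at the outset which equivalent formulation of uniform integrability is being verified — I will use the ``uniform tail'' form above rather than the ``$\varepsilon$–$\delta$ over sets of small probability'' form — after which the argument is the one-line Markov-type estimate displayed above. (If the set-based form is preferred, one combines the bound $\ex\left[ |X_t| \, \indicator{A} \right] \leq K \, \Pr(A) + M/K^{\delta}$ for arbitrary measurable $A$, and optimizes over $K$; this is equally immediate.)
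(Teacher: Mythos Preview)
Your proof is correct and is exactly the standard textbook argument. The paper itself does not give a proof but simply refers the reader to Billingsley's book, where the same one-line Markov-type estimate you wrote is what one finds; so your proposal is in fact more complete than the paper's treatment.
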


\text{Proof.} See \citet{Billingsley-book}.

\begin{lem}\label{lem:inequality}
	For any fixed $m > 0$ we have
	$$ \bigg(1+\frac{m}{k}\bigg)^k \leq e^m \text{ for all } k \geq 1$$
\end{lem}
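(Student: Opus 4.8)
\textbf{Proof proposal for Lemma~\ref{lem:inequality}.}
The plan is to reduce the claim to the elementary exponential inequality $e^{x}\ge 1+x$, valid for all $x\in\Real$. Since $m>0$ and $k\ge 1$, the quantity $1+m/k$ is strictly positive, so the inequality $(1+m/k)^{k}\le e^{m}$ is equivalent, after taking logarithms of both sides, to $k\,\log\!\big(1+m/k\big)\le m$, i.e.\ to $\log\!\big(1+m/k\big)\le m/k$. This last bound is exactly the statement $\log(1+x)\le x$ at the point $x=m/k>0$, which in turn is the logarithmic form of $e^{x}\ge 1+x$. Hence the result follows immediately.

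Alternatively, one can avoid logarithms entirely by a direct binomial expansion: for integer $k\ge 1$,
$$\Big(1+\frac{m}{k}\Big)^{k}=\sum_{j=0}^{k}\binom{k}{j}\frac{m^{j}}{k^{j}}
=\sum_{j=0}^{k}\frac{m^{j}}{j!}\cdot\frac{k(k-1)\cdots(k-j+1)}{k^{j}}
\le\sum_{j=0}^{k}\frac{m^{j}}{j!}\le\sum_{j=0}^{\infty}\frac{m^{j}}{j!}=e^{m},$$
where the first inequality uses $k(k-1)\cdots(k-j+1)\le k^{j}$ and the positivity of each term. Either route works; there is no real obstacle here, as the lemma is a standard calculus fact invoked only to streamline the estimates in Lemma~\ref{lem:variance.bound}. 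I would present the first (logarithmic) argument for brevity.
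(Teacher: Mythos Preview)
Your proposal is correct and your primary (logarithmic) argument is essentially identical to the paper's: both take logarithms to reduce the claim to $\log(1+m/k)\le m/k$, i.e.\ the standard inequality $\log(1+x)\le x$ for $x>0$. The binomial-expansion alternative you offer is also valid (for integer $k$) but is not needed here.
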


\text{Proof.} We know that for any $x > 0$, $\log (1+x) \leq x$ and taking logarithm and dividing both sides by $m$ the statement in the lemma reduces to 
$\log(1+m/k)\leq m/k$.\\[2ex]
The following well known random variable lemmas have been used in our proofs.

\begin{lem}\label{lem:Cai}
For random variables $W_1, \ldots, W_n$ we have:
$$
	\E \left[ \left( \sum_{i=1}^n W_i \right)^2 \right]  \leq \left(  \sum_{i=1}^n \sqrt{ \E (W_i^2) }  \right)^2
$$
\end{lem}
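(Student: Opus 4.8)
The plan is to recognize the claimed bound as the square of the triangle inequality for the $L^2$ norm, and to prove it by expanding the left-hand side and applying the Cauchy--Schwarz inequality to each cross term.

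First, observe that if $\E(W_i^2) = \infty$ for some $i$, then the right-hand side equals $+\infty$ and there is nothing to prove; so I would assume $\E(W_i^2) < \infty$ for every $i = 1, \ldots, n$. In particular, by Cauchy--Schwarz each pairwise product $W_i W_j$ is integrable, so the square $\left(\sum_i W_i\right)^2$ is integrable and we may expand
\[
\E\left[\left(\sum_{i=1}^n W_i\right)^2\right] = \sum_{i=1}^n \sum_{j=1}^n \E[W_i W_j]~.
\]

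Next, I would apply the Cauchy--Schwarz inequality to each summand, $\E[W_i W_j] \le \sqrt{\E(W_i^2)}\,\sqrt{\E(W_j^2)}$, and substitute this bound into the double sum. Since the resulting expression factors, this yields
\[
\sum_{i=1}^n \sum_{j=1}^n \E[W_i W_j] \;\le\; \sum_{i=1}^n \sum_{j=1}^n \sqrt{\E(W_i^2)}\,\sqrt{\E(W_j^2)} \;=\; \left(\sum_{i=1}^n \sqrt{\E(W_i^2)}\right)^2~,
\]
which is exactly the asserted inequality.

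There is no genuine obstacle in this argument: the only point requiring a word of care is the integrability bookkeeping in the first step (so that the expansion of the square is legitimate), after which the proof is a one-line expansion followed by a termwise application of Cauchy--Schwarz. An equivalent and even shorter route, if preferred, is simply to invoke Minkowski's inequality, $\bigl\| \sum_{i=1}^n W_i \bigr\|_{L^2} \le \sum_{i=1}^n \| W_i \|_{L^2}$, and square both sides.
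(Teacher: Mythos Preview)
Your proof is correct. The paper actually states this lemma without proof, treating it as a standard auxiliary fact (it is just Minkowski's inequality in $L^2$, squared), so there is no argument in the paper to compare against; your expansion-plus-Cauchy--Schwarz derivation, or the one-line appeal to Minkowski you mention at the end, is exactly the intended elementary justification.
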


\begin{lem}\label{lem:modified.markov} For any random variable $X$ and $\lambda >0$, we have
	$$\ex \{X I\{X \geq \lambda\} \} \leq |\lambda|^{-1} \ex X^2.$$  
\end{lem}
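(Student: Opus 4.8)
The plan is to prove this by a pointwise domination argument, which is the standard route for all Markov-type inequalities. The crucial observation is that on the event $\{X \geq \lambda\}$, since $\lambda > 0$ we have $X \geq \lambda > 0$, hence $X/\lambda \geq 1$; multiplying this through by the nonnegative quantity $X$ gives $X \leq X^2/\lambda$ on that event. Off the event one has $X\,I\{X \geq \lambda\} = 0 \leq X^2/\lambda$ trivially. Combining the two cases yields the pointwise bound
$$
X\,I\{X \geq \lambda\} ~\leq~ \frac{X^2}{\lambda}\,I\{X \geq \lambda\} ~\leq~ \frac{X^2}{\lambda}.
$$

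Next I would take expectations on both sides. Since $X\,I\{X \geq \lambda\}$ is nonnegative, its expectation is well-defined as an element of $[0,\infty]$, so no integrability hypothesis on $X$ is needed a priori, and monotonicity of expectation gives $\ex\{X\,I\{X \geq \lambda\}\} \leq \lambda^{-1}\,\ex X^2$. Finally, because $\lambda > 0$ we may replace $\lambda^{-1}$ by $|\lambda|^{-1}$, which is exactly the stated inequality. The case $\ex X^2 = \infty$ requires no separate treatment, as the right-hand side is then infinite and the bound is vacuous.

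There is essentially no obstacle here; the only points worth a clause of care are that the indicator forces $X$ to be strictly positive on the relevant event (so that the step $X \leq X^2/\lambda$ is legitimate and no sign issue arises), and that one should phrase the expectation step using monotonicity rather than linearity so as not to tacitly assume $\ex\{X\,I\{X\ge\lambda\}\}$ is finite.
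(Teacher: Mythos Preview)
Your proof is correct. The paper states this lemma among its auxiliary results but does not supply a proof, so there is nothing to compare against; your pointwise domination argument $X\,I\{X\ge\lambda\}\le X^2/\lambda$ followed by monotonicity of expectation is exactly the natural one-line justification one would expect.
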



\begin{lem}\label{lem:var.bound.2}
	For any finite $l \geq 1$ we have
	$$\var\bigg(\sum_{i=1}^l X_i\bigg) \leq 2^{l-1} \sum_{i=1}^l \var(X_i).$$ 
\end{lem}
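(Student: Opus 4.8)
The plan is to reduce the statement to the purely deterministic inequality $\big(\sum_{i=1}^l a_i\big)^2 \le 2^{l-1}\sum_{i=1}^l a_i^2$ for all reals $a_1,\dots,a_l$, and then apply it pointwise to the centered variables $a_i = X_i - \E X_i$ and take expectations.

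First I would write $\var\big(\sum_{i=1}^l X_i\big) = \E\big[\big(\sum_{i=1}^l Y_i\big)^2\big]$, where $Y_i := X_i - \E X_i$; here $\E Y_i = 0$ and $\var(Y_i) = \var(X_i)$, and the equality is just the definition of variance together with linearity of expectation ($\E\sum_i X_i = \sum_i \E X_i$). No independence or any distributional assumption is needed for this reduction.

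Next I would prove the scalar bound $\big(\sum_{i=1}^l a_i\big)^2 \le 2^{l-1}\sum_{i=1}^l a_i^2$ by induction on $l$. The base case $l=1$ is an equality, with $2^{0}=1$. For the inductive step, split off the last term and apply $(u+v)^2 \le 2u^2 + 2v^2$ with $u = \sum_{i=1}^{l-1} a_i$ and $v = a_l$, then invoke the induction hypothesis on $u^2$ to get $(u+v)^2 \le 2\cdot 2^{l-2}\sum_{i=1}^{l-1} a_i^2 + 2 a_l^2 = 2^{l-1}\sum_{i=1}^{l-1} a_i^2 + 2 a_l^2$, and finally use $2 \le 2^{l-1}$ for $l\ge 2$ to absorb the last term into the constant $2^{l-1}$. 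Applying this with $a_i = Y_i(\omega)$ and integrating over $\omega$ yields $\E\big[\big(\sum_i Y_i\big)^2\big] \le 2^{l-1}\sum_i \E[Y_i^2] = 2^{l-1}\sum_i \var(X_i)$, which is exactly the claim.

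There is essentially no obstacle here; the only points requiring a moment of care are the edge case $l=1$ (where the inequality is an equality and the constant is $1$) and the collapse of the constant at $l=2$, where $2 = 2^{l-1}$ so the inductive bookkeeping is tight. As a remark one could note that the convexity of $t\mapsto t^2$ gives the sharper bound with constant $l$ in place of $2^{l-1}$ (and $l\le 2^{l-1}$ for all $l\ge 1$), but I would state the lemma with $2^{l-1}$ since that is the form in which it is invoked (e.g. in \eqref{var.decomp}).
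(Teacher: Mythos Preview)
Your proof is correct. The paper actually states this lemma without proof (it appears among the auxiliary lemmas with no argument given), so there is nothing to compare against; your induction via $(u+v)^2\le 2u^2+2v^2$ applied to the centered variables is the standard route and handles all cases cleanly, including the edge case $l=1$.
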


\section{Glossary}

In Table~\ref{table-glossary}, we briefly list the notations that have been \pr{used} repeatedly in the current \pr{paper. As a convention}, multivariate vectors, expressions and estimates are represented in bold. \\[-3.5ex]
\begin{table*}[h]
\caption{List of important notations used in the current paper.}\label{table-glossary}
\begin{tabular}{|l|l|}
\hline
\textsf{Notation} & \textsf{Description} \\
\hline
$n$ & dimension of the problem\\
$i$ &  coordinate index\\
$a_n$ & $\log \log n$ \\
$\theta_i$ & {\em unknown} mean of coordinate $i$\\
$\sigma_{p,i}$ & variance of the past data of coordinate $i$\\
$\sigma_{f,i}$ & variance of the future data of coordinate $i$\\
$X_i$ & past  data for coordinate $i$ with $X_i  \sim N(\theta_i, \sigma_{p,i})$ \\
$Y_i$ & future data of coordinate $i$ with $Y_i  \sim N(\theta_i, \sigma_{f,i})$ \\
$b_i$ & per-unit underestimation cost associated with coordinate $i$\\
$h_i$ & per-unit overestimation cost associated with coordinate $i$\\
$\net$ & Grid over which ARE criterion is optimized\\
$\bt_i$ &  the critical ratio $b_i / (b_i+h_i)$ of the weights of the check loss\\
$l_i(\theta_i, \qhat_i(\x))$ & loss associated with coordinate $i$ under the policy $\qhat_i$ when $\X = \x$ is observed \\
$L_n(\thetab, \qhatb)$ & average loss over $n$ coordinates under the prediction policy $\qhat$ \\
$r_i(\theta_i, \qhat_i)$ & risk associated with coordinate $i$ under the prediction policy $\qhat_i$\\
$R_n(\thetab, \qhatb)$ &  average risk of $n$ coordinates under the prediction policy $\qhat$\\
$(\eta,\tau)$ & hyperparameters for the prior distribution of $\theta_i$, with $\theta_i \sim N(\eta, \tau)$\\
$\qhat_i^{{\sf Bayes}}(\eta,\tau)$ & bayes estimate for $N(\eta,\tau)$ prior; see \eqref{eq:linear.est} for definition\\
$\alpha_i(\tau)$ & shrinkage factor in our estimates, with $\alpha_i(\tau) = \tau / (\tau + \sigma_{p,i})$ for all $i$\\
$\mathcal{S}^0$ & class of shrinkage estimators  $\qhatb(\tau)$ based on origin-centric priors\\ 
$\mathcal{S}^G$ & class of shrinkage estimators $\qhatb^G(\tau)$  based on grand-mean centric priors \\
$\mathcal{S}$ & class of data driven shrinkage estimators $\qhatb(\eta,\tau)$\\
$\Ahat$ & our proposed estimate of the risk function $R_n(\thetab, \qhatb(\tau))$ of estimators in $\mathcal{S}^0$\\
$\AGhat$ & our proposed estimate of the risk function $R_n(\thetab, \qhatb^G(\tau))$ of estimators in $\mathcal{S}^G$\\
$\AD$ & our proposed estimate of the risk function $R_n(\thetab, \qhatb(\eta,\tau))$ of estimators in $\mathcal{S}$\\
$\tau_{n}^{{\sf OR}}$,  $\tau_{n}^{{\sf GOR}}$ & Oracle estimates of the hyperparameter $\tau$ for $\mathcal{S}^0$ and 
$\mathcal{S}^G$, respectively\\
$\tauh^{\A}_n,\,\tauh^{\AG}_n $ & ARE-based estimate of $\tau$ for  $\mathcal{S}^0$ and $\mathcal{S}^G$, respectively \\
$(\eta_{n}^{{\sf DOR}}, \tau_{n}^{{\sf DOR}})$ & oracle estimates of the hyperparameter in $\mathcal{S}$; ARE estimates are $\etah^{\ADD}_n, \tauh^{\ADD}_n$ \\
$\That_i(X_i,\tau)$ & coordinate-wise estimator used in our risk estimation method; see \eqref{def.A}\\
$\thr(i)$ & threshold parameter used in our risk estimation method; see \eqref{thr.definition}\\
$K_n(i)$ & truncation parameter used in our risk estimation method; see \eqref{eq:trun.parameter}\\
$G(\omega,\beta)$ &  describes the predictive loss function; see \eqref{eq:loss-function}\\
$\bigo{\cdot}$,\;$\smallo{\cdot}$ & denote the Big O and the little-o mathematical notations, respectively\\
\hline
\end{tabular}
\end{table*}

\bibliography{mybib,append}

\end{document}